\theoremstyle{definition}
\newtheorem{definition}{Definition}[section]
\newtheorem{rem}[definition]{Remark}
\newtheorem*{ackname}{Acknowledgment}
\theoremstyle{theorem}
\newtheorem{theorem}{Theorem}
\newtheorem{corollary}[definition]{Corollary}
\newtheorem{lemma}[definition]{Lemma}
\newtheorem{proposition}[definition]{Proposition}
\newtheorem*{theorem*}{Theorem}
\numberwithin{equation}{section}
\let\sl\l
\renewcommand\l{%
	\leavevmode
  \ifmmode
    \left
  \else
    \sl
  \fi}
  \let\sL\L
  \renewcommand\L{%
    \leavevmode
    \ifmmode
      \mathscr{L}
    \else
      \sL
    \fi}
\renewcommand{\set}[2]{\left\{ #1 \; | \; #2 \right\}}
\renewcommand{\Set}[2]{\left\{ #1 \; \middle| \; #2 \right\}}
\newcommand{\inp}[1]{\left\langle #1 \right\rangle}
\newcommand{\EXP}[1]{\mathbb{E}\left[ #1 \right]}
\newcommand{\CC}{\mathbb{C}}
\newcommand{\RR}{\mathbb{R}}
\newcommand{\ZZ}{\mathbb{Z}}
\newcommand{\EE}{\mathbb{E}}
\newcommand{\PP}{\mathbb{P}}
\newcommand{\e}{\varepsilon}
\newcommand{\s}{\sigma}
\newcommand{\ms}{\mathscr}
\newcommand{\mca}{\mathcal}
\newcommand{\Lam}{\Lambda}
\newcommand{\lam}{\lambda}
\newcommand{\qquadf}{\qquad \qquad \qquad \qquad}
\newcommand{\te}{\tilde{\eta}}
\newcommand{\meas}{\operatorname{meas}}
\newcommand{\Li}{\operatorname{Li}}
\newcommand{\ol}{\overline}
\newcommand{\supp}{\operatorname{supp}}
\newcommand{\Log}{\operatorname{Log}}
\newcommand{\DF}{M}
\newcommand{\nDF}{N}
\renewcommand{\a}{\alpha}
\renewcommand{\b}{\beta}
\renewcommand{\r}{\right}
\renewcommand{\epsilon}{\varepsilon}
\renewcommand{\phi}{\varphi}
\renewcommand{\Re}{\operatorname{Re}}
\renewcommand{\Im}{\operatorname{Im}}
\begin{document}

\title[Probability density functions for $\te_{m}(s)$]{On the value-distribution of iterated integrals of the logarithm 
of the Riemann zeta-function II: Probabilistic aspects
}

\author[K. Endo]{Kenta Endo}

\address[Endo]{Graduate School of Mathematics, Nagoya University, Chikusa-ku, Nagoya 464-8602, Japan.}
\email{m16010c@math.nagoya-u.ac.jp}

\author[S. Inoue]{Sh\=ota Inoue}

\address[Inoue]{\phantom{a}Department of Mathematics, \phantom{a} Tokyo Institute of Technology, \phantom{a} 2-12-1 Ookayama, 
Meguro-ku, Tokyo 152-8551, Japan.}

\email{inoue.s.bd@m.titech.ac.jp}

\author[M. Mine]{Masahiro Mine}

\address[Mine]{Faculty of science and technology, Sophia University, 7-1 Kioi-cho, Chiyoda-ku, Tokyo 102-8554, Japan}
\email{m-mine@sophia.ac.jp}

\subjclass[2020]{Primary 11M26, Secondary 60F05}

\keywords{Riemann zeta-function, Value-distribution, Limit theorems, Discrepancy estimate, Large deviations}

\maketitle

\begin{abstract}
  In this paper, we discuss the value-distribution of the Riemann zeta-function.
  The authors give some results for the discrepancy estimate and large deviations in the limit theorem by Bohr and Jessen.
\end{abstract}


\section{\textbf{Introduction and statement of main results}}\label{sec1}

\subsection{Introduction}
The study for the behavior of the Riemann zeta-function $\zeta$ plays an important role to understand the distribution of zeros of $\zeta$ 
and prime numbers.
We in this paper discuss this theme based on the following celebrated result due to Bohr and Jessen.
Throughout this paper, $\meas(\cdot)$ stands for the Lebesgue measure on $\RR$, and $\PP_{T}$ is defined by
\begin{align*}
  \PP_{T}(f(t) \in A) 
  = \frac{1}{T} \meas \set{t \in [T, 2T]}{f(t) \in A}
\end{align*}
for any Lebesgue measurable set $A$ and any Lebesgue measurable function $f$.

\begin{theorem*}[Bohr and Jessen in \cite{BJ1930}]
    Let $\s > \frac{1}{2}$.
    There exists a probability measure $P_{\s}$ on $\CC$ such that for any rectangle $\mathcal{R} \subset \CC$ 
    with edges parallel to the axes
    \begin{align}   \label{LTBJ}
        \PP_{T}(\log{\zeta(\s + it)} \in \mathcal{R})
        \sim P_{\s}(\mathcal{R})
    \end{align}
    as $T \rightarrow + \infty$. 
    Moreover, the probability measure $P_{\s}$ has a probability density function.
\end{theorem*}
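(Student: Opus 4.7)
The plan is to transfer a limit theorem from an explicitly constructed random Euler product to $\log \zeta(\s + it)$, and then to deduce the existence of a density from the infinite-convolution structure of the limiting law.

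First, I would introduce the random model. Let $(X_{p})_{p}$ be independent random variables indexed by primes, each uniformly distributed on the unit circle $\TT$. Since $\s > 1/2$, the series
\[
  Z(\s) := \sum_{p}\sum_{k \ge 1} \frac{X_{p}^{k}}{k\, p^{k\s}}
\]
converges almost surely and in $L^{2}(\PP)$ by a Kolmogorov-type argument, because $\sum_{p}\sum_{k} k^{-2} p^{-2k\s} < \infty$. The candidate limit $P_{\s}$ is the law of $Z(\s)$ on $\CC$.

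Second, I would handle the truncated Euler product. Write $\log \zeta_{y}(s) := \sum_{p \le y}\sum_{k \ge 1} p^{-ks}/k$ and $Z_{y}(\s) := \sum_{p \le y}\sum_{k \ge 1} X_{p}^{k}/(k p^{k\s})$. Because $\{\log p\}_{p \text{ prime}}$ is $\QQ$-linearly independent, the Weyl--Kronecker equidistribution theorem on the finite torus $\TT^{\pi(y)}$ gives, for each fixed $y$, weak convergence of the law of $\log \zeta_{y}(\s + it)$ under $\PP_{T}$ to the law of $Z_{y}(\s)$ as $T \to \infty$. Moreover, $Z_{y}(\s) \to Z(\s)$ almost surely and in $L^{2}$ as $y \to \infty$.

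The main analytic obstacle is the mean-square approximation on the zeta side,
\[
  \limsup_{T \to \infty} \frac{1}{T}\int_{T}^{2T} \bigl|\log \zeta(\s + it) - \log \zeta_{y}(\s + it)\bigr|^{2}\, dt \longrightarrow 0 \qquad (y \to \infty).
\]
For $\s > 1$ this is immediate from absolute convergence of the Euler product. For $1/2 < \s \le 1$ it is delicate: one has to smoothly truncate the Dirichlet series, control the contribution of those $t$ for which $\log \zeta$ is close to a singularity using a Carlson-type zero-density estimate together with a mean-value theorem for $\zeta$ on the line $\Re s = \s$, and bound the tail contribution by mean-value estimates for short Dirichlet polynomials $\sum_{p > y} p^{-\s - it}$. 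Combining this with the random-model estimate above and a standard three-$\e$ argument then yields \eqref{LTBJ}.

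Finally, the existence of a density for $P_{\s}$ follows from its infinite-convolution structure. By independence, $P_{\s}$ is the convolution of the laws $\mu_{p}$ of the single-prime factors $\sum_{k} X_{p}^{k}/(k p^{k\s})$. Each $\mu_{p}$ is supported on the smooth simple closed curve $w \mapsto -\log(1 - p^{-\s} w)$ with $w \in \TT$, so its characteristic function decays like $|\xi|^{-1/2}$ along generic directions by stationary phase. Multiplying over sufficiently many primes yields $\widehat{P_{\s}} \in L^{1}(\RR^{2})$, and Fourier inversion produces a bounded continuous density. Equivalently, the Jessen--Wintner purity law together with the non-atomicity of even a single factor forces $P_{\s}$ to be absolutely continuous.
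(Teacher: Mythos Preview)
Your outline is the classical route to Bohr--Jessen and is essentially correct, but two remarks are in order.

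First, the paper does not give a stand-alone proof of this statement; it quotes it as the theorem of Bohr and Jessen and instead proves the strictly stronger discrepancy bound of Theorem~\ref{thmDB1} (the case $m=0$), from which the asymptotic follows immediately. The paper's engine is different from yours in its mechanics: rather than invoking Weyl--Kronecker equidistribution on $\TT^{\pi(y)}$ and a three-$\e$ argument, it compares \emph{moments} of the truncated Dirichlet polynomial $P_{0,Y}(\s+it)$ with those of the random model $P_{0,Y}(\s,X)$ directly via the orthogonality $\int_T^{2T}(n/m)^{it}\,dt$ (Lemma~\ref{CDR}, Proposition~\ref{Key_Prop}), and then converts this into a bound on rectangles using the Beurling--Selberg majorant/minorant machinery. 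For the approximation of $\log\zeta$ by $P_{0,Y}$ the paper uses an $L^{2k}$ estimate (Lemma~\ref{MVEtevDP}) of the same flavour as the $L^2$ bound you describe. Your approach buys conceptual simplicity and works for any continuity set; the paper's buys a rate $D_{\s,0}(T)\ll(\log T)^{-\s}$. For the density, the paper's Proposition~\ref{propPDF} does exactly what you sketch first: Taylor-expand the local characteristic functions $\Lambda_{\s,m,p}$ for large $p$, multiply, and get $|\Lambda_{\s,m}(w)|\leq\exp(-|w|^{1/(2\s)})$, hence $\Lambda_{\s,m}\in L^1$ and Fourier inversion gives a continuous density.

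Second, your closing ``equivalently'' is not correct. The Jessen--Wintner purity theorem only asserts that an infinite convolution of discrete measures is pure: it is either purely atomic, purely singular continuous, or purely absolutely continuous. Non-atomicity of a single convolution factor (or of the whole) rules out the atomic case but does \emph{not} by itself exclude the singular continuous one; there are classical examples of infinite Bernoulli convolutions that are non-atomic yet singular. You genuinely need the characteristic-function decay (your stationary-phase estimate, or the paper's Taylor-expansion bound) to land in the absolutely continuous case. So keep the first argument and drop the ``equivalently.''
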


Here, we mention the branch of $\log{\zeta(s)}$. Define $\log{\zeta(\s + it)} = \int_{\infty}^{\s} \frac{\zeta'}{\zeta}(\a + it) d\a$ 
if $t$ is not zero or $t$ is not an imaginary part of a zero of $\zeta(s)$. 
If $t = 0$, then $\log{\zeta(\s)} = \lim_{\e \downarrow 0}\log{\zeta(\s + it)}$, 
and if $t$ is an imaginary part of a zero $\rho = \b + i\gamma$ of $\zeta(s)$, 
then $\log{\zeta(\s + i\gamma)} = \lim_{t \uparrow \gamma} \log{\zeta(\s + it)}$.

We can write the probability measure $P_{\s}$ explicitly as follows.
Let $\mathcal{P}$ be the set of prime numbers.
Let $\{ X(p) \}_{p \in \mathcal{P}}$ be a sequence of independent random variables on a probability space $(\Omega, \mathscr{A}, \PP)$ 
and uniformly distributed on the unit circle in the complex plane. Define $\zeta(\s, X) = \prod_{p}\l( 1 - X(p)p^{-\s} \r)^{-1}$.
Then, we have
\begin{align*}
  P_{\s}(A) = \PP( \log \zeta(\s, X) \in A)
\end{align*}
for any Borel set $A$ on $\CC$.

From the theorem of Bohr and Jessen and more recent results, 
we can guess the behavior of the Riemann zeta-function in the critical strip via $P_{\s}$.
Hattori and Matsumoto \cite{HM1999} showed that for $\mca{L} = [-\ell, \ell] \times i[-\ell, \ell]$,
\begin{align}   \label{HMMLD}
  P_{\s}(\CC \setminus \mca{L})
  = \exp\l( -(A(\s) + o(1))\l(\ell (\log{\ell})^{\s}\r)^{\frac{1}{1-\s}} \r)
\end{align}
for $\frac{1}{2} < \s < 1$ as $\ell \rightarrow + \infty$. Here, $A(\s)$ is expressed by 
\begin{align} \label{def_A(s)}
  A(\s) = \l( \frac{\s^{2\s}}{(1 - \s)^{2\s - 1} G(\s)^{\s}} \r)^{\frac{1}{1-\s}}, 
\end{align}
where $G(\s) = \int_{0}^{\infty}\log{I_{0}(u)}u^{-1-\frac{1}{\s}}du$, and $I_{0}$ is the modified $0$-th order Bessel function.
Moreover, the error term was improved by Lamzouri \cite{L2011}.
From these results, we can grasp the behavior of $P_{\s}$ well.
Therefore, the probability measure $P_{\s}$ is a good model for the distribution function of the Riemann zeta-function.

It is a natural question to ask how precise the limit theorem \eqref{LTBJ} is.
Lamzouri, Lester, and Radziwi\l\l \ \cite{LLR2019} studied this problem, 
and gave some results involving the discrepancy estimate and large deviations.
In particular, the large deviation result is related to the Lindel\"of Hypothesis, and hence interesting.
Precisely, they gave the result that the asymptotic formula
\begin{align}
  \label{LTBJRW}
  \PP_{T}(\log |\zeta(\s + it)| > \tau)
  \sim \PP(\log |\zeta(\s, X)| > \tau)
\end{align}
holds for $\frac{1}{2} < \s < 1$, $\tau = o\l( \frac{(\log{T})^{1-\s}}{(\log{\log{T}})^{1/\s}} \r)$\footnote{
  The range of $\tau$ in their paper is wider. 
  However, the range of $\tau$ such that asymptotic formula \eqref{LTBJRW} holds is $o\l( \frac{(\log{T})^{1 - \s}}{(\log{\log{T}})^{1/\s}} \r)$ 
  because their error term is not equal to $o(1)$ when $\tau$ is outside of the range.
} as $T \rightarrow + \infty$.
If the range of $\tau$ in their result could be extended to an appropriate range
(e.g.\ $\tau \leq C \frac{(\log{T})^{1 - \s}}{(\log{\log{T}})^{\s}}$ 
with $C$ a suitable positive constant), 
then it would hold that for $\frac{1}{2} < \s < 1$
\begin{align}   \label{CBRCS}
  \log{|\zeta(\s + it)|}
  \leq C(\s) \frac{(\log{t})^{1 - \s}}{(\log{\log{t}})^{\s}}
\end{align}
for any $t \geq 3$, and 
\begin{align}   \label{ORRM}
  \log{|\zeta(\s + it)|}
  = \Omega\l( \frac{(\log{t})^{1 - \s}}{(\log{\log{t}})^{\s}} \r)
\end{align}
as $t \rightarrow + \infty$.
In fact, the former estimate is conjectural, and the latter $\Omega$-estimate has been proved by Montgomery \cite{M1977}.
Hence, we are interested in the dependency of $\mca{R}$ with respect to $T$.
In this paper, we extend the range of $\tau$ due to Lamzouri, Lester, and Radziwi\l\l \ 
to $\tau = o\l( \frac{(\log{T})^{1-\s}}{\log{\log{T}}} \r)$ in Theorem \ref{thmLD} below.
Moreover, we generalize this problem to iterated integrals of $\log{\zeta(s)}$.

We define 
\begin{gather*}
\te_{m}(\s + it) = \int_{\s}^{\infty}\te_{m-1}(\a + it)d\a
\end{gather*}
for $m \in \ZZ_{\geq 1}$ recursively, where $\te_{0}(s) = \log{\zeta}(s)$.
We also define the corresponding random eta-function $\te_{m}(\s, X)$ by
\begin{align}\label{eqdefeta}
  \te_{m}(\s, X) = \sum_{p}\te_{m, p}(\s, X(p)), 
\end{align}
where
\begin{gather}\label{eqlocal}
  \te_{m, p}(\s, w)
  =\frac{\Li_{m + 1}(p^{-\s} w)}{(\log{p})^{m}}
  =\sum_{k=1}^\infty\frac{w^k}{kp^{k\s}(\log{p}^k)^{m}}
\end{gather}
for $w \in \CC$ with $|w|=1$. 
One can show that \eqref{eqdefeta} converges almost surely if $\s>1/2$ and $m\in\ZZ_{\geq0}$; see Lemma \ref{lemASConv}. 
The purpose of this paper is to study the value-distribution of $\te_{m}(\s+it)$ in $t$-aspect. 
Our work on the value-distribution of $\te_{m}$ is motivated in the following fact:

\begin{proposition} \label{GCGCS}
  The following statements are equivalent.
  \begin{itemize}
    \item[(A)] The Lindel\"of Hypothesis is true.
    \item[(B)] For any positive integer $m$, the estimate $\Re \te_{m}(\s + it) = o(\log{t})$ holds for any fixed $\s > \frac{1}{2}$ 
    as $t \rightarrow + \infty$.
    \item[(C)] There exists a positive integer $m$ such that
    the estimate $\Re \te_{m}(\s + it) = o(\log{t})$ holds for any fixed $\s > \frac{1}{2}$ as $t \rightarrow + \infty$. 
  \end{itemize}
\end{proposition}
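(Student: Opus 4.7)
The approach uses the integral representation
\begin{align*}
\te_{m}(\s+it) = \frac{1}{(m-1)!}\int_{\s}^{\infty} \log\zeta(\alpha+it)(\alpha-\s)^{m-1}\,d\alpha,
\end{align*}
obtained from the iterated definition by Fubini, together with the differential counterpart $\frac{d^{m}}{ds^{m}}\te_{m}(s) = (-1)^{m}\log\zeta(s)$. The implication (B) $\Rightarrow$ (C) is immediate, so only (A) $\Rightarrow$ (B) and (C) $\Rightarrow$ (A) demand work.

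For (A) $\Rightarrow$ (B): under LH we have $|\log\zeta(\alpha+it)| = o(\log t)$ for every fixed $\alpha > 1/2$; the uniform version on compact subintervals of $(1/2,\infty)$ needed here follows from Phragm\'en--Lindel\"of convexity together with the standard LH-consequence $S(t) = o(\log t)$ for the imaginary part. For $\alpha \geq 2$, the Dirichlet expansion gives $|\log\zeta(\alpha+it)| \ll 2^{-\alpha}$. Splitting the integral at $\alpha = 2$, the bounded piece contributes $o(\log t)\cdot O(1)$ and the tail contributes $O(1)$, whence $|\te_{m}(\s+it)| = o(\log t)$ and hence (B).

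For (C) $\Rightarrow$ (A), the plan is to promote the hypothesized real-part bound on $\te_{m}$ to a pointwise bound on $\log\zeta$ via complex-analytic differentiation and then invoke convexity. Fix $\s_{0} > 1/2$. On a disk centered at $\s_{0}+it$ and contained in $\{\s > 1/2\}$, apply Borel--Carath\'eodory to $\te_{m}$ on a slightly larger disk, taking the base point on the line $\s = 2$, where absolute convergence of the underlying Dirichlet series yields $|\te_{m}(2+it')| = O(1)$. Combined with the hypothesis $\Re\te_{m} = o(\log t)$ on the enclosing disk, this gives $|\te_{m}(s)| = o(\log t)$ on the inner disk. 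Cauchy's formula for the $m$-th derivative, together with $\log\zeta(s) = (-1)^{m}\te_{m}^{(m)}(s)$, then produces $|\log\zeta(\s_{0}+it)| = o(\log t)$ for every $\s_{0} > 1/2$. Since the Lindel\"of $\mu$-function is convex, nonnegative on $[1/2,1]$, and continuous on $\RR$, this forces $\mu(1/2) = \lim_{\s_{0}\downarrow 1/2}\mu(\s_{0}) = 0$, which is LH.

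The principal obstacle in (C) $\Rightarrow$ (A) is that $\log\zeta$, and therefore $\te_{m}$, is ramified at zeros of $\zeta$ inside the disk, jeopardizing the holomorphy on which Borel--Carath\'eodory and Cauchy rely. This must be handled by factoring out a local Hadamard-type product over those zeros, whose count in any disk of bounded radius about height $t$ is $O(\log t)$ by the Riemann--von Mangoldt formula, and bounding its contribution to $\te_{m}$ separately. The balancing act---choosing the Borel--Carath\'eodory radius large enough to convert real-part bounds into modulus bounds, yet small enough that the accumulated zero correction remains $o(\log t)$---is the delicate technical heart of the argument.
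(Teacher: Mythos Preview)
Your plan for (A) $\Rightarrow$ (B) is essentially the same as the paper's (both appeal to the standard LH machinery as in Titchmarsh, Chapter 13).

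For (C) $\Rightarrow$ (A) the approaches diverge, and your route has a genuine obstruction. Unconditionally there can be $\asymp \log t$ zeros of $\zeta$ with real part $>1/2$ and imaginary part within $O(1)$ of $t$, and each contributes $O(1)$ to the iterated integral of $\log\zeta$. Thus, after you subtract your local Hadamard piece, the real part of the holomorphic remainder is only $O(\log t)$, not $o(\log t)$: the hypothesis on $\Re\te_m$ gets swallowed by the zero correction, and Borel--Carath\'eodory followed by Cauchy then yields nothing better than the trivial $|\log\zeta|=O(\log t)$. To make the zero count $o(\log t)$ you would essentially need the density statement that is equivalent to LH itself, so the argument becomes circular. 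A secondary gap is that (C) gives the $o(\log t)$ bound only for each \emph{fixed} $\s$, whereas Borel--Carath\'eodory needs it uniformly on the bounding circle; this is repairable via the unconditional $\Re\te_{m-1}=O(\log t)$ (giving an $O(\log t)$ Lipschitz bound in $\s$), but you do not say so.

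The paper sidesteps both issues entirely. It invokes the already-known equivalence at the critical line (Lemma~\ref{EQLtecl}, i.e.\ \cite[Proposition~1]{II2020}), so that it suffices to show $\Re\te_{m_0}(\tfrac12+it)=o(\log t)$. This is done by writing, for $\s(\e)=\tfrac12+\e^2$,
\[
\Re\te_{m_0}(\tfrac12+it)=\Re\te_{m_0}(\s(\e)+it)-\int_{1/2}^{\s(\e)}\Re\te_{m_0-1}(\alpha+it)\,d\alpha,
\]
using the hypothesis at the single point $\s(\e)$ for the first term, and the \emph{unconditional} uniform bound $\Re\te_{m_0-1}(\alpha+it)=O(\log t)$ (Titchmarsh 9.6(B), 9.9(A)) to make the integral over the short interval $\le C\e^2\log t$. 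No complex differentiation, no disk, no zeros to excise.
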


This proposition is shown in the Appendix section.

\subsection{Statement of main results} \label{Sec_SMR}

The first main result presents a discrepancy estimate for the value-distribution of $\te_{m}(\sigma+it)$, that is, an upper bound for the quantity  
\begin{gather}\label{eq:DB}
D_{\sigma, m}(T)
=\sup_{\mca{R}}
\l|\PP_T(\te_{m}(\s+it)\in\mca{R})-\PP(\te_{m}(\s, X)\in\mca{R})\r|, 
\end{gather}
where $\mca{R}$ runs through all rectangle in $\CC$ with edges parallel to the axes.

\begin{theorem}\label{thmDB1}
Let $1/2 < \s < 1$ and $m\in\ZZ_{\geq0}$. 
Then we have 
\begin{align*}
D_{\sigma, m}(T)
\ll_{\s, m} 
\frac{1}{(\log{T})^{\s}(\log{\log{T}})^{m}}.
\end{align*}
\end{theorem}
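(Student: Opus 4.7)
The plan is to adapt the classical Bohr--Jessen/Beurling--Selberg strategy to the iterated setting, and ultimately to reduce the theorem to a comparison of two-dimensional characteristic functions via Sadikova's inequality. Define the joint characteristic functions
\begin{align*}
\Phi_{T}(u, v)
&= \frac{1}{T}\int_{T}^{2T}\exp\!\l(i u\,\Re\te_{m}(\s+it) + i v\,\Im\te_{m}(\s+it)\r)dt,\\
\Phi(u,v)
&= \EXP{\exp\!\l(i u\,\Re\te_{m}(\s,X) + i v\,\Im\te_{m}(\s,X)\r)}.
\end{align*}
Then Sadikova's two-dimensional inequality bounds $D_{\s,m}(T)$, for any cut-off $K>0$, by
\begin{align*}
D_{\s,m}(T)\ll\iint_{|u|,|v|\leq K}\frac{|\Phi_{T}(u,v)-\Phi(u,v)|}{\max(|u|,|v|,1)}du\,dv+\frac{1}{K}.
\end{align*}
Matching this against the target bound suggests the choice $K\asymp(\log T)^{\s}(\log\log T)^{m}$, so the main task is to show that the double integral above is $O(1/K)$.

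To analyse the integrand I would introduce a truncation at a parameter $Y=Y(T)$ (eventually a small power of $\log T$) and compare $\te_{m}(\s+it)$ with the partial Euler-type sum $\te_{m,Y}(\s+it)=\sum_{p\leq Y}\te_{m,p}(\s,p^{-it})$, and similarly $\te_{m}(\s,X)$ with $\te_{m,Y}(\s,X)=\sum_{p\leq Y}\te_{m,p}(\s,X(p))$. The tail contributions are handled by mean-value estimates: on the arithmetic side, a hybrid of the approximate functional equation and zero-density estimates (applied iteratively because of the $m$-fold integration in $\a$) yields bounds for $\EE_{T}|\te_{m}(\s+it)-\te_{m,Y}(\s+it)|^{2}$; on the probabilistic side, the independence of the $X(p)$'s together with Lemma~\ref{lemASConv} controls $\EXP{|\te_{m}(\s,X)-\te_{m,Y}(\s,X)|^{2}}$. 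The crucial quantitative gain is that, since $|\te_{m,p}(\s,w)|\ll p^{-\s}(\log p)^{-m}$ uniformly for $|w|=1$, the truncated variance $\sum_{p>Y}p^{-2\s}(\log p)^{-2m}$ acquires an extra factor $(\log Y)^{-2m}$ compared with the case $m=0$; this is precisely the source of the $(\log\log T)^{-m}$ improvement in the final bound, once $\log Y\asymp\log\log T$.

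For the truncated characteristic functions $\Phi_{T}^{Y}, \Phi^{Y}$, I would use the standard orthogonality estimate
\begin{align*}
\frac{1}{T}\int_{T}^{2T}\prod_{p\leq Y}p^{-it a_{p}}dt\ll \frac{1}{T}
\end{align*}
(valid when not all $a_{p}=0$, with implicit constant depending polynomially on $Y$) to expand both characteristic functions in a Taylor/Fourier series in the $p^{-it}$ and match them term by term. This yields $|\Phi_{T}^{Y}(u,v)-\Phi^{Y}(u,v)|\ll T^{-1/2}\cdot\exp(C Y)$, say, which is negligible once $Y$ is only a small power of $\log T$. The large-$|u|,|v|$ decay needed for the Sadikova integral comes from lower bounds on the density of $\te_{m,Y}(\s,X)$, derived from moment/Laplace-transform estimates of the type used by Hattori--Matsumoto and Lamzouri; the extra $(\log p)^{-m}$ factors give better decay of $\Phi^{Y}$ in the $m\geq 1$ case, consistent with the target.

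The principal difficulty will be to choose the three parameters $K$, $Y$, and the length of the approximate functional equation in a way that simultaneously (i) keeps the matching error of the truncated characteristic functions negligible on the whole square $|u|,|v|\leq K$, (ii) keeps both truncation errors below $1/K$ in total variation, and (iii) provides enough decay of the characteristic functions for $(u,v)$ near the boundary of the square to make the Sadikova integral convergent. I expect that the optimal balance forces $Y$ to be a sufficiently small power of $\log T$ and $K=c_{\s,m}(\log T)^{\s}(\log\log T)^{m}$; the $(\log\log T)^{m}$ savings emerge naturally from the $(\log p)^{-m}$ gains in the local factors $\te_{m,p}$, so that the method already used in \cite{LLR2019} for $m=0$ transfers, with some extra care in the iterated setting, to give the stated discrepancy bound.
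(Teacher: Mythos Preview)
Your overall plan---compare characteristic functions on a box of side $K\asymp(\log T)^{\sigma}(\log\log T)^{m}$, pass to a Dirichlet polynomial truncation at $Y=(\log T)^{B}$, and match moments---is close in spirit to what the paper does (the paper uses the Beurling--Selberg approximation of the indicator of a rectangle rather than Sadikova's inequality, and first reduces to rectangles of side $O(\log\log T)$, but these are cosmetic differences).

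There is, however, a genuine gap in the step where you compare $\Phi_{T}^{Y}$ and $\Phi^{Y}$. Your claimed error $T^{-1/2}\exp(CY)$ cannot coexist with the size of $Y$ you need elsewhere. The tail estimate $|\Phi_T-\Phi_T^{Y}|\ll |w|\,Y^{\frac12-\sigma}(\log Y)^{-m}$, inserted into the Sadikova integral over $|u|,|v|\le K$, forces $Y^{\sigma-\frac12}\gg K^{3}$, i.e.\ $B>3\sigma/(\sigma-\tfrac12)\ge 6$. But if you Taylor-expand $e^{iw\cdot P_{m,Y}}$ and use Lemma~\ref{CDR} term by term, the off-diagonal error is of order
\[
\frac{1}{T}\sum_{k,\ell\ge0}\frac{|z_1|^{k}|z_2|^{\ell}}{k!\,\ell!}(2^{m}Y)^{2(k+\ell)}
\asymp \frac{1}{T}\exp\!\big(c\,K\,Y^{2}\big),
\]
which with $K=(\log T)^{\sigma}$ and $Y=(\log T)^{B}$ is $\exp\big(c(\log T)^{\sigma+2B}-\log T\big)$; this is negligible only when $\sigma+2B<1$, i.e.\ $B<\tfrac12(1-\sigma)<\tfrac14$. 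The two constraints $B>6$ and $B<\tfrac14$ are incompatible, so the naive expansion does not close.

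The paper's remedy is Proposition~\ref{Key_Prop}: one first restricts to the set $A_T=\{t\in[T,2T]:|P_{m,Y}(\sigma+it)|\le V\}$ with $V\asymp(\log T)^{1-\sigma}(\log\log T)^{-(m+1)}$, whose complement has measure $\exp(-c\,V^{\frac{1}{1-\sigma}}(\log V)^{\frac{m+\sigma}{1-\sigma}})$ by Lemma~\ref{ESE}. On $A_T$ the Taylor series can be cut at order $Z\asymp V^{\frac{1}{1-\sigma}}(\log V)^{\frac{m+\sigma}{1-\sigma}}$, and the moment-matching error becomes $T^{-1}(VY)^{O(Z)}$, which \emph{is} negligible for $Y$ a fixed power of $\log T$ once the constants are chosen. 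This pointwise restriction is the missing idea in your proposal; without it the parameter balancing you describe in the final paragraph cannot be achieved. (Incidentally, the $(\log\log T)^{m}$ gain in $K$ does come, as you say, from the extra $(\log p)^{-m}$ in $\te_{m,p}$, but it enters through the moment bound Lemma~\ref{UBDRP} for $|P_{m,Y}|^{2k}$, which in turn governs the admissible size of $|z_1|,|z_2|$ in Proposition~\ref{Key_Prop}.)
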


This theorem is a generalization of \cite[Theorem 1]{LLR2019}.
Actually, the result in the case $m = 0$ coincides with theirs.

Next, we give the large deviation result
of $\Re e^{-i\a}\te_m(\s+it)$ with any angle $\a\in\RR$.

\begin{theorem}\label{thmLD}
Let $1/2 < \s < 1$, $\a \in \RR$, and $m \in \ZZ_{\geq 0}$. 
There exists a positive constant $a = a(\s, m)$ such that for large numbers $T$, $\tau$ 
with $\tau \leq a \frac{(\log{T})^{1-\s}}{(\log\log{T})^{m+1}}$ we have 
\begin{align*}
  \PP_T\l(\Re e^{-i\a}\te_m(\s+it)>\tau\r)
  =\PP\l(\Re e^{-i\a}\te_m(\s, X) > \tau\r)(1 + E), 
\end{align*}
where
\begin{align*}
  E \ll_{\s, m} \frac{\tau^{\frac{\s}{1-\s}}(\log{\tau})^{\frac{\s+m}{1-\s}}}{(\log{T})^{\s}(\log\log{T})^m}.
\end{align*}
\end{theorem}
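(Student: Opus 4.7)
The plan is to follow and adapt the approach of Lamzouri--Lester--Radziwi\l\l \ \cite{LLR2019} to the iterated setting. First I would truncate both sides at primes $p \leq y$, setting
\begin{align*}
\te_{m, y}(\s+it) = \sum_{p \leq y}\te_{m, p}(\s, p^{-it}), \qquad \te_{m, y}(\s, X) = \sum_{p \leq y}\te_{m, p}(\s, X(p)),
\end{align*}
where $y$ is to be chosen as a small power of $\log T$ depending on $\tau$. The tails $\te_{m}-\te_{m, y}$ will be controlled in the $t$-aspect by combining a zero-density estimate for $\zeta$ with an $L^{2}$ mean value bound, while the random tail $\te_{m}(\s, X) - \te_{m, y}(\s, X)$ will be bounded by orthogonality of the $X(p)$ together with Chebyshev. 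The goal is to show that either tail exceeds $\tau/(\log\log T)^{m+1}$ only on a set of measure negligible compared to the target probability, so that the problem reduces to comparing the laws of the two truncated sums.

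The second step is a moment comparison for the truncated Dirichlet polynomials. The identity
\begin{align*}
\frac{1}{T}\int_{T}^{2T}\prod_{j=1}^{r}p_{j}^{-ik_{j}t}\,dt = \mathbf{1}\l\{\prod_{j=1}^{r}p_{j}^{k_{j}}=1\r\} + O\l(\frac{1}{T\l|\log(p_{1}^{k_{1}}\cdots p_{r}^{k_{r}})\r|}\r)
\end{align*}
forces the joint moments of $\te_{m, y}(\s+it)$ over $t\in[T, 2T]$ to coincide with those of $\te_{m, y}(\s, X)$ as long as the supporting Dirichlet polynomial has length $\leq T^{1-c}$. Exponentiating and summing over the order of the moment, I expect to obtain, for some $\lambda_{0} \asymp (\log T)^{\s}(\log\log T)^{m}$ and all $|\lambda|\leq \lambda_{0}$,
\begin{align*}
\EE_{T}\l[e^{\lambda\Re e^{-i\a}\te_{m, y}(\s+it)}\r] = \EE\l[e^{\lambda\Re e^{-i\a}\te_{m, y}(\s, X)}\r] + O(T^{-c}).
\end{align*}

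The third step is a saddle-point large deviation analysis for the random model itself. From the local Laplace transforms $\EE[e^{\lambda \Re e^{-i\a}\te_{m, p}(\s, X(p))}]$, computed via Bessel-type integrals and summed over primes, I would derive a Hattori--Matsumoto-type asymptotic
\begin{align*}
\PP\l(\Re e^{-i\a}\te_{m}(\s, X) > \tau\r) = \exp\l(-B(\s, m)\l(\tau(\log\tau)^{\s+m}\r)^{\frac{1}{1-\s}}(1+o(1))\r)
\end{align*}
with an explicit $B(\s, m)$ generalizing \eqref{def_A(s)}. The saddle point $\lambda(\tau)$ will scale like $(\tau/(\log\tau)^{m})^{\s/(1-\s)}$, so the admissibility condition $\lambda(\tau) \leq \lambda_{0}$ translates precisely into the stated range of $\tau$. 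Feeding the moment error $O(T^{-c})$ through the saddle-point transfer to probabilities, and balancing it against the tail error from step one by optimizing $y$, should produce the claimed error $E$.

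The main obstacle I expect is in step three, where the weights $(\log p)^{-m}$ from \eqref{eqlocal} shift the location of the saddle point and are responsible for the $(\log\log T)^{m}$ denominators both in the admissible range of $\tau$ and in the error $E$. Extracting the correct power of $\log\log T$ requires a delicate two-scale analysis of the local Laplace transforms at large primes, where the $\Li_{m+1}$ contributions at different primes enter on subtly different logarithmic scales. Step one also has a genuine difficulty: unlike for $m=0$, the iterated integral $\te_{m}$ encodes contributions from zeros of $\zeta$ with real part close to $\s$, so the tail control must go via a conditional argument on a zero-density event in order for the exceptional set to remain negligible at the upper edge of the $\tau$-range.
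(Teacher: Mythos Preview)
Your overall architecture---truncate, match complex moments of the truncated polynomials, then run a saddle-point/large-deviation transfer---is the same as the paper's. Two points, however, need correction.

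First, the saddle-point scaling is wrong. From the asymptotics of the cumulant-generating function (the paper's Proposition~\ref{propCumu1}), one has
\[
f'_{\s,m}(\kappa;\a)\asymp \frac{\kappa^{\frac{1}{\s}-1}}{(\log\kappa)^{\frac{m}{\s}+1}},
\]
so the solution of $f'_{\s,m}(\kappa;\a)=\tau$ is
\[
\kappa\asymp \tau^{\frac{\s}{1-\s}}(\log\tau)^{\frac{m+\s}{1-\s}},
\]
not $(\tau/(\log\tau)^m)^{\s/(1-\s)}$. The $(\log p)^{-m}$ weights \emph{increase} the saddle point by a positive power of $\log\tau$, which is exactly what produces the factor $(\log\log T)^m$ in the admissible moment range and in the error $E$.

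Second, and more seriously, merely ``adapting LLR'' does not yield the stated range of $\tau$. In \cite{LLR2019} the passage from moment-generating functions to tail probabilities goes through the Granville--Soundararajan Mellin kernel $(e^{\lambda s}-1)/(\lambda s)$ iterated $N=[\log\log T]$ times; that iteration is what forces the extra $(\log\log T)$ in the exponent and limits the range to $\tau\ll (\log T)^{1-\s}(\log\log T)^{-1/\s}$ when $m=0$ (see the paper's Remark~(iv) in Section~\ref{Sec_SMR}). To reach $\tau\ll (\log T)^{1-\s}(\log\log T)^{-(m+1)}$ the paper replaces that step by a Beurling--Selberg approximation of indicator functions: Lemma~\ref{SPMPQ} compares the \emph{tilted} measures $P$ and $Q$ on finite intervals directly, and then \eqref{eqMine46}--\eqref{eqMine47} integrate these interval estimates against $e^{-x}$ to recover tail probabilities. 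Your Step~3 (``feeding the moment error through the saddle-point transfer'') is precisely the LLR mechanism and will lose the required factor; you need to insert the Beurling--Selberg comparison (or an equivalent device avoiding the $N$-fold smoothing) to get the claimed $E$ and range.
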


This theorem in the case $m = 0$ is an improvement of work due to Lamzouri, Lester, and Radziwi\l\l \ \cite{LLR2019}.
Actually, our estimate of $E$ is sharper than theirs.

\begin{rem}
  Our result is to concern the case $1/2 < \s < 1$.
  The authors further expect that we can prove a similar result even for $\s = 1/2$ when $m$ is positive.
  Indeed, Proposition \ref{Key_Prop} of this paper remains available in that case. 
  However, we should remark that Lemmas \ref{CREP} and \ref{CREP2} give only weak error terms for $\s=1/2$. 
  In the authors' opinion, an appropriate goal is to prove a large deviation result in the same range as \cite[Theorem 1]{II2020}, 
  but they can prove only a weaker result at present.
  This is an ongoing problem.
\end{rem}

\begin{rem}
  Some people often consider the other situation by using $\PP_{T}^{*}(f(t) \in A) = \frac{1}{T} \meas \set{t \in [0, T]}{f(t) \in A}$ 
  in place of $\PP_{T}$.
  Our results also recover the situation because we can show the same results for $\PP_{T}^{*}$
  by dividing the range as $[0, T] = [0, 2^{-N} T] \cup \bigcup_{n = 1}^{N}(2^{-n}T, 2^{-n+1}T]$ 
  with $N = \l[ \frac{\log{T}}{2 \log{2}} \r] + 1$.
  The result for $\PP_{T}^{*}$ is obtained by applying our theorems to each partition $(2^{-n}T, 2^{-n+1}T]$.
\end{rem}

\subsection{Outline of the paper}

The following sections are organized as follows: 
In Section \ref{Sec_FMGF}, we introduce a Dirichlet polynomial $P_{m,Y}(\s+it)$ and a random Dirichlet polynomial $P_{m,Y}(\s, X)$ as finite truncations of $\te_m(\s+it)$ and $\te_m(\s,X)$, respectively.  
Then we state an asymptotic formula which relates the moment generating function of $P_{m,Y}(\s+it)$ to that of $P_{m,Y}(\s, X)$. 
The formula plays a key role in the proofs of both Theorem \ref{thmDB1} and Theorem \ref{thmLD}. 
In Section \ref{secRDS}, we prove the existence and some properties of the probability density function $\DF_{\s,m}$ attached to $\te_{m}(\s, X)$.
These properties are used in the proof of Theorem \ref{thmDB1}.
In Section \ref{Sec_ED_ZvDP}, we give the fact that the set of $t \in [T, 2T]$ 
for which the gap between $\te_{m}(\s+it)$ and $P_{m,Y}(\s+it)$ is large has a satisfactorily small measure. 
This fact is also used in the proofs of both Theorem \ref{thmDB1} and Theorem \ref{thmLD}.
In Section \ref{Sec_PDB}, we give the proof of Theorem \ref{thmDB1}. 
The main tool is the Beurling-Selberg function that gives a smooth approximation of the indicator function $\bm{1}_{\mathcal{R}}$. 
Moreover, it plays an important role even in improving the large deviation result due to Lamzouri, Lester, and Radziwi{\l\l}  \cite{LLR2019}.
The contents in Sections \ref{Sec_PThmLD}, \ref{Sec_FPDF}, \ref{Sec_PLD} are preparations for the proof of Theorem \ref{thmLD}.
We in Section \ref{Sec_PThmLD} give asymptotic formulas for the cumulant-generating function of $\te_{m}(\s, X)$ and its derivatives.
In Section \ref{Sec_FPDF}, we define another probability density function $\nDF^{\tau}_{\s,m}$ by using $\DF_{\s,m}$ of Section \ref{secRDS}. 
Then we show some properties of $\nDF^{\tau}_{\s,m}$, which are used in the proof of Theorem \ref{thmLD}.
In Section \ref{Sec_PLD}, we complete the proof of Theorem \ref{thmLD}.

\subsection{Some remarks}
Many parts in our paper are based on the method of Lamzouri, Lester, and Radziwi\l\l \ \cite{LLR2019}.
In particular, Theorem \ref{thmDB1} is shown similarly to Theorem 1.1 in their paper.
On the other hand, there are important changes in the proof of Theorem \ref{thmLD} from their method. 

\begin{itemize}
\item[$(\mathrm{i})$]
According to \cite[Section 7]{LLR2019}, we denote by $M(z)$ the cumulant-generating function $M(z) = \log \EXP{|\zeta(\s, X)|^{z}}$ for $\s>1/2$. 
Proposition 7.5 of \cite{LLR2019} asserts certain estimates of $M^{(n)}(k)$ for $n=0,1,2$ and of $M^{(3)}(k+it)$, 
where $k>0$ is a large real number and $|t| \leq k$. 
The details of the proof were omitted in their paper, but the authors suspect that it is not so trivial for the following reasons. 
Firstly, it appears that the non-vanishing of the moment-generating function $E_p(z)=\EXP{\l( 1 -X(p) p^{-\s} \r)^{-z}}$ 
is not clear off the real axis when $p$ is small. 
It causes the difficulty of ensuring the regularity of $M(z) = \sum_{p} \log E_p(z)$. 
Moreover, the estimates of $M^{(n)}(k)$ for $n \geq2$ do not follow directly along the same line as \cite[Proposition 3.2]{L2011}. 
Indeed, we need the upper bound $\frac{d^n}{dk^n}\log E_p(k)\ll k^{1-n} p^{-\sigma}$ for small $p$, which is nontrivial for $n \geq2$. 
To overcome these difficulties, we in Section \ref{SSec_RPL} discuss some properties of the polylogarithmic functions. 
Then we give an approximation formula of $E_p(z)$ as Proposition \ref{thmF} to ensure its non-vanishing for small $p$. 
This formula yields the regularity of $\log{E}_p(z)$ in the domain $\Re z > C$ and $|\Im z| \leq \Re z$ with $C > 0$ sufficiently large. 
As a result, we prove an approximation formula of $M(z)$ in this domain. 
The formula plays an important role in the proof of Proposition \ref{propCumu1}, which corresponds to Proposition 7.5 in \cite{LLR2019}. 
In our paper, the estimates of $M^{(n)}(k)$ are justified by the argument using Cauchy's integral formula.

\item[$(\mathrm{ii})$]
We observe a rotation of any angle $\a$ by considering the function $\Re e^{-i\a} \te_{m}(\s+it)$ instead of $\log|\zeta(\sigma+it)|$. 
It makes a new difficulty when $\a \neq0$; see Lemmas \ref{lemZeros}--\ref{lemLB}. 
If we consider the case $\a = 0$ only, then the arguments will be simpler because we do not need these lemmas.

\item[$(\mathrm{iii})$]
In the proof of Proposition 7.1 in \cite{LLR2019}, they used the formula 
\begin{gather}\label{eq:2233}
M(s)
=M(\kappa)+it M'(\kappa)-\frac{t^2}{2}M''(\kappa)
+O\left(\frac{\kappa^{\frac{1}{\sigma}-3}}{\log{\kappa}} |t|^3\right)
\end{gather}
for $s=\kappa+it$ with $|t| \leq \kappa$ which follows from Proposition 7.5 in their paper. 
Combining it with the condition $M'(\kappa)=\tau$ and the formula
\begin{gather*}
\frac{e^{\lambda s}-1}{\lambda s^2}
=\frac{1}{\kappa} \left(1-i \frac{t}{\kappa} +O\left(\lambda \kappa +\frac{t^2}{\kappa^2}\right)\right), 
\end{gather*}
they concluded that the estimate 
\begin{align}\label{eq:2246}
&\EXP{|\zeta(\s, X)|^{s}} e^{-\tau s} \frac{e^{\lambda s}-1}{\lambda s^2} \\
&=\frac{1}{\kappa} \EXP{|\zeta(\s, X)|^{\kappa}} e^{-\tau \kappa} \exp\left(-\frac{t^2}{2}M''(\kappa)\right) \\
&\qquad \times
\left(1-i \frac{t}{\kappa} +O\left(\lambda \kappa +\frac{t^2}{\kappa^2}
+\frac{\kappa^{\frac{1}{\sigma}-3}}{\log{\kappa}} |t|^3 \right)\right)
\end{align}
holds for $s=\kappa+it$ with $|t| \leq \kappa$. 
For deducing this, it appears that the expected value $\EXP{|\zeta(\s, X)|^{s}}$ is calculated as 
\begin{align*}
\EXP{|\zeta(\s, X)|^{s}} 
&=\exp\left(M(\kappa)+it M'(\kappa)-\frac{t^2}{2}M''(\kappa)
+O\left(\frac{\kappa^{\frac{1}{\sigma}-3}}{\log{\kappa}} |t|^3\right)\right) \\
&=\EXP{|\zeta(\s, X)|^{\kappa}} e^{-it \tau} \exp\left(-\frac{t^2}{2}M''(\kappa)\right)
\left(1+O\left(\frac{\kappa^{\frac{1}{\sigma}-3}}{\log{\kappa}} |t|^3\right)\right) 
\end{align*}
by using \eqref{eq:2233} and $M'(\kappa)=\tau$. 
However, we remark that the asymptotic formula 
\begin{gather*}
\exp\left(O\left(\frac{\kappa^{\frac{1}{\sigma}-3}}{\log{\kappa}} |t|^3\right)\right)
=1+O\left(\frac{\kappa^{\frac{1}{\sigma}-3}}{\log{\kappa}} |t|^3\right)
\end{gather*}
holds only for $|t| \ll \kappa^{1-\frac{1}{3\sigma}} (\log{\kappa})^{1/3}$ 
since $e^{f(z)}=1+O(|f(z)|)$ is valid when $f$ is bounded or $f(z) \in i\RR$. 
Therefore, estimate \eqref{eq:2246} should be valid only in $|t| \ll \kappa^{1-\frac{1}{3\sigma}} (\log{\kappa})^{1/3}$ as well 
since the error term is bounded only for $|t| \ll \kappa^{1-\frac{1}{3\sigma}} (\log{\kappa})^{1/3}$,
and further we can confirm that the error term does not always belong to $i \RR$. 
For this reason, it appears that the proof of Proposition 7.1 in \cite{LLR2019} is incomplete. 
To complete the proof, we need an additional argument of $\EXP{|\zeta(\s, X)|^{\kappa+it}}$ 
for $\kappa^{1-\frac{1}{3\sigma}} (\log{\kappa})^{1/3} \ll |t| \leq \kappa$. 
It will be described as \eqref{lemBound1td} in Lemma \ref{lemBound1td} of our paper. 
Remark that a similar trouble appears in the proof of \cite[Lemma 3.2]{W2007}, 
but it can be corrected by using the first inequality of Lemma 2.4 in the same paper. 
The proof of \cite[Lemma 6.3]{LRW2008} can be also corrected similarly.

\item[$(\mathrm{iv})$]
Let $N=[\log\log{T}]$ and $\lambda =c(\sigma)(\log{T})^{-\sigma}$ with some constant $c(\sigma)>0$ as in the proof of Theorem 1.3 in \cite{LLR2019}. 
We focus on the formula
\begin{gather}\label{eq:2337}
\exp\left(O\left(\lambda N (\tau \log{\tau})^{\frac{\sigma}{1-\sigma}}\right)\right)
=1+O\left(\frac{(\tau \log{\tau})^{\frac{\sigma}{1-\sigma}} \log\log{T}}{(\log{T})^\sigma}\right)
\end{gather}
which was used in \cite[Equation (7.20)]{LLR2019}. 
Since $\lambda N (\tau \log{\tau})^{\frac{\sigma}{1-\sigma}}$ is bounded only 
in the range $C \leq \tau \ll (\log{T})^{1-\sigma}(\log\log{T})^{-\frac{1}{\sigma}}$ with $C > 0$ large, 
we see that \eqref{eq:2337} is valid in this range only. 
This observation deduces that the admissible range of $\tau$ in Theorem 1.3 of \cite{LLR2019} should be corrected as 
\begin{gather}\label{eq:2351}
C \leq \tau \leq b(\sigma) (\log{T})^{1-\sigma}(\log\log{T})^{-\frac{1}{\sigma}}
\end{gather}
with some constant $b(\sigma)>0$, which was originally stated as 
\begin{gather*}
C \leq \tau \leq b(\sigma) (\log{T})^{1-\sigma}(\log\log{T})^{1-\frac{1}{\sigma}}. 
\end{gather*}
We note that the term $\log\log{T}$ in \eqref{eq:2337} comes from an inequality of Granville--Soundararajan \cite{GS2003}; 
see Lemma \ref{lemGS} of our paper. 
The inequality is used to approximate the indicator function $\bm{1}_{(-\infty, c]}$ by using certain Mellin transforms. 
Our main idea is to use the Beurling--Selberg function in the treatment of indicator functions. 
It enables us to extend \eqref{eq:2351} to  
\begin{gather*}
C \leq \tau \leq b(\sigma) (\log{T})^{1-\sigma}(\log\log{T})^{-1} 
\end{gather*}
which is completely the same as the range of Lamzouri \cite[Theorem 1.1]{L2011}. 
Additionally, we cannot also follow the first equation of (7.20) in \cite{LLR2019} from their reason.
To avoid this issue, we use some properties of probability density functions which are shown in Section \ref{Sec_FPDF}.

\item[$(\mathrm{v})$]
We use the Beurling--Selberg function to deduce our theorems. 
We note that the same result can be shown by using the Berry--Esseen inequality instead of the Beurling--Selberg function. 
See the forthcoming paper by the third author \cite{M2021} for a related work. 
\end{itemize}

\section{\textbf{An approximation formula for moment generating functions of Dirichlet polynomials}}  \label{Sec_FMGF}

Denote by $\mca{A}$ the set of pairs $(\s, m)$ such that
\begin{align}	\label{def_set_A}
\mca{A}=\set{(\s, m)}{\text{$\s>1/2$ and $m\in\ZZ_{\geq0}$}}\cup\set{(\s, m)}{\text{$\s\geq1/2$ and $m\in\ZZ_{\geq1}$}}.
\end{align}
For $1/2 \leq \s < 1$, we put 
\[
\tau(\s) 
= 
\begin{cases}
\s &\text{if $1/2 < \s < 1$}, \\ 
0 &\text{if $\s = 1/2$}. 
\end{cases}
\]
Then we define
\begin{align*}
P_{m, Y}(\s + it)
&= \sum_{p^{k} \leq Y}\frac{p^{-ikt}}{k p^{k\s}(\log{p^{k}})^{m}}, \\
P_{m, Y}(\s, X)
&= \sum_{p^{k} \leq Y}\frac{X(p)^{k}}{k p^{k\s} (\log{p^{k}})^{m}}
\end{align*}
for $(\s, m)\in\mca{A}$ and $Y\geq3$. 
The following result relating the moment-generating functions of $P_{m, Y}(\s + it)$ and $P_{m, Y}(\s, X)$ 
is useful to study the value-distribution of $\te_m(\sigma+it)$. 

\begin{proposition}	\label{Key_Prop}
Let $(\s, m) \in \mca{A}$ with $\sigma<1$. 
Let $T, V>0$ be large. 
Denote by $A_T=A_T(V, Y; \sigma, m)$ the set
\begin{align}	\label{def_A_Y}
A_T = \set{t \in [T, 2T]}{|P_{m, Y}(\s + it)| \leq V}
\end{align}
for $Y\geq3$. 
If we further suppose that 
\begin{gather}\label{eqY}
3 \leq Y \leq \exp\left(\frac{\log{T}}{V^{\frac{1}{1-\s}}(\log{V})^{\frac{m+\tau(\s)}{1-\s}}}\right) 
\end{gather}
holds, then there exist positive constants $b_{1} = b_{1}(\s, m)$ and $b_{2} = b_{2}(\s, m)$ such that for any complex numbers $z_1, z_2$ with $|z_{1}|, |z_{2}| \leq b_{1} V^{\frac{\s}{1-\s}}(\log{V})^{\frac{m+\tau(\s)}{1-\s}}$ we have 
\begin{align}
&\frac{1}{T}\int_{A_T}\exp\l( z_{1}P_{m, Y}(\s+it) + z_{2}\ol{P_{m, Y}(\s+it)} \r) \,dt\\
&= \EXP{\exp\l( z_{1}P_{m, Y}(\s, X) + z_{2}\ol{P_{m, Y}(\s, X)} \r)} +E, 
\end{align}
where $E$ is estimated as
\begin{align*}
E
&\ll\frac{1}{T}
\l( V^{\frac{\s}{1-\s}}(\log{V})^{\frac{m + \tau(\s)}{1-\s}}Y \r)^{V^{\frac{1}{1-\s}}(\log{V})^{\frac{m+\tau(\s)}{1-\s}}}\\
&\qquad+ \exp\l( -b_{2} V^{\frac{1}{1-\s}}(\log{V})^{\frac{m+\tau(\s)}{1-\s}} \r).
\end{align*}
\end{proposition}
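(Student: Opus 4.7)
The plan is to approximate $\exp(z_1 P_{m,Y}(\sigma+it) + z_2 \overline{P_{m,Y}(\sigma+it)})$ by a Taylor polynomial and then compare term by term against the random model via Dirichlet-polynomial moment matching. Set $N = \lfloor c_0 V^{1/(1-\sigma)}(\log V)^{(m+\tau(\sigma))/(1-\sigma)} \rfloor$ for a small constant $c_0>0$ to be chosen, and write
\[
\exp\bigl(z_1 P_{m,Y}(\sigma+it) + z_2 \overline{P_{m,Y}(\sigma+it)}\bigr) = Q_N(t) + R_N(t),
\]
where $Q_N$ is the $N$-th Taylor polynomial of $\exp$ in $z_1 P_{m,Y} + z_2 \overline{P_{m,Y}}$. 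On $A_T$ we have $|z_j|V \leq b_1 N$ by the hypothesis on $|z_j|$, so Stirling's formula forces $|R_N(t)| \ll e^{-b_2 N}$ uniformly in $t \in A_T$ provided $b_1$ (equivalently $c_0$) is small. The analogous truncation on the random side is controlled via the subgaussian moment bounds $\EXP{|P_{m,Y}(\sigma, X)|^{2n}} \ll (Cn)^n$, valid for $(\sigma, m) \in \mca{A}$ since $P_{m,Y}(\sigma,X) = \sum_{p} \pi_p$ is a sum of independent uniformly bounded random variables whose variances are summable. Matters reduce to showing
\[
\frac{1}{T}\int_{A_T} Q_N(t)\,dt = \EXP{Q_N(\sigma,X)} + E.
\]

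For this reduced problem I would decompose $\int_{A_T} = \int_{T}^{2T} - \int_{[T,2T]\setminus A_T}$. Expanding $Q_N$ as a polynomial in $P_{m,Y}^j \overline{P_{m,Y}}^k$ and writing each monomial as a Dirichlet polynomial of the form $\sum_{a,b}d_j(a)\overline{d_k(b)}(a/b)^{-it}$ with $a \leq Y^j$, $b \leq Y^k$, one finds that the diagonal $a = b$ reproduces the expectation exactly. Indeed, $\EXP{X(p)^a \overline{X(p)^b}} = \delta_{a,b}$ combined with the unique factorization of integers gives $\sum_a d_j(a)\overline{d_k(a)} = \EXP{P_{m,Y}(\sigma,X)^j \overline{P_{m,Y}(\sigma,X)}^k}$. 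For the off-diagonal $a \neq b$, one bounds
\[
\Bigl| \frac{1}{T}\int_T^{2T}(a/b)^{-it}\,dt \Bigr| \leq \frac{2}{T|\log(a/b)|} \leq \frac{2 \max(a,b)}{T} \leq \frac{2Y^N}{T},
\]
and combines it with $\sum_a |d_j(a)| \leq R(Y)^j$, where $R(Y) = \sum_{p^l \leq Y} \frac{1}{l p^{l\sigma}(\log p^l)^m}$. Rearranging by Stirling's formula over the weights $|z_1|^j|z_2|^k/(j!k!)$ with $j+k \leq N$ produces the first term of the claimed $E$, the numerology matching because $|z_j| \leq b_1 N/V$.

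To handle the excised contribution $\frac{1}{T}\int_{[T,2T] \setminus A_T} Q_N(t)\,dt$, I would reapply the moment-matching scheme to the $2L$-th moment $\frac{1}{T}\int_T^{2T}|P_{m,Y}(\sigma+it)|^{2L}dt$ with $L$ of order $N$, obtaining $\PP_T(|P_{m,Y}(\sigma+it)| > V) \ll e^{-b_3 N}$ via Chebyshev's inequality and the subgaussian bound on the random model. Combining with a Cauchy--Schwarz estimate on $|Q_N|$, whose mean square is itself controlled through the same moment method, absorbs this piece into the $\exp(-b_2 N)$ term of $E$.

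I expect the main technical obstacle to be the careful bookkeeping across the four error sources---the two Taylor-tail errors, the off-diagonal contribution, and the $A_T^c$ contribution---so that they collapse into the specific form of $E$ stated in the proposition. The hypothesis \eqref{eqY} on $Y$ is precisely calibrated to make $Y^N \leq T$, rendering the crude off-diagonal bound $Y^N/T$ acceptable once multiplied by the coefficient masses. An additional subtle point is that on the random side the admissible range of $|z_j|$ is much larger than $V$, so the Taylor truncation must genuinely exploit the factorial decay rather than a pointwise bound on the sub-Gaussian tail.
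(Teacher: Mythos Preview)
Your overall strategy matches the paper's proof closely: Taylor truncation at order $N \asymp V^{1/(1-\sigma)}(\log V)^{(m+\tau(\sigma))/(1-\sigma)}$, the split $\int_{A_T} = \int_T^{2T} - \int_{A_T^c}$, diagonal/off-diagonal moment matching (the paper's Lemma~\ref{CDR}), Chebyshev for $\meas(A_T^c)$, and Cauchy--Schwarz to control the excised integral. The off-diagonal bookkeeping you describe is exactly what the paper does.

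There is, however, a genuine gap in the moment input. The subgaussian bound $\EXP{|P_{m,Y}(\sigma,X)|^{2n}} \ll (Cn)^n$ that you invoke is \emph{too weak} for both the random-side Taylor tail and the Chebyshev/Cauchy--Schwarz steps. Concretely, the random tail is
\[
\sum_{n>N}\frac{(|z_1|+|z_2|)^n}{n!}\,\EXP{|P_{m,Y}|^{n}}
\ll \sum_{n>N}\Bigl(\frac{C'\,|z_j|}{n^{1/2}}\Bigr)^n,
\]
and with $|z_j|\asymp V^{\sigma/(1-\sigma)}(\log V)^{\cdots}$ and $N\asymp V^{1/(1-\sigma)}(\log V)^{\cdots}$ the ratio $|z_j|/N^{1/2}\asymp V^{(\sigma-1/2)/(1-\sigma)}\to\infty$ for $\sigma>1/2$, so this series is not small. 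The same deficiency appears in your Chebyshev step: the subgaussian bound yields only $\PP_T(|P_{m,Y}|>V)\ll e^{-cV^2}$, whereas you need $e^{-cN}=e^{-cV^{1/(1-\sigma)}(\log V)^{\cdots}}$, and $1/(1-\sigma)>2$. Your closing remark that one should ``genuinely exploit the factorial decay'' does not rescue this; the factorial alone is already being used and is insufficient.

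What is actually required is the sharper bound
\[
\EXP{|P_{m,Y}(\sigma,X)|^{2n}}\ll \Bigl(\frac{Cn^{1-\sigma}}{(\log 2n)^{m+\tau(\sigma)}}\Bigr)^{2n},
\]
and the analogous $t$-average bound (the paper's Lemma~\ref{UBDRP}). This is \emph{not} a generic subgaussian estimate: it comes from splitting the prime sum at $p\approx n\log 2n$, bounding the small-prime part trivially and the large-prime part by $n!\bigl(\sum_{p>n\log 2n}p^{-2\sigma}(\log p)^{-2m}\bigr)^n$, and invoking the prime number theorem. With this input, $|z_j|/\bigl(n^{\sigma}(\log n)^{m+\tau(\sigma)}\bigr)$ is bounded at $n=N$, and all four of your error terms collapse into the stated $E$ exactly as in the paper.
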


\subsection{\textbf{Preliminaries for Proposition \ref{Key_Prop}}}\label{sec3}

\begin{lemma}	\label{CDR}
Let $(\s, m) \in \mca{A}$ with $\sigma<1$.  
Let $T \geq 5$ and $Y \geq 3$.
For any $k, \ell\in\ZZ_{\geq1}$, we have
\begin{align*}
&\frac{1}{T}\int_{T}^{2T}(P_{m, Y}(\s + it))^{k}\l( \ol{P_{m, Y}(\s + it)} \r)^{\ell}dt\\
&= \EXP{ \l(P_{m, Y}(\s, X)\r)^{k} \l( \ol{P_{m, Y}(\s, X)} \r)^{\ell} } 
+ O\l( \frac{(2^{m} Y)^{2(k+\ell)}}{T} \r). 
\end{align*}
Here, the implicit constant is absolute.
\end{lemma}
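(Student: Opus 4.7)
The plan is to expand both sides as finite sums over ordered tuples of prime powers, and then compare term by term. Writing $c(p,k) = 1/(kp^{k\sigma}(\log p^{k})^{m})$, we have
\begin{align*}
P_{m,Y}(\sigma+it) = \sum_{p^{k}\leq Y} c(p,k)\, (p^{k})^{-it},
\qquad
P_{m,Y}(\sigma,X) = \sum_{p^{k}\leq Y} c(p,k)\, X(p)^{k}.
\end{align*}
Expanding the $k$-th power of the first and the $\ell$-th power of its conjugate, the integrand becomes
\begin{align*}
\sum \prod_{i=1}^{k} c(p_{i},a_{i}) \prod_{j=1}^{\ell} c(q_{j},b_{j}) \left(\frac{Q}{P}\right)^{it},
\qquad P=\prod_{i} p_{i}^{a_{i}},\ Q=\prod_{j} q_{j}^{b_{j}},
\end{align*}
where the sum runs over all ordered $(k+\ell)$-tuples of prime powers $\leq Y$. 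An analogous expansion holds for $(P_{m,Y}(\sigma,X))^{k}(\overline{P_{m,Y}(\sigma,X)})^{\ell}$, with the exponential replaced by $\prod_{i} X(p_{i})^{a_{i}} \prod_{j} \overline{X(q_{j})}^{b_{j}}$.

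Next I would isolate the diagonal contribution. On the integral side, $\int_{T}^{2T}(Q/P)^{it}\,dt$ equals $T$ when $P=Q$, and when $P\neq Q$ it is bounded by $2/|\log(Q/P)|\leq 2\max(P,Q)$; the last inequality uses that $P,Q$ are positive integers with $|Q-P|\geq 1$, so the mean value theorem yields $|\log(Q/P)|\geq 1/\max(P,Q)$. On the random side, the independence of $\{X(p)\}_{p}$ together with $\mathbb{E}[X(p)^{n}\overline{X(p)}^{n'}]=\delta_{n,n'}$ makes the expectation of $\prod X(p_{i})^{a_{i}}\prod\overline{X(q_{j})}^{b_{j}}$ equal to $\mathbf{1}[P=Q]$. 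Hence the diagonal contributions from the two sides are identical, producing the main term in the stated identity.

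The remaining task is to control the off-diagonal error
\begin{align*}
\frac{2}{T}\sum_{P\neq Q}\prod_{i} c(p_{i},a_{i})\prod_{j} c(q_{j},b_{j})\max(P,Q).
\end{align*}
Using $p^{k\sigma}\geq 1$, $k\geq 1$ and $\log p^{k}\geq \log 2$, one has $c(p,k)\leq (\log 2)^{-m}\leq 2^{m}$. The number of prime powers $\leq Y$ is at most $Y$, so there are at most $Y^{k+\ell}$ tuples, and $\max(P,Q)\leq Y^{\max(k,\ell)}\leq Y^{k+\ell}$. Multiplying these bounds yields the error $\ll (2^{m})^{k+\ell} Y^{2(k+\ell)}/T\leq (2^{m}Y)^{2(k+\ell)}/T$, which is the claim. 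No step here is genuinely deep; the only bit that requires care is the elementary separation estimate $|\log(Q/P)|\geq 1/\max(P,Q)$ for distinct positive integers, since it is what makes the off-diagonal integrals individually bounded.
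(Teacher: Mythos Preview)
Your proof is correct and follows essentially the same approach as the paper: expand both sides over tuples of prime powers, match the diagonal $P=Q$ to the expectation via the orthogonality relation $\EXP{X(p)^{n}\overline{X(p)}^{n'}}=\delta_{n,n'}$, and bound the off-diagonal via $|\log(Q/P)|^{-1}\ll Y^{k+\ell}$ together with the crude estimate $\sum_{p^a\le Y} c(p,a)\le Y(\log 2)^{-m}$. The only cosmetic difference is that the paper factors the off-diagonal bound as $Y^{k+\ell}\bigl(\sum c(p,a)\bigr)^{k+\ell}$ while you bound each coefficient by $2^m$ and count tuples separately; the resulting bound is the same.
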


\begin{proof}
We see that
\begin{align*}
&\int_{T}^{2T}(P_{m, Y}(\s + it))^{k}\l( \ol{P_{m, Y}(\s + it)} \r)^{\ell}dt \\
&=
\sum_{\substack{p_{1}^{a_1}, \dots, p_{k}^{a_{k}} \leq Y \\ q_1^{b_1}, \dots, q_{\ell}^{b_{\ell}} \leq Y}}
\frac{1}
{a_1 p_{1}^{a_{1}\s} (\log{p_{1}^{a_{k}}})^{m} \cdots a_{k} p_{k}^{a_{k}\s} (\log{p_{k}^{a_{k}}})^{m}}\\
&\qquad\qquad\qquad
\times\frac{1}
{b_{1} q_{1}^{b_{1}\s} (\log{q_{1}^{b_{1}}})^{m} \cdots b_{\ell}q_{\ell}^{b_{\ell}\s} (\log{q_{\ell}^{b_{\ell}}})^{m}}
\int_{T}^{2T} \l(\frac{ q_{1}^{b_1} \cdots q_{\ell}^{b_{\ell}} }{ p_{1}^{a_1} \cdots p_{k}^{a_{k}}} \r)^{it}dt\\
&= S_{1} + S_{2}, 
\end{align*}
where $S_1$ is the sum over the terms with $p_{1}^{a_1} \cdots p_{k}^{a_{k}} = q_{1}^{b_1} \cdots q_{\ell}^{b_{\ell}}$, 
and $S_2$ is the sum over the other terms. 
Here, for $p_{1}^{a_1} \cdots p_{k}^{a_{k}} \neq q_{1}^{b_1} \cdots q_{\ell}^{b_{\ell}}$, 
it holds that
\begin{align*}
\int_{T}^{2T}\l(\frac{q_{1}^{b_1} \cdots q_{\ell}^{b_{\ell}}}{p_{1}^{a_1} \cdots p_{k}^{a_{k}}}\r)^{it}dt
\ll \frac{1}{\l|\log\l( q_{1}^{b_{1}} \cdots q_{\ell}^{b_{\ell}} / p_{1}^{a_{1}} \cdots p_{k}^{a_{k}} \r)\r|}
\ll Y^{k+\ell}, 
\end{align*}
and hence we have
\begin{align*}
S_{2} 
\ll Y^{k+\ell}
\l( \sum_{p^{a} \leq Y}\frac{1}{a p^{a\s} (\log{p^{a}})^{m}} \r)^{k+\ell}
\leq Y^{k + \ell} \l( \sum_{n \leq Y}\frac{1}{(\log{2})^{m}} \r)^{k + \ell}
\leq (2^{m} Y)^{2(k+\ell)}.
\end{align*}
We can also write
\begin{align}\label{eq_S1}
&\frac{1}{T}S_{1}= 
\sum_{\substack{p_{1}^{a_1}, \dots, p_{k}^{a_{k}} \leq Y \\ 
q_{1}^{b_1}, \dots, q_{\ell}^{b_{\ell}} \leq Y \\ 
p_{1}^{a_1} \cdots p_{k}^{a_{k}} = q_{1}^{b_1} \cdots q_{\ell}^{b_{\ell}}}}
\frac{1}
{a_1 p_{1}^{a_{1}\s} (\log{p_{1}^{a_{k}}})^{m} \cdots a_{k} p_{k}^{a_{k}\s} (\log{p_{k}^{a_{k}}})^{m}} \\
&\qquad\qquad\qquad\qquad\qquad
\times\frac{1}
{b_{1} q_{1}^{b_{1}\s} (\log{q_{1}^{b_1}})^{m} \cdots b_{\ell}q_{\ell}^{b_{\ell}\s} (\log{q_{\ell}^{b_{\ell}}})^{m}}.
\end{align}
On the other hand, it holds that
\begin{align*}
&\EXP{ \l(P_{m, Y}(\s, X)\r)^{k} \l( \ol{P_{m, Y}(\s, X)} \r)^{\ell} } \\
&=
\sum_{\substack{p_{1}^{a_1}, \dots, p_{k}^{a_{k}} \leq Y \\ q_1^{b_1}, \dots, q_{\ell}^{b_{\ell}} \leq Y}}
\frac{1}
{a_1 p_{1}^{a_{1}\s} (\log{p_{1}^{a_{k}}})^{m} \cdots a_{k} p_{k}^{a_{k}\s} (\log{p_{k}^{a_{k}}})^{m}} \\
&\qquad\qquad\qquad
\times\frac{1}
{b_{1} q_{1}^{b_{1}\s} (\log{q_{1}^{b_{1}}})^{m} \cdots b_{\ell}q_{\ell}^{b_{\ell}\s} (\log{q_{\ell}^{b_{\ell}}})^{m}}
\EXP{ \frac{X(p_1)^{a_{1}} \cdots X(p_{k})^{a_{k}}}{ X(q_1)^{b_1} \cdots X(q_{\ell})^{b_{\ell}} } }. 
\end{align*}
Since the random variables $X(p)$ are independent and uniformly distributed on the unit circle in $\CC$, 
we have
\begin{align}	\label{BEUIR}
\EXP{ \frac{X(p_1)^{a_{1}} \cdots X(p_{k})^{a_{k}}}{ X(q_1)^{b_1} \cdots X(q_{\ell})^{b_{\ell}} } }
=
\begin{cases}
1	&	\text{if $p_{1}^{a_1} \cdots p_{k}^{a_{k}} = q_{1}^{b_1} \cdots q_{\ell}^{b_\ell}$}, \\
0	&	\text{if $p_{1}^{a_1} \cdots p_{k}^{a_{k}} \neq q_{1}^{b_1} \cdots q_{\ell}^{b_\ell}$}. 
\end{cases}
\end{align}
Therefore, we deduce from \eqref{eq_S1} the equality
\begin{align*}
\EXP{ \l(P_{m, Y}(\s, X)\r)^{k} \l( \ol{P_{m, Y}(\s, X)} \r)^{\ell} } 
= T S_{1}, 
\end{align*}
which completes the proof of the lemma.
\end{proof}

\begin{lemma}	\label{VSLS}
Let $\{a(p)\}_{p\in\mca{P}}$ be any complex sequence. 
Let $T \geq 5$ and $Y \geq 3$. 
For $k \in \ZZ_{\geq 1}$ with $Y^{k} \leq T(\log{T})^{-1}$, we have
\begin{align}
\label{VSLS1}
\frac{1}{T}\int_{T}^{2T}\bigg| \sum_{p \leq Y}a(p)p^{-it} \bigg|^{2k}dt
\ll k! \l( \sum_{p \leq Y}|a(p)|^2 \r)^{k}.
\end{align}
Additionally, for any $k \in \ZZ_{\geq 1}$, we have
\begin{align}
\label{VSLS2}
\EXP{ \bigg| \sum_{p \leq Y}a(p) X(p) \bigg|^{2k} }
\leq k! \l( \sum_{p \leq Y}|a(p)|^2 \r)^{k}.
\end{align}
\end{lemma}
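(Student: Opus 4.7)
The plan is to treat \eqref{VSLS2} as a combinatorial identity and then deduce \eqref{VSLS1} from it via a standard mean value theorem for Dirichlet polynomials.

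First I would expand $|\sum_{p \leq Y} a(p) X(p)|^{2k}$ as a $2k$-fold sum indexed by tuples $(p_1, \ldots, p_k, q_1, \ldots, q_k)$ of primes $\leq Y$ and take the expectation term by term. By the orthogonality relation \eqref{BEUIR} already recorded in the proof of Lemma \ref{CDR}, only tuples with $p_1 \cdots p_k = q_1 \cdots q_k$ contribute. Unique factorization then forces $(q_1, \ldots, q_k)$ to be a permutation of $(p_1, \ldots, p_k)$ as a multiset, so for each fixed $(p_1, \ldots, p_k)$ there are at most $k!$ admissible $(q_1, \ldots, q_k)$. Bounding each surviving term by $|a(p_1)|^2 \cdots |a(p_k)|^2$ and summing yields \eqref{VSLS2}.

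For \eqref{VSLS1}, I would set $D(t) = \sum_{p \leq Y} a(p) p^{-it}$ and collect terms in the $k$-th power as $D(t)^k = \sum_{n \leq Y^k} b(n) n^{-it}$, where $b(n) = \sum_{p_1 \cdots p_k = n} a(p_1) \cdots a(p_k)$ is the sum over ordered $k$-tuples of primes $\leq Y$. Then $|D(t)|^{2k}$ is the squared modulus of a Dirichlet polynomial of length at most $Y^k$, and the Montgomery--Vaughan mean value theorem delivers
\[
\int_{T}^{2T} |D(t)|^{2k} \, dt \ll (T + Y^k) \sum_{n \leq Y^k} |b(n)|^2.
\]
A direct expansion identifies $\sum_n |b(n)|^2$ with $\EXP{|\sum_p a(p) X(p)|^{2k}}$: both quantities enumerate pairs of ordered $k$-tuples subject to $p_1 \cdots p_k = q_1 \cdots q_k$ weighted by $\prod_i a(p_i) \overline{a(q_j)}$. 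Plugging in the bound from \eqref{VSLS2} and invoking the hypothesis $Y^k \leq T(\log T)^{-1}$ then absorbs the $Y^k$ factor into the main term upon dividing by $T$.

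The only delicate point, which is really bookkeeping, is ensuring that the Montgomery--Vaughan error $Y^k \sum_n |b(n)|^2$ is dominated by the target bound; this is precisely the role of the length hypothesis $Y^k \leq T(\log T)^{-1}$. Without it, the off-diagonal contribution from the expansion of $|D(t)|^{2k}$ would compete with the diagonal on the same order of magnitude, and the $2k$-th moment estimate would degrade accordingly.
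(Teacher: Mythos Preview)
Your argument is correct. For \eqref{VSLS2} you do exactly what the paper does: expand, invoke the orthogonality relation \eqref{BEUIR}, observe that $p_1\cdots p_k=q_1\cdots q_k$ forces $(q_j)$ to be a permutation of $(p_i)$, and bound the number of such permutations by $k!$.

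For \eqref{VSLS1} the paper simply cites \cite[Lemma~8]{II2019} and says nothing further, whereas you give a self-contained derivation: write $D(t)^k=\sum_{n\leq Y^k}b(n)n^{-it}$, apply the Montgomery--Vaughan mean value theorem, and identify $\sum_n|b(n)|^2$ with the expectation in \eqref{VSLS2}. This is a clean and standard route; the identification of $\sum_n|b(n)|^2$ with $\EXP{|\sum a(p)X(p)|^{2k}}$ is exact (both are the same diagonal sum over $p_1\cdots p_k=q_1\cdots q_k$), so you get \eqref{VSLS1} directly from \eqref{VSLS2} rather than as an independent input. Your approach has the advantage of being explicit and of showing that the two inequalities are really the same combinatorial fact, one on the probability side and one on the $t$-average side; the paper's approach keeps the proof minimal by outsourcing to the literature.
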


\begin{proof}
The former assertion directly follows from \cite[Lemma 8]{II2019}. 
We prove the latter assertion. By equation \eqref{BEUIR}, we see that
\begin{align*}
&\EXP{ \bigg| \sum_{p \leq Y}a(p) X(p) \bigg|^{2k} }\\
&= \sum_{\substack{p_1, \dots, p_{k} \leq Y \\ q_{1}, \dots, q_{k} \leq Y}}
a(p_1) \cdots a(p_{k}) \ol{a(q_{1}) \cdots a(q_{k})}
\EXP{\frac{X(p_1) \cdots X(p_{k}) }{ X(q_{1}) \cdots X(q_{k})} }\\
&\leq k!\sum_{p_{1}, \dots, p_{k} \leq Y}|a(p_{1})|^2 \cdots |a(p_{k})|^{2}
= k! \l( \sum_{p \leq Y}|a(p)|^2 \r)^{k}, 
\end{align*}
which completes the proof of the lemma.
\end{proof}

\begin{lemma}	\label{UBDRP}
Let $(\s, m) \in \mathcal{A}$ with $\sigma<1$.
Let $T>0$ be large and $Y \geq 3$.
There exists a positive constant $C = C(\s, m)$ such that
\begin{align}	\label{UBDRP1}
\frac{1}{T}\int_{T}^{2T} | P_{m, Y}(\s + it) |^{2k}dt
\ll \l( \frac{C k^{1-\s}}{(\log{2k})^{m + \tau(\s)}} \r)^{2k}
\end{align}
for $k \in \ZZ_{\geq 1}$ with $Y^{k} \leq T(\log{T})^{-1}$. 
Additionally, we have 
\begin{align}	\label{UBDRP2}
\EXP{|P_{m, Y}(\s, X)|^{2k}}
\ll \l( \frac{C k^{1-\s}}{(\log{2k})^{m + \tau(\s)}} \r)^{2k}
\end{align}
for any $k \in \ZZ_{\geq 1}$. 
\end{lemma}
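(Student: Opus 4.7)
My plan is to split $P_{m, Y}(\s+it)$ into a small-prime part (handled by a sup-norm triangle inequality) and a large-prime tail (handled by the mean-square bound \eqref{VSLS1}), then balance the two contributions by an optimal choice of the splitting threshold. Under $(\s, m) \in \mca{A}$ with $\s < 1$, a preliminary observation is that the contribution of prime powers $p^{a}$ with $a \geq 2$ is uniformly bounded by $O_{\s, m}(1)$, since the sum $\sum_{p, a \geq 2} 1/(a p^{a\s} (\log p^{a})^{m})$ converges thanks to the pair condition defining $\mca{A}$ (in particular $\sum_{p} 1/(p (\log p)^{m})$ converges for $m \geq 1$, which handles the boundary $\s = 1/2$). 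Hence I may reduce to the prime ($a = 1$) part modulo an additive $O_{\s, m}(1)$ error.

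For a threshold $Z \in [3, Y]$ to be chosen later, I would write the prime part as $A(t) + B(t)$, where $A(t) = \sum_{p \leq Z} p^{-it}/(p^{\s}(\log p)^{m})$ and $B(t)$ denotes the tail over $Z < p \leq Y$. Partial summation with the prime number theorem gives $|A(t)| \ll_{\s, m} Z^{1-\s}/(\log Z)^{m+1}$ in $L^{\infty}$. For $B(t)$, I would invoke \eqref{VSLS1} (valid under the hypothesis $Y^{k} \leq T/\log T$) together with the tail estimate $\sum_{Z < p \leq Y} 1/(p^{2\s}(\log p)^{2m}) \ll Z^{1-2\s}/(\log Z)^{2m+1}$ for $\s > 1/2$ (with the analogue $\ll 1/(\log Z)^{2m}$ at $\s = 1/2$, $m \geq 1$) to deduce
\begin{align*}
\frac{1}{T}\int_{T}^{2T} |B(t)|^{2k}\, dt \ll_{\s, m} k! \biggl(\frac{Z^{1-2\s}}{(\log Z)^{2m+1}}\biggr)^{k}.
\end{align*}
Combining via $|P_{m, Y}|^{2k} \ll 2^{2k}(|A|^{2k} + |B|^{2k} + 1)$ and taking $Z = k \log k$ for $\s > 1/2$ (resp.\ $Z = k (\log k)^{2}$ at the boundary $\s = 1/2$), a short calculation using Stirling's formula collapses both contributions to $(C k^{1-\s}/(\log 2k)^{m + \tau(\s)})^{2k}$, which yields \eqref{UBDRP1}. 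The degenerate case $Y < Z$ is handled directly by the $L^{\infty}$ bound $Y^{1-\s}/(\log Y)^{m+1}$, which already lies within the target since $Z \mapsto Z^{1-\s}/(\log Z)^{m+1}$ is eventually increasing.

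The random-variable bound \eqref{UBDRP2} follows from the identical decomposition with \eqref{VSLS2} in place of \eqref{VSLS1}; the $L^{\infty}$ estimate on $A(\s, X)$ holds pointwise on the probability space and carries no restriction on $k$, so the same choice of $Z$ works verbatim. The only delicate point is to verify that the choice $Z = k (\log k)^{c(\s, m)}$ simultaneously balances both contributions so that each reduces to $k^{1-\s}/(\log k)^{m+\tau(\s)}$ — a routine matching of powers of $k$ and $\log k$ rather than a conceptual obstacle, the optimal $Z$ arising from equating the sup bound with the Stirling-processed moment bound. Thus the essential creative input is just the identification of the splitting parameter; the rest is a mechanical combination of elementary estimates and Lemma \ref{VSLS}.
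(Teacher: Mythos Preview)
Your proposal is correct and follows essentially the same route as the paper: reduce to the prime part modulo $O_{\s,m}(1)$, split at the threshold $Z \asymp k\log k$, bound the short sum pointwise via the prime number theorem and the tail in mean via Lemma~\ref{VSLS}, and treat the degenerate case $Y<Z$ by the trivial $L^\infty$ bound. The only cosmetic differences are the precise constant in the $|a+b+c|^{2k}$ inequality (the paper uses $9^k$ rather than $2^{2k}$) and your separate choice $Z=k(\log k)^2$ at $\s=1/2$, which is unnecessary since $Z=k\log 2k$ already gives the required $(\log 2k)^{m}$ saving there (indeed the sup-norm part yields the slightly stronger exponent $m+\s$).
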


\begin{proof}
Suppose that the inequality ${k}\log{2k}<Y$ holds. 
We can write
\begin{align*}
P_{m, Y}(\s + it)
&= \sum_{p \leq Y}\frac{1}{p^{\s + it}(\log{p})^{m}}
+ \sum_{\substack{p^{k} \leq Y \\ k \geq 2 }}\frac{1}{k p^{k(\s + it)} (\log{p^{k}})^{m}}\\
&= \sum_{p \leq Y}\frac{1}{p^{\s + it}(\log{p})^{m}} + O_{\s}(1).
\end{align*}
We then see that
\begin{align*}
&\int_{T}^{2T}|P_{m, Y}(\s + it)|^{2k}dt \\
&\leq9^{k}\Bigg( \int_{T}^{2T}\bigg| \sum_{p \leq k \log{2k}}\frac{1}{p^{\s+it}(\log{p})^{m}} \bigg|^{2k}dt \\
&\qquad\qquad
+ \int_{T}^{2T}\bigg| \sum_{k\log{2k} < p \leq Y}\frac{1}{p^{\s+it}(\log{p})^{m}} \bigg|^{2k}dt + C^{2k} T \Bigg), 
\end{align*}
where $C = C(\s)$ is a positive constant. 
By Lemma \ref{VSLS} and the prime number theorem, it holds that
\begin{align*}
\frac{1}{T}\int_{T}^{2T}\bigg| \sum_{k \log 2k < p \leq Y}\frac{1}{p^{\s+it}(\log{p})^{m}} \bigg|^{2k}dt
&\ll k! \l( \sum_{k \log 2k < p \leq Y}\frac{1}{p^{2\s}(\log{p})^{2m}} \r)^{k}\\
&\ll \l(  \frac{ C_1 k^{1 - \sigma} }{ (\log 2k)^{m + \tau(\s)}} \r)^{2k}, 
\end{align*}
where $C_{1}$ is a positive constant which may depend on $\s$ and $m$.
Furthermore, by the prime number theorem it follows that
\begin{align}\label{eq_FS}
\frac{1}{T}\int_{T}^{2T}\bigg| \sum_{p \leq k\log{2k}}\frac{1}{p^{\s+it}(\log{p})^{m}} \bigg|^{2k}dt
&\ll \l( \sum_{p \leq k\log{2k}}\frac{1}{p^{\s}(\log{p})^{m}} \r)^{2k}\\
&\ll \l(\frac{C_{2} k^{1 - \sigma}}{(\log{2k})^{m+\s}} \r)^{2k}, 
\end{align}
where $C_2$ is also a positive constant which may depend on $\s$ and $m$.
From the above estimates, we obtain estimate \eqref{UBDRP1}.
If the inequality $Y\leq{k}\log{2k}$ holds, then we have 
\begin{align*}
P_{m, Y}(\s + it)
\ll_{\s} \sum_{p\leq{Y}}\frac{1}{p^\sigma(\log{p})^m}
\ll_{m} \frac{k^{1 - \sigma}}{(\log{2k})^{m+\s}} 
\end{align*}
by the prime number theorem. 
Hence estimate \eqref{UBDRP1} follows in this case. 
Similarly, we can prove estimate \eqref{UBDRP2}.
\end{proof}

\begin{lemma}	\label{ESE}
Let $(\s, m) \in \mca{A}$ with $\sigma<1$. 
Let $T, V>0$ be large. 
There exists a small positive constant $c_{1} = c_{1}(\s, m)$ such that
\begin{align}	\label{ESE1}
\PP_T\l( |P_{m, Y}(\s+it)| > V \r)
\leq \exp\l( -c_{1} V^{\frac{1}{1-\s}}(\log{V})^{\frac{m+\tau(\s)}{1-\s}} \r),
\end{align}
if $Y \geq 3$ satisfies \eqref{eqY}. 
Additionally, we have
\begin{align}	\label{ESE2}
\PP\l( |P_{m, Y}(\s, X)| > V \r)
\leq \exp\l( -c_{1} V^{\frac{1}{1-\s}}(\log{V})^{\frac{m+\tau(\s)}{1-\s}} \r)
\end{align}
for any $Y \geq 3$. 
\end{lemma}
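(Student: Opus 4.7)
The plan is a standard Chebyshev/Markov-type large deviation argument: the moment bounds in Lemma \ref{UBDRP} are sharp enough that optimizing over the $2k$-th moment immediately yields the desired subexponential tail. I would treat \eqref{ESE1} and \eqref{ESE2} in parallel, the only difference being that the random analogue does not require the admissibility constraint $Y^k\leq T(\log T)^{-1}$.

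First I would apply Markov's inequality to $|P_{m,Y}|^{2k}$ and invoke Lemma \ref{UBDRP}: for every integer $k\geq1$ with $Y^k\leq T(\log T)^{-1}$,
\begin{align*}
\PP_T(|P_{m,Y}(\sigma+it)|>V)
\leq V^{-2k}\cdot\frac{1}{T}\int_T^{2T}|P_{m,Y}(\sigma+it)|^{2k}\,dt
\ll\left(\frac{Ck^{1-\sigma}}{V(\log 2k)^{m+\tau(\sigma)}}\right)^{2k},
\end{align*}
where $C=C(\sigma,m)$ is the constant of \eqref{UBDRP1}; an identical bound for $\PP(|P_{m,Y}(\sigma,X)|>V)$ follows from \eqref{UBDRP2} with no restriction on $k$. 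The remaining task is to choose $k$ so that the base of the $2k$-th power is bounded by a fixed constant strictly less than $1$, while $k$ itself is as large as possible.

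A short optimization shows that the natural choice is
\begin{align*}
k=k(V)=\bigl\lfloor c_0\,V^{\frac{1}{1-\sigma}}(\log V)^{\frac{m+\tau(\sigma)}{1-\sigma}}\bigr\rfloor
\end{align*}
with a sufficiently small constant $c_0=c_0(\sigma,m)>0$. Indeed, since $\log 2k=\frac{1}{1-\sigma}\log V+O(\log\log V)$ under this choice, one has
\begin{align*}
\frac{Ck^{1-\sigma}}{V(\log 2k)^{m+\tau(\sigma)}}
=C\,(1-\sigma)^{m+\tau(\sigma)}\,c_0^{1-\sigma}\,(1+o(1))
\end{align*}
as $V\to\infty$. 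Taking $c_0$ small enough forces this ratio to be at most $e^{-2}$ for $V$ large, so the Markov bound becomes $e^{-4k}\leq\exp\bigl(-c_1 V^{1/(1-\sigma)}(\log V)^{(m+\tau(\sigma))/(1-\sigma)}\bigr)$ for a suitable $c_1=c_1(\sigma,m)>0$, which is precisely \eqref{ESE1}–\eqref{ESE2}.

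The only non-cosmetic point to verify is that this $k$ is admissible in \eqref{UBDRP1}, i.e.\ that $Y^k\leq T(\log T)^{-1}$, and this is exactly what hypothesis \eqref{eqY} is designed to deliver: it yields
\begin{align*}
Y^k\leq\exp\left(\frac{k\log T}{V^{\frac{1}{1-\sigma}}(\log V)^{\frac{m+\tau(\sigma)}{1-\sigma}}}\right)\leq T^{c_0},
\end{align*}
which is $\leq T/\log T$ provided $c_0<1$ and $T$ is large. Thus the main (mild) obstacle is simply the bookkeeping of the optimization: one must balance the factor $k^{1-\sigma}$ against the logarithmic factor $(\log 2k)^{m+\tau(\sigma)}$ so that the powers of $V$ and $\log V$ in the base match, after which the required tail bound falls out immediately. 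For \eqref{ESE2} the same $k$ works without the admissibility check, so the random case is a direct specialization.
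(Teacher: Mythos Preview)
Your proposal is correct and matches the paper's own proof essentially line for line: Markov's inequality with the $2k$-th moment from Lemma~\ref{UBDRP}, then the choice $k=\lfloor c\,V^{1/(1-\sigma)}(\log V)^{(m+\tau(\sigma))/(1-\sigma)}\rfloor$, with the admissibility $Y^k\leq T(\log T)^{-1}$ supplied by \eqref{eqY}. The only difference is cosmetic---you spell out the asymptotic of the ratio $Ck^{1-\sigma}/(V(\log 2k)^{m+\tau(\sigma)})$ a bit more explicitly than the paper does.
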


\begin{proof}
Let $k \in \ZZ_{\geq 1}$ with $Y^{k} \leq T(\log{T})^{-1}$.
Then we derive from \eqref{UBDRP1} the estimate
\begin{align*}
\PP_T\l( |P_{m, Y}(\s+it)| > V \r)
&\leq \frac{1}{V^{2k}} \frac{1}{T} \int_{T}^{2T} |P_{m, Y}(\s+it)|^{2k}dt\\
&\ll \frac{1}{V^{2k}} \l( \frac{C(\s, m) k^{1-\s}}{(\log{2k})^{m + \tau(\s)}} \r)^{2k}. 
\end{align*}
Hence, choosing $k = \l[c_1V^{\frac{1}{1-\s}}(\log{V})^{\frac{m+\tau(\s)}{1-\s}}\r]$ 
with $c_1$ a suitably small constant depending on $\s$ and $m$, 
we obtain inequality \eqref{ESE1}. 
Note that the inequality $Y^{k} \leq T(\log{T})^{-1}$ holds for this $k$ under condition \eqref{eqY}.
Similarly, by using \eqref{UBDRP2}, we see that
\begin{align*}
\PP\l( |P_{m, Y}(\s, X)| > V \r)
&\leq \frac{1}{V^{2k}}\EXP{|P_{m, Y}(\s, X)|^{2k}}\\
&\ll \frac{1}{V^{2k}} \l( \frac{C(\s, m) k^{1-\s}}{(\log{2k})^{m + \tau(\s)}} \r)^{2k}
\end{align*}
holds for any $Y \geq 3$.
Thus again choosing $k = \l[c_1V^{\frac{1}{1-\s}}(\log{V})^{\frac{m+\tau(\s)}{1-\s}}\r]$, 
we obtain inequality \eqref{ESE2}.
\end{proof}

\subsection{\textbf{Proof of Proposition \ref{Key_Prop}}}

\def\c{c_3}

\begin{proof}[Proof of Proposition \ref{Key_Prop}
]
Let $(\s, m) \in \mca{A}$ be fixed.
Let $T, V$ be large.
Let $3 \leq Y \leq \exp\l( \log{T} / V^{\frac{1}{1 - \s}}(\log{V})^{\frac{m + \tau(\s)}{1 - \s}} \r)$.
By the definition of the set $A_T=A_{T}(V, Y; \sigma, m)$, we find that
\begin{align}	\label{KPEQ1}
&\int_{A_{T}}\exp\l( z_{1}P_{m, Y}(\s+it) + z_{2} \ol{P_{m, Y}(\s+it)} \r)dt\\
&= \sum_{\substack{k + \ell \leq Z \\ k, \ell \in \ZZ_{\geq 0}}}\frac{z_{1}^{k} z_{2}^{\ell}}{k! \ell!}
\int_{A_{T}}P_{m, Y}(\s+it)^{k} \ol{P_{m, Y}(\s+it)}^{\ell}dt 
+ O\l( T\sum_{\substack{k + \ell > Z \\ k, \ell \in \ZZ_{\geq 0}}}\frac{|z_1|^{k} |z_{2}|^{\ell}}{k! \ell!}V^{k+\ell} \r), 
\end{align}
where $Z = \c V^{\frac{1}{1-\s}}(\log{V})^{\frac{m+\tau(\s)}{1-\s}}$, and $\c$ is a small constant decided later.
For $|z_{1}|, |z_{2}| \leq 2^{-1}e^{-2} \c V^{\frac{\s}{1-\s}}(\log{V})^{\frac{m+\tau(\s)}{1-\s}} = 2^{-1} e^{-2} V^{-1} Z$, 
it holds that
\begin{align}
\sum_{\substack{k + \ell > Z \\ k, \ell \in \ZZ_{\geq 0}}}\frac{|z_1|^{k} |z_{2}|^{\ell}}{k! l!}V^{k+\ell}
&\leq \sum_{n > Z}\frac{1}{n!}\sum_{k = 0}^{n}\binom{n}{k}
\l( 2^{-1}e^{-2} Z \r)^{n}
= \sum_{n > Z}\frac{1}{n!}
\l( e^{-2} Z  \r)^{n}\\
\label{KPE1}
&\ll \sum_{n > Z}e^{-n}
\ll \exp\l( - \c V^{\frac{1}{1-\s}}(\log{V})^{\frac{m+\tau(\s)}{1-\s}}  \r)
\end{align}
by the Stirling formula.
On the other hand, we can write
\begin{align*}
&\int_{A_{T}}P_{m, Y}(\s+it)^{k} \ol{P_{m, Y}(\s+it)}^{\ell}dt \\
&=\int_{T}^{2T}P_{m, Y}(\s+it)^{k} \ol{P_{m, Y}(\s+it)}^{\ell}dt
-\int_{[T, 2T] \setminus A_{T}}P_{m, Y}(\s+it)^{k} \ol{P_{m, Y}(\s+it)}^{\ell}dt.
\end{align*}
Recall that $Y^{k+\ell}\leq{T}(\log{T})^{-1}$ is satisfied for $k+\ell \leq Z$ if $c_3$ is sufficiently small. 
By using the Cauchy-Schwarz inequality and throughout estimates \eqref{UBDRP1}, \eqref{ESE1}, we have
\begin{align*}
&\frac{1}{T}\int_{[T, 2T] \setminus A_{T}}P_{m, Y}(\s+it)^{k} \ol{P_{m, Y}(\s+it)}^{\ell}dt\\
&\leq \l( \frac{1}{T} \meas([T, 2T] \setminus A_{T}) \r)^{1/2}
\l( \frac{1}{T} \int_{T}^{2T} \l| P_{m, Y}(\s+it) \r|^{2(k+\ell)}dt \r)^{1/2}\\
&\ll \exp\l( -\frac{c_1}{2}V^{\frac{1}{1-\s}}(\log{V})^{\frac{m+\tau(\s)}{1- \s}} \r)
\l( \frac{C(m, \s) (k+\ell)^{1-\s}}{(\log{2(k+\ell)})^{m + \tau(\s)}} \r)^{k+\ell}\\
&\ll \exp\l( -\frac{c_1}{2}V^{\frac{1}{1-\s}}(\log{V})^{\frac{m+\tau(\s)}{1- \s}} \r)
\l( \frac{C(m, \s) Z^{1-\s}}{(\log{2Z})^{m + \tau(\s)}} \r)^{k+\ell}
\end{align*}
for $1\leq k+\ell \leq Z$.
We note that the same is true for $k=\ell=0$ by estimate \eqref{ESE1}. 
Therefore, we have
\begin{align*}
&\frac{1}{T}\sum_{\substack{k + \ell \leq Z \\ k, \ell \in \ZZ_{\geq 0}}}
\frac{z_{1}^{k} z_{2}^{\ell}}{k! \ell!}\int_{[T, 2T] \setminus A_{T}}P_{m, Y}(\s+it)^{k} \ol{P_{m, Y}(\s+it)}^{\ell}dt\\
&\ll \exp\l( -\frac{c_1}{2}V^{\frac{1}{1-\s}}(\log{V})^{\frac{m+\tau(\s)}{1-\s}} \r)
\sum_{\substack{0 \leq k + \ell \leq Z \\ k, \ell \in \ZZ_{\geq 0}}} 
\frac{1}{ k! \ell!} \l( 2^{-1} C' \c V^{\frac{1}{1-\s}}(\log{V})^{\frac{m+\tau(\s)}{1-\s}} \r)^{k +\ell}\\
&\ll \exp\l( -\frac{c_1}{2}V^{\frac{1}{1-\s}}(\log{V})^{\frac{m+\tau(\s)}{1-\s}} \r) 
\exp\l( C' \c V^{\frac{1}{1-\s}}(\log{V})^{\frac{m+\tau(\s)}{1-\s}} \r), 
\end{align*}
where $C' > C(m, \s) + 1$ is a positive constant not depending on $V$ and $\c$.
Hence, choosing $\c = c_1/4C'$, we obtain
\begin{align}
&\frac{1}{T}\sum_{\substack{k + \ell \leq Z \\ k, \ell \in \ZZ_{\geq 0}}}
\frac{z_{1}^{k} z_{2}^{\ell}}{k! \ell!}\int_{[T, 2T] \setminus A_{T}}P_{m, Y}(\s+it)^{k} \ol{P_{m, Y}(\s+it)}^{\ell}dt\\ \label{KPE2}
&\ll \exp\l( -\frac{c_1}{4}V^{\frac{1}{1-\s}}(\log{V})^{\frac{m+\tau(\s)}{1-\s}} \r)
\ll \exp\l( - \c V^{\frac{1}{1-\s}}(\log{V})^{\frac{m+\tau(\s)}{1-\s}} \r).
\end{align}
Thus, by \eqref{KPEQ1}, \eqref{KPE1}, and \eqref{KPE2}, we have
\begin{align}	\label{KPEQ2}
&\frac{1}{T}\int_{A_{T}}\exp\l( z_{1}P_{m, Y}(\s+it) + z_{2} \ol{P_{m, Y}(\s+it)} \r)dt\\
&= \frac{1}{T}\sum_{\substack{k + \ell \leq Z \\ k, \ell \in \ZZ_{\geq 0}}}\frac{z_{1}^{k} z_{2}^{\ell}}{k! \ell!}
\int_{T}^{2T}P_{m, Y}(\s+it)^{k} \ol{P_{m, Y}(\s+it)}^{\ell}dt \\
&\qquad +O\l( \exp\l( - \c V^{\frac{1}{1-\s}}(\log{V})^{\frac{m+\tau(\s)}{1-\s}} \r) \r). 
\end{align}
Applying Lemma \ref{CDR} to the integral on the right-hand side, we see that its first term is equal to
\begin{align*}
&\sum_{\substack{k + \ell \leq Z \\ k, \ell \in \ZZ_{\geq 0}}}
\EXP{ \frac{z_{1}^{k} z_{2}^{\ell}}{k! \ell!}P_{m, Y}(\s, X)^{k} \ol{P_{m, Y}(\s, X)}^{\ell} }
+ O\l( \frac{1}{T}|z_{1}|^{Z} |z_{2}|^{Z}\l(2^{m} Y\r)^{2Z} \r)\\
&= \EXP{ \exp\l( z_1P_{m, Y}(\s, X) + z_{2}\ol{P_{m, Y}(\s, X)} \r) }\\
&\qquad-\sum_{\substack{k + \ell > Z \\ k, \ell \in \ZZ_{\geq 0}}}\frac{z_{1}^{k} z_{2}^{\ell}}{k! \ell!}
\EXP{ P_{m, Y}(\s, X)^{k} \ol{P_{m, Y}(\s, X)}^{\ell} }\\
&\qquad+ O\l( \frac{1}{T}\l(V^{\frac{\s}{1-\s}}(\log{V})^{\frac{m+\tau(\s)}{1-\s}} Y\r)^{2Z} \r).
\end{align*}
Here, we chose $c_{3}$ such that $c_{3} \leq 2^{-m}$.
By using the Cauchy--Schwarz inequality and estimate \eqref{UBDRP2}, we obtain
\begin{align*}
\EXP{ P_{m, Y}(\s, X)^{k} \ol{P_{m, Y}(\s, X)}^{\ell} }
\ll \l( \frac{C(\sigma, m)(k + \ell)^{1-\s}}{(\log2(k+\ell))^{m + \tau(\s)}} \r)^{k+\ell}.
\end{align*}
By using this estimate and the Stirling formula, we find that
\begin{align*}
&\sum_{\substack{k + \ell > Z \\ k, \ell \in \ZZ_{\geq 0}}}\frac{z_{1}^{k} z_{2}^{\ell}}{k! \ell!}
\EXP{ P_{m, Y}(\s, X)^{k} \ol{P_{m, Y}(\s, X)}^{\ell} }\\
&\ll \sum_{n > Z}\frac{1}{n!}\sum_{k = 0}^{n}
\begin{pmatrix}
n\\
k
\end{pmatrix}
\l(\frac{C(m, \s) \max\{ |z_{1}| , |z_{2}| \} n^{1-\s} }{(\log{n})^{m+\tau(\s)}}\r)^{n}\\
&\ll \sum_{n > Z}
\l(\frac{2 e C(m, \s) \max\{ |z_{1}| , |z_{2}| \} }{ n^{\sigma} (\log{n})^{m+\tau(\s)}}\r)^{n}\\
&\ll \sum_{n > Z}
\l(\frac{c_3 C' V^{\frac{\s}{1-\s}}(\log{V})^{\frac{m+\s}{1-\s}}}{e n^{\s}(\log{n})^{m+\tau(\s)}}\r)^{n}
\ll \exp\l(- \c V^{\frac{1}{1-\s}}(\log{V})^{\frac{m+\tau(\s)}{1-\s}}\r).
\end{align*}
Hence, the left-hand side of \eqref{KPEQ2} is equal to
\begin{multline*}
\EXP{ \exp\l( z_1P_{m, Y}(\s, X) + z_{2}\ol{P_{m, Y}(\s, X)} \r) } \\
+ O\l( \frac{1}{T}\l(V^{\frac{\s}{1-\s}}(\log{V})^{\frac{m+\tau(\s)}{1-\s}} Y\r)^{2Z}
+ \exp\l( - \c V^{\frac{1}{1-\s}}(\log{V})^{\frac{m+\tau(\s)}{1-\s}} \r) \r), 
\end{multline*}
which completes the proof of Proposition \ref{Key_Prop}.
\end{proof}

\section{\textbf{Probability density function for $\te_m(\s, X)$}}\label{secRDS}
To begin with, we check that $\te_m(\s, X)$ defined by \eqref{eqdefeta} presents a $\mathbb{C}$-valued random variable if $(\s, m) \in \mca{A}$. 
For this, it is sufficient to show the following. 

\begin{lemma}\label{lemASConv}
Let $(\s, m) \in \mca{A}$. 
Then \eqref{eqdefeta} converges almost surely. 
\end{lemma}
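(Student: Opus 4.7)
The plan is to apply the classical Kolmogorov theorem for sums of independent mean-zero random variables with summable variances. First, I would verify that each summand $\te_{m,p}(\s, X(p))$ has mean zero. Since $X(p)$ is uniformly distributed on the unit circle, $\EE[X(p)^k] = 0$ for every $k \geq 1$; expanding the series for $\te_{m,p}$ and exchanging expectation and summation (justified by the absolute convergence of the defining series, as $|p^{-\s} X(p)| < 1$ almost surely), this gives $\EE[\te_{m,p}(\s, X(p))] = 0$. Since the family $\{X(p)\}_{p \in \mca{P}}$ is independent, so is $\{\te_{m,p}(\s, X(p))\}_{p}$.

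Next, I would estimate the second moments. Using $\EE[X(p)^{k}\overline{X(p)^{\ell}}] = \delta_{k,\ell}$, expansion of $|\te_{m,p}(\s, X(p))|^2$ yields
\begin{align*}
\EE[|\te_{m,p}(\s, X(p))|^2]
= \sum_{k=1}^{\infty}\frac{1}{k^{2} p^{2k\s}(\log{p^{k}})^{2m}}
\ll_{m} \frac{1}{p^{2\s}(\log{p})^{2m}},
\end{align*}
where the bound comes from isolating the $k=1$ term and bounding the tail by a geometric series (noting the $k=1$ term dominates whenever $\s \geq 1/2$).

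Then I would check that $\sum_{p}\EE[|\te_{m,p}(\s, X(p))|^2] < \infty$ in both cases of the definition \eqref{def_set_A}. If $\s > 1/2$, convergence is immediate from $\sum_{p} p^{-2\s} < \infty$. If $\s = 1/2$ and $m \geq 1$, partial summation together with Mertens' theorem $\sum_{p \leq x}p^{-1} = \log\log{x} + O(1)$ reduces the sum to a convergent integral $\int_{2}^{\infty}\frac{dt}{t(\log{t})^{2m+1}}$, convergent because $2m+1 \geq 3 > 1$.

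Finally, splitting $\te_{m,p}(\s, X(p))$ into its real and imaginary parts and applying Kolmogorov's convergence theorem (a sum of independent real-valued mean-zero random variables with summable variances converges almost surely) to each part, I conclude that $\sum_{p}\te_{m,p}(\s, X(p))$ converges almost surely, as required. No step here looks to be a significant obstacle; the only point requiring slight care is the borderline case $\s = 1/2$, where the convergence of $\sum_{p} p^{-1}(\log{p})^{-2m}$ relies essentially on $m \geq 1$, which is exactly the condition singled out in \eqref{def_set_A}.
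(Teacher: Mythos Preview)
Your proposal is correct and follows essentially the same approach as the paper: verify that each $\te_{m,p}(\s, X(p))$ has mean zero, bound the second moments by $p^{-2\s}(\log p)^{-2m}$, check summability over primes for $(\s,m)\in\mca{A}$, and invoke Kolmogorov's theorem (the paper cites \cite[Theorem 1.4.2]{S2011}). The only cosmetic differences are that the paper appeals to the prime number theorem rather than Mertens' for the summability check and does not explicitly split into real and imaginary parts.
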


\begin{proof}
For any prime number $p$, we have
\begin{gather*}
\EXP{\te_{m, p}(\s, X(p))}
= \sum_{k = 1}^\infty \frac{ \EXP{X(p)^k} }{kp^{k \sigma}(\log{p}^k)^m}
=0, 
\end{gather*}
where the change of the sum and expectation is justified by Fubini's theorem.
By the prime number theorem, we further obtain
\begin{align} \label{plemASConv1}
\sum_p \EXP{\l|\te_{m, p}(\s, X(p))\r|^2}
\ll \sum_p \frac{1}{p^{2\s} (\log p)^{2m}}
< \infty
\end{align}
since $(\s, m) \in \mca{A}$. 
Thus the assertion follows from \cite[Theorem 1.4.2]{S2011}. 
\end{proof}

Then, we prove that $\te_m(\s, X)$ has a continuous probability density function. 

\begin{proposition}\label{propPDF}
Let $(\s, m)\in\mca{A}$. 
Then there exists a continuous function $\DF_{\s, m}:\CC\to\RR_{\geq0}$ such that 
\begin{gather}\label{eqMine12}
\PP(\te_{m}(\s, X)\in{A})
=\int_{A} \DF_{\s, m}(z)\, |dz|
\end{gather}
for any Borel set $A$ on $\CC$, where $|dz|=(2\pi)^{-1}dxdy$ for $z=x+iy$ with $x,y \in \mathbb{R}$.  
\end{proposition}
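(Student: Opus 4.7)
The plan is to construct $\DF_{\s,m}$ as the Fourier inverse of the characteristic function of $\te_m(\s,X)$, viewed as an $\RR^2$-valued random variable via the identification $\CC\ni z \leftrightarrow (\Re z,\Im z)$. For $w \in \CC$, set
\begin{equation*}
\Phi_{\s,m}(w) = \EXP{\exp\bigl(i\Re(\bar w\,\te_m(\s,X))\bigr)}.
\end{equation*}
By independence of $\{X(p)\}_{p\in\mca{P}}$ and Lemma \ref{lemASConv}, this factors as $\Phi_{\s,m}(w) = \prod_p \Phi_p(w)$ with $\Phi_p(w) = \EXP{\exp(i\Re(\bar w\,\te_{m,p}(\s,X(p))))}$. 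Once I establish $\Phi_{\s,m}\in L^1(\CC)$, Fourier inversion produces a bounded continuous function
\begin{equation*}
\DF_{\s,m}(z) = \int_{\CC} \Phi_{\s,m}(w)\,e^{-i\Re(\bar w z)}\,|dw|,
\end{equation*}
and L\'evy's inversion theorem (applied to the two real coordinates and combined with the paper's normalisation $|dw|=(2\pi)^{-1}dw_1 dw_2$) identifies $\DF_{\s,m}$ with the density of the law of $\te_m(\s,X)$. This gives \eqref{eqMine12} and automatically forces $\DF_{\s,m}\geq 0$; continuity follows from dominated convergence applied to the defining integral.

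The heart of the proof is therefore the decay of $\Phi_{\s,m}$. Parametrising $X(p)=e^{i\theta}$ with $\theta$ uniformly distributed on $[0,2\pi]$, the $k=1$ term in \eqref{eqlocal} contributes exactly the Bessel factor $J_0\bigl(|w|/(p^\s(\log p)^m)\bigr)$, whereas the tail $k\geq 2$ is absolutely bounded by $O(p^{-2\s})$ and can be absorbed as a perturbation. Using the classical inequalities $|J_0(x)|\leq 1-cx^2$ for $|x|\leq 1$ and $|J_0(x)|\leq C|x|^{-1/2}$ for $|x|\geq 1$, together with the trivial bound $|\Phi_p|\leq 1$ on a finite set of small primes, I expect to obtain
\begin{equation*}
|\Phi_p(w)| \leq \exp\!\left(-c\,\min\!\left(1,\,\frac{|w|^2}{p^{2\s}(\log p)^{2m}}\right)\right)
\end{equation*}
for all primes $p$ exceeding a fixed threshold. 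Taking the product over all primes and applying the prime number theorem with the natural cut-off $P = P(w)$ defined by $P^\s(\log P)^m \asymp |w|$, the count $\pi(P)$ dominates and yields
\begin{equation*}
|\Phi_{\s,m}(w)| \ll_{\s,m} \exp\!\left(-c'\,\frac{|w|^{1/\s}}{(\log|w|)^{1+m/\s}}\right)
\end{equation*}
as $|w| \to \infty$. For $(\s,m)\in\mca{A}$ this is integrable over $\CC$: in the regime $\s>1/2$ the exponent grows faster than any power of $|w|$ up to logarithms, while in the boundary case $\s=1/2,\,m\geq 1$ the exponent is $|w|^2/(\log|w|)^{2m+1}$, still enough to absorb the $r$ factor from polar coordinates.

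The main obstacle will be the uniform local estimate of $\Phi_p(w)$, particularly for small primes where the higher-order terms in \eqref{eqlocal} are not negligible and $\te_{m,p}(\s,e^{i\theta})$ is not well approximated by a single harmonic. I plan to circumvent this by extracting the decay exclusively from primes in the intermediate-to-large range $p\asymp P(w)$, where the Bessel approximation is accurate, and using the trivial bound $|\Phi_p|\leq 1$ on the remaining finitely many small primes. A secondary but purely bookkeeping issue is matching the prefactors in the Fourier inversion so that the formula for $\DF_{\s,m}$ is compatible with the convention $|dz|=(2\pi)^{-1}dxdy$ adopted throughout the paper.
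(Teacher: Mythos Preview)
Your approach is correct and follows the same skeleton as the paper: factor the characteristic function over primes, prove exponential decay in $|w|$, and apply Fourier inversion together with L\'evy's formula. The implementation in the paper is a little simpler than what you outline. Rather than invoke both the small- and large-argument Bessel regimes, the paper discards all primes $p\leq M|w|^{1/\s}$ via the trivial bound $|\Lambda_{\s,m,p}|\leq 1$ and keeps only the tail $p>M|w|^{1/\s}$, where a direct second-order Taylor expansion (equivalently, your bound $|J_0(x)|\leq 1-cx^2$) already gives $\log|\Lambda_{\s,m,p}(w)|\leq -\tfrac18\,|w|^2 p^{-2\s}(\log p)^{-2m}$; summing this over the tail via the prime number theorem produces $|\Lambda_{\s,m}(w)|\leq\exp(-|w|^{1/(2\s)})$, which is more than enough for integrability. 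This sidesteps the uniformity issue lurking in your displayed bound $|\Phi_p(w)|\leq\exp(-c\min(1,\dots))$ for $p$ above a \emph{fixed} threshold: for fixed $p$ and $|w|\to\infty$ the perturbation from $k\geq 2$ is of size $O(|w|p^{-2\s})$ and is not small, so the Bessel approximation breaks down and a separate stationary-phase argument would be needed to recover the claimed $e^{-c}$. Your own fallback of restricting to primes $p\asymp P(w)$ avoids this and even yields the sharper exponent $|w|^{1/\s}$, but the paper's tail-only route is shorter and needs no case analysis.
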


\begin{proof}
The characteristic function of the random variable $\te_m(\s, X)$ is defined as
\begin{gather*}
\Lam_{\s, m}(w)
=\EXP{\exp(i \inp{\te_{m}(\s, X), w})}
\end{gather*}
for $w \in \mathbb{C}$, where the inner product $\inp{z_1, z_2}$ stands for $\Re z_1 \Re z_2 + \Im z_1 \Im z_2$ for $z_1, z_2 \in \CC$. 
By the independence of $X=\{X(p)\}_{p \in \mathcal{P}}$, we have 
\begin{align}\label{eqMine3}
\Lam_{\s, m}(w)
&=\EXP{\prod_{p} \exp(i \inp{\te_{m,p}(\s, X(p)), w})} \\
&=\prod_{p} \Lam_{\s, m, p}(w),  
\end{align}
where $\Lam_{\s, m, p}(w)=\EXP{\exp(i \inp{\te_{m, p}(\s, X(p)), w})}$. 
The inequality $|\Lam_{\s, m, p}(w)|\leq1$ holds for every $p$ by the definition. 
Therefore, we obtain
\begin{gather}\label{eqMine5}
\l|\Lam_{\s, m}(w)\r|
\leq\prod_{p\in\ms{P}} \l|\Lam_{\s, m, p}(w)\r|
\end{gather}
for any subset $\ms{P}\subset\mca{P}$. 
Write $w=u+iv$ with $u,v \in \mathbb{R}$ and $P(M)={M}|w|^{1/\s}$ for $M\geq1$. 
By the Taylor expansion of $\exp(z)$, we obtain 
\begin{align*}
&\exp(i \inp{\te_{m, p}(\s, X(p)), w})\\
&=\exp(iu\Re\te_{m, p}(\s, X(p))+iv\Im\te_{m, p}(\s, X(p)))\\
&=1+iu\Re\te_{m, p}(\s, X(p))+iv\Im\te_{m, p}(\s, X(p))\\
&\qquad+\frac{1}{2}\{iu\Re\te_{m, p}(\s, X(p))+iv\Im\te_{m, p}(\s, X(p))\}^2+O\l(\frac{(|u|+|v|)^3}{p^{3\s}(\log{p})^{3m}}\r)
\end{align*}
for $p>P(M_1)$ with some $M_1\geq1$. 
By the definition of $\te_{m, p}(\s, X(p))$, we can easily see that 
\[
\EXP{\Re\te_{m, p}(\s, X(p))}
=\EXP{\Im\te_{m, p}(\s, X(p))}=0, 
\]
\[
\EXP{\Re\te_{m, p}(\s, X(p))\Im\te_{m, p}(\s, X(p))}
=0, \]
and
\[\EXP{\l(\Re\te_{m, p}(\s, X(p))\r)^2}
=\EXP{\l(\Im\te_{m, p}(\s, X(p))\r)^2}
=\frac{1}{2}\frac{\Li_{2m+2}(p^{-2\s})}{(\log{p})^{2m}}. \]
Therefore, the characteristic function $\Lam_{\s, m, p}(w)$ is calculated as 
\[\Lam_{\s, m, p}(w)
=1-\frac{|w|^2}{4}\frac{\Li_{2m+2}(p^{-2\s})}{(\log{p})^{2m}}+O\l(\frac{|w|^3}{p^{3\s}(\log{p})^{3m}}\r).\]
Furthermore, we have 
\[\Log\Lam_{\s, m, p}(w)
=-\frac{|w|^2}{4}\frac{\Li_{2m+2}(p^{-2\s})}{(\log{p})^{2m}}+O\l(\frac{|w|^3}{p^{3\s}(\log{p})^{3m}}\r)\]
if $p>P(M_2)$ with some $M_2\geq{M}_1$, where $\Log(z)$ is the principal blanch of logarithm. 
Hence we deduce the asymptotic formula
\[\log\l|\Lam_{\s, m, p}(w)\r|
=-\frac{|w|^2}{4}\frac{\Li_{2m+2}(p^{-2\s})}{(\log{p})^{2m}}+O\l(\frac{|w|^3}{p^{3\s}(\log{p})^{3m}}\r)\]
by the equality $\log|z|=\Re\Log(z)$. 
We notice that the inequalities
\begin{gather*}
\frac{|w|^2}{4}\frac{\Li_{2m+2}(p^{-2\s})}{(\log{p})^{2m}}\geq\frac{1}{4}\frac{|w|^2}{p^{2\s}(\log{p})^{2m}}, \\
\frac{|w|^3}{p^{3\s}(\log{p})^{3m}}\leq\frac{1}{M}\frac{|w|^2}{p^{2\s}(\log{p})^{2m}} 
\end{gather*}
are satisfied for $p>P(M)$ with any $M\geq1$. 
Hence there exists an absolute constant $M_3\geq{M}_2$ such that the inequality
\[\log\l|\Lam_{\s, m, p}(w)\r|
\leq-\frac{1}{8}\frac{|w|^2}{p^{2\s}(\log{p})^{2m}} \]
holds for $p>P(M_3)$. 
Therefore, taking $\ms{P} = \set{p \in \mathcal{P}}{p > P(M_{3})}$ in \eqref{eqMine5}, we deduce from the prime number theorem that
\begin{gather}\label{eqMine9}
\l|\Lam_{\s, m}(w)\r|
\leq\exp\l(-\frac{|w|^2}{8}\sum_{p>P(M_3)}\frac{1}{p^{2\s}(\log{p})^{2m}}\r)
\leq\exp\l(-|w|^{1/(2\s)}\r)
\end{gather}
if $|w|>c(\sigma, m)$ with some large constant $c(\s, m)>0$. 
This implies that
\begin{align} \label{neqMine1}
  \int_\CC|\Lam_{\s, m}(w)|\, |dw|<\infty,  
\end{align}
and one can define a function
\begin{gather}\label{eqMine14}
  \DF_{\s, m}(z)=\int_\CC\Lam_{\s, m}(w)\exp(-i\inp{z, w})\, |dw|
\end{gather}
for $z \in \mathbb{C}$. 
The function $\DF_{\s, m}(z)$ is continuous by the dominated convergence theorem. 
By Levy's inversion formula, we see that $\DF_{\s, m}(z)$ is a probability density function of $\te_m(\s, X)$. 
Thus the proof is completed. 
\end{proof}

We further prove the following property of $\DF_{\s, m}(z)$. 
We will prove more properties according to the method of Jessen--Wintner \cite{JW1935} in Appendix section later.

\begin{proposition}\label{prop:MGF}
Let $(\s, m)\in\mca{A}$. 
Then the integral
\begin{gather*}
\int_\CC e^{a|z|} \DF_{\s, m}(z)\, |dz|
\end{gather*}
is finite for any $a>0$. 
\end{proposition}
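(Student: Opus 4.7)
The plan is to reduce the statement to a moment estimate on $\eta_m(\sigma, X)$. By formula \eqref{eqMine12} of Proposition \ref{propPDF}, the integral in question equals the expectation $\EXP{e^{a|\eta_m(\sigma, X)|}}$. Expanding the exponential as a power series and invoking Tonelli's theorem, it suffices to establish the convergence of
\begin{equation*}
\sum_{k=0}^{\infty} \frac{a^k}{k!} \EXP{|\eta_m(\sigma, X)|^k}
\end{equation*}
for each $a > 0$.

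The second step is to control the even moments $\EXP{|\eta_m(\sigma, X)|^{2k}}$. By Lemma \ref{lemASConv}, $P_{m,Y}(\sigma, X) \to \eta_m(\sigma, X)$ almost surely as $Y \to \infty$, so Fatou's lemma combined with the estimate \eqref{UBDRP2} in Lemma \ref{UBDRP}, which is uniform in $Y$, yields
\begin{equation*}
\EXP{|\eta_m(\sigma, X)|^{2k}} \ll_{\sigma, m} \left( \frac{C\, k^{1-\sigma}}{(\log 2k)^{m+\tau(\sigma)}} \right)^{2k}
\end{equation*}
for every $k \in \ZZ_{\geq 1}$. An application of Cauchy--Schwarz (together with the monotonicity of $p \mapsto \EXP{|\eta|^p}^{1/p}$) gives analogous bounds on the odd moments up to adjusting the constant.

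The final step is to apply Stirling's formula $k! \geq (k/e)^k$, which reduces the general term of the series to order $(aeC / (k^\sigma (\log 2k)^{m+\tau(\sigma)}))^k$. For $(\sigma, m) \in \mathcal{A}$ with $\sigma > 1/2$, the factor $k^{-\sigma}$ alone ensures super-geometric decay. In the boundary case $\sigma = 1/2$ with $m \geq 1$, one has $\tau(\sigma) = 0$ but the factor $(\log 2k)^{-m}$ still makes the general term decay to zero faster than any geometric sequence. In either case the series is summable for arbitrary $a > 0$, which proves the proposition.

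The only conceptual hurdle is the transfer of the uniform moment bound from the truncated polynomial $P_{m,Y}(\sigma, X)$ to its almost-sure limit $\eta_m(\sigma, X)$; since \eqref{UBDRP2} does not depend on $Y$, this is handled cleanly by Fatou's lemma, and the rest is a routine application of Stirling.
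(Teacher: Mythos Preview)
Your argument is correct and takes a genuinely different route from the paper. The paper proves the result from scratch using the multiplicative structure: it bounds $\EXP{\exp(\pm a\Re\te_m(\s,X))}$ and $\EXP{\exp(\pm a\Im\te_m(\s,X))}$ separately by factoring each as a product over primes via independence, Taylor-expanding the local factors for large $p$, and then combining the real and imaginary parts by Cauchy--Schwarz. Your approach instead leverages the moment bound \eqref{UBDRP2} already established in Lemma~\ref{UBDRP}, passes it to $\te_m(\s,X)$ via Fatou, and sums the exponential series using Stirling. Your route is shorter and more ``off the shelf,'' but it depends on the sharper input of Lemma~\ref{UBDRP}; the paper's argument is self-contained and makes the role of independence explicit.

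One small caveat: Lemma~\ref{UBDRP} is stated only for $(\s,m)\in\mca{A}$ with $\s<1$ (indeed $\tau(\s)$ is undefined for $\s\geq1$), so as written your proof does not cover $\s\geq1$. This is easily patched: for $\s>1$ with $m=0$, or $\s\geq1$ with $m\geq1$, the series $\sum_p p^{-\s}(\log p)^{-m}$ converges, so $\te_m(\s,X)$ is almost surely bounded and the claim is immediate; for $\s=1$, $m=0$ one can either note that the proof of Lemma~\ref{UBDRP} extends verbatim, or invoke Lemma~\ref{MVERE} (whose proof is independent of Proposition~\ref{prop:MGF}) to get $\EXP{|\te_m(\s,X)|^{2k}}\leq k!\,C^k$, which suffices for your Stirling argument. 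You should also note that the a.s.\ convergence $P_{m,Y}(\s,X)\to\te_m(\s,X)$ needs one line beyond Lemma~\ref{lemASConv}, since $P_{m,Y}$ truncates at prime powers $p^k\leq Y$ rather than at primes; the $k\geq2$ tail is handled by absolute convergence.
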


\begin{proof}
We use Proposition \ref{propPDF} to deduce
\begin{gather*}
\int_\CC e^{a|z|}\DF_{\s, m}(z)\, |dz|
=\EXP{\exp\l(a|\te_m(\s, X)|\r)}. 
\end{gather*}
Thus it is sufficient to prove that $\EXP{\exp\l(a|\te_m(\s, X)|\r)}$ is finite for any $a \in \mathbb{R}$. 
Recall that $\sum_{p<y}\te_{m, p}(\s, X(p))\to\te_m(\s, X)$ in law as $y\to\infty$. 
Hence we deduce from Fatou's lemma that 
\begin{gather}\label{eqMine1}
\EXP{\Phi(\te_{m}(\s, X))}
\leq\liminf_{y\to\infty}
\EXP{\Phi\l(\sum_{p<y}\te_{m, p}(\s, X(p))\r)}
\end{gather}
for any continuous function $\Phi:\CC\to\RR_{\geq0}$. 
In particular, we can take the function $\Phi(z)=\exp(\pm{a}\Re{z})$. 
In this case, we have $\Phi(z+w)=\Phi(z)\Phi(w)$, and therefore the identity
\begin{gather}\label{eqMine1'}
\EXP{\Phi\l(\sum_{p<y}\te_{m, p}(\s, X(p))\r)}
=\prod_{p<y}\EXP{\Phi\l( \te_{m, p}(\s, X(p)) \r)}
\end{gather}
holds since $\{X(p)\}_{p\in\mca{P}}$ is independent. 
If we suppose $p\geq{a}^{1/\s}$, then the estimate 
\[\Phi\l( \te_{m, p}(\s, X(p)) \r)
=1\pm{a}\Re\te_{m, p}(\s, X(p))
+O\l(\frac{a^2}{p^{2\s}(\log{p})^{2m}}\r)\]
follows by the Taylor expansion. 
It implies
\[\EXP{\Phi\l( \te_{m, p}(\s, X(p)) \r)}
=1+O\l(\frac{a^2}{p^{2\s}(\log{p})^{2m}}\r) \]
since $\EXP{\Re\te_{m, p}(\s, X(p))}$ vanishes. 
Note that the series $\sum_{p}p^{-2\s}(\log{p})^{-2m}$ is finite if $(\s, m) \in \mca{A}$. 
Hence we conclude that the infinite product
\[\prod_{p}\EXP{\Phi\l( \te_{m, p}(\s, X(p)) \r)}\]
converges, and that $\EXP{\Phi(\te_{m}(\s, X))}$ is finite by \eqref{eqMine1} and \eqref{eqMine1'}. 
From the above, we deduce
\begin{align}
&\EXP{\exp(a|\Re\te_m(\s, X)|)}\nonumber\\
&\leq\EXP{\exp(a\Re\te_m(\s, X))}+\EXP{\exp(-a\Re\te_m(\s, X))}
<\infty. 
\end{align}
One can prove that $\EXP{\exp(a|\Im\te_m(\s, X)|)}$ is also finite by replacing the above function $\Phi$ with $\Phi(z)=\exp(\pm{a}\Im{z})$. 
Note that the inequality 
\begin{gather*}
\exp\l(a|\te_m(\s, X)|\r)
\leq \exp(a |\Re \te_{m}(\s, X)|) \exp(a |\Im \te_{m}(\s, X)|)
\end{gather*}
holds. 
Therefore, by the Cauchy--Schwarz inequality, we obtain
\begin{align}\label{eqMine13}
&\EXP{\exp\l(a|\te_m(\s, X)|\r)}\\
&\leq\sqrt{\EXP{\exp\l(2a|\Re\te_m(\s, X)|\r)}}
\sqrt{\EXP{\exp\l(2a|\Im\te_m(\s, X)|\r)}}<\infty
\end{align}
as desired. 
\end{proof}

Let $(\s, m) \in \mca{A}$, and denote by  $P_{m, Y}(\s, X)$ the random Dirichlet polynomial introduced in Section \ref{Sec_FMGF}. 
Then the following lemmas are used in the proofs of Theorems \ref{thmDB1} and \ref{thmLD}. 

\begin{lemma}	\label{CREP} 
Let $(\s, m) \in \mca{A}$.
For $Y \geq 3$ and $w \in \CC$, 
we have
\begin{align*}
\Lam_{\s, m}(w)
= \EXP{ \exp\l( i\inp{P_{m, Y}(\s, X), w} \r) }
+ O_{\s, m} \l( \frac{|w|}{Y^{\s-\frac{1}{2}}(\log{Y})^{m}} \r).
\end{align*}
\end{lemma}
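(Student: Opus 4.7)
The plan is to split $\te_m(\s,X)$ as the truncation $P_{m,Y}(\s,X)$ plus a remainder, and then use the Lipschitz property of $z\mapsto e^{i\inp{z,w}}$ to reduce the claim to an $L^2$ estimate of the remainder. Set
\begin{align*}
R_Y(\s,X) := \te_m(\s,X)-P_{m,Y}(\s,X) = \sum_{p^k > Y} \frac{X(p)^k}{k\, p^{k\s}(\log p^k)^m},
\end{align*}
which converges almost surely by the proof of Lemma \ref{lemASConv}. Since $\inp{z,w}\in\RR$, the elementary bound $|e^{i\a}-e^{i\b}|\leq|\a-\b|$ together with $|\inp{z,w}|\leq|z||w|$ gives
\begin{align*}
\bigl|\Lam_{\s,m}(w)-\EXP{\exp(i\inp{P_{m,Y}(\s,X),w})}\bigr|
\leq |w|\,\EXP{|R_Y(\s,X)|}
\leq |w|\,\sqrt{\EXP{|R_Y(\s,X)|^2}}
\end{align*}
after one application of the Cauchy--Schwarz inequality. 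Thus the lemma reduces to the $L^2$-bound $\EXP{|R_Y(\s,X)|^2}\ll Y^{1-2\s}(\log Y)^{-2m}$ (absorbing an extra $1/\log Y$ into the implied constant).

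First I would compute $\EXP{|R_Y(\s,X)|^2}$ exactly. Since $\{X(p)\}_{p}$ is independent with $\EXP{X(p)^j\ol{X(p)^k}}=\delta_{jk}$ (as $X(p)$ is uniformly distributed on the unit circle), the double sum over $(p,k)$ and $(q,\ell)$ collapses to the diagonal $p=q$, $k=\ell$, giving
\begin{align*}
\EXP{|R_Y(\s,X)|^2}
=\sum_{p^k>Y}\frac{1}{k^2 p^{2k\s}(\log p^k)^{2m}}.
\end{align*}

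Next I would estimate this sum by separating $k=1$ and $k\geq2$. For $k=1$, partial summation together with the prime number theorem yields
\begin{align*}
\sum_{p>Y}\frac{1}{p^{2\s}(\log p)^{2m}}
\ll \frac{Y^{1-2\s}}{(\log Y)^{2m+1}}\qquad(\s>1/2),
\end{align*}
and an analogous estimate $\ll(\log Y)^{-2m}$ when $\s=1/2$ (which is permitted only for $m\geq1$, so convergence holds). For $k\geq2$, the condition $p^k>Y$ forces $p>Y^{1/k}$, and breaking the range into $2\leq k\leq\log Y$ and $k>\log Y$ shows that the contribution is dominated by the $k=2,\, p>Y^{1/2}$ term, which is $O(Y^{1/2-2\s})$ and hence negligible relative to the $k=1$ contribution when $\s>1/2$; the tail $k>\log Y$ is exponentially small. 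Combining, $\EXP{|R_Y(\s,X)|^2}\ll Y^{1-2\s}(\log Y)^{-2m}$ uniformly on $\mca{A}$, from which the stated error bound follows.

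The only nontrivial step is the $L^2$ estimate (the last paragraph), and within it the mild obstacle is verifying that the $k\geq2$ prime-power contributions are absorbed into the $k=1$ main term on the full range $(\s,m)\in\mca{A}$ including $\s=1/2$; all other ingredients (independence, the Lipschitz bound, Cauchy--Schwarz, PNT) are completely standard.
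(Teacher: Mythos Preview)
Your argument is correct and in fact slightly more streamlined than the paper's. The paper first strips off the $k\geq2$ prime-power terms from $R_Y$ by a deterministic (almost-sure) bound $\sum_{p^k>Y,\,k\geq2}\ll Y^{\frac12-\s}(\log Y)^{-m}$, and only afterwards exploits the independence of $P_{m,Y}(\s,X)$ (which involves primes $p\leq Y$) and $\sum_{p>Y}X(p)p^{-\s}(\log p)^{-m}$ (primes $p>Y$) to \emph{factor} the expectation; it then shows the tail factor equals $1+O(|w|Y^{\frac12-\s}(\log Y)^{-m})$ via the same $|e^{ia}-1|\leq|a|$ and Cauchy--Schwarz computation you use. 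Your route bypasses both the deterministic splitting and the factorization: since $e^{i\inp{\cdot,w}}$ is bounded, you can apply the Lipschitz bound to the full remainder $R_Y$ and compute $\EXP{|R_Y|^2}$ directly by orthogonality of the monomials $X(p)^k$, independence being used only to obtain that orthogonality.

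What the paper's extra structure buys is a template that survives when the exponential is no longer bounded: in the companion Lemma~\ref{CREP2} (the moment-generating function $F_{\s,m}(s;\a)$ in place of the characteristic function) the Lipschitz step must be weighted by $e^{\kappa\Re(\cdots)}$, and there the factorization by independence is essential; the $k\geq2$ terms are again removed deterministically so that the remaining tail depends only on primes $p>Y$ and genuinely splits off. Your direct $L^2$ argument would not transfer to that setting without modification. For the present lemma, though, your approach is cleaner and entirely sufficient.
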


\begin{proof} 
By the definition of $\te_{m}(\s, X)$, we see that
\begin{align}
  \hspace{-9mm}
\begin{aligned}
\label{CREP1}
\Lam_{\s, m}(w)
= \EXP{ \exp\l(i\inp{P_{m, Y}(\s, X), w} + i\inp{\sum_{p^{k} > Y}\frac{X(p)^{k}}{k p^{k\s} (\log{p^{k}})^{m}}, w}\r) }.
\end{aligned}
\end{align}
Since the estimate
\begin{align*}
\sum_{\substack{p^{k} > Y \\ k \geq 2}}\frac{X(p)^{k}}{k p^{k\s} (\log{p^{k}})^{m}}
\ll_{\s, m}\frac{1}{Y^{\s - \frac{1}{2}}(\log{Y})^{m}}
\end{align*}
holds, we have
\begin{align*}
\inp{\sum_{p^{k} > Y}\frac{X(p)^{k}}{k p^{k\s} (\log{p^{k}})^{m}}, w}
= \inp{\sum_{p > Y}\frac{X(p)}{p^{\s} (\log{p})^{m}}, w}
+ O_{\s, m}\l( \frac{|w|}{Y^{\s-\frac{1}{2}}(\log{Y})^{m}} \r)
\end{align*}
by applying the Cauchy--Schwarz inequality $|\inp{z, w}|\leq|z||w|$. 
Furthermore, by the inequality $|e^{i b} - e^{i a} | \leq | b -a |$ for $a, b \in \mathbb{R}$, 
the left-hand side of \eqref{CREP1} is equal to
\begin{align*}
\EXP{ \exp\l(i\inp{P_{m, Y}(\s, X), w} + i\inp{\sum_{p > Y}\frac{X(p)}{p^{\s} (\log{p})^{m}}, w}\r) }
+ O_{\s, m}\l( \frac{|w|}{Y^{\s - \frac{1}{2}} (\log{Y})^{m}} \r).
\end{align*}
From the independence of $X(p)$'s, the above expectation is equal to
\begin{align}	\label{eq:CREP2}
\EXP{ \exp\l(i\inp{P_{m, Y}(\s, X), w} \r)} \times \EXP {\exp\l(i\inp{\sum_{p > Y}\frac{X(p)}{p^{\s} (\log{p})^{m}}, w}\r) }.
\end{align}
Moreover, by the Cauchy--Schwarz inequality and the inequality $|e^{i x} - 1| \leq | x |$ for $x \in \mathbb{R}$, 
we find that
\begin{align*}
&\Bigg|\EXP{ \exp\l(i\inp{\sum_{Y < p \leq Z}\frac{X(p)}{p^{\s}(\log{p})^{m}}, w} \r) } - 1\Bigg|\\
&\leq  | w | \EXP{\bigg| \sum_{Y < p \leq Z}\frac{X(p)}{p^{\s}(\log{p})^{m}} \bigg| }
\leq | w | \l( \EXP{\bigg| \sum_{Y < p \leq Z}\frac{X(p)}{p^{\s}(\log{p})^{m}} \bigg|^2 } \r)^{1/2}\\
&= |w| \l( \EXP{\sum_{Y < p_{1}, p_{2} \leq Z}
\frac{X(p_{1}) \ol{X(p_{2})}}{(p_{1} p_{2})^{\s}(\log{p_{1}} \log{p_{2}})^{m}} } \r)^{1/2}
\end{align*}
for any $Z > Y$.
By equation \eqref{BEUIR}, the last is equal to
\begin{align*}
| w | \l( \sum_{Y < p \leq Z}\frac{1}{p^{2\s}(\log{p})^{2m}} \r)^{1/2}
\ll_{\s, m} \frac{|w|}{Y^{\s-\frac{1}{2}}(\log{Y})^{m}}.
\end{align*}
Therefore, by Lebesgue's dominated convergence theorem, it holds that
\begin{align*}
&\EXP{ \exp\l(i\inp{\sum_{p > Y}\frac{X(p)}{p^{\s}(\log{p})^{m}}, w} \r) }\\
&= \lim_{Z \rightarrow \infty}\EXP{ \exp\l(i\inp{\sum_{Y < p \leq Z}\frac{X(p)}{p^{\s}(\log{p})^{m}}, w} \r) }
= 1 + O_{\s, m}\l(\frac{|w|}{Y^{\s-\frac{1}{2}}(\log{Y})^{m}}\r), 
\end{align*}
and hence, \eqref{eq:CREP2} is equal to
\begin{align*}
\EXP{ \exp\l(i\inp{P_{m, Y}(\s, X), w} \r)}
+ O_{\s, m}\l( \frac{|w|}{Y^{\s-\frac{1}{2}}(\log{Y})^{m}} \r).
\end{align*}
Thus, the left-hand side of \eqref{CREP1} is also equal to above.
\end{proof}

Let $(\s, m) \in \mca{A}$. 
By Proposition \ref{prop:MGF}, we see that the moment-generating function 
\begin{align} \label{eqFY}
F_{\sigma,m}(s;\alpha)
:=\EXP{ \exp(s\Re e^{-i\a}\te_{m}(\s, X))}
\end{align}
exists for any $s\in\CC$ and $\alpha \in \mathbb{R}$. 

\begin{lemma}	\label{CREP2} 
Let $(\s, m) \in \mca{A}$.
Let $Y$ be a large parameter.
For $s=\kappa+it$ with $|s|\leq{Y}^{\sigma-\frac{1}{2}}(\log{Y})^m$, we have
\begin{align*}
F_{\sigma,m}(s;\alpha)
= \EXP{ \exp\l( s\Re {e^{-i\a} P_{m, Y}(\s, X)} \r) }
+ O_{\s, m}\l(F_{\sigma,m}(\kappa;\alpha) \frac{|s|}{Y^{\s-\frac{1}{2}}(\log{Y})^{m}} \r).
\end{align*}
\end{lemma}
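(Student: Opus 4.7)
The plan is to adapt the proof of Lemma \ref{CREP} to the complex exponent $s$. I would decompose
\begin{gather*}
\te_{m}(\s, X) = P_{m, Y}(\s, X) + L_Y + H_Y,
\end{gather*}
where $L_Y = \sum_{p > Y} X(p)/(p^{\s}(\log{p})^{m})$ collects the $k = 1$ tail and $H_Y = \sum_{p^{k} > Y,\, k \geq 2} X(p)^{k}/(k p^{k\s}(\log{p^{k}})^{m})$ is the remaining tail. The same deterministic estimate $|H_Y| \ll_{\s, m} 1/(Y^{\s-1/2}(\log{Y})^m)$ used in the proof of Lemma \ref{CREP} is available here, and under our hypothesis $|s| \leq Y^{\s-1/2}(\log{Y})^m$ it yields $|s H_Y| \ll_{\s, m} 1$ uniformly on the underlying probability space.

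Next I would apply the inequality $|e^z - 1| \leq |z| e^{|z|}$ to the factor $\exp(s \Re e^{-i\a} H_Y)$ to get
\begin{gather*}
\exp(s \Re e^{-i\a} \te_{m}(\s, X)) = \exp(s \Re e^{-i\a}(P_{m, Y}(\s, X) + L_Y)) \Bigl(1 + O_{\s, m}\Bigl(\tfrac{|s|}{Y^{\s-1/2}(\log{Y})^m}\Bigr)\Bigr)
\end{gather*}
pointwise. Since $P_{m, Y}(\s, X)$ depends only on $\{X(p)\}_{p \leq Y}$ while $L_Y$ depends only on $\{X(p)\}_{p > Y}$, taking expectations and invoking independence gives
\begin{gather*}
\EXP{\exp(s \Re e^{-i\a}(P_{m, Y}(\s, X) + L_Y))} = \EXP{\exp(s \Re e^{-i\a} P_{m, Y}(\s, X))} \cdot \EXP{\exp(s \Re e^{-i\a} L_Y)}.
\end{gather*}
Writing $a_p = (p^{\s}(\log{p})^m)^{-1}$ and integrating each uniform $X(p)$ out, the tail factor reduces to $\prod_{p > Y} I_0(s a_p)$, where $I_0$ is the modified Bessel function. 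Our hypothesis forces $|s a_p| \ll Y^{-1/2}$ uniformly in $p > Y$, so the expansion $\log I_0(z) = z^2/4 + O(|z|^4)$ together with the prime number theorem estimate for $\sum_{p > Y} a_p^2$ yields
\begin{gather*}
\EXP{\exp(s \Re e^{-i\a} L_Y)} = \prod_{p > Y} I_0(s a_p) = 1 + O_{\s, m}\Bigl(\tfrac{|s|}{Y^{\s-1/2}(\log{Y})^m}\Bigr).
\end{gather*}

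To conclude, I would bound $|\EXP{\exp(s \Re e^{-i\a} P_{m, Y}(\s, X))}| \leq \EXP{\exp(\kappa \Re e^{-i\a} P_{m, Y}(\s, X))}$, and show the latter is $\ll_{\s, m} F_{\s, m}(\kappa; \a)$ via the same independence splitting of $F_{\s, m}(\kappa; \a)$, the pointwise boundedness of $\kappa H_Y$, and Jensen's inequality $\EXP{\exp(\kappa \Re e^{-i\a} L_Y)} \geq 1$ (which holds because $\Re e^{-i\a} L_Y$ has mean zero). Collecting all error contributions then yields the claimed formula. The main technical obstacle is the tail step: whereas Lemma \ref{CREP} treats the purely imaginary exponent via the elementary inequality $|e^{ix} - 1| \leq |x|$ and an $L^2$-estimate, here the complex exponent forces a uniform Taylor expansion of each Bessel factor $I_0(s a_p)$, and the admissible range $|s| \leq Y^{\s-1/2}(\log{Y})^m$ is exactly what keeps the per-prime argument small enough for this expansion to be valid.
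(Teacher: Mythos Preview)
Your proposal is correct and follows essentially the same route as the paper's proof: the same decomposition $\te_m(\s,X)=P_{m,Y}(\s,X)+L_Y+H_Y$, the same deterministic bound on $H_Y$, the independence splitting of $P_{m,Y}$ and $L_Y$, and the Taylor expansion of each factor $\EXP{\exp(s a_p \Re e^{-i\a}X(p))}$ for $p>Y$. The only cosmetic differences are that you identify the tail factors explicitly as Bessel values $I_0(sa_p)$ and invoke Jensen's inequality to get $\EXP{\exp(\kappa\Re e^{-i\a}L_Y)}\geq1$, whereas the paper simply records the Taylor expansion $1+O(|s|^2 p^{-2\s}(\log p)^{-2m})$ directly and uses the resulting two-sided estimate of the tail product; neither changes the substance of the argument.
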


\begin{proof}
Since the estimate
\begin{align*}
\sum_{\substack{p^{k} > Y \\ k \geq 2}}\frac{X(p)^{k}}{k p^{k\s} (\log{p^{k}})^{m}}
\ll_{\s, m} \frac{1}{Y^{\s - \frac{1}{2}}(\log{Y})^{m}}
\end{align*}
holds, we have
\begin{align}
&F_{\sigma,m}(s;\alpha)\\
&= \EE\Bigg[ \exp\Bigg(s\Re {e^{-i\a} P_{m, Y}(\s, X)}  
+ s \sum_{p > Y}\frac{\Re e^{-i\a}X(p)}{p^{\s} (\log{p})^{m}} \\
&\qquad\qquad\qquad\qquad\qquad\qquad\qquad\qquad\qquad
+ O_{\s, m}\l( \frac{|s|}{Y^{\s-\frac{1}{2}}(\log{Y})^{m}} \r)\Bigg) \Bigg]\\
\label{CREP1'}
&=\EXP{ \exp\l(s\Re{e^{-i\a} P_{m, Y}(\s, X)} 
+ s \sum_{p > Y}\frac{\Re e^{-i\a}X(p)}{p^{\s} (\log{p})^{m}}\r)}\\
&\qquad+ O_{\s, m} \Bigg(\EE\Bigg[{ \kappa\Re {e^{-i\a} P_{m, Y}(\s, X)}  
+ \kappa \sum_{p > Y}\frac{\Re e^{-i\a}X(p)}{p^{\s} (\log{p})^{m}}} \Bigg]
 \frac{|s|}{Y^{\s-\frac{1}{2}}(\log{Y})^{m}}  \Bigg).
\end{align}
Note that the independence of $X(p)$'s yields 
\begin{align}\label{eqMine54}
&\EXP{ \exp\l(s\Re {e^{-i\a} P_{m, Y}(\s, X)} 
+ s \sum_{p > Y}\frac{\Re e^{-i\a}X(p)}{p^{\s} (\log{p})^{m}}\r)}\nonumber\\\
&=\EXP{ \exp\l(s\Re {e^{-i\a} P_{m, Y}(\s, X)} \r)}
\times \prod_{p > Y}\EXP{\exp\l(s \frac{\Re e^{-i\a}X(p)}{p^{\s} (\log{p})^{m}}\r)}. 
\end{align}
Furthermore, if $p>Y$, we find that the inequality
\[\l|s\frac{\Re e^{-i\a}X(p)}{p^{\s} (\log{p})^{m}}\r|
\leq\frac{|s|}{p^\s(\log{p})^m} \leq c \]
holds for $|s|\leq{Y}^{\sigma-\frac{1}{2}}(\log{Y})^m$, where $c$ is suitably small if $Y$ is large.
From the Taylor expansion of $\exp(z)$ we deduce
\begin{align*}
\EXP{\exp\l(s \frac{\Re e^{-i\a}X(p)}{p^{\s} (\log{p})^{m}}\r)}
&=\EXP{1
+s \frac{\Re e^{-i\a}X(p)}{p^{\s} (\log{p})^{m}}
+O\l(\frac{|s|^2}{p^{2\s}(\log{p})^{2m}}\r)}\\
&=1+O\l(\frac{|s|^2}{p^{2\s}(\log{p})^{2m}}\r) 
\end{align*}
since the expected value $\EXP{\Re e^{-i\a}{X}(p)}$ vanishes. 
Therefore, we find that
\begin{align*}
  &\prod_{p > Y}\EXP{\exp\l(s \frac{\Re e^{-i\a}X(p)}{p^{\s} (\log{p})^{m}}\r)}
  = \prod_{p > Y}\l(1 + O\l( \frac{|s|^2}{p^{2\s}(\log{p})^{2m}} \r)\r)\\
  &= \exp\l( O\l(\sum_{p > Y}\frac{|s|^2}{p^{2\s}(\log{p})^{2m}}\r) \r)
  = \exp\l( O_{\s, m}\l(\frac{|s|^2}{Y^{2\s - 1} (\log{Y})^{2m}} \r) \r)\\
  &= 1 + O_{\s, m}\l( \frac{|s|}{Y^{\s - \frac{1}{2}} (\log{Y})^{2m}} \r).
\end{align*}
Hence, by equation \eqref{eqMine54}, the formula
\begin{align*}
&\EXP{ \exp\l(s\Re {e^{-i\a} P_{m, Y}(\s, X)}
+ s \sum_{p > Y}\frac{\Re e^{-i\a}X(p)}{p^{\s} (\log{p})^{m}}\r)}\\
&=\EXP{ \exp\l(s\Re {e^{-i\a} P_{m, Y}(\s, X)} \r)}\\
&\quad+O_{\s, m}\l(\EXP{ \exp\l(\kappa\Re {e^{-i\a} P_{m, Y}(\s, X)} \r)}
\frac{|s|}{Y^{\s-\frac{1}{2}}(\log{Y})^{m}}\r) 
\end{align*}
holds for $|s|\leq{Y}^{\sigma-\frac{1}{2}}(\log{Y})^m$. 
Inserting this to \eqref{CREP1'}, we finally obtain 
\begin{align*}
&F_{\sigma,m}(s;\alpha) \\
&=\EXP{ \exp\l(s\Re {e^{-i\a} P_{m, Y}(\s, X)} \r)}
\l(1+O_{\s, m}\l(\frac{|s|}{Y^{\s-\frac{1}{2}}(\log{Y})^{m}}\r)\r), 
\end{align*}
which yields the result. 
\end{proof}

\section{\textbf{An estimation of the difference between \\$\te_{m}(s)$ and Dirichlet polynomials}} \label{Sec_ED_ZvDP}

\begin{lemma} \label{MVEtevDP}
  Let $\s > \frac{1}{2}$.
  There exist positive constants $c$, $A = A(\s)$ such that for any $k \in \ZZ_{\geq 1}$, $3 \leq Y \leq T^{1/k}$
  \begin{align*}
    \frac{1}{T}\int_{T}^{2T}\l| \log{\zeta(\s + it)} - P_{0, Y}(\s + it) \r|^{2k}dt
    \leq A^{k} k^{4k} T^{c(1 - 2\s)} + A^{k} k^{k} Y^{k(1 -2\s)}.
  \end{align*}
\end{lemma}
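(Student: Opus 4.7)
The plan is to use a Selberg-style approximate formula to express $\log\zeta(\sigma+it) - P_{0,Y}(\sigma+it)$ as the sum of a Dirichlet polynomial over prime powers $p^k \in (Y, X^2]$ and a remainder controlled by zeros of $\zeta$ near $\sigma+it$, for a parameter $X \in [3, T^{\delta}]$ with $\delta > 0$ small depending on $k$. The polynomial piece will produce the term $A^{k}k^{k}Y^{k(1-2\sigma)}$, while the zero-sum piece will produce $A^{k}k^{4k}T^{c(1-2\sigma)}$.

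First, I would invoke a representation of Selberg type valid for $\sigma \geq 1/2$ and $t \in [T, 2T]$ away from zero ordinates,
\begin{align*}
\log\zeta(\sigma+it) = \sum_{n \leq X^2} \frac{\Lambda_X(n)}{n^{\sigma+it}\log n} + R_X(\sigma+it),
\end{align*}
where $\Lambda_X$ is a smooth truncation of $\Lambda$ supported on $[1, X^2]$ and equal to $\Lambda$ on $[1, X]$, and $R_X$ is expressible as a short sum over zeros $\rho = \beta+i\gamma$ with $|\gamma - t| \leq 1$, plus an average of $\log|\zeta|$ on nearby vertical segments. Subtracting $P_{0,Y}(\sigma+it)$ and absorbing the smoothing contribution as a negligible error, the difference decomposes as $Q_{Y,X}(\sigma+it) + R_X(\sigma+it)$, where $Q_{Y,X}$ is a Dirichlet polynomial supported on prime powers $Y < p^k \leq X^2$.

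For $Q_{Y,X}$, the variance satisfies $\sum_{p^k > Y} p^{-2k\sigma}(\log p^k)^{-2} \ll Y^{1-2\sigma}$ by the prime number theorem, so inequality \eqref{VSLS1} of Lemma \ref{VSLS} (applicable under $X^{2k} \leq T(\log T)^{-1}$, which is secured by the hypothesis $Y \leq T^{1/k}$ and an appropriate choice of $X$) gives the bound $\leq A^{k}k^{k}Y^{k(1-2\sigma)}$ on the $2k$-th mean of $Q_{Y,X}$. For $R_X$, a Selberg-type zero-density estimate of the form $N(\alpha, T+1) - N(\alpha, T-1) \ll T^{1-c(\alpha - 1/2)}\log T$ controls the measure of the set where $R_X$ is not $O(1)$; on the complementary bad set we bound $|R_X(\sigma+it)|$ pointwise by a count of nearby zeros times $\log T$, and a double application of the Cauchy--Schwarz inequality produces the bound $A^{k}k^{4k}T^{c(1-2\sigma)}$ on the $2k$-th mean.

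The main obstacle is handling the zero contribution. Since the sum over zeros is not a mean-zero Dirichlet polynomial, one cannot exploit orthogonality as in Lemma \ref{VSLS}; instead one combines a zero-density estimate with a pointwise bound via the number of nearby zeros, and the resulting double Cauchy--Schwarz step is the source of the large factor $k^{4k}$ in place of the more natural $k^{k}$. Balancing the parameter $X$ so that both the prime-power tail and the zero contribution remain controllable, while simultaneously respecting $X^{2k} \leq T(\log T)^{-1}$, will also need some care, but the exponent $c$ in $T^{c(1-2\sigma)}$ can be absorbed into the constant coming from the zero-density estimate.
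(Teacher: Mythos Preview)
The paper's own proof is a one-line citation: it invokes Proposition~3.3 of \cite{IL2021} to obtain
\[
\frac{1}{T}\int_{T}^{2T}\bigl| \log{\zeta(\s + it)} - P_{0, Y}(\s + it) \bigr|^{2k}dt
\leq C^{k} k^{4k} T^{c(1 - 2\s)} + C^{k} k^{k} \Bigl( \sum_{Y < p \leq T^{1/k}}\frac{1}{p^{2\s}} \Bigr)^{k},
\]
and then bounds the sum by $\ll_{\s} Y^{1-2\s}$. All the work is outsourced to the cited reference.

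Your sketch is, in effect, an outline of how that cited proposition is proved: a Selberg-type approximate formula splits $\log\zeta - P_{0,Y}$ into a Dirichlet polynomial over $Y<p^{a}\leq X^{2}$ and a remainder governed by zeros near $\s+it$; the polynomial piece is handled by a high-moment mean-value estimate giving the $k^{k}Y^{k(1-2\s)}$ term, and the remainder by zero-density input giving the $k^{4k}T^{c(1-2\s)}$ term. That is the standard route and is structurally correct.

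Two points of imprecision worth noting. First, Lemma~\ref{VSLS} as stated in this paper is for sums over primes only, so to apply it to $Q_{Y,X}$ you would first have to strip off the higher prime powers (which contribute $O_{\s}(1)$ for $\s>1/2$). Second, your parameter discussion is slightly off: the cited bound has the polynomial running up to $T^{1/k}$, not to $X^{2}$ with $X^{2k}\leq T(\log T)^{-1}$; reconciling this requires the smoothed Selberg weights on $(X,X^{2}]$ and taking $X$ of size $T^{1/k}$, so that the $k$-th power of the polynomial has length $\leq T$. These are details of execution rather than gaps in the plan.
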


\begin{proof}
  It follows from Proposition 3.3 of \cite{IL2021} that
  \begin{align*}
    &\frac{1}{T}\int_{T}^{2T}\l| \log{\zeta(\s + it)} - P_{0, Y}(\s + it) \r|^{2k}dt\\
    &\leq C^{k} k^{4k} T^{c(1 - 2\s)} + C^{k} k^{k} \l( \sum_{Y < p \leq T^{1/k}}\frac{1}{p^{2\s}} \r)^{k}
  \end{align*}
  for some absolute constants $c, C > 0$.
  The sum on the right hand side is $\ll_{\s} Y^{k(1 - 2\s)}$, so we obtain this lemma.
\end{proof}

\begin{lemma}	\label{ESEPE}
  Let $(\s, m) \in \mca{A}$. 
  Let $T>0$ be large, and let $3 \leq Y \leq T^{c_{1}}$ with $c_{1} = c_{1}(\s) > 0$ suitably small. 
  For $W > 0$, we denote by $B_{T} = B_{T}(Y, W; \sigma, m)$ the set
  \begin{align}	\label{def_B_Y}
    B_{T} = \Set{t \in [T, 2T]}{|\te_{m}(\s + it) - P_{m, Y}(\s+it)| \leq W Y^{\frac{1}{2} - \s}}.
  \end{align}
  When $m \not= 0$, there exists a small positive constant $c$ such that
  \begin{align}	\label{ESEPE1}
    \frac{1}{T}\meas([T, 2T] \setminus B_{T}) 
    \ll \exp\l( -c W^{2} (\log{Y})^{2m} \r)
  \end{align}
  for $0 < W \leq \l((\log{T})(\log{Y})^{-2(m+1)}\r)^{\frac{m}{2m+1}}$. 
  Moreover, for $m \in \ZZ_{\geq 0}$ we have 
  \begin{align}	\label{ESEPE2}
    \frac{1}{T}\meas([T, 2T] \setminus B_{T})
    \ll \exp\l( -c (W(\log{T})^{m})^{\frac{1}{m+1}} \r)
  \end{align}
  for $\l((\log{T})(\log{Y})^{-2(m+1)}\r)^{\frac{m}{2m+1}} \leq W \leq (\log{T})(\log{Y})^{-(m+1)}$. 
\end{lemma}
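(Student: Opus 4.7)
The plan is to apply Markov's inequality to the $2k$-th moment of $D(t) := \te_m(\s+it) - P_{m,Y}(\s+it)$, and then to optimize over $k$ separately in the two regimes of $W$ appearing in \eqref{ESEPE1} and \eqref{ESEPE2}. Since Lemma \ref{MVEtevDP} controls the moment directly when $m=0$, the new content is the reduction of the case $m \geq 1$ to that case.

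For $m \geq 1$ the key step is to establish the integral representation
\[
D(t) = \frac{1}{(m-1)!}\int_{\s}^{\infty}(\a-\s)^{m-1}\l(\log\zeta(\a+it) - P_{0,Y}(\a+it)\r)d\a,
\]
which I would derive by iterating the definition $\te_m(s)=\int_{\s}^{\infty}\te_{m-1}(\a+it)\,d\a$ together with the elementary computation $\int_{\s}^{\infty}p^{-k\a}\,d\a = p^{-k\s}/(k\log{p})$, showing that the Dirichlet polynomial $P_{m,Y}$ satisfies the same iteration. Convergence at $\a=\infty$ follows from the bound $\log\zeta(\a+it)\ll 2^{-\a}$. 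Applying Minkowski's integral inequality in $L^{2k}([T,2T])$, invoking Lemma \ref{MVEtevDP} at each shifted abscissa, and computing the gamma-type integrals $\int_{0}^{\infty}\b^{m-1}Y^{-\b}\,d\b = (m-1)!/(\log{Y})^m$ and its analogue for $T^{-c\b/k}$ would then give
\[
\l(\frac{1}{T}\int_{T}^{2T}|D(t)|^{2k}dt\r)^{1/(2k)} \ll_{\s,m} \frac{k^{m+2}T^{-c(\s-1/2)/k}}{(\log T)^m} + \frac{k^{1/2}Y^{1/2-\s}}{(\log Y)^m}
\]
uniformly for $1 \leq k \leq \log T/\log Y$.

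Markov's inequality then reads
\[
\frac{\meas([T,2T]\setminus B_T)}{T} \ll \l(\frac{C_1 k^{m+2}T^{-c(\s-1/2)/k}Y^{\s-1/2}}{W(\log T)^m} + \frac{C_2 k^{1/2}}{W(\log Y)^m}\r)^{2k}.
\]
For \eqref{ESEPE1} I would set $k = c_3 W^2(\log Y)^{2m}$ with $c_3$ small; the upper bound on $W$ keeps $k \ll (\log T/\log Y)^{2m/(2m+1)}$, so Lemma \ref{MVEtevDP} applies. The second term inside the parentheses then becomes $O(c_3^{1/2}) \leq 1/e$, and the first term is negligible since the choice of $c_3$ relative to $c$ forces $T^{-c(\s-1/2)/k}Y^{\s-1/2}$ to decay faster than any negative power of $\log T$. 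Raising to the $2k$-th power yields $e^{-2k} = \exp(-2c_3 W^2(\log Y)^{2m})$, which is \eqref{ESEPE1}. For \eqref{ESEPE2} the analogous choice $k = c_3 (W(\log T)^m)^{1/(m+1)}$ is made precisely so that $k \leq \log T/\log Y$ whenever $W \leq (\log T)/(\log Y)^{m+1}$, and the same balance check delivers $e^{-2k}$.

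The main obstacle I anticipate is verifying that the $T$-dependent first term in the moment bound is always negligible against the $Y$-dependent second one after the optimal $k$ is plugged in. This reduces to checking, under the standing assumption $Y \leq T^{c_1}$ with $c_1$ small compared to the constant $c$ of Lemma \ref{MVEtevDP}, that $k\log Y \ll c\log T$ in both regimes; this is built into the stated ranges of $W$ by the choice of $k$, so the verification is ultimately routine but bookkeeping-heavy. Secondary points such as the absolute convergence of the iterated integral representation and the applicability of Minkowski's integral inequality in $L^{2k}([T,2T])$ will need only a brief justification.
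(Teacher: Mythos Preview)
Your reduction from $m\geq1$ to $m=0$ via the integral representation and Minkowski is natural, and for \eqref{ESEPE1} it goes through as you describe. But there is a genuine gap in your treatment of \eqref{ESEPE2}, and it sits precisely where you call the bookkeeping ``routine''. After Minkowski, the $T$-dependent part of your moment bound reads
\[
\|D\|_{L^{2k}}\ll \frac{k^{m+2}}{(\log T)^{m}}\,T^{-c(\sigma-1/2)/k},
\]
the factor $k^m$ coming from $\int_{0}^{\infty}\beta^{m-1}e^{-\beta c\log T/k}\,d\beta=(m-1)!\,(k/(c\log T))^m$ and multiplying the $k^2$ already present in $\|D_0\|_{L^{2k}}$. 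Compare this with the bound the paper quotes from \cite[Lemma~8]{II2020} in the proof of Lemma~\ref{MVTeta}: the exponent there is $k^{2k(m+1)}$, i.e.\ one power of $k$ better. That single extra $k$ is fatal for your choice $k=c_3(W(\log T)^m)^{1/(m+1)}$ in \eqref{ESEPE2}: with that choice one has $k^{m+2}/(W(\log T)^m)=c_3^{m+1}k$, so after Markov the first term carries an undamped factor $k^{2k}$. For $Y$ bounded and $W$ near its upper limit $(\log T)/(\log Y)^{m+1}$ one has $k\asymp\log T$, and then $k^{2k}=\exp(2k\log k)$ is of size $\exp(c'\log T\log\log T)$, which overwhelms $T^{-2c(\sigma-1/2)}$. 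The check ``$k\log Y\ll c\log T$'' you single out handles only the factor $Y^{k(2\sigma-1)}$, not this polynomial-in-$k$ piece.

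The paper itself sidesteps all of this by citing \cite[Lemma~8]{II2020} for $m\geq1$ and proving only the case $m=0$ (where, incidentally, it takes $k\sim cW/\log W$ rather than $k\sim cW$; the logarithm is needed for exactly the analogous reason). The sharper moment estimate in \cite{II2020} presumably treats the exceptional set of $t$ (those with a nearby zero of $\zeta$) directly for $\te_m$ rather than integrating the $m=0$ estimate in $\alpha$, so that the pointwise bound on the bad set does not acquire the extra $(k/\log T)^m$ from your gamma integral. Your argument does recover the full lemma as soon as $\log Y\gg\log\log T$ --- in particular for the choices $Y=(\log T)^{B}$ actually used in Sections~\ref{Sec_PDB} and~\ref{Sec_PLD} --- and it gives \eqref{ESEPE1} unconditionally; but for \eqref{ESEPE2} in the stated range $3\leq Y\leq T^{c_1}$ you need either the sharper input from \cite{II2020} or a separate treatment of the exceptional set that does not pass through Minkowski.
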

  
\begin{proof}
  When $m$ is positive, this lemma follows from Lemma 8 of \cite{II2020}. 
  We prove \eqref{ESEPE2} in the case $m = 0$.
  Let $1 \leq W \leq \frac{\log{T}}{\log{Y}}$.
  Using Lemma \ref{MVEtevDP}, we obtain
  \begin{align*}
    \frac{1}{T}\meas([T, 2T] \setminus B_{T})
    \ll \frac{Y^{k(2\s - 1)}}{T^{c_{2}(2\s - 1)}}\l( \frac{A k^{2}}{W} \r)^{2k} + \l( \frac{A k}{W^2} \r)^{k}.
  \end{align*}
  for any $1 \leq k \leq \frac{\log{T}}{\log{Y}}$ and for some constant $A > 0, c_{2} > 0$.
  We decide as $c_{1} = c_{2}/2$.
  Choosing $k = \l[c \frac{W}{\log(W + 2)} \r] + 1$ with $c = c(\s)$ a suitably small constant, we have
  \begin{align*}
    \frac{1}{T}\meas([T, 2T] \setminus B_{T})
    \ll \exp\l( -c W \r), 
  \end{align*}
  which completes the proof of this lemma in the case $m = 0$.
\end{proof}

\section{\textbf{Discrepancy estimate: Proof of Theorem \ref{thmDB1}}}  \label{Sec_PDB}

We prepare two mean value theorems for $\te_{m}(s)$ and $\te_{m}(\s, X)$.

\begin{lemma} \label{MVTeta}
  Let $(\s, m) \in \mca{A}$.
  Let $T$ be large.
  Let $k$ be a positive integer with $k \leq \sqrt{\log{T}}$.
	Then, we have
  \begin{align*}
    \frac{1}{T}\int_{T}^{2T}| \te_{m}(\s + it)|^{2k}dt
    \leq k! C^{k}
  \end{align*}
  for some constant $C = C(\s, m) > 0$.
\end{lemma}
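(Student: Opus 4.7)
The plan is to approximate $\te_{m}(\s+it)$ by the Dirichlet polynomial $P_{m,Y}(\s+it)$ with a suitably chosen $Y$, and estimate each piece separately. I would decompose $\te_{m}(\s+it) = P_{m,Y}(\s+it) + R(t)$ where $R(t)=\te_{m}(\s+it) - P_{m,Y}(\s+it)$, use the elementary inequality $|a+b|^{2k} \leq 2^{2k-1}(|a|^{2k} + |b|^{2k})$, and bound the two resulting $2k$-th moments separately, aiming for each contribution to be at most $T \cdot k! C^{k}$.

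For the polynomial part, I take $Y = Y_{0}(\s,m)$ to be a sufficiently large constant depending only on $(\s,m)$. Since $k \leq \sqrt{\log T}$, the hypothesis $Y_{0}^{k} \leq T(\log T)^{-1}$ of Lemma \ref{UBDRP} is trivially satisfied, and that lemma gives
\begin{align*}
\frac{1}{T}\int_{T}^{2T} |P_{m,Y_{0}}(\s+it)|^{2k}\,dt
\ll \left(\frac{C k^{1-\s}}{(\log 2k)^{m+\tau(\s)}}\right)^{2k}.
\end{align*}
Since $(\s,m) \in \mca{A}$ forces either $2(1-\s)<1$ or the logarithmic factor $(\log 2k)^{m+\tau(\s)}$ to compensate at the boundary $\s=1/2$, Stirling's formula converts this into a bound of the form $k! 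C'^{k}$.

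For the remainder $R$, the approach splits according to whether $m=0$ or $m\geq1$. When $m=0$, Lemma \ref{MVEtevDP} applied with $Y=Y_{0}$ yields $A^{k} k^{4k} T^{c(1-2\s)} + A^{k} k^{k} Y_{0}^{k(1-2\s)}$; under $k\leq\sqrt{\log T}$, the first term is $T^{o(1)}$ times a decaying power of $T$, hence $o(1)$, and the second is bounded by $C^{k} k^{k} \leq C'^{k} k!$ by Stirling. When $m\geq1$, I would convert the tail estimate of Lemma \ref{ESEPE} into a moment bound through the layer-cake identity
\begin{align*}
\frac{1}{T}\int_{T}^{2T}|R(t)|^{2k}\,dt
=2k\int_{0}^{\infty}V^{2k-1}\,\frac{\meas\{t\in[T,2T]:|R(t)|>V\}}{T}\,dV,
\end{align*}
substitute $V = W Y_{0}^{1/2-\s}$, and split the $W$-integral into the two regimes of Lemma \ref{ESEPE}. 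The Gaussian regime $\meas/T \ll \exp(-c W^{2}(\log Y_{0})^{2m})$ contributes
\begin{align*}
2k\, Y_{0}^{(1-2\s)k}\,\int_{0}^{W_{1}} W^{2k-1} e^{-c W^{2}(\log Y_{0})^{2m}}\,dW
\ll k! \, C^{k}
\end{align*}
via the Gaussian moment identity $\int_{0}^{\infty} W^{2k-1} e^{-\alpha W^{2}}\,dW = (k-1)!/(2\alpha^{k})$, while the stretched-exponential regime is checked to be of lower order by comparing the maximum of $W^{2k-1}\exp(-c(W(\log T)^{m})^{1/(m+1)})$ to its value at $W_{1}$.

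The main obstacle is the tail outside the range covered by Lemma \ref{ESEPE}, i.e., $V > W_{2}Y_{0}^{1/2-\s}$ with $W_{2}=(\log T)(\log Y_{0})^{-(m+1)}$. Here one must supplement the argument with a crude pointwise bound such as $|\te_{m}(\s+it)| \ll (\log T)^{A}$ valid off a small exceptional set (obtainable by iterating the convexity bound for $\log\zeta$), so that contributions from very large $V$ are absorbed by $T^{-O(1)}$ factors. The restriction $k \leq \sqrt{\log T}$ is exactly what ensures the integrand peaks in the Gaussian regime at $W \sim \sqrt{k}/(\log Y_{0})^{m}$, well inside the admissible range, and guarantees that the clean factorial growth $k!$ rather than $k^{k}$ emerges from the calculation.
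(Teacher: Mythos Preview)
Your treatment of the Dirichlet-polynomial piece via Lemma~\ref{UBDRP} and of the remainder when $m=0$ via Lemma~\ref{MVEtevDP} is correct and in the spirit of the paper. The difficulty is in the remainder $R(t)=\te_m(\s+it)-P_{m,Y_0}(\s+it)$ for $m\geq1$. You propose to recover its $2k$-th moment from the tail-measure estimate of Lemma~\ref{ESEPE} via the layer-cake identity, but that lemma supplies no bound for $W>(\log T)(\log Y_0)^{-(m+1)}$. You flag this yourself and suggest a pointwise bound $|\te_m(\s+it)|\ll(\log T)^A$ off a small exceptional set, but this is not carried out: to make it work one would need both that bound (true, but not a one-line fact) and the observation that the measure at $W=W_2$ is already $\ll T^{-c}$ by monotonicity, so that the tail contributes at most $T^{-c}(\log T)^{2Ak}=T^{-c+o(1)}$ for $k\leq\sqrt{\log T}$. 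As written this is a gap.

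The paper's route for $m\geq1$ avoids the detour entirely. It invokes directly the $2k$-th moment bound of \cite[Lemma~8]{II2020},
\[
\frac{1}{T}\int_{T}^{2T}\bigg|\te_m(\s+it)-\sum_{2\leq n\leq Y}\frac{\Lambda(n)}{n^{\s+it}(\log n)^{m+1}}\bigg|^{2k}dt
\ll C^k k!\,\frac{Y^{k(1-2\s)}}{(\log Y)^{2km}}
+C^k k^{2k(m+1)}\frac{T^{(1-2\s)/135}}{(\log T)^{2km}},
\]
which is precisely the source from which Lemma~\ref{ESEPE} is itself derived. With $Y$ fixed and $k\leq\sqrt{\log T}$ both terms are $\leq k!\,C'^k$ (for the second, use $(\log T)^{2km}\geq k^{4km}$). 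Your layer-cake route thus inverts the natural order---reconstructing a moment bound from a tail estimate that came from that very moment bound---and in doing so manufactures the tail problem. Using the moment bound directly, the issue does not arise.
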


\begin{proof}
  Let $m \in \ZZ_{\geq 1}$, $\s \geq \frac{1}{2}$ be fixed.
  Let $k$ be a positive integer.
	Let $T$ be large, $3 \leq Y \leq T^{1 / 135k}$.
  By Lemma 8 in \cite{II2020}, we have
  \begin{align}
    \label{pMVTBEDP1}
    &\frac{1}{T}\int_{T}^{2T}\bigg| \te_{m}(\s + it) - \sum_{2 \leq n \leq Y}\frac{\Lam(n)}{n^{\s + it} (\log{n})^{m+1}} \bigg|^{2k}dt\\
    &\ll C^{k} k! \frac{Y^{k(1 - 2\s)}}{(\log{Y})^{2km}} + C^{k} k^{2k(m+1)}\frac{T^{\frac{1 - 2\s}{135}}}{(\log{T})^{2km}}.
  \end{align}
  Therefore, we obtain
  \begin{align*}
    &\frac{1}{T}\int_{T}^{2T}| \te_{m}(\s + it)|^{2k}dt\\
    &\leq 2^{2k}\frac{1}{T}\int_{T}^{2T}\bigg| \te_{m}(\s + it) - \sum_{2 \leq n \leq Y}\frac{\Lam(n)}{n^{\s + it} (\log{n})^{m+1}} \bigg|^{2k}dt\\
    &\qquadf \qquadf+ 2^{2k} \frac{1}{T}\int_{T}^{2T} \bigg| \sum_{2 \leq n \leq Y}\frac{\Lam(n)}{n^{\s + it} (\log{n})^{m+1}} \bigg|^{2k}dt.
  \end{align*}
  By Lemma \ref{VSLS}, the second integral is 
  \begin{align*}
    \leq \int_{T}^{2T}\bigg|\sum_{p \leq Y}\frac{1}{p^{\s + it} (\log{p})^{m + 1}}\bigg|^{2k}dt + O(T C^{k})
    \leq T k! C^{k}
  \end{align*}
  for some constant $C = C(\s, m) > 0$.
  By inequality \eqref{pMVTBEDP1}, the first integral is $\leq T (k! C^{k} + k^{2k(m + 1)}/(\log{T})^{2km})$ for some constant $C = C(\s, m) > 0$.
  Hence, we obtain
  \begin{align*}
    \frac{1}{T}\int_{T}^{2T}| \te_{m}(\s + it)|^{2k}dt
    \leq k! C^{k} + C^{k} \frac{k^{2k(m + 1)}}{(\log{T})^{2km}}
    \leq k! C^{k},
  \end{align*}
  which completes the proof.
\end{proof}

\begin{lemma} \label{MVERE}
  Let $(\s, m) \in \mca{A}$. For any $k \in \ZZ_{\geq 1}$, we have
  \begin{align*}
    \EXP{|\te_{m}(\s, X)|^{2k}}
    \leq k! C^{k}
  \end{align*}
  for some constant $C = C(\s, m)$.
\end{lemma}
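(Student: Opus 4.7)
The plan is to derive Lemma \ref{MVERE} from the moment bound for the truncated random Dirichlet polynomial $P_{m, Y}(\s, X)$ already established in Lemma \ref{UBDRP}, passing to the limit $Y \to \infty$ by means of Fatou's lemma. This avoids a direct combinatorial expansion of $\EXP{\te_m(\s, X)^k \overline{\te_m(\s, X)}^k}$, which would otherwise require a delicate control of the diagonal contributions appearing in \eqref{BEUIR}.

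First I would observe that, by Lemma \ref{lemASConv}, the defining series of $\te_m(\s, X)$ converges almost surely; in particular the truncations $P_{m, Y}(\s, X)$ tend to $\te_m(\s, X)$ almost surely as $Y \to \infty$. (The discrepancy between $P_{m, Y}(\s, X)$ and $\sum_{p \leq Y} \te_{m, p}(\s, X(p))$ comes from the higher prime-power terms, which form an absolutely convergent series under the assumption $(\s, m) \in \mca{A}$, and hence tends to zero.) Consequently $|P_{m, Y}(\s, X)|^{2k} \to |\te_{m}(\s, X)|^{2k}$ almost surely, and Fatou's lemma yields
\[
\EXP{|\te_m(\s, X)|^{2k}} \leq \liminf_{Y \to \infty} \EXP{|P_{m, Y}(\s, X)|^{2k}}.
\]

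Next I would invoke estimate \eqref{UBDRP2} of Lemma \ref{UBDRP}, which applies uniformly in $Y \geq 3$, to get
\[
\EXP{|\te_m(\s, X)|^{2k}} \ll_{\s, m} \l( \frac{C k^{1-\s}}{(\log 2k)^{m + \tau(\s)}} \r)^{2k}
\]
for some constant $C = C(\s, m) > 0$.

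Finally, it remains to compare this bound with $k! C^k$ via Stirling's formula $k! \gg \sqrt{k}(k/e)^k$. Taking $2k$-th roots, the desired inequality reduces to verifying that $k^{1 - 2\s}/(\log 2k)^{2(m + \tau(\s))}$ is bounded by an absolute constant. For $\s > 1/2$ this is immediate since $k^{1 - 2\s} \to 0$; for $\s = 1/2$ with $m \geq 1$ we have $\tau(\s) = 0$ but the factor $(\log 2k)^{2m}$ grows, so the quotient again tends to $0$. The two branches of the set $\mca{A}$ are thus handled, and after absorbing finitely many small values of $k$ into the constant $C$, the bound holds for all $k \geq 1$. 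This final Stirling comparison, which must distinguish the two cases of $(\s, m) \in \mca{A}$, is the only point requiring real care; otherwise the argument is straightforward once Lemma \ref{UBDRP} is in hand.
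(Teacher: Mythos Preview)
Your overall strategy---pass to the truncated polynomial, bound its moments, and apply Fatou---is exactly what the paper does. However, there is a genuine gap in your invocation of Lemma~\ref{UBDRP}: that lemma is stated only for $(\s, m) \in \mca{A}$ with $\s < 1$, whereas Lemma~\ref{MVERE} is asserted for all $(\s, m) \in \mca{A}$, which includes $\s \geq 1$. Your final Stirling comparison therefore does not cover those cases. The fix is trivial (for $\s \geq 1$ with $(\s, m) \in \mca{A}$ the series defining $\te_m(\s, X)$ converges absolutely and is bounded by a constant, so $\EXP{|\te_m(\s, X)|^{2k}} \leq C^{2k} \leq k! (eC^2)^k$), but as written your argument is incomplete.

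Even restricted to $\s < 1$, your route is more circuitous than the paper's. The paper bypasses Lemma~\ref{UBDRP} entirely and applies Lemma~\ref{VSLS} (estimate~\eqref{VSLS2}) directly to the linear sum $\sum_{p \leq Y} X(p) p^{-\s} (\log p)^{-m}$, obtaining the bound $k! \bigl(\sum_p p^{-2\s}(\log p)^{-2m}\bigr)^k$ in the desired $k! C^k$ form immediately---no Stirling comparison is needed, and no restriction on $\s$ arises because the convergence of $\sum_p p^{-2\s}(\log p)^{-2m}$ is precisely what membership in $\mca{A}$ guarantees. Your detour through \eqref{UBDRP2} recovers a sharper asymptotic in $k$ that you then have to weaken again via Stirling; the paper's direct argument is both shorter and uniform over $\mca{A}$.
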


\begin{proof}
  Using Lemma \ref{VSLS}, we see that
  \begin{align*}
    \EXP{\l| \sum_{p \leq Y}\frac{\Li_{m + 1}(p^{-\s} X(p))}{(\log{p})^{m}} \r|^{2k}}
    \leq \EXP{2^{2k}\l| \sum_{p \leq Y}\frac{X(p)}{p^{\s} (\log{p})^{m}}\r|^{2k}}  + O( C^{k} )
    \leq k! C^{k}
  \end{align*}
  for some $C = C(\s, m)$. 
  Hence, we see, using Fatou's lemma, that
  \begin{align*}
    \EXP{|\te_{m}(\s, X)|^{2k}}
    \leq \liminf_{Y \rightarrow + \infty}\EXP{\l| \sum_{p \leq Y}\frac{\Li_{m + 1}(p^{-\s} X(p))}{(\log{p})^{m}} \r|^{2k}}
    \leq k! C^{k},
  \end{align*}
  which is the assertion of this lemma.
\end{proof}

Let $\mathcal{S}$ denote the set of all rectangles in the complex plane $\mathbb{C}$ with sides parallel to the coordinate axes. 
Put $\L = \log{\log{T}}$.
We define $\mathcal{S}_{\L} \subset \mathcal{S}$ as the collection of all $\mathcal{R}$ such that 
\begin{gather*}
\mathcal{R} 
\subset \mathcal{R}_{\L} 
:=[-\L, +\L] \times i[-\L, +\L]
\end{gather*}
for large $T$. 
Let $1/2<\sigma<1$ and $m \in \mathbb{Z}_{\geq0}$. 
If $\mathcal{R} \notin \mathcal{S}_{\L}$, then it holds that 
\begin{align}
&\PP_T(\te_{m}(\s+it)\in\mca{R}) \\
&=\PP_T(\te_{m}(\s+it)\in\mca{R}\cap \mathcal{R}_{\L})
+\PP_T(\te_{m}(\s+it)\in\mca{R} \setminus \mathcal{R}_{\L}). 
\end{align}
Using Lemma \ref{MVTeta} with $k = [\L]$, we have 
\begin{align*}
\PP_T(\te_{m}(\s+it)\in\mca{R} \setminus \mathcal{R}_{\L})
&\leq \sum_{\alpha \in \{0,\frac{\pi}{2},\pi,\frac{3\pi}{2}\}} \PP_T( \Re e^{-i \alpha} \te_{m}(\s+it)> \L) \\
&\ll (\log{T})^{-2}. 
\end{align*}
Thus, we obtain
\begin{gather*}
\PP_T(\te_{m}(\s+it)\in\mca{R})
=\PP_T(\te_{m}(\s+it)\in\mca{R}\cap \mathcal{R}_{\L})
+O\left((\log{T})^{-2}\right). 
\end{gather*}
Applying Lemma \ref{MVERE} with $k = [\L]$, 
we find that $\PP(\te_{m}(\s,X)\in\mca{R})$ is estimated as
\begin{align*}
\PP(\te_{m}(\s,X)\in\mca{R})
=\PP(\te_{m}(\s,X)\in\mca{R}\cap \mathcal{R}_{\L})
+O\left((\log{T})^{-2}\right). 
\end{align*}
From these, we deduce that
\begin{align*}
&\sup_{\mca{R} \in \mca{S}}
\l|\PP_T(\te_{m}(\s+it)\in\mca{R})-\PP(\te_{m}(\s, X)\in\mca{R})\r| \\
&= \sup_{\mca{R} \in \mca{S}}
\l|\PP_T(\te_{m}(\s+it)\in\mca{R} \cap \mca{R}_{\L})-\PP(\te_{m}(\s, X)\in\mca{R} \cap \mca{R}_{\L}) + O\l( (\log{T})^{-2} \r)\r|\\
&= \sup_{\mca{R} \in \mathcal{S}_{\L}}
\l|\PP_T(\te_{m}(\s+it)\in\mca{R})-\PP(\te_{m}(\s, X)\in\mca{R})\r|
+O\l((\log{T})^{-2}\r)
\end{align*}
by noting that $\mca{R}\cap \mathcal{R}_{\L}$ is a rectangle in $\mathcal{S}_{\L}$. 
Hence the quantity $D_{\sigma,m}(T)$ of \eqref{eq:DB} satisfies
\begin{align}\label{eq:1449}
D_{\sigma,m}(T)
&= \sup_{\mca{R} \in \mathcal{S}_{\L}}
\l|\PP_T(\te_{m}(\s+it)\in\mca{R})-\PP(\te_{m}(\s, X)\in\mca{R})\r| \nonumber \\
&\qquad+O\left((\log{T})^{-2}\right). 
\end{align}
Furthermore, we associate $D_{\sigma,m}(T)$ with the quantity
\begin{gather}\label{eq:1729}
D_{\sigma,m,Y}(T)
=\sup_{\mca{R} \in \mathcal{S}_{2\L}}
\l|\PP_T(P_{m,Y}(\s+it)\in\mca{R})-\PP(\te_{m}(\s, X)\in\mca{R})\r|
\end{gather}
as follows. 
Let $B_T=B_T(Y, W; \sigma, m)$ denote the set of \eqref{def_B_Y} and put $\epsilon=W Y^{\frac{1}{2} - \s}$. 
For any $\mca{R}=(c_1,d_1)\times i(c_2,d_2) \in \mathcal{S}_{\L}$, we have
\begin{align} \label{pthmDB1}
&\PP_T(\te_{m}(\s+it)\in\mca{R}) \\
&= \frac{1}{T} \meas \left\{ t \in B_T ~\middle|~ \te_{m}(\s+it)\in\mca{R} \right\}
+O\left(\frac{1}{T} \meas ([T, 2T] \setminus B_T)\right) \\
&\leq \PP_T(P_{m,Y}(\s+it)\in\mca{R}^{+\epsilon})
+O\left(\frac{1}{T} \meas ([T, 2T] \setminus B_T)\right)
\end{align}
by the definition of $B_T$, where $\mca{R}^{+\epsilon}=(c_1-\epsilon,d_1+\epsilon)\times i(c_2-\epsilon,d_2+\epsilon)$. 
We also obtain the opposite inequality
\begin{align} \label{pthmDB2}
&\PP_T(\te_{m}(\s+it)\in\mca{R}) \\
&\geq \PP_T(P_{m,Y}(\s+it)\in\mca{R}^{-\epsilon})
+O\left(\frac{1}{T} \meas ([T, 2T] \setminus B_T)\right)
\end{align}
if we use the rectangle $\mca{R}^{-\epsilon}=(c_1+\epsilon,d_1-\epsilon)\times i(c_2+\epsilon,d_2-\epsilon)$. 
If $\epsilon$ is small, then the rectangles $\mca{R}^{\pm \epsilon}$ belong to $\mathcal{S}_{2\L}$. 
Therefore
\begin{gather}
  \label{pthmDB3}
\PP_T(P_{m,Y}(\s+it)\in\mca{R}^{\pm \epsilon})
=\PP(\te_{m}(\s, X)\in\mca{R}^{\pm \epsilon})
+O\left(D_{\sigma,m,Y}(T)\right)
\end{gather}
follows. 
Furthermore, we use the boundedness of the density function $\DF_{\sigma, m}(z)$ of \eqref{eqMine12} 
coming from \eqref{neqMine1}, \eqref{eqMine14} to derive 
\begin{gather}
  \label{pthmDB4}
\PP(\te_{m}(\s, X)\in\mca{R}^{\pm \epsilon})
=\PP(\te_{m}(\s, X)\in\mca{R})
+O\left(\epsilon \L \right)
\end{gather}
by noting that $\meas(\mca{R}^{+\epsilon} \setminus \mca{R}), \meas(\mca{R} \setminus \mca{R}^{-\epsilon}) \ll \epsilon \L$ 
for $\mca{R} \in \mathcal{S}_{\L}$. 
From \eqref{pthmDB1}, \eqref{pthmDB2}, \eqref{pthmDB3}, and \eqref{pthmDB4}, we deduce the formula
\begin{align*}
&\PP_T(\te_{m}(\s+it)\in\mca{R}) \\
&=\PP(\te_{m}(\s, X)\in\mca{R})
+O\left(
\frac{1}{T} \meas ([T, 2T] \setminus B_T) 
+ D_{\sigma,m,Y}(T)
+\epsilon \L \right) 
\end{align*}
for any $\mca{R} \in \mathcal{S}_{\L}$. 
Together with \eqref{eq:1449}, we conclude that $D_{\sigma,m}(T)$ is bounded as
\begin{gather}\label{eq:1721}
D_{\sigma,m}(T)
\ll 
D_{\sigma,m,Y}(T)
+\frac{1}{T} \meas ([T, 2T] \setminus B_T) 
+\epsilon \L
+(\log{T})^{-2}
\end{gather}
by using the quantity $D_{\sigma,m,Y}(T)$. 
Then, we evaluate $D_{\sigma,m,Y}(T)$ by using the Beurling--Selberg function as in \cite[Section 6]{LLR2019}. 
Define 
\begin{gather}
  \label{def_GKf}
  \begin{aligned}
G(u) = \frac{2u}{\pi} + \frac{2(1 - u)u}{\tan(\pi u)}, 
\qquad
K(x)=\left(\frac{\sin \pi x}{\pi x}\right)^2, \\
\text{and}\quad
f_{c, d}(u) = \frac{1}{2}(e^{-2\pi i u c} - e^{-2\pi i u d})
  \end{aligned}
\end{gather}
with $c, d \in \RR$. For a set $A$, we denote the indicator function of $A$ by $\bm{1}_{A}$.

\begin{lemma}	\label{BSF2d}
Let $L > 0$. Let $c, d \in \RR$ with $c < d$.
For any $x \in \RR$, we have
\begin{align*}
\bm{1}_{(c, d)}(x)
= \Im \int_{0}^{L}G\l(\frac{u}{L}\r)e^{2\pi i u x}f_{c, d}(u)\frac{du}{u}
+O\l( K(L(x-c)) + K(L(x-d)) \r).
\end{align*}
\end{lemma}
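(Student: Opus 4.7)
The plan is to reduce the claim to the classical Beurling--Selberg approximation of the sign function. Expanding $e^{2\pi i u x} f_{c,d}(u) = \frac{1}{2}(e^{2\pi i u(x-c)} - e^{2\pi i u(x-d)})$ and taking imaginary parts, the integral on the right-hand side of the lemma becomes $\frac{1}{2}(H_L(x-c) - H_L(x-d))$, where
\[
H_L(y) := \int_0^L G\!\left(\frac{u}{L}\right) \sin(2\pi u y)\, \frac{du}{u}.
\]
Since $\bm{1}_{(c,d)}(x) = \frac{1}{2}(\sgn(x-c) - \sgn(x-d))$ holds for every $x \notin \{c,d\}$ (and the two endpoints form a set of measure zero, where in any case $K(0) = 1$ absorbs any finite discrepancy), it suffices to prove the one-variable estimate
\[
\sgn(y) = H_L(y) + O(K(Ly)) \qquad \text{for all } y \in \RR.
\]

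To establish this, I would invoke the Beurling--Selberg extremal majorant and minorant of $\sgn$: there exist entire functions $B^{\pm}_L$ of exponential type $2\pi L$ satisfying $B^-_L(y) \leq \sgn(y) \leq B^+_L(y)$ together with the pointwise bound $B^+_L(y) - B^-_L(y) \ll K(Ly)$. In particular, the midpoint $M_L := \frac{1}{2}(B^+_L + B^-_L)$ is an odd entire function of exponential type $2\pi L$ with $M_L(y) - \sgn(y) \ll K(Ly)$. By the Paley--Wiener theorem, $\widehat{M_L}$ is supported in $[-L,L]$, and by oddness it admits a representation
\[
M_L(y) = \int_0^L \widehat{M_L}(u)\, \sin(2\pi u y)\, du
\]
for a uniquely determined density on $(0,L)$.

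The main technical task is then to verify the identity $\widehat{M_L}(u) = G(u/L)/u$ on $(0,L)$. This is a direct Fourier-transform computation starting from Beurling's explicit series formula for $B^{\pm}_L$; the $2u/\pi$ term in $G$ arises from the derivative-type contribution at the origin, while the cotangent factor $2(1-u)u / \tan(\pi u)$ comes from summing residue contributions at the integer nodes $n/L$. The main obstacle I anticipate is carrying out this Fourier calculation cleanly, since the Beurling series converges only conditionally and the order of summation must be monitored. A practical shortcut is to consult the tables of Fourier transforms of Beurling--Selberg extremal functions in Vaaler's survey on extremal functions and to match the known transform of the Selberg average directly against $G(u/L)/u$, thereby reducing the verification to a formal identity between known formulas. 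Substituting the resulting identity into the one-variable estimate and combining via $\bm{1}_{(c,d)} = \frac{1}{2}(\sgn(\,\cdot\,-c) - \sgn(\,\cdot\,-d))$ completes the proof.
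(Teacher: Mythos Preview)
Your approach is correct and is precisely the classical Beurling--Selberg argument underlying the result. Note, however, that the paper does not actually supply a proof here: its ``proof'' is simply the sentence ``This lemma is equation (6.1) in \cite{LLR2019}, which is proved essentially in \cite{KTDT}.'' So rather than comparing two proofs, you have in effect reconstructed (in outline) the argument that those cited references contain: reduce to the sign function via $\bm{1}_{(c,d)}=\tfrac12(\sgn(\,\cdot\,-c)-\sgn(\,\cdot\,-d))$, invoke the Beurling--Selberg majorant/minorant for $\sgn$, and identify the Fourier-side density with $G(u/L)/u$. Your identification of the main technical task---the explicit Fourier computation matching the Selberg average to $G$---is exactly right, and pointing to Vaaler's survey for the transform tables is the standard shortcut used in practice.
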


\begin{proof}
This lemma is equation (6.1) in \cite{LLR2019}, which is proved essentially in \cite{KTDT}.
\end{proof}

Let $\mathcal{R}=(c_1,d_1)\times i(c_2,d_2) \in \mathcal{S}$. 
The indicator function of the rectangle $\mathcal{R}$ is given by 
$\bm{1}_{\mathcal{R}}(z)=\bm{1}_{(c_1, d_1)}(x)\bm{1}_{(c_2, d_2)}(y)$ for $z=x+iy$. 
Thus, we derive the formula
\begin{align}\label{eq:1653}
\bm{1}_{\mathcal{R}}(z)
=W_{L, \mathcal{R}}(z)
&+O \big( K(L(x-c_1)) + K(L(x-d_1)) \nonumber \\
&\qquad \qquad 
+ K(L(y-c_2)) + K(L(y-d_2)) \big) 
\end{align}
by Lemma \ref{BSF2d}, where $W_{L, \mathcal{R}}(z)$ is denoted by
\begin{multline*}
\frac{1}{2}\Re\int_{0}^{L}\int_{0}^{L}G\l(\frac{u}{L}\r)G\l( \frac{v}{L} \r)\\
\times \l( e^{2\pi i (ux - vy)}f_{c_{1}, d_{1}}(u)\ol{f_{c_{2}, d_{2}}(v)} 
- e^{2\pi i (ux + vy)} f_{c_{1}, d_{1}}(u) f_{c_{2}, d_{2}}(v) \r)\frac{du}{u}\frac{dv}{v}.
\end{multline*}
In this deformation, we used the identity $\Im z_{1} \Im z_{2} = \frac{1}{2}\Re(z_{1} \ol{z_{2}} - z_{1} z_{2})$.
Note that $\PP_T(P_{m,Y}(\s+it)\in\mca{R})$ and $\PP(\te_{m}(\s, X)\in\mca{R})$ are represented as 
\begin{align*}
\PP_T(P_{m,Y}(\s+it)\in\mca{R})
&=\frac{1}{T} \int_{T}^{2T} \bm{1}_{\mathcal{R}}(P_{m,Y}(\s+it)) \,dt, \\
\PP(\te_{m}(\s, X)\in\mca{R})
&=\EXP{\bm{1}_{\mathcal{R}}(\te_m(\sigma,X))}. 
\end{align*}
The contributions from terms on the function $W_{L, \mathcal{R}}$ are evaluated as follows. 

\begin{proposition}\label{prop:W}
Let $1/2<\sigma<1$ and $m \in \mathbb{Z}_{\geq0}$.  
For large $T>0$, we take the parameters
\begin{gather*}
L=c_1(\log{T})^\sigma (\log\log{T})^m
\quad\text{and}\quad
Y = (\log{T})^{5/(\s - \frac{1}{2})}, 
\end{gather*}
where $c_1=c_1(\sigma,m)$ is a small positive constant. 
Then we obtain the asymptotic formula
\begin{gather*}
\frac{1}{T} \int_{T}^{2T} W_{L, \mathcal{R}}(P_{m,Y}(\s+it)) \,dt
=\EXP{W_{L, \mathcal{R}}(\te_{m}(\s, X))}
+O\left((\log{T})^{-2}\right) 
\end{gather*}
for any $\mathcal{R}=(c_1,d_1)\times i(c_2,d_2) \in \mathcal{S}_{2\L}$. 
\end{proposition}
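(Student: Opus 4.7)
The plan is to expand $W_{L,\mathcal{R}}$ via its defining double integral, exchange orders of integration, and then apply Proposition \ref{Key_Prop} together with Lemma \ref{CREP} pointwise in $(u,v)\in[0,L]^2$. Writing $P=P_{m,Y}(\sigma+it)$, each factor in $W_{L,\mathcal{R}}(P)$ takes the form $e^{2\pi i(u\Re P\mp v\Im P)}=e^{z_1P+z_2\bar P}$ with $z_1=\pi(iu\mp v)$ and $z_2=-\overline{z_1}$, so the inner integrand has modulus one on $[T,2T]$ while $|z_1|,|z_2|\leq\sqrt{2}\pi L$. The total mass of the weight factor $G(u/L)G(v/L)f_{\cdot,\cdot}(u)f_{\cdot,\cdot}(v)/(uv)$ on $[0,L]^2$ is $\ll(\log\log T)^2$, since $|G|\ll 1$ and $\int_0^L|f_{c,d}(u)/u|\,du\ll\log(L\Lambda)\ll\log\log T$ whenever $|d-c|\leq4\Lambda$, which is the case for $\mca{R}\in\mathcal{S}_{2\Lambda}$.

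For the pointwise step, choose $V=V(\sigma,m,T)$ of the shape $(\log T)^{1-\sigma}(\log\log T)^{-A}$ with $A=A(\sigma,m)$ a sufficiently large constant. With $Y=(\log T)^{5/(\sigma-1/2)}$ one can verify the admissibility condition $\sqrt{2}\pi L\leq b_1V^{\sigma/(1-\sigma)}(\log V)^{(m+\sigma)/(1-\sigma)}$ of Proposition \ref{Key_Prop} on $z_1,z_2$, the hypothesis \eqref{eqY}, and a tail bound $\exp(-cV^{1/(1-\sigma)}(\log V)^{(m+\sigma)/(1-\sigma)})\ll(\log T)^{-10}$. Splitting $[T,2T]=A_T\cup([T,2T]\setminus A_T)$ with $A_T$ as in \eqref{def_A_Y}, using modulus one on the complement together with the measure bound from Lemma \ref{ESE}, and applying Proposition \ref{Key_Prop} on $A_T$, one obtains
\begin{gather*}
\frac{1}{T}\int_T^{2T}e^{z_1P+z_2\bar P}\,dt
=\EXP{e^{z_1P_{m,Y}(\sigma,X)+z_2\overline{P_{m,Y}(\sigma,X)}}}+O\l((\log T)^{-10}\r)
\end{gather*}
uniformly in $(u,v)\in[0,L]^2$. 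The right-hand side equals $\EXP{e^{i\inp{P_{m,Y}(\sigma,X),w}}}$ for $w=2\pi(u\mp iv)$, which by Lemma \ref{CREP} is $\Lam_{\sigma,m}(w)+O(|w|Y^{1/2-\sigma}(\log Y)^{-m})=\Lam_{\sigma,m}(w)+O\l((\log T)^{\sigma-5}\r)$ by the choice of $Y$, and $\Lam_{\sigma,m}(w)=\EXP{e^{i\inp{\te_m(\sigma,X),w}}}$ is the characteristic function of $\te_m(\sigma,X)$.

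Reintegrating over $(u,v)$ against the weight factor reconstructs $\frac{1}{T}\int_T^{2T}W_{L,\mathcal{R}}(P_{m,Y}(\sigma+it))\,dt$ on one side and $\EXP{W_{L,\mathcal{R}}(\te_m(\sigma,X))}$ on the other, while the pointwise errors acquire an extra factor $\ll(\log\log T)^2$ from the weight integrals, yielding the announced total error $O((\log T)^{-2})$. The main obstacle is the simultaneous calibration of $V$: Proposition \ref{Key_Prop} demands $V$ small enough that $V^{1/(1-\sigma)}\log Y\ll\log T$, while its admissibility on $z_1,z_2$ demands $V^{\sigma/(1-\sigma)}(\log V)^{(m+\sigma)/(1-\sigma)}$ to dominate $L=c_1(\log T)^\sigma(\log\log T)^m$. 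These two constraints admit a common solution precisely because the choice $Y=(\log T)^{5/(\sigma-1/2)}$ makes $\log Y\asymp\log\log T$, leaving a window of size $(\log\log T)^{1-\sigma}$ in which $V$ can be placed; this is the arithmetic content of the specific exponent $5/(\sigma-1/2)$ appearing in the statement.
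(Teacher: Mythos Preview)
Your approach is essentially the same as the paper's: expand $W_{L,\mathcal{R}}$ as a double integral, apply Proposition~\ref{Key_Prop} (after splitting off $[T,2T]\setminus A_T$ via Lemma~\ref{ESE}) and then Lemma~\ref{CREP} pointwise in $(u,v)$, and reintegrate. The paper chooses $V=c_2(\log T)^{1-\sigma}(\log\log T)^{-(m+1)}$ and bounds the reintegration crudely via $|f_{c,d}(u)|\leq\pi u|d-c|$, obtaining a weight factor $\ll L^2\Lambda^2$ against a pointwise error $\ll(\log T)^{-4}$; your sharper weight bound $\int_0^L|f_{c,d}(u)/u|\,du\ll\log\log T$ is a harmless variation.

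One point needs correction. You write that $A$ should be ``sufficiently large'' and that there is a ``window of size $(\log\log T)^{1-\sigma}$'' for $V$. In fact the two constraints you identify force $A=m+1$ exactly: condition~\eqref{eqY} requires $V^{1/(1-\sigma)}(\log V)^{(m+\sigma)/(1-\sigma)}\log Y\ll\log T$, giving $V\ll(\log T)^{1-\sigma}(\log\log T)^{-(m+1)}$, while the admissibility bound $L\ll V^{\sigma/(1-\sigma)}(\log V)^{(m+\sigma)/(1-\sigma)}$ gives $V\gg(\log T)^{1-\sigma}(\log\log T)^{-(m+1)}$. So the window is only in the implied constant, and taking $A$ strictly larger than $m+1$ makes the admissibility condition on $z_1,z_2$ fail. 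With $A=m+1$ and the leading constant small enough (which is exactly the paper's choice), all of your claimed bounds go through.
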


\begin{proof}
By the definition of $W_{L, \mathcal{R}}(z)$, we have 
\begin{align*}
&\frac{1}{T} \int_{T}^{2T} W_{L, \mathcal{R}}(P_{m,Y}(\s+it)) \,dt \\
&=\frac{1}{2}\Re\int_{0}^{L}\int_{0}^{L}G\l(\frac{u}{L}\r)G\l( \frac{v}{L} \r)
\Big( \Lambda_{T,\sigma,m}(2\pi \overline{w}) f_{c_{1}, d_{1}}(u)\ol{f_{c_{2}, d_{2}}(v)} \\
&\qquad\qquad\qquad\qquad\qquad\qquad
- \Lambda_{T,\sigma,m}(2\pi w) f_{c_{1}, d_{1}}(u) f_{c_{2}, d_{2}}(v) \Big)\frac{du}{u}\frac{dv}{v}
\end{align*}
for $w=u+iv \in \mathbb{C}$, where
\begin{align}\label{eq:1601}
\Lambda_{T,\sigma, m}(w; Y)
&= \frac{1}{T} \int_{T}^{2T}\exp\l( i \inp{P_{m, Y}(\s + it), w} \r)\\
&=\frac{1}{T} \int_{T}^{2T} \exp\l( \frac{i \overline{w}}{2} P_{m, Y}(\s+it) + \frac{iw}{2} \ol{P_{m, Y}(\s+it)} \r) \,dt. 
\end{align}
Then we denote by $A_T=A_T(V, Y; \sigma, m)$ the set of \eqref{def_A_Y} with $V>0$ given by
\begin{gather*}
V = c_2(\log{T})^{1-\s}(\log{\log{T}})^{-(m+1)}, 
\end{gather*}
where $0 < c_{2} = c_{2}(\sigma, m) \leq \frac{1}{2}$ is a fixed small constant such that inequality \eqref{eqY} holds. 
Since the integrand of \eqref{eq:1601} is bounded, we find, using Lemma \ref{ESE}, that
\begin{align*}
\Lambda_{T, \sigma, m}(2\pi w; Y)
&=\frac{1}{T}\int_{A_T}\exp\l( i \pi \overline{w} P_{m, Y}(\s+it) + i \pi w \ol{P_{m, Y}(\s+it)} \r) \,dt \\
&\qquad+O\left(\exp\l( -c_3 V^{\frac{1}{1-\s}}(\log{V})^{\frac{m+\s}{1-\s}} \r) \right),
\end{align*}
where $c_3=c_3(\sigma,m)$ is a positive constant. 
Note that the choices of the parameters yield that the condition 
\begin{gather}  \label{pprop:W1}
|i \pi \overline{w}|, |i \pi w|
\leq b_{1} V^{\frac{\s}{1-\s}}(\log{V})^{\frac{m+\s}{1-\s}}
\end{gather}
is satisfied for $|u|, |v|<L$, where $b_1$ is the positive constant of Proposition \ref{Key_Prop}. 
Here, we need to choose $c_{1}$ as small such that inequality \eqref{pprop:W1} holds.
Then we deduce from Proposition \ref{Key_Prop} and Lemma \ref{CREP} that $\Lambda_{T,\sigma,m}(2\pi w)$ is estimated as
\begin{align}\label{eq:1745}
\Lambda_{T, \sigma, m}(2\pi w; Y)
=\Lambda_{\sigma, m}(2\pi w)
+E_1
\end{align}
for $|u|<L$ and $|v|<L$, where $\Lambda_{\sigma,m}(w)$ is the characteristic function of  \eqref{eqMine3}, and the error term is evaluated as
\begin{align*}
E_1
&\ll\exp\l( -c_{3} V^{\frac{1}{1-\s}}(\log{V})^{\frac{m+\s}{1-\s}} \r)
+\frac{1}{T}\l( V^{\frac{\s}{1-\s}}(\log{V})^{\frac{m + \s}{1-\s}}Y \r)^{V^{\frac{1}{1-\s}}(\log{V})^{\frac{m+\s}{1-\s}}} \\
&\qquad+\exp\l( -b_{2} V^{\frac{1}{1-\s}}(\log{V})^{\frac{m+\s}{1-\s}} \r)
+\frac{L}{Y^{\s-\frac{1}{2}}(\log{Y})^{m}} \\
&\ll (\log{T})^{-4}
\end{align*}
with the positive constant $b_2=b_2(\sigma,m)$ of Proposition \ref{Key_Prop}. 
One can estimate the function $\Lambda_{T,\sigma,m}(2\pi \overline{w})$ along the same line. 
Therefore, we obtain 
\begin{align*}
&\frac{1}{T} \int_{T}^{2T} W_{L, \mathcal{R}}(P_{m,Y}(\s+it)) \,dt \\
&=\frac{1}{2}\Re\int_{0}^{L}\int_{0}^{L}G\l(\frac{u}{L}\r)G\l( \frac{v}{L} \r)
\Big( \Lambda_{\sigma,m}(2\pi \overline{w}) f_{c_{1}, d_{1}}(u)\ol{f_{c_{2}, d_{2}}(v)} \\
&\qquad\qquad\qquad\qquad\qquad\qquad
- \Lambda_{\sigma,m}(2\pi w) f_{c_{1}, d_{1}}(u) f_{c_{2}, d_{2}}(v) \Big)\frac{du}{u}\frac{dv}{v}
+E_2 \\
&=\EXP{W_{L, \mathcal{R}}(\te_{m}(\s, X))}+E_2, 
\end{align*}
where $E_2$ is evaluated as 
\begin{gather*}
E_2
\ll L^2 (\log{T})^{-4} (d_1-c_1) (d_2-c_2)
\end{gather*}
by the basic inequalities $G(u) \ll 1$ and $|f_{c, d}(u)| \leq \pi u |d-c|$. 
Now, we assume that the rectangle $(c_{1}, d_{1}) \times i(c_{2}, d_{2}) \in \mca{S}_{2\L}$, 
so $E_{2} \ll (\log{T})^{-2}$, which completes the proof of this proposition.
\end{proof}

\begin{proof}[Proof of Theorem \ref{thmDB1}]
Choosing $Y = (\log{T})^{5/(\s - \frac{1}{2})}$ and $W=(\log{T})^{1/2}$ in \eqref{eq:1721}, we derive the bound
\begin{gather*}
D_{\sigma,m}(T)
\ll D_{\sigma,m,Y}(T)+(\log{T})^{-2}
\end{gather*}
by Lemma \ref{ESEPE}. 
Thus, we show an upper bound of the quantity $D_{\sigma,m,Y}(T)$. 
Let $\mathcal{R}=(c_1,d_1) \times i(c_2,d_2) \in \mathcal{S}_{2\L}$. 
Then we apply \eqref{eq:1653} to approximate the value $\PP_T(P_{m,Y}(\s+it)\in\mca{R})$ as 
\begin{align}\label{eq:1759}
\PP_T(P_{m,Y}(\s+it)\in\mca{R})
&=\frac{1}{T} \int_{T}^{2T} W_{L, \mathcal{R}}(P_{m,Y}(\s+it)) \,dt \nonumber\\
&\qquad+O\left(I_T(c_1)+I_T(d_1)+J_T(c_2)+J_T(d_2)\right), 
\end{align}
where we put
\begin{align*}
I_T(\xi)
&=\frac{1}{T} \int_{T}^{2T} K(L(\Re P_{m,Y}(\s+it)-\xi)) \,dt, \\
J_T(\xi)
&=\frac{1}{T} \int_{T}^{2T} K(L(\Im P_{m,Y}(\s+it)-\xi)) \,dt
\end{align*}
for $\xi \in \mathbb{R}$. 
We evaluate these integrals by using the equality 
\begin{gather}\label{eq:1805}
K(Lx)
=\frac{2}{L^2} \Re \int_{0}^{L} (L-u) e^{2\pi i x u} \,du
\end{gather}
which follows from \cite[$(6.10)$]{LLR2019}. 
We have 
\begin{align*}
I_{T}(\xi)
&=\frac{2}{L^2} \Re \int_{0}^{L} (L-u) \frac{1}{T} \int_{T}^{2T} \exp\l(2\pi i (\Re P_{m,Y}(\s+it)-\xi) u\r) \,dt\,du \\
&=\frac{2}{L^2} \Re \int_{0}^{L} (L-u) e^{-2\pi i \xi u} \Lambda_{T, \sigma, m}(2\pi u; Y)  \,du \\
&=\frac{2}{L^2} \Re \int_{0}^{L} (L-u) e^{-2\pi i \xi u} \Lambda_{\sigma,m}(2\pi u)  \,du
+O\left((\log{T})^{-4}\right) 
\end{align*}
by using \eqref{eq:1745}. 
Similarly, the formula
\begin{gather*}
J_{T}(\xi)
=\frac{2}{L^2} \Re \int_{0}^{L} (L-u) e^{-2\pi i \xi v} \Lambda_{\sigma,m}(2\pi iv)  \,dv
+O\left((\log{T})^{-4}\right) 
\end{gather*}
is valid. 
By inequality \eqref{eqMine9}, we have 
\begin{gather*}
\Lambda_{\sigma,m}(2\pi u) \ll \exp(-\sqrt{u})
\quad\text{and}\quad
\Lambda_{\sigma,m}(2\pi iv) \ll \exp(-\sqrt{v})
\end{gather*}
for large $u,v>0$. 
These yield the bounds $I_{T}(\xi), J_{T}(\xi) \ll L^{-1}+(\log{T})^{-4}$ for any $\xi \in \RR$. 
Hence we deduce from \eqref{eq:1759} the asymptotic formula
\begin{align}\label{eq:1800}
\PP_T(P_{m,Y}(\s+it)\in\mca{R})
&=\frac{1}{T} \int_{T}^{2T} W_{L, \mathcal{R}}(P_{m,Y}(\s+it)) \,dt \nonumber\\
&\qquad+O\left(L^{-1}+(\log{T})^{-4}\right). 
\end{align}
Then, we also apply \eqref{eq:1653} to approximate the value $\PP(\te_m(\s,X)\in\mca{R})$ as 
\begin{align*}
\PP(\te_m(\s,X)\in\mca{R})
&=\EXP{W_{L, \mathcal{R}}(\te_m(\s,X))} \nonumber\\
&\qquad+O\left(I(c_1)+I(d_1)+J(c_2)+J(d_2)\right), 
\end{align*}
where we put
\begin{align*}
I(\xi)
&=\EXP{K(L(\Re \te_m(\s,X)-\xi))}, \\
J(\xi)
&=\EXP{K(L(\Im \te_m(\s,X)-\xi))}
\end{align*}
for $\xi \in \mathbb{R}$. 
Since we have 
\begin{align*}
I(\xi)
=\frac{2}{L^2} \Re \int_{0}^{L} (L-u) e^{-2\pi i \xi u} \Lambda_{\sigma,m}(2\pi u)  \,du
\end{align*}
by using \eqref{eq:1805}, the bound $I(\xi) \ll L^{-1}+(\log{T})^{-4}$ follows again from inequality \eqref{eqMine9}. 
We see that $J(\xi)$ is estimated similarly. 
Hence we obtain 
\begin{align}\label{eq:1801}
\PP(\te_m(\s,X)\in\mca{R})
=\EXP{W_{L, \mathcal{R}}(\te_m(\s,X))}
+O\left(L^{-1}+(\log{T})^{-4}\right). 
\end{align}
Combining \eqref{eq:1800} and \eqref{eq:1801}, we have 
\begin{align*}
D_{\sigma,m,Y}(T)
&\ll 
\left|\frac{1}{T} \int_{T}^{2T} W_{L, \mathcal{R}}(P_{m,Y}(\s+it)) \,dt
-\EXP{W_{L, \mathcal{R}}(\te_{m}(\s, X))}\right| \\
&\qquad+L^{-1}+(\log{T})^{-4}. 
\end{align*}
Thus, with the same assumption as in Proposition \ref{prop:W}, we obtain 
\begin{gather*}
D_{\sigma,m,Y}(T)
\ll (\log{T})^{-2}+L^{-1}
\ll (\log{T})^{-\sigma}(\log\log{T})^{-m},  
\end{gather*}
which yields the desired bound of $D_{\sigma,m}(T)$. 
\end{proof}

\section{\textbf{Cumulant-generating functions}} \label{Sec_PThmLD}

Let $(\s, m)\in\mca{A}$ and $\a\in\RR$. 
We recall that the moment-generating function is defined by \eqref{eqFY}, that is,  
\begin{gather*}
  F_{\s, m}(s; \alpha)
  =\EXP{\exp(s\Re e^{-i\a}\te_{m}(\s, X))}
\end{gather*}
for $s=\kappa+it\in\CC$. 
Note that $F_{\s, m}(\kappa; \alpha)>0$ if $\kappa\in\RR$ by the definition. 
Then, we define the cumulant-generating function
\begin{align*}
  f_{\s, m}(\kappa; \alpha)
  =\log{F}_{\s, m}(\kappa; \alpha)
\end{align*}
for $\kappa\in\RR$, which is a real analytic function. 
The purpose in this section is to give the asymptotic formula for $f_{\s, m}^{(n)}(\kappa; \alpha)$ for $1/2 < \s < 1$ and $m\in\ZZ_{\geq 0}$. 
The formula is given by the following proposition.

\begin{proposition}\label{propCumu1}
  Let $1/2<\s<1$, $m\in\ZZ_{\geq0}$ and $\a \in \mathbb{R}$. 
  There exists a large constant $c = c(\s, m)$ such that 
  for any $\kappa \geq c$ and $n \in \ZZ_{\geq 0}$ 
  \begin{align*}
    f_{\s, m}^{(n)}(\kappa; \alpha)
    =\s^{m/\s}g_n(\s)\frac{\kappa^{(1/\s)-n}}{(\log\kappa)^{(m/\s)+1}}
    \l(1 + O_{\s, m} \l( \frac{2^n n!}{| g_n(\sigma) |} \frac{1+ m \log\log\kappa}{\log\kappa}\r)\r),
  \end{align*}
  where
  \begin{equation}\label{eq_gn}
    g_{n}(\s) = 
    \int_{0}^{\infty}\frac{g^{(n)}(u)}{u^{(1/\s)+1-n}}\, du 
  \end{equation}
  with $g(z)=\log{I}_{0}(z)$ and $I_{0}$ the modified Bessel function of order $0$.
\end{proposition}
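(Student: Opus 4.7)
The plan is to first establish the asymptotic for $f_{\s, m}(z;\a)$ as a holomorphic function on a complex sector $\mca{D} = \{z \in \CC : \Re z > C,\ |\Im z| \leq \Re z\}$ (for some large $C = C(\s, m)$), then deduce the derivative formulas via Cauchy's integral formula. The starting point is the factorization
\[F_{\s, m}(z;\a) = \prod_p E_p(z;\a), \qquad E_p(z;\a) := \EXP{\exp\l(z\,\Re e^{-i\a}\te_{m,p}(\s, X(p))\r)}, \]
coming from the independence of $\{X(p)\}_p$. The approximation formula Proposition~\ref{thmF}, established in Section~\ref{SSec_RPL} via the polylogarithmic analysis described in remark~(i), ensures that $E_p(z;\a)$ is non-vanishing throughout $\mca{D}$, so that each $\log E_p(z;\a)$ is well-defined and analytic, and thus $f_{\s, m}(z;\a) = \sum_p \log E_p(z;\a)$ is holomorphic on $\mca{D}$.

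I would then split the sum at a threshold $Y$ of size comparable to $|z|^{1/\s}(\log|z|)^{-m/\s}$. For $p > Y$, a change of variables $\theta \to \theta + \a$ in the leading $k=1$ term of $\Li_{m+1}(p^{-\s}X(p))/(\log p)^m$ and a Taylor expansion yield
\[\log E_p(z;\a) = g\l(\frac{z}{p^{\s}(\log p)^{m}}\r) + O\l(\frac{|z|^2}{p^{2\s}(\log p)^{2m}}\r), \]
with $g = \log I_0$; the $k \geq 2$ contributions of the polylogarithm are of strictly lower order. The contribution of $p \leq Y$ is controlled directly via Proposition~\ref{thmF}. Converting $\sum_{p>Y} g(z/(p^\s(\log p)^m))$ to the integral $\int_Y^\infty g(z/(x^\s(\log x)^m))\,dx/\log x$ via the Prime Number Theorem and then performing the substitution $u = z/(x^\s(\log x)^m)$, so that $x \sim \s^{m/\s}(z/u)^{1/\s}(\log(z/u))^{-m/\s}$, one arrives at
\[f_{\s, m}(z;\a) = \s^{m/\s}g_0(\s)\frac{z^{1/\s}}{(\log z)^{(m/\s)+1}}\l(1 + O_{\s, m}\l(\frac{1+m\log\log|z|}{\log|z|}\r)\r)\]
uniformly for $z \in \mca{D}$ with $|z|$ large.

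For the derivatives, I would apply Cauchy's integral formula on the circle $\gamma_\kappa = \{z : |z - \kappa| = \kappa/2\} \subset \mca{D}$:
\[f_{\s, m}^{(n)}(\kappa;\a) = \frac{n!}{2\pi i}\oint_{\gamma_\kappa}\frac{f_{\s, m}(z;\a)}{(z-\kappa)^{n+1}}\,dz. \]
Direct differentiation of the main term $\s^{m/\s}g_0(\s)\,z^{1/\s}/(\log z)^{m/\s+1}$, combined with the identity
\[g_n(\s) = \frac{1}{\s}\l(\frac{1}{\s}-1\r)\cdots\l(\frac{1}{\s}-n+1\r)g_0(\s), \]
obtained by $n$-fold integration by parts in \eqref{eq_gn} (the boundary terms vanish since $1/2 < \s < 1$), produces the claimed leading term $\s^{m/\s}g_n(\s)\kappa^{(1/\s)-n}/(\log\kappa)^{(m/\s)+1}$. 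The Cauchy estimate on the error part contributes the factor $n!/(\kappa/2)^n = 2^n n!\,\kappa^{-n}$, and after dividing by the main term the relative remainder becomes $\ll \tfrac{2^n n!}{|g_n(\s)|}\cdot\tfrac{1+m\log\log\kappa}{\log\kappa}$, exactly matching the statement.

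The main obstacle is the small-prime contribution $p \leq Y$, where the Taylor expansion used for $p > Y$ fails and one must rely on Proposition~\ref{thmF} to control $\log E_p(z;\a)$ in the sector $\mca{D}$; in particular, the analyticity and non-vanishing of $E_p(z;\a)$ off the real axis for small $p$ are nontrivial, as emphasized in remark~(i). A subtler challenge, specific to this setting, is that passage from the $n = 0$ asymptotic to derivatives via Cauchy's formula amplifies the relative error by $2^n n!$, so the remainder in the $n = 0$ case must be proved to have \emph{precisely} the shape $(1+m\log\log|z|)/\log|z|$ uniformly on $\mca{D}$; any weaker or non-uniform bound would be insufficient after differentiation, which forces use of the Prime Number Theorem with its sharp error term rather than a routine partial summation.
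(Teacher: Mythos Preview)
Your overall architecture matches the paper: factorize over primes, approximate $\log E_p$ by $g(z/(p^\s(\log p)^m))$, convert the prime sum to an integral via the Prime Number Theorem with error term, invoke the identity $g_n(\s)=\prod_{j=0}^{n-1}(\tfrac1\s-j)\,g_0(\s)$, and extract derivatives by Cauchy's formula on the circle $|z-\kappa|=\kappa/2$. The gap is the location of the split.

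With $Y \asymp |z|^{1/\s}(\log|z|)^{-m/\s}$, the primes $p\le Y$ are exactly those for which $u=z/(p^\s(\log p)^m)$ satisfies $|u|\gtrsim 1$, and that range carries a fixed positive share of the constant $g_0(\s)=\int_0^\infty g(u)\,u^{-1/\s-1}\,du$. Treating $p\le Y$ as error via Corollary~\ref{corf} gives only $\sum_{p\le Y}\log E_p \ll \sum_{p\le Y}\kappa/(p^\s(\log p)^m)\asymp \kappa^{1/\s}/(\log\kappa)^{m+1}$, which is at least as large as the asserted main term $\kappa^{1/\s}/(\log\kappa)^{m/\s+1}$ (strictly larger for $m\ge1$, since $m+1<m/\s+1$ when $\s<1$); so this block cannot be discarded, and correspondingly your main sum $\sum_{p>Y}g(\cdot)$ only recovers $\int_0^{1}g(u)\,u^{-1/\s-1}\,du$, not the full $g_0(\s)$. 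The paper instead cuts at $y_1\asymp\kappa^{1/(2\s)}$, the threshold where the $g$-approximation of Lemma~\ref{lemCumu} (with error $O(\kappa/(p^{2\s}(\log p)^m))$, not the $O(|z|^2/(p^{2\s}(\log p)^{2m}))$ you wrote) first becomes valid; then $\sum_{p\le y_1}\log E_p\ll \kappa^{1/2+1/(2\s)}/(\log\kappa)^{m+1}$ is a genuine lower-order term, and the main sum $\sum_{p>y_1}g(\cdot)$ recovers the full $g_0(\s)$ after the change of variables.
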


To prove this proposition, we need the analyticity of $f_{\s, m}(s; \a)$ for a certain complex domain.
In other words, we need the fact that $F_{\s, m}(s; \a)$ does not have zeros in the complex domain.
In the following, we prove some results to reveal the nonexistence of zeros.

The function $F_{\s, m}(s; \a)$ satisfies
\begin{equation}\label{eqMine3T}
  F_{\s, m}(s; \a) = \prod_{p} F_{\s, m, p} (s; \a)
\end{equation}
by the independence of $X(p)$'s, where $F_{\s, m, p} (s; \a)$ is the function defined by
\begin{align}
  F_{\s, m, p}(s; \a)
  &= \EXP{\exp\l( s \Re e^{-i\a} \te_{m, p}(\s, X(p)) \r)}\\
  \label{eqFp}
  &= \frac{1}{2\pi} \int_{0}^{2\pi}\exp(s \Re e^{-i\a} \te_{m, p}(\s, e^{i\theta}))d\theta.
\end{align}
Our plan for the proof of the non-vanishing of $F_{\s, m, p}$ is the following:
In Section \ref{SSec_RPL}, we prove some properties of polylogarithmic functions.
The properties are used in the proof of the non-vanishing of $F_{\s, m, p}$ when $p$ is small.
When $p$ is large, we use some properties of the Bessel function $I_{0}$.
We describe those properties in Section \ref{Sec_RBF}.

\subsection{Results on polylogarithmic functions} \label{SSec_RPL}

In this section, we prove the following proposition.

\begin{proposition}\label{thmF}
  Let $(\s, m)\in\mca{A}$ and $\a\in\RR$. 
  Let $s = \kappa + it$ satisfy $\kappa>c$ and $|t|\leq\kappa$ with $c$ sufficiently large. 
  For any $p^{\s} (\log{p})^{m} \leq \kappa (\log{\kappa})^{-3}$, we have
  \begin{align*}
  &F_{\s, m, p}(s; \a) =\\
  &\exp\l(\frac{s}{(\log{p})^m}\lambda_{p^{-\s}}(\theta_1; m, \a)\r)\sqrt{\frac{(\log{p})^m}{2\pi{s}|\lambda''_{p^{-\s}}(\theta_1; m, \a)|}}
  \Bigg\{ 1+  O\left( \sqrt{\frac{p^\s (\log p)^m}{\kappa }} \right) \Bigg\}. 
  \end{align*}
  Here, the above square root takes positive values on the positive real axis.
\end{proposition}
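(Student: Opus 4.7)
The plan is to treat $F_{\s,m,p}(s;\a)$ via its integral representation \eqref{eqFp}, namely
\begin{gather*}
F_{\s,m,p}(s;\a) = \frac{1}{2\pi} \int_0^{2\pi} \exp\left(\frac{s}{(\log p)^m}\,\lambda_{p^{-\s}}(\theta; m, \a)\right) d\theta,
\end{gather*}
where $\lambda_{x}(\theta; m, \a) := \Re[e^{-i\a}\Li_{m+1}(xe^{i\theta})]$, by the Laplace/saddle-point method with large parameter $N := \kappa/(\log p)^m \geq p^\s(\log\kappa)^3$. First I would locate the global maximum: from the Dirichlet expansion $\lambda_{p^{-\s}}(\theta;m,\a) = \sum_{k\geq 1}\cos(k\theta-\a)/(k^{m+1}p^{k\s})$ the $k=1$ term dominates, giving a unique maximum at some $\theta_1 = \theta_1(p,\s,m,\a)$ close to $\a$. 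I would then verify the asymptotics $|\lambda''_{p^{-\s}}(\theta_1;m,\a)| \asymp p^{-\s}$ and $|\lambda'''_{p^{-\s}}(\theta_1;m,\a)| \ll p^{-\s}$, together with the global quadratic lower bound $\lambda_{p^{-\s}}(\theta_1;m,\a) - \lambda_{p^{-\s}}(\theta;m,\a) \gg p^{-\s}(\theta-\theta_1)^2$ for $\theta\in[0,2\pi)$.

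Next, I would split the integral at $|\theta-\theta_1|\leq\delta$, choosing $\delta$ so that $Np^{-\s}\delta^2 \gg \log\kappa$ while the cubic remainder of the Taylor expansion stays controlled; $\delta \asymp (\log\kappa)^{-1}$ suffices. Since $|\exp(s\lambda/(\log p)^m)| = \exp(N\lambda(\theta))$, the quadratic lower bound makes the tail exponentially smaller than the main term. On the central piece, Taylor expansion together with the substitution $\theta-\theta_1 = v/\sqrt{N|\lambda''(\theta_1)|}$ converts the quadratic part of the exponent into $-sv^2/(2\kappa)$, whose Gaussian integral over $\RR$ equals $\sqrt{2\pi\kappa/s}$. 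Multiplying by the Jacobian produces the announced main factor $\sqrt{(\log p)^m/(2\pi s|\lambda''_{p^{-\s}}(\theta_1;m,\a)|)}$, while the cubic remainder $\tfrac{s\lambda'''(\theta_1)}{6(\log p)^m}u^3$ contributes, after the same change of variables and using $|\lambda'''|/|\lambda''|^{3/2} \asymp p^{\s/2}$, an error of size $(|s|/\kappa)\sqrt{p^\s(\log p)^m/\kappa} \ll \sqrt{p^\s(\log p)^m/\kappa}$. The quartic and higher-order remainders yield strictly smaller contributions under the hypothesis $p^\s(\log p)^m \leq \kappa(\log\kappa)^{-3}$.

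The main obstacle is the complex nature of $s = \kappa + it$ with $|t|\leq\kappa$, which forbids a direct appeal to the usual real-variable Laplace method. The essential point is that the modulus of the integrand depends only on $\kappa > 0$, so both the tail decay and the Gaussian width keep their full strength, while the imaginary part $t$ enters only through the bounded factor $|s/\kappa|\leq\sqrt{2}$ in the error estimate. One must also interpret $\sqrt{s}$ via the principal branch, which is unambiguous because the condition $|t|\leq\kappa$ confines $s$ to a sector strictly away from the negative real axis and ensures $\sqrt{s}$ reduces to the positive root on $s>0$, matching the convention in the statement.
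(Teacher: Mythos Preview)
Your plan is the paper's: apply the saddle-point method to the integral representation \eqref{eqFp}, localize around the maximizer $\theta_1$, Taylor expand to second order, and reduce to a complex Gaussian. The paper takes a $p$-adaptive window $\epsilon=\log\kappa\sqrt{2(\log p)^m/(\kappa|\lambda''_{p^{-\s}}(\theta_1)|)}$ rather than your fixed $\delta\asymp(\log\kappa)^{-1}$, and it bounds the tail $I_2$ using the monotonicity of $\lambda_{p^{-\s}}(\,\cdot\,;m,\a)$ on each arc between the two critical points (Lemma~\ref{propMono}) instead of a global quadratic lower bound; these are minor implementation choices and either route can be made to work.

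The substantive gap is your derivation of the structural facts about $\theta_1$ from ``the $k=1$ term dominates''. Differentiating termwise gives
\[
\lambda'_{r}(\theta;m,\a)=-\sum_{k\geq1}\frac{r^{k}}{k^{m}}\sin(k\theta-\a),\qquad
\lambda''_{r}(\theta;m,\a)=-\sum_{k\geq1}\frac{r^{k}}{k^{m-1}}\cos(k\theta-\a),
\]
and for $m\in\{0,1\}$ with $r=p^{-\s}$ close to $1/\sqrt2$ (e.g.\ $p=2$ and $\s$ near $1/2$) the tails $\sum_{k\geq2}k^{-m}r^{k}$ and $\sum_{k\geq2}k^{1-m}r^{k}$ are comparable in size to the leading term $r$, so neither uniqueness of the maximum nor the uniform lower bound $|\lambda''_{r}(\theta_1)|\gg r$ nor your global quadratic estimate follows from a direct perturbation argument. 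The paper spends Lemmas~\ref{lemZeros}--\ref{lemLB} on exactly this: a Rolle-type induction starting from the closed form $\Li_{-1}(z)=z/(1-z)^2$ shows that $\lambda'_{r}(\,\cdot\,;m,\a)$ has precisely two zeros in a period for every $m\geq-1$, giving the monotonicity picture; the same device gives simplicity of those zeros; and the bound $|\lambda''_{r}(\theta_1)|\gg r$ is then obtained by an explicit computation in the perturbative regime together with a continuity/compactness argument on $(\a,r)\in[0,2\pi]\times[0.27,1/\sqrt2]$ for the remaining case $m\in\{0,1\}$. Once these preparatory facts are secured, the rest of your outline agrees with the paper's proof.
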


To prove this, we use the saddle method. 
The following lemmas are used in the method.

Let $m \in \ZZ$ and $\a \in \RR$. 
We define
\begin{align*}
  \lambda_{r}(\theta; m, \a)
  =\Re e^{-i\a} \Li_{m+1}(re^{i\theta})
  =\sum_{k=1}^\infty\frac{r^k}{k^{m+1}}\cos(k\theta-\a) 
\end{align*}
for $\theta\in\RR$, where $0 < r \leq 1 / \sqrt{2}$ is a real number. 
By the definition, the function $\lambda_{r}(\theta; m, \a)$ satisfies the differential relation
\begin{gather}\label{eqNote1}
\lambda'_{r}(\theta; m, \a)
=\lambda_{r}(\theta; m-1, \a-\pi/2). 
\end{gather}
We begin by the following lemma on zeros of $\lambda_{r}(\theta; m, \a)$. 

\begin{lemma}\label{lemZeros}
Let $m \geq -2$ and $\a\in\RR$. 
For any fixed real number $0<r\leq1/\sqrt{2}$, the function $\lambda_{r}(\theta; m, \a)$ 
has exactly two distinct zeros in the interval $[0, 2\pi)$. 
\end{lemma}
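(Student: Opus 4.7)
The plan is to prove Lemma \ref{lemZeros} by induction on $m$, starting from $m=-2$ and stepping upward by means of the differential relation \eqref{eqNote1}.

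For the base case $m=-2$, I would use the closed form $\mathrm{Li}_{-1}(z)=z/(1-z)^2$ to compute $\lambda_r(\theta;-2,\alpha)$ explicitly. Multiplying numerator and denominator by $(1-re^{-i\theta})^2$ and taking real parts shows that the denominator $(1-2r\cos\theta+r^2)^2$ is strictly positive, so the zeros of $\lambda_r(\theta;-2,\alpha)$ coincide with those of a real trigonometric polynomial of the form $A\cos\theta+B\sin\theta-C$, where $A=(1+r^2)\cos\alpha$, $B=(1-r^2)\sin\alpha$, $C=2r\cos\alpha$. A direct expansion gives $A^2+B^2-C^2=(1-r^2)^2$, which is strictly positive for $0<r\leq1/\sqrt{2}$, and moreover $(A,B)\neq(0,0)$ in every regime of $\alpha$ (if $\cos\alpha=0$ then $B=(1-r^2)\sin\alpha\neq0$). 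This produces exactly two distinct zeros of $A\cos\theta+B\sin\theta=C$ in $[0,2\pi)$.

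For the inductive step, suppose $\lambda_r(\theta;m-1,\beta)$ has exactly two distinct zeros in $[0,2\pi)$ for every $\beta\in\RR$. By \eqref{eqNote1}, the derivative $\lambda_r'(\theta;m,\alpha)=\lambda_r(\theta;m-1,\alpha-\pi/2)$ has exactly two distinct zeros $\theta_1,\theta_2\in[0,2\pi)$. Periodicity of $\lambda_r(\cdot;m,\alpha)$ forces $\int_0^{2\pi}\lambda_r'(\theta;m,\alpha)\,d\theta=0$, which rules out the possibility that $\lambda_r'$ has a single sign on all of $[0,2\pi)$; hence the two zeros must both be sign changes, and $\lambda_r(\cdot;m,\alpha)$ has exactly one local maximum and one local minimum on the circle and is strictly monotonic on each of the two connecting arcs. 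Fourier orthogonality gives $\int_0^{2\pi}\lambda_r(\theta;m,\alpha)\,d\theta=0$, and the $k=1$ Fourier term $r\cos(\theta-\alpha)$ shows that $\lambda_r(\cdot;m,\alpha)$ is not identically zero. Therefore its maximum is strictly positive and its minimum strictly negative, and the intermediate value theorem supplies exactly one zero on each of the two monotonic arcs, for exactly two zeros in total.

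The point that requires the most care is the transition from zeros of $\lambda_r'$ to strict monotonicity of $\lambda_r$ in the inductive step, i.e., ensuring that the two derivative zeros are genuine sign changes and not tangential contacts. This is handled by the integral argument above: if $\lambda_r'$ did not change sign at either zero it would have constant sign, and combined with $\int\lambda_r'=0$ would force $\lambda_r'\equiv0$ and hence $\lambda_r$ constant, contradicting the non-vanishing of the $k=1$ Fourier coefficient. Beyond this, only minor bookkeeping is needed; the bound $r\leq1/\sqrt{2}$ is used only to guarantee convergence of the series defining $\lambda_r(\theta;m,\alpha)$ down to $m=-2$, and the induction itself uses only $r<1$.
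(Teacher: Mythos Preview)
Your proposal is correct, and the base case is handled essentially the same way as in the paper (explicit computation via $\Li_{-1}(z)=z/(1-z)^2$, reducing to $A\cos\theta+B\sin\theta=C$ with $A^2+B^2>C^2$).

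The inductive step, however, follows a genuinely different route. The paper argues separately for ``at least two'' and ``at most two'': the lower bound comes from writing $\lambda_r(\theta;m,\alpha)=\lambda_r'(\theta;m+1,\alpha+\pi/2)$ and noting that the derivative of a smooth periodic function vanishes at least twice, while the upper bound comes from Rolle's theorem applied to $\lambda_r'(\theta;m,\alpha)=\lambda_r(\theta;m-1,\alpha-\pi/2)$ and the inductive hypothesis. Your argument instead works entirely through the derivative: you take the inductive hypothesis that $\lambda_r'$ has exactly two zeros, use the vanishing mean to force both to be sign changes, deduce the monotonicity structure of $\lambda_r$ on the two arcs, and then use $\int_0^{2\pi}\lambda_r=0$ together with the nonvanishing $k=1$ Fourier coefficient to pin down exactly one zero per arc via the intermediate value theorem. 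The paper's version is shorter, but yours actually establishes along the way the monotonicity structure that the paper proves separately in the next lemma (Lemma~\ref{propMono}), so your approach front-loads some of that work. One small remark: your closing comment that $r\le 1/\sqrt{2}$ is needed for convergence at $m=-2$ is not quite right, since $\sum k r^k$ converges for all $r<1$; the restriction in the statement is there only because $r=p^{-\sigma}$ with $p\ge 2$, $\sigma\ge 1/2$ in the intended application, and your base-case computation in fact works for all $0<r<1$.
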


\begin{proof}
We prove this lemma by induction. 
Recall that $\Li_{-1}(z) = z / (1 - z)^2$.
Hence, we can easily confirm that the equation $\lam_{r}(\theta; -2, \a) = 0$ is equivalent to 
that $\cos(\theta - \b) = \frac{2r \cos{\a}}{\sqrt{A^2 + B^2}}$,
where $A = (1 + r^2) \cos{\a}$, $B = (1 - r^2)\sin{\a}$, and $\b$ is defined by the equations $\cos{\b} = A / \sqrt{A^2 + B^2}$ 
and $\sin{\b} = B / \sqrt{A^2 + B^2}$.
Note that we can also easily confirm that $\l|\frac{2r \cos{\a}}{\sqrt{A^2 + B^2}} \r| < 1$ for $0 < r \leq 1 / \sqrt{2}$, 
so $\lam_{r}(\theta; -2, \a)$ has exactly two zeros.
Let $m\in\ZZ_{\geq -1}$. 
We have $\lambda_{r}(\theta; m, \a)=\lambda'_{r}(\theta; m+1, \a+\pi/2)$ by relation \eqref{eqNote1}. 
Note that the function $\lambda_{r}(\theta; m+1, \a+\pi/2)$ is smooth and periodic with period $2\pi$. 
Thus $\lambda'_{r}(\theta; m+1, \a+\pi/2)$ vanishes at least twice in the period. 
Hence, there exist at least two zeros of $\lambda_{r}(\theta; m, \a)$ in $[0, 2\pi)$.  
If there were three or more zeros of $\lambda_{r}(\theta; m, \a)$ in $[0, 2\pi)$, 
then we saw that $\lambda'_{r}(\theta; m, \a)$ has at least three zeros in $[0, 2\pi)$ by Rolle's theorem. 
However, it implies that the function $\lambda_{r}(\theta; m-1, \a-\pi/2)$ has at least three zeros in $[0, 2\pi)$ 
by relation \eqref{eqNote1}, which contradicts the assumption of induction. 
Thus, the function $\lambda_{r}(\theta; m, \a)$ 
has exactly two distinct zeros in the interval $[0, 2\pi)$.
\end{proof}

Let $m\in\ZZ_{\geq 0}$ and $\a\in\RR$. 
Denote by $\theta_1$ and $\theta_2$ the zeros of $\lambda'_{r}(\theta; m, \a)$ with  $0\leq\theta_1<\theta_2<2\pi$. 
Then we have $\lambda_{r}(\theta_1; m, \a)\neq\lambda_{r}(\theta_2; m, \a)$; 
otherwise we have the third zero of $\lambda'_{r}(\theta; m, \a) = \lambda_{r}(\theta; m - 1, \a - \pi / 2)$ between $\theta_1$ and $\theta_2$ 
which contradicts Lemma \ref{lemZeros}.
Furthermore, we obtain the following result as a consequence of Lemma \ref{lemZeros}.

\begin{lemma}\label{propMono}
Let $m \geq 0$ and $\a \in \RR$. 
For $0<r\leq1/\sqrt{2}$, there exist real numbers $\theta_1= \theta_1(\alpha, r) = \theta_1(\alpha, r; m)$ 
and $\theta_2 = \theta_{2}(\a, r) = \theta_2 (\alpha, r; m)$ satisfying  
$\theta_{1} \in [0, 2 \pi)$, $\theta_{1} < \theta_{2} < \theta_{1} + 2\pi$ 
such that the function $\lambda_{r}(\theta; m, \a)$ is strictly decreasing for $\theta_1\leq\theta\leq\theta_2$ 
and is strictly increasing for $\theta_2\leq\theta\leq\theta_1+2\pi$. 
In particular, $\theta_{1}$, $\theta_{2}$ are uniquely determined with respect to $\a$, $r$, $m$. 
\end{lemma}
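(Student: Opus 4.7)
The plan is to reduce the statement to the already-established Lemma \ref{lemZeros} by analyzing the sign pattern of $\lambda_r'(\theta;m,\alpha)$. By the differential relation \eqref{eqNote1}, we have $\lambda_r'(\theta;m,\alpha)=\lambda_r(\theta;m-1,\alpha-\pi/2)$, and since $m\geq0$ gives $m-1\geq-1\geq-2$, Lemma \ref{lemZeros} applies and tells us that $\lambda_r'(\,\cdot\,;m,\alpha)$ has exactly two distinct zeros in $[0,2\pi)$. The goal is to show that these zeros are exactly a local maximum and a local minimum of $\lambda_r$, and to then extract $\theta_1,\theta_2$ from them.

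The first key step is to verify that both zeros of $\lambda_r'$ are sign-change points. Observe that $\lambda_r(\theta;m,\alpha)$ is real-analytic and $2\pi$-periodic; writing out the Fourier expansion shows $\int_0^{2\pi}\lambda_r'(\theta;m,\alpha)\,d\theta=0$, and the leading $k=1$ Fourier coefficient of $\lambda_r$ is nonzero, so $\lambda_r'$ is a nontrivial real-analytic function with mean zero. Hence $\lambda_r'$ must attain both positive and negative values, which produces at least two sign changes in $[0,2\pi)$. Combined with the fact that there are exactly two zeros in the period, each zero must be a simple sign change. Consequently $\lambda_r'$ alternates sign on the two arcs that these zeros cut out of the circle $\RR/2\pi\ZZ$.

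Next I would label the zeros. Let $\theta_2\in[0,2\pi)$ denote a zero where $\lambda_r'$ switches from negative to positive (necessarily a strict local minimum of $\lambda_r$), and let the other zero be the unique strict local maximum, which after a shift by a multiple of $2\pi$ we place as $\theta_1\in[0,2\pi)$ with $\theta_1<\theta_2<\theta_1+2\pi$. Then $\lambda_r'<0$ on $(\theta_1,\theta_2)$ and $\lambda_r'>0$ on $(\theta_2,\theta_1+2\pi)$, which gives the asserted strict monotonicity on each subinterval.

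Finally, uniqueness of $\theta_1,\theta_2$ follows from the fact that the two critical values are distinct: if $\lambda_r(\theta_1;m,\alpha)=\lambda_r(\theta_2;m,\alpha)$, then Rolle's theorem applied on $(\theta_1,\theta_2)$ and on $(\theta_2,\theta_1+2\pi)$ (using periodicity) would produce at least three zeros of $\lambda_r'$ in a full period, contradicting Lemma \ref{lemZeros}. Hence one of the two critical points is strictly the unique maximum and the other strictly the unique minimum, and $\theta_1$ is characterized uniquely as the unique point in $[0,2\pi)$ at which $\lambda_r(\,\cdot\,;m,\alpha)$ attains its global maximum. The main conceptual hurdle is the sign-change verification in the second step; everything else is bookkeeping on top of Lemma \ref{lemZeros} and the periodicity of $\lambda_r$.
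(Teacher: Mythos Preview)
Your argument is correct and follows essentially the same route as the paper: reduce to Lemma \ref{lemZeros} via \eqref{eqNote1}, deduce that $\lambda_r'$ has exactly two zeros and alternates sign on the resulting arcs, and use Rolle's theorem to see the two critical values are distinct. The paper determines the sign of $\lambda_r'$ on each arc by comparing the critical values $\lambda_r(\tilde\theta_1)$ and $\lambda_r(\tilde\theta_2)$ directly, while you instead use that $\lambda_r'$ has mean zero and is nontrivial; both are equally valid. One small bookkeeping slip: you fix $\theta_2\in[0,2\pi)$ first and then require $\theta_1\in[0,2\pi)$ with $\theta_1<\theta_2$, which fails when the maximum lies to the right of the minimum in $[0,2\pi)$; the lemma only demands $\theta_1\in[0,2\pi)$ and $\theta_1<\theta_2<\theta_1+2\pi$, so the correct order is to place the maximum $\theta_1\in[0,2\pi)$ first and then take $\theta_2$ to be the unique minimum in $(\theta_1,\theta_1+2\pi)$, exactly as the paper does.
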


\begin{proof}
Let $0\leq\tilde{\theta}_1<\tilde{\theta}_2<2\pi$ be the zeros of $\lambda'_{r}(\theta; m, \a)$. 
If $\lambda_{r}(\tilde{\theta}_1; m, \a)>\lambda_{r}(\tilde{\theta}_2; m, \a)$, then we have 
\[\lambda'_{r}(\theta; m, \a)
\begin{cases}
<0 & \text{for $\tilde{\theta}_1<\theta<\tilde{\theta}_2$, }\\
>0 & \text{for $\tilde{\theta}_2<\theta<\tilde{\theta}_1+2\pi$}
\end{cases}\]
since there exist no zeros except for $\tilde{\theta}_1$ and $\tilde{\theta}_2$ by Lemma \ref{lemZeros}. 
Then the result follows by taking $\theta_j=\tilde{\theta}_j$. 
In the case $\lambda_{r}(\tilde{\theta}_1; m, \a)<\lambda_{r}(\tilde{\theta}_2; m, \a)$, we take $\theta_1=\tilde{\theta}_2$ and $\theta_2=\tilde{\theta}_1+2\pi$. 
Then we obtain the desired result similarly. 
\end{proof}

In the rest of this section, $\theta_{1}$ and $\theta_{2}$ are the numbers in Lemma \ref{propMono}.

\begin{lemma} \label{SZPL}
  Let $m \geq 0$, $\a \in \RR$, and let $r \in (0, 1 / \sqrt{2}]$.
  The zeros $\theta_{1}$ and $\theta_{2}$ of $\lam_{r}'(\theta; m, \a)$ are simple, 
  that is, $\lam_{r}''(\theta_{j}; m, \a) \not= 0$ for $j = 1, 2$.
\end{lemma}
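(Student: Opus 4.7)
The plan is to argue by contradiction using Rolle's theorem together with the zero-counting statement of Lemma \ref{lemZeros}. The key observation is that iterating the differential identity \eqref{eqNote1} gives
\begin{gather*}
\lambda_{r}''(\theta; m, \a)
= \lambda_{r}(\theta; m-2, \a-\pi),
\end{gather*}
and that under the hypothesis $m \geq 0$ of the lemma we have $m-2 \geq -2$, so Lemma \ref{lemZeros} applies to this second derivative and guarantees that $\lambda_{r}(\cdot\,; m-2, \a-\pi)$ has \emph{exactly} two distinct zeros in $[0, 2\pi)$.

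Suppose for contradiction that $\lambda_{r}''(\theta_j; m, \a)=0$ for some $j \in \{1,2\}$. Since $\lambda_{r}'(\theta; m, \a)=\lambda_{r}(\theta; m-1, \a-\pi/2)$ is smooth and $2\pi$-periodic with zeros at $\theta_1$ and $\theta_2$, Rolle's theorem produces a point $\eta_1 \in (\theta_1, \theta_2)$ with $\lambda_{r}''(\eta_1; m, \a)=0$; applying Rolle's theorem again on the interval $(\theta_2, \theta_1+2\pi)$, where $\lambda_{r}'$ vanishes at both endpoints by periodicity, produces a second point $\eta_2 \in (\theta_2, \theta_1+2\pi)$ with $\lambda_{r}''(\eta_2; m, \a)=0$. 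Reducing modulo $2\pi$, the representatives of $\eta_1$ and $\eta_2$ in $[0, 2\pi)$ are distinct (one lies strictly between $\theta_1$ and $\theta_2$, the other either in $(\theta_2, 2\pi)$ or in $[0, \theta_1)$), and both differ from $\theta_j$ since they lie in open intervals whose closures miss $\theta_j$ except at the endpoints.

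Combined with the assumed zero at $\theta_j$, this yields at least three distinct zeros of $\lambda_{r}(\cdot\,; m-2, \a-\pi)$ in $[0, 2\pi)$, contradicting Lemma \ref{lemZeros}. Hence $\lambda_{r}''(\theta_j; m, \a) \neq 0$ for $j=1, 2$, which proves simplicity.

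The only delicate point is verifying that the two Rolle zeros, once reduced modulo $2\pi$, are genuinely distinct from $\theta_j$ and from one another; this is a straightforward case distinction on whether $\theta_1=0$ or $\theta_1>0$, using that both $\eta_1$ and $\eta_2$ lie in the interiors of disjoint intervals whose union has length $2\pi$.
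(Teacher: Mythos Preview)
Your proof is correct and follows essentially the same approach as the paper: assume $\lambda_{r}''(\theta_j; m, \a)=0$, apply Rolle's theorem on $(\theta_1,\theta_2)$ and $(\theta_2,\theta_1+2\pi)$ to produce two further zeros of $\lambda_{r}''=\lambda_{r}(\cdot\,; m-2,\a-\pi)$, and then invoke Lemma \ref{lemZeros} (valid since $m-2\ge -2$) for a contradiction. The only difference is that you are slightly more explicit about why the three zeros remain distinct after reduction modulo $2\pi$, a point the paper leaves implicit.
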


\begin{proof}
  Let $m \geq 0$.
  If $\theta_{1}$ is a multiple zero, then $\lam_{r}''(\theta_{1}; m, \a) = 0$.
  By Rolle's theorem, there are zeros $\vartheta_{1}$, $\vartheta_{2}$ of $\lam_{r}''(\theta; m, \a)$ 
  such that $\vartheta_{1} \in (\theta_{1}, \theta_{2})$ and $\vartheta_{2} \in (\theta_{2}, \theta_{1} + 2\pi)$.
  Therefore, the function $\lam_{r}''(\theta; m, \a)$ 
  has at least the distinct three zeros $\theta_{1}$, $\vartheta_{1}$, $\vartheta_{2}$ in $[\theta_{1}, \theta_{1} + 2\pi)$.
  Since the equation $\lam_{r}''(\theta; m, \a) = \lam_{r}(\theta; m - 2, \a - \pi)$ holds by relation \eqref{eqNote1}, 
  the function $\lam_{r}(\theta; m - 2, \a - \pi)$ also has at least three zeros in $[0, 2\pi)$.
  This is a contradiction with Lemma \ref{lemZeros}. 
  Hence, $\theta_{1}$ is simple.
  Similarly, $\theta_{2}$ is also simple, and hence we complete the proof of this lemma.
\end{proof}

\begin{lemma} \label{lemLB}
  Let $m \geq 0$, $\a \in \RR$, and let $r \in (0, 1 / \sqrt{2}]$.
  Then we have $|\lambda''_{r}(\theta_1; m, \a)| \gg r$. 
  Moreover, there exists an absolute constant $d > 0$ such that $d \leq \theta_2 - \theta_1  \leq 2\pi - d$.
\end{lemma}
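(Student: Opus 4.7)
The plan is to prove both assertions by a continuity-and-compactness argument on the parameter space $(\a, r)$. For each fixed $r \in (0, 1/\sqrt{2}]$, the implicit function theorem applied to $\lambda'_r(\theta; m, \a) = 0$, using the nonvanishing of $\lambda''_r(\theta_j; m, \a)$ from Lemma \ref{SZPL}, shows that $\theta_1(\a, r)$ and $\theta_2(\a, r)$ depend smoothly on $(\a, r) \in (\RR/2\pi\ZZ) \times (0, 1/\sqrt{2}]$. The crucial step is to extend these continuously to $r = 0$. From the leading-order expansion
\[
\lambda'_r(\theta; m, \a) = -r\sin(\theta - \a) + O(r^2) \qquad (r \to 0^+),
\]
the two zeros of $\lambda'_r$ in $[0, 2\pi)$ lie near $\theta = \a$ and $\theta = \a + \pi$; comparing $\lambda_r(\a; m, \a) = r + O(r^2)$ with $\lambda_r(\a + \pi; m, \a) = -r + O(r^2)$ identifies the former as the maximum. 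Hence $\theta_1(\a, r) \to \a$ and $\theta_2(\a, r) \to \a + \pi$ as $r \to 0^+$, producing continuous extensions on the compact set $K := (\RR/2\pi\ZZ) \times [0, 1/\sqrt{2}]$.

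For the first bound, consider
\[
F(\a, r) \;=\; \frac{\lambda''_r(\theta_1(\a, r); m, \a)}{r}
\]
on $K$, extended at $r = 0$ by $F(\a, 0) := -\cos(\theta_1(\a, 0) - \a) = -1$. Continuity follows from the uniform expansion $\lambda''_r(\theta; m, \a)/r = -\cos(\theta - \a) + O(r)$, where the tail $\sum_{k \geq 2} k^{1-m} r^{k-1} \cos(k\theta - \a)$ converges uniformly on $r \in [0, 1/\sqrt{2}]$. For $r > 0$, the fact that $\theta_1$ is a local maximum forces $F(\a, r) \leq 0$, and Lemma \ref{SZPL} upgrades this to the strict inequality $F(\a, r) < 0$. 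By compactness of $K$, $F$ attains its maximum, which is therefore a strictly negative constant $-c$ with $c = c(m) > 0$, yielding $|\lambda''_r(\theta_1; m, \a)| \geq cr$.

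For the second bound, the map $(\a, r) \mapsto \theta_2(\a, r) - \theta_1(\a, r)$ is continuous on $K$ and takes values in $(0, 2\pi)$ (strictly, by Lemma \ref{propMono}) with limit $\pi$ at $r = 0$. Compactness then delivers a positive constant $d$ with $d \leq \theta_2 - \theta_1 \leq 2\pi - d$ throughout $K$.

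The main obstacle is justifying the continuous extension of $\theta_1, \theta_2$ to $r = 0$: once this is in hand, the two assertions follow from standard compactness. The extension requires carefully selecting, among the zeros of the perturbation $\lambda'_r(\theta; m, \a)$, the one corresponding to the maximum (resp.\ minimum) via the leading-order comparison above, and then invoking the implicit function theorem to match the small-$r$ analysis with the smooth family obtained for $r > 0$.
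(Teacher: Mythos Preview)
Your approach is correct and takes a genuinely different route from the paper. The paper splits into two regimes: for $m\geq 2$ (all $r\le 1/\sqrt2$) and for $m\in\{0,1\}$ with $r\le 0.27$, it uses explicit numerical estimates on the leading Fourier coefficient to pin down $\cos(\theta_j-\a)$ away from zero, and then for $m\in\{0,1\}$ with $r\in[0.27,1/\sqrt2]$ it runs a compactness argument on a parameter region where $r$ is bounded away from $0$. You instead compactify once and for all by extending $\theta_1,\theta_2$ continuously to $r=0$ (using $\lambda'_r(\theta;m,\a)/r=-\sin(\theta-\a)+O(r)$ and the implicit function theorem at $r=0$), and then run a single compactness argument on $(\RR/2\pi\ZZ)\times[0,1/\sqrt2]$ for both assertions. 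This is cleaner and avoids the case split and the numerical constants. Two small remarks: first, to pass from the local IFT branches to a global continuous map $\theta_j:(\RR/2\pi\ZZ)\times(0,1/\sqrt2]\to\RR/2\pi\ZZ$ you are implicitly using the uniqueness of the maximizer and minimizer from Lemma~\ref{propMono}, which is worth stating; the paper spells out essentially this argument in the $m\in\{0,1\}$ part of its proof. Second, your compactness is carried out for each fixed $m$, so the constants you obtain (both the implied constant in $|\lambda''_r(\theta_1;m,\a)|\gg r$ and the gap $d$) are a priori $m$-dependent, whereas the paper's explicit estimates yield constants independent of $m$. This is harmless for the applications in the paper, where $(\s,m)$ is fixed, but it does not literally establish the word ``absolute'' in the statement.
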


\begin{proof}
  Let $\mathcal{B}$ denote 
  \[
  \mathcal{B}
  = \Set{ \l(m, (0, 1/\sqrt{2}] \r)}{ m \in \mathbb{Z}_{\geq 2}}
  \cup \Set{ \l(m, (0, 0.27] \r)}{m = 0, 1}.
  \]  
  We first consider the case $(m, I) \in \mathcal{B}$ and $r \in I$.
  Since $\lambda'_{r}(\theta_1; m, \alpha)=0$, we find that
\begin{align*}
\l | \sin(\theta_1-\a) \r|
= \l |-\sum_{k=2}^\infty\frac{r^{k -1}}{k^{m}}\sin(k\theta_1-\a) \r |
\leq \sum_{k=2}^\infty\frac{r^{k -1}}{k^{m}}
\leq f_m(r), 
\end{align*}
\[
f_m(r) 
=\begin{dcases}
r^{-1} \Li_{2}(r) - 1 & \text{if $m \in \mathbb{Z}_{\geq 2}$}, \\
\frac{r}{1 - r}       & \text{if $m = 0, 1$.}
\end{dcases}
\]
holds for $r \in I$.
Note that $f_{m}$ is increasing.
Since $f_{m}(1 / \sqrt{2}) = 0.275\dots$ for $m \in \ZZ_{\geq 2}$, and $f_m(0.27)=0.369\dots$ for $m = 0, 1$, 
we have
\begin{equation}
\l| \cos(\theta_1-\a) \r|
= \sqrt{ 1 - \sin(\theta_1-\a)^2 }
\geq \sqrt{ 1 - f_m(r)^2 }
\geq \sqrt{ 1 - (0.37)^2 }
=: c_1
\end{equation}
for $r \in I$. 
Furthermore, we obtain
\begin{align*}
\l|\lambda''_{r}(\theta_1; m, \a)+r\cos(\theta_1-\a)\r|
\leq\sum_{k=2}^\infty\frac{r^k}{k^{m-1}}
\leq r g_m(r), 
\end{align*}
\[
g_m(r) 
=\begin{dcases}
r^{-1}\Li_{1}(r) - 1 & \text{if $m \in \mathbb{Z}_{\geq 2}$}, \\
\frac{ 2r - r^2 }{( 1 - r )^2} & \text{if $m = 0, 1$}
\end{dcases}
\]
for $r \in I$.
If we suppose $\cos(\theta_1-\a) \leq - \sqrt{ 1 - f_{m} (r)^2 }$, 
then we have
\[
\lambda''_{r}(\theta_1; m, \a)
\geq -r \cos(\theta_{1} - \a) - rg_{m}(r)
\geq \l(\sqrt{ 1 - f_m (r)^2 } - g_m (r) \r)r =: h_m(r) r,
\]
say. Here, we note that $h_{m}$ is decreasing since $f_{m}$ and $g_{m}$ are increasing.
Therefore, we have
\[
h_m(r) 
\begin{cases}
\geq h_{m}(1 / \sqrt{2}) \geq 0.224\dots  & \text{if $m \in \mathbb{Z}_{\geq 2}$}, \\
\geq h_m(0.27) = 0.0525\dots              & \text{if $m = 0, 1$}
\end{cases}
\]
for $r \in I$, which contradicts that $\lambda_{r}''(\theta_{1}; m, \a) \leq 0$ since $\lam_{r}(\theta; m, \a)$ takes the maximum value at $\theta=\theta_1$. 
Thus we have $\cos(\theta_1-\a)>\sqrt{ 1 - f_m (r)^2 } \geq c_1 > 0$, and therefore 
\[
\lambda''_{r}(\theta_1; m, \a)
< - \left( \sqrt{ 1 - f_m (r)^2 } - g_m(r) \right) r
=- h_m(r)r 
\leq - 0.0525\dots \times r.
\] 
Since $\l|\lambda''_{r}(\theta_1; m, \a)\r|=-\lambda''_{r}(\theta_1; m, \a)$, 
we obtain $|\lam_{r}''(\theta_{1}; m, \a)| \gg r$ when $(m, I) \in \mathcal{B}$ and $r \in I$. 
On the other hand, we have $\cos(\theta_2-\a)< - \sqrt{ 1 - f_m (r)^2 } \leq - c_1$ by a similar calculation. 
Putting 
\begin{gather*}
d_{1} = \inf \set{ | \omega_1 - \omega_2 |}{\cos{\omega_1} \in [c_1, 1], \cos{\omega_2} \in [-1, -c_1]} > 0, \\
d_{2} = 2\pi - \sup\set{ | \omega_1 - \omega_2 |}{\cos{\omega_1} \in [c_1, 1], \cos{\omega_2} \in [-1, -c_1]} > 0,
\end{gather*}
we also have $d' \leq \theta_{2} - \theta_{1} \leq 2\pi - d'$ with $d' = \min\{ d_{1}, d_{2} \}$ when $(m, I) \in \mathcal{B}$ and $r \in I$.

Next, we consider the case when $0.27 < r \leq 1/ \sqrt{2}$, $m = 0, 1$.
Let $m\in \{0,1\}$ and let $\e$ be a small positive number. 
For any $(\a, r) \in \RR \times (0, 1/\sqrt{2} + \e)$, 
we denote by $\Theta_1(\a, r;m)$ (resp.\ $\Theta_{2}(\a, r; m)$) the subset of $\RR$ consisting of all $\theta$ such that $\lambda_r(\theta;m,\a)$ 
takes the maximum (resp.\ minimum) value.
Using the periodicity of $\lam_{r}(\theta; r, m)$ for $\theta$ and Lemma \ref{propMono}, 
we find that $\Theta_j(\a, r;m) \in \RR/2\pi \ZZ$ for $j = 1,2$, 
and that $\Theta_1(\a, r;m) \neq \Theta_2(\a, r;m)$. 
We show the mapping $\RR \times (0, 1/\sqrt{2} + \e) \ni (\a, r) \mapsto \Theta_j(\a, r;m)\in \RR/2\pi \ZZ$ is continuous for any $j = 1, 2$.

We fix $(\a_0, r_0) \in \RR \times (0, 1/\sqrt{2} + \e)$.
Define $f(\theta, \a, r) = f(\theta, \a, r; m) = \lam_{r}'(\theta; m, \a)$.
Then we can apply the implicit function theorem to $f(\theta, \a_{0}, r_{0})$ at $\theta = \theta_{j}(\a_0, r_0)$ 
since $\frac{\partial}{\partial \theta}f(\theta_{j}(\a_{0}, r_{0}); \a_{0}, r_{0}) \neq 0$ for $j =1,2$ by Lemma \ref{SZPL}. 
Hence, there exist some open neighborhood 
of $(\a_0, r_0)$ 
and the continuous functions $\widetilde{\theta_1}(\a, r)$ and $\widetilde{\theta_2}(\a, r)$ such that 
$\widetilde{\theta_{j}}(\a_{0}, r_{0}) = \theta_{j}(\a_{0}, r_{0})$, $j = 1, 2$, and
\[
\lambda'_r(\widetilde{\theta_1}(\a, r);m,\a) = \lambda'_r(\widetilde{\theta_2}(\a, r);m,\a) = 0
\]
for any $(\a, r)$ belonging to some neighborhood of $(\a_{0}, r_{0})$.
Moreover, using the inequality $\lambda_{r_0}(\theta_1(\a_0, r_0);m,\a_0)) > \lambda_{r_0}(\theta_2(\a_0, r_0);m,\a_0))$ 
and the continuities of $\lam_{r}(\theta; m, \a)$ and $\widetilde{\theta_{j}}$, we have
\[
\lambda_r(\widetilde{\theta_1}(\a, r);m,\a) > \lambda_r(\widetilde{\theta_2}(\a, r);m,\a)
\]
when $(\a, r)$ belongs to a suitable neighborhood of $(\a_{0}, r_{0})$.
These imply $\widetilde{\theta_j}(\a, r) \in \Theta_j(\a, r; m)$ for $j = 1, 2$ for those $(\a, r)$. 
Letting the quotient mapping $\Pi : \RR \ni \theta \mapsto \theta + 2 \pi \ZZ \in \RR/2\pi \ZZ$, 
we find that $\Theta_j = \Pi \circ \widetilde{\theta_j}$ for $j = 1, 2$. 
This gives the continuities of $\Theta_1$ and $\Theta_2$. 
Therefore $\Theta_1([0,2\pi] \times [0,27, 1/\sqrt{2}];m)$ is a compact subset of $\RR/2\pi \ZZ$ by the continuity of $\Theta_1$. 
Since $\lambda''_r(\theta; m, \a)$ is continuous for $(\theta, \a, r)$ and periodic for $\theta$ with period $2 \pi$, 
and $| \lambda''_r(\vartheta_1;m,\a) | > 0$ for $\vartheta_1 + 2 \pi \ZZ \in \Theta_1([0, 2\pi] \times [0,27, 1/\sqrt{2}];m)$, 
we obtain
\[
c := \min_{m \in \{0, 1\}}\min_{\vartheta_1 + 2\pi \ZZ \in \Theta_1([0, 2\pi] \times [0,27, 1/\sqrt{2}];m)} \l| \lambda''_r(\vartheta_1;m,\a) \r| 
> 0.
\]
Thus we conclude $|\lambda''_r(\theta_1;m,\a) | \gg r$ even for $m = 0, 1$ and $0.27 < r \leq 1/\sqrt{2}$.

Now we denote by $\| \cdot \|_{\sim}$ the quotient norm on $\RR/2\pi \ZZ$, 
which is given by $\| x + 2 \pi \ZZ \|_{\sim}:= \min\l\{| x + 2 \pi n | \mid n \in \ZZ\r\}$. 
Then we have
\begin{align*}
&\|(\theta_{2}(\a, r; m) - \theta_{1}(\a, r; m)) + 2\pi \ZZ \|_{\sim}\\
&\geq \min_{m \in \{ 0, 1\}}\min_{(\a, r) \in [0, 2 \pi] \times [0.27, 1/\sqrt{2}]} \l\| \Theta_2(\a, r; m) - \Theta_1(\a, r; m) \r\|_{\sim}
=: d_{3}
\end{align*}
for any $(\a, r) \in [0, 2\pi] \times [0.27, 1/\sqrt{2}], m \in \{0, 1\}$
by the continuities of $\Theta_1$ and $\Theta_2$. 
For any fixed $(\a, r) \in [0, 2\pi] \times [0.27, 1/\sqrt{2}], m \in \{0, 1\}$, 
it holds that $\|\Theta_2(\a, r; m) - \Theta_1(\a, r; m)\|_{\sim} > 0$, so $d_{3}$ is an absolute positive constant.
Hence we obtain $d_{3} \leq \theta_2(\a, r; m) - \theta_1(\a, r; m) \leq 2 \pi - d_3$ for $m = 0, 1$ and $0.27 < r \leq 1/\sqrt{2}$.
Taking $d = \min\{d', d_3\}$, we have the last assertion of this lemma. 
\end{proof}

\begin{lemma}\label{lemUB}
  Let $m \geq -1$ and $\a\in\RR$. 
  Then we have uniformly
  \[\lambda_{r}^{(n)}(\theta; m, \a)\ll{n}!\, {r}\]
  for $0<r\leq1/\sqrt{2}$, $n\geq0$, and $\theta\in\RR$. 
  \end{lemma}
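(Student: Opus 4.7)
The plan is to differentiate the defining Fourier series term by term and reduce the problem to a uniform bound on $\lambda_{r}$ itself with a shifted index. Since $\frac{d^n}{d\theta^n}\cos(k\theta-\a) = k^n \cos(k\theta-\a+n\pi/2)$, differentiating the series that defines $\lambda_r(\theta;m,\a)$ yields
\[
\lambda_{r}^{(n)}(\theta;m,\a)
= \sum_{k=1}^\infty \frac{r^k}{k^{m+1-n}} \cos\l(k\theta-\a + \tfrac{n\pi}{2}\r)
= \lambda_{r}\l(\theta,\,m-n,\,\a-\tfrac{n\pi}{2}\r),
\]
so everything reduces to a uniform bound on $\lambda_{r}(\theta; m', \a')$ with $m' = m - n$ possibly very negative. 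I would then split on the sign of $m+1-n$.

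For $0 \leq n \leq m+1$ the exponent $m+1-n$ is non-negative, so the triangle inequality and $|\cos|\leq 1$ give
\[
|\lambda_{r}^{(n)}(\theta;m,\a)|
\leq \sum_{k=1}^\infty r^k
= \frac{r}{1-r}
\leq (2+\sqrt{2})\, r
\]
using $r \leq 1/\sqrt{2}$. This is trivially $\ll n!\,r$.

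For $n \geq m+2$ I would write $\lambda_{r}^{(n)}(\theta;m,\a) = \Re\l( i^n e^{-i\a} \Li_{m+1-n}(re^{i\theta}) \r)$ and invoke the standard closed form $\Li_{-j}(z) = z A_j(z) / (1-z)^{j+1}$ for integer $j \geq 0$, where $A_j$ is the $j$-th Eulerian polynomial. Its coefficients are non-negative with $A_j(1) = j!$, so $|A_j(re^{i\theta})| \leq A_j(r) \leq j!$; combined with $|1 - re^{i\theta}| \geq 1 - r$ this yields, for $j = n - m - 1$,
\[
|\lambda_{r}^{(n)}(\theta;m,\a)|
\leq \frac{r \cdot (n-m-1)!}{(1-r)^{n-m}}.
\]
Since $r \leq 1/\sqrt{2}$ the denominator is bounded below by $((\sqrt{2}-1)/\sqrt{2})^{n-m}$, and the resulting bound has the claimed order $n!\,r$ after absorbing the geometric factor into the implied constant.

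The main bookkeeping obstacle is precisely that exponential factor $(1-r)^{-(n-m)} \leq (2+\sqrt{2})^{n-m}$ appearing in the second case: the statement $\ll n!\,r$ must be read with an implicit constant that depends on $m$ (and on the fixed upper bound $1/\sqrt{2}$ of $r$), so that $(2+\sqrt{2})^{n-m}(n-m-1)!$ is swallowed by $n!$ times a constant. No genuine analytic difficulty remains once the polylogarithmic closed form is brought in; the entire argument is elementary.
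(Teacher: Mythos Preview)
Your reduction $\lambda_r^{(n)}(\theta;m,\a)=\lambda_r(\theta;m-n,\a-\tfrac{n\pi}{2})$ is correct (it is exactly the iterated form of the paper's relation \eqref{eqNote1}), and the case $0\le n\le m+1$ is fine. The genuine gap is in your last step: the claim that $(2+\sqrt{2})^{\,n-m}(n-m-1)!$ can be ``swallowed by $n!$ times a constant'' depending only on $m$ is false. Writing $j=n-m-1$, you are asserting that $(2+\sqrt{2})^{\,j+1}j!\le C_m\,(j+m+1)!$, i.e.\ that $(2+\sqrt{2})^{\,j+1}\le C_m\,(j+1)(j+2)\cdots(j+m+1)$. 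The right-hand side is a polynomial of degree $m+1$ in $j$, while the left-hand side is exponential in $j$; so the inequality fails for $j$ large. In the borderline case $m=-1$ your bound literally reads $(2+\sqrt{2})^{\,n+1}n!\,r$, which is certainly not $\ll n!\,r$ with an $n$-independent constant.

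The paper proceeds differently: it majorizes $|\lambda_r^{(n)}|$ by $S_n(r)=\sum_{k\ge1}k^nr^k$ and argues inductively from the identity $(1-r)S_n(r)=r\bigl(1+\sum_{j<n}\binom{n}{j}S_j(r)\bigr)$. It is worth noting that this induction also picks up a factor of order $r(e-1)/(1-r)$ at each step, which at $r=1/\sqrt{2}$ exceeds $1$; and indeed taking $\theta=\a=0$, $m=-1$, $n\equiv0\pmod4$ gives $\lambda_r^{(n)}(0;-1,0)=S_n(r)$, for which one checks numerically that $S_n(1/\sqrt{2})/(n!\cdot 1/\sqrt{2})\to\infty$. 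So the $n$-uniformity asserted in the lemma is not actually delivered by either argument, and cannot be at $r=1/\sqrt{2}$. This is harmless for the paper, since the lemma is only invoked with $n=3$ in the Taylor remainder inside the saddle-point estimate for $F_{\s,m,p}$; but you should not present your argument as giving a bound uniform in $n$.
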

  
  \begin{proof}
  By \eqref{eqNote1} and the definition of $\lambda_{r}(\theta; m, \a)$, we have 
  \[\lambda_{r}^{(n)}(\theta; m, \a)
  =\lambda_{r}(\theta; m-n, \a-n/2)
  \ll\sum_{k=1}^\infty{k}^nr^k=:S_n(r). \]
  Then we prove the upper bound $S_n(r)\ll{n}!\, r$ by induction on $n$.
  The bound is elementary for $n=0$. 
  If $n\geq1$, we have 
  \begin{align*}
    (1-r)S_n(r)
    =r+\sum_{k=1}^\infty\l\{(k+1)^n-k^n\r\}r^{k+1}
    &=r\l(1+\sum_{j=0}^{n-1}\binom{n}{j}S_j(r)\r)\\
    &\ll r + n! r^{2}
    \ll n! r
  \end{align*}
  by the assumption of induction.
\end{proof}

\begin{proof}[Proof of Proposition \ref{thmF}]
Let $(\s, m)\in\mca{A}$, $\a\in\RR$. 
We use the saddle point method. 
For convenience, we write
\[
\lambda(\theta)=\lambda_{p^{-\s}}(\theta; m, \a). 
\] 
Using the periodicity of $\lambda(\theta) = (\log{p})^{m} \Re e^{-i\a} \te_{m, p}(p^{-\s} e^{i\theta})$ and the integral representation of \eqref{eqFp}, 
we have
\begin{align*}
  &F_{\s, m, p}(s; \a)
  = \frac{1}{2\pi}\int_{\theta_{1} - \e}^{\theta_{1} + 2\pi - \e}\exp\l(\frac{s}{(\log{p})^m}\lambda(\theta)\r)\, d\theta\\
  &= \frac{1}{2\pi}\int_{\theta_{1} - \e}^{\theta_{1} + \e}\exp\l(\frac{s}{(\log{p})^m}\lambda(\theta)\r)\, d\theta
  + \frac{1}{2\pi}\int_{\theta_{1} + \e}^{\theta_{1} + 2\pi - \e}\exp\l(\frac{s}{(\log{p})^m}\lambda(\theta)\r)\, d\theta\\
  &=: I_{1} + I_{2},
\end{align*}
where 
\[
\epsilon
= \epsilon(\kappa, p; \s, m) =  \log{\kappa} \sqrt{\frac{2 ( \log p )^m}{ \kappa | \lambda^{''}(\theta_1) |}}.
\]

We begin with the estimate of $I_1$.
Let $d$ be the same absolute positive constant as in Lemma \ref{lemLB}.
Then we have $\epsilon < d$ if $\kappa$ is sufficiently large.
Since $\lambda'(\theta_1) = 0$, 
we have
\[
\frac{s}{(\log p)^m}\left(\lambda(\theta) - \lambda(\theta_1)\right)
= \frac{ s \lambda''(\theta_1)}{ 2(\log p)^{m} } (\theta - \theta_1)^{2} + E_{0} 
\]
for $|\theta - \theta_{1}| \leq \epsilon$, where $E_{0} \ll \frac{\kappa}{p^\s ( \log p )^m} ( \theta - \theta_1 )^{3}$
by Lemma \ref{lemUB} when $|t| \leq \kappa$.
Hence we can write
\begin{align*}
  &\int_{\theta_{1} - \e}^{\theta_{1} + \e}\exp\l(\frac{s}{(\log{p})^m}\lambda(\theta)\r)\, d\theta\\
  &= \exp\l( \frac{s \lam(\theta_{1})}{(\log{p})^{m}} \r)\int_{\theta_{1} - \e}^{\theta_{1} + \e}
  \exp\l( \frac{s \lam''(\theta_{1})}{2(\log{p})^{m}}(\theta - \theta_{1})^2 + E_{0} \r)\, d\theta.
\end{align*}
We note that our choice of $\e$ yields $\kappa p^{-\s} ( \log p )^{-m} ( \theta - \theta_1 )^{3} \ll 1$ for $|\theta - \theta_{1}| \leq \e$. 
The change of variables $\theta - \theta_{1} = x\sqrt{2(\log{p})^{m} / \kappa |\lam''(\theta_{1})|}$ and the Taylor expansion of $\exp$ yield
\begin{align*}
&\int_{\theta_1- \e}^{\theta_1+\epsilon}\exp\l(\frac{s}{(\log{p})^m}\lambda(\theta)\r)\, d\theta\\
&= \exp\l( \frac{s}{( \log p )^m} \lambda(\theta_1) \r) \sqrt{ \frac{2 ( \log p )^m }{\kappa |\lambda''(\theta_1)|}} 
\left\{ \int_{- \log{\kappa}}^{\log{\kappa}} \exp \l( - \frac{s}{\kappa} x^{2} \r) \, dx + E_1 \right\}.
\end{align*}
Here, we have
\begin{align*}
E_1 
&\ll \frac{\kappa}{p^\s (\log p)^m} \left( \frac{2(\log p)^m}{\kappa | \lambda'' (\theta_1) |} \right)^{\frac{3}{2}} \int_{-\infty}^{\infty} x^{3}  
\exp\l( - x^{2} \r)\, dx
\ll \left( \frac{p^\s (\log p)^m}{\kappa} \right)^{\frac{1}{2}}
\end{align*}
since $|\lambda''(\theta_1)| \gg p^{- \s}$ holds.
The calculations 
\[
\int_{-\infty}^{\infty} \exp \l( - \frac{s}{\kappa} x^{2} \r) \, dx
= \l(\frac{\kappa}{s}\r)^{\frac{1}{2}} \int_{-\infty}^{\infty} \exp( - x^{2}) dx
= \l(\frac{\kappa}{s}\r)^{\frac{1}{2}} \sqrt{\pi}
\]
and
\begin{align*}
\int_{\log \kappa}^{\infty} \exp ( - x^{2} ) dx 
&\leq \frac{1}{ \log{\kappa} } \int_{\log \kappa}^{\infty} x \exp ( - x^{2} ) dx
= \frac{1}{2 \log{\kappa} } \exp \left( - ( \log \kappa )^{2} \right)
\end{align*}
imply
\begin{align*}
  I_{1}
  = \exp\l( \frac{s}{( \log p )^m} \lambda(\theta_1) \r) \sqrt{ \frac{ ( \log p )^m}{ 2 \pi s | \lambda''(\theta_1) |}}
  \left( 1 + O\left( \sqrt{ \frac{p^\sigma (\log p)^m}{\kappa}} \right) \right). 
\end{align*}

Next, we estimate $I_{2}$.
We can write
\begin{align*}
I_{2}
=  \exp\l(\frac{s}{(\log{p})^m}\lambda(\theta_1)\r) \int_{\theta_{1} + \e}^{\theta_{1} + 2\pi - \e} \exp\l(\frac{s}{(\log{p})^m} \l( \lambda(\theta) -  \lambda(\theta_1) \r)\r)\, d\theta. 
\end{align*}
Since $\lambda(\theta)$ is decreasing for $\theta_1 + \epsilon \leq \theta \leq \theta_2$ 
and increasing for $\theta_{2} \leq \theta \leq \theta_{1} + 2\pi - \e$ by Lemma \ref{propMono}, we find that
\begin{align*}
\lambda(\theta)-\lambda(\theta_1)
&\leq\lambda(\theta_1 \pm \epsilon) - \lambda(\theta_1) \\
&= \frac{\lambda'' (\theta_1) }{2} \epsilon^{2} \left( 1 + O(\e) \right)
\leq - \frac{ (\log \kappa)^{2} (\log p)^{m}}{2 \kappa}
\end{align*}
by the definition of $\epsilon$. 
Therefore we have
\[
I_{2} = \int_{\theta_1+\epsilon}^{\theta_1 + 2\pi - \e} \exp\l(\frac{s}{(\log{p})^m} \l( \lambda(\theta) -  \lambda(\theta_1) \r)\r)\, d\theta
\ll \exp\left( - \frac{(\log \kappa)^{2}}{2} \right).
\]
Thus we have 
\begin{align*}
  &F_{\s, m, p}(s; \a)\\
  &= \exp\l( \frac{s}{( \log p )^m} \lambda(\theta_1) \r) \sqrt{ \frac{( \log p )^m}{ 2\pi s | \lambda''(\theta_1) |}}
  \left( 1 + O\left( \sqrt{ \frac{p^\sigma (\log p)^m}{\kappa}} \right) \right),
\end{align*}
which gives the asymptotic formula $F_{\s, m, p} (s; \a)$.
\end{proof}

Define the function $f_{\s, m, p}$ by 
\begin{align} \label{def_f_smp}
  f_{\s, m, p}(\kappa; \a) = \log{F_{\s, m, p}(\kappa; \a)}
\end{align} 
  for $\kappa > 0$.
The following corollary is an immediate consequence of Proposition \ref{thmF}.

\begin{corollary}\label{corf}
Assume the same situation as Proposition \ref{thmF}.
Then the function $f_{\s, m, p}$ is analytically continued for $s = \kappa + it$ 
with $\kappa \geq C p^{\s} (\log{p})^{m + 3}$, $|t| \leq \kappa$, 
where $C = C(m)$ is a positive constant depending only on $m$.
Moreover, we have for the region 
\begin{align*}
  f_{\s, m, p}(s; \a)
  \ll \frac{\kappa}{p^\s(\log{p})^m}. 
\end{align*}
\end{corollary}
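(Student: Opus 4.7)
The plan is to deduce the corollary from the explicit factorization supplied by Proposition \ref{thmF}, namely
\[
F_{\sigma,m,p}(s;\alpha)
= \exp\!\left(\frac{s\,\lambda(\theta_1)}{(\log p)^m}\right)
\sqrt{\frac{(\log p)^m}{2\pi s\,|\lambda''(\theta_1)|}}
\bigl(1+R(s)\bigr),
\]
with $\lambda(\theta) = \lambda_{p^{-\sigma}}(\theta;m,\alpha)$ and $R(s) \ll \sqrt{p^\sigma(\log p)^m/\kappa}$. The first task is to verify that the hypothesis $\kappa \geq Cp^\sigma(\log p)^{m+3}$, with $C=C(m)$ sufficiently large, implies the hypothesis $p^\sigma(\log p)^m \leq \kappa(\log\kappa)^{-3}$ of Proposition \ref{thmF}. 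I would argue this by a dichotomy: when $\log p$ is comparable to $\log\kappa$ (say $\log p \geq (\log\kappa)/C^{1/3}$), the factor $(\log p)^3$ hidden in the hypothesis absorbs $(\log\kappa)^3$ directly; in the opposite case $p$ is so small that $p^\sigma(\log p)^m \ll \kappa^\varepsilon$ for any small $\varepsilon > 0$, which is easily dominated by $\kappa(\log\kappa)^{-3}$ once $\kappa$ exceeds some threshold $c(\sigma,m)$.

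Next I would establish the analytic continuation. Under the corollary's hypothesis we have $R(s) \ll (\log p)^{-3/2}$, and choosing $C$ large enough guarantees $|R(s)| \leq 1/2$. The exponential factor is entire and nowhere vanishing; since $\arg s \in [-\pi/4, \pi/4]$ on our region (using $|t|\leq\kappa$), the principal branches of $\sqrt{s}$ and $\log s$ are well-defined, making the square root factor analytic and nonzero. Consequently $F_{\sigma,m,p}(s;\alpha)$ itself is analytic and nonvanishing on the simply connected region $\{s=\kappa+it : \kappa \geq Cp^\sigma(\log p)^{m+3},\ |t|\leq\kappa\}$, and I define $f_{\sigma,m,p}(s;\alpha)$ as the unique analytic branch of $\log F_{\sigma,m,p}$ there that agrees with the real logarithm on the positive real axis.

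Taking logarithms of the factorization yields
\[
f_{\sigma,m,p}(s;\alpha)
= \frac{s\,\lambda(\theta_1)}{(\log p)^m}
+ \tfrac{1}{2}\log\!\left(\frac{(\log p)^m}{2\pi s\,|\lambda''(\theta_1)|}\right)
+ \log(1+R(s)),
\]
and I would bound each piece separately. For the leading term I would use Lemma \ref{lemUB} with $r = p^{-\sigma} \leq 1/\sqrt{2}$ (valid since $\sigma > 1/2$ gives $p^{-\sigma} \leq 2^{-\sigma} < 1/\sqrt{2}$ for every prime $p$) to obtain $\lambda(\theta_1) \ll p^{-\sigma}$, so this term is $\ll |s|/(p^\sigma(\log p)^m) \ll \kappa/(p^\sigma(\log p)^m)$. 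The third term is $O(1)$ by the bound on $R(s)$. The middle term is analyzed using Lemma \ref{lemLB} for the lower bound $|\lambda''(\theta_1)| \gg p^{-\sigma}$ together with Lemma \ref{lemUB} for the complementary upper bound $|\lambda''(\theta_1)| \ll p^{-\sigma}$, yielding a middle term of size $O(\log\kappa + m\log\log p)$.

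The main obstacle is to absorb this $O(\log\kappa)$ bound into $\kappa/(p^\sigma(\log p)^m)$. I would resolve this by the same dichotomy invoked in the first paragraph: when $\log p \gg \log\kappa$, the hypothesis forces $\kappa/(p^\sigma(\log p)^m) \geq C(\log p)^3 \gg (\log\kappa)^3$; when $\log p \ll \log\kappa$, the quantity $p^\sigma(\log p)^m$ is at most a small power of $\kappa$, so $\kappa/(p^\sigma(\log p)^m)$ is itself a positive power of $\kappa$. In either case the middle term is dwarfed by the leading one, giving the required estimate $f_{\sigma,m,p}(s;\alpha) \ll \kappa/(p^\sigma(\log p)^m)$ and completing the proof.
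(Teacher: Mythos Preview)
Your argument is correct and is precisely the approach the paper intends: the paper's own proof is the single sentence ``this corollary is an immediate consequence of Proposition \ref{thmF}'', and you have supplied the details of that deduction. One minor imprecision worth noting: in your dichotomy you write ``$p^\sigma(\log p)^m \ll \kappa^\varepsilon$ for any small $\varepsilon > 0$'', but in fact the exponent you obtain is the fixed quantity $\sigma/C^{1/3}$; since any exponent strictly below $1$ suffices for the remainder of the argument, this does not affect validity.
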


\subsection{Results on the Bessel function $I_{0}$} \label{Sec_RBF}

We further prepare some lemmas on the Bessel function $I_{0}(z)  = \frac{1}{2\pi}\int_{-\pi}^{\pi} \exp(z \cos \theta)d\theta$. 
Put
\begin{gather}\label{eqdelta}
\Delta=\set{z=x+iy}{x\geq0, |y|\leq{x}}. 
\end{gather}

\begin{lemma}\label{lemBessel}
We have $|I_0(z)|\asymp{I}_0(x)$ for all $z\in\Delta$. 
\end{lemma}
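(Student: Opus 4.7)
The plan is to prove the upper and lower bounds separately. The upper bound $|I_0(z)| \leq I_0(x)$ is immediate from the integral representation recalled just before the lemma: since $|e^{z\cos\theta}|=e^{x\cos\theta}$ for every $\theta\in[-\pi,\pi]$, we have
\[
|I_0(z)| \leq \frac{1}{2\pi}\int_{-\pi}^{\pi} e^{x\cos\theta}\,d\theta = I_0(x).
\]

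For the reverse bound $I_0(x) \ll |I_0(z)|$, I would split the argument by the size of $|z|$. Fix a large constant $R>0$ to be chosen later. On the compact set $\Delta\cap\{|z|\leq R\}$, observe that $I_0$ has no zeros: the zeros of $I_0$ are purely imaginary and nonzero (because $I_0(z)=J_0(iz)$ and all zeros of $J_0$ are real and nonzero), whereas the only purely imaginary point of $\Delta$ is the origin, where $I_0(0)=1$. By continuity of $|I_0|$ and compactness, we obtain a strictly positive lower bound $|I_0(z)|\geq c(R)>0$ on this region, while the monotonicity of $I_0$ on $\RR_{\geq0}$ gives $I_0(x)\leq I_0(R)$; hence $|I_0(z)|\asymp I_0(x)$ here.

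For $|z|>R$ with $R$ sufficiently large, I would invoke the classical asymptotic expansion
\[
I_0(z) = \frac{e^z}{\sqrt{2\pi z}}\bigl(1+O(|z|^{-1})\bigr),
\]
valid uniformly in any fixed sector $|\arg z|\leq \pi/2-\delta$. Since $z\in\Delta$ forces $|\arg z|\leq\pi/4$, this asymptotic applies and yields $|I_0(z)|\asymp e^x/\sqrt{|z|}$. Applying the same expansion at the real point $x$ gives $I_0(x)\asymp e^x/\sqrt{x}$. The bounds $x\leq |z|\leq\sqrt{2}\,x$ on $\Delta$ make $\sqrt{|z|}$ and $\sqrt{x}$ comparable, completing the proof on this region.

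The only subtle points are locating the zeros of $I_0$ and verifying that the asymptotic expansion is uniform in the relevant sector; both are standard facts about the modified Bessel function, so no serious obstacle is expected.
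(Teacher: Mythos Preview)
Your proposal is correct and follows essentially the same route as the paper's proof: both obtain the upper bound directly from the integral representation, and for the lower bound both split into the compact region $|z|\leq R$ (using non-vanishing of $I_0$ on $\Delta$ plus compactness) and the region $|z|>R$ (using the asymptotic $I_0(z)=e^{z}(2\pi z)^{-1/2}(1+O(|z|^{-1}))$ together with $x\leq|z|\leq\sqrt{2}\,x$ on $\Delta$). Your write-up is slightly more explicit than the paper's in justifying the non-vanishing of $I_0$ on $\Delta$ and the uniformity of the asymptotic in the sector $|\arg z|\leq\pi/4$.
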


\begin{proof}
The inequality $|I_0(z)|\leq{I}_0(x)$ is deduced from the definition $I_{0}$. 
We prove that $|I_0(x)/I_0(z)|$ is bounded if $x\geq0$ and $|y|\leq{x}$. 
Recall that the asymptotic formula (see \cite[page 122]{LSF})
\begin{align}	\label{BFB0}
I_0(z)=\frac{e^z}{\sqrt{2\pi{z}}}\l(1 + O\l(|z|^{-1}\r)\r)
\end{align}
holds if $\Re{z}>0$. 
Hence we see that there exists an absolute constant $R>0$ such that 
\[\l|\frac{I_0(x)}{I_0(z)}\r|\leq2\sqrt{\frac{|z|}{x}}\leq2\sqrt{2}\]
if $|z|>{R}$ and $z\in\Delta$. 
Recall that the modified Bessel function ${I}_0(z)$ is non-zero and holomorphic for $\Re{z}>0$. 
Hence $|I_0(x)/I_0(z)|$ is bounded if $|z|\leq{R}$ and $z\in\Delta$, and so we complete the proof. 
\end{proof}

Define 
\begin{equation}\label{eqn:LogBe}
g(z)=\log{I}_0(z)\end{equation}
as a holomorphic function on $\Re{z}>0$, whose values are real on the real axis. 

\begin{lemma}\label{lemBF}
We have the followings; 
\begin{itemize}
\item[{\rm (i)}] The estimate $g(z) = z^2/4 + O\l( | z |^4 \r)$ holds for $|z| \leq 1$. 
\item[{\rm (ii)}] The estimate $g(z) \ll |z|$ holds for $z \in \Delta$.
\item[{\textrm{(iii)}}] The estimate $g'(z) \ll 1$ for $z \in \Delta$.
\end{itemize}
\end{lemma}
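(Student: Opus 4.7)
The three estimates live in different regimes of $|z|$, and my plan is to treat the near-origin part via the power series of $I_0$ and the far-field part via the asymptotic formula \eqref{BFB0}. Since $I_0$ does not vanish on $\{\Re z>0\}$ and $I_0(0)=1$, the principal branch $g(z)=\log I_0(z)$ is well defined throughout $\Delta$.

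For (i), I would use the Taylor series $I_0(z)=\sum_{k\geq0}(z/2)^{2k}/(k!)^2=1+z^2/4+O(|z|^4)$. For $|z|\leq1$ the remainder $u(z):=I_0(z)-1$ satisfies $|u(z)|\leq I_0(1)-1<1$, so the Taylor expansion $\log(1+u)=u-u^2/2+O(|u|^3)$ applies with principal branch. Substituting $u(z)=z^2/4+O(|z|^4)$ gives $g(z)=z^2/4+O(|z|^4)$.

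For (ii), I would split at $|z|=1$. If $|z|\leq1$ and $z\in\Delta$, then (i) gives $|g(z)|\ll|z|^2\leq|z|$. If $|z|>1$ and $z\in\Delta$, then $\Re z\geq|z|/\sqrt{2}>0$, so \eqref{BFB0} applies. Taking the principal logarithm yields
\[
g(z)=z-\tfrac{1}{2}\log(2\pi z)+\log\bigl(1+O(|z|^{-1})\bigr)=z+O(\log|z|),
\]
hence $|g(z)|\ll|z|$ in this regime.

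For (iii), when $|z|\leq1$ I would simply differentiate the power series of $g$ (or use $g'(z)=z/2+O(|z|^3)$ from (i)) to see $|g'(z)|\ll1$. When $|z|>1$ and $z\in\Delta$, I would apply Cauchy's integral formula on the disk $D(z,|z|/4)$. Because $\Re z\geq|z|/\sqrt{2}$, every $w$ in this disk satisfies $\Re w\geq|z|(1/\sqrt{2}-1/4)>0$ and $|w|\in[3|z|/4,5|z|/4]$, so $g$ is holomorphic there and the argument used for (ii) (which in fact works in the full half-plane $\Re w>0$ with $|w|\gg1$) gives $|g(w)|\ll|w|\ll|z|$. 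Cauchy's formula then produces
\[
|g'(z)|\leq\frac{4}{|z|}\max_{|w-z|=|z|/4}|g(w)|\ll1.
\]
The main thing to be careful about is making the two regimes glue cleanly: the disk used in Cauchy's formula must avoid the zeros of $I_0$ on the imaginary axis, and the choice of radius $|z|/4$ together with the containment $z\in\Delta$ is what guarantees this. Once that geometric point is verified, the rest is mechanical manipulation of \eqref{BFB0} and the Taylor expansion of $I_0$.
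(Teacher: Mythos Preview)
Your proof is correct and follows essentially the same route as the paper: part (i) via the Taylor series of $I_0$ and the expansion of $\log(1+u)$, part (ii) via the asymptotic \eqref{BFB0}, and part (iii) via Cauchy's integral formula combined with (ii). You are in fact a bit more careful than the paper in explicitly splitting at $|z|=1$ and in specifying the Cauchy radius $|z|/4$ so that the disk stays in $\{\Re w>0\}$, where $I_0$ is non-vanishing; the paper simply says (ii) ``immediately follows'' from \eqref{BFB0} and (iii) is ``easily obtained'' from (ii) and Cauchy's formula.
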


\begin{proof}
By the Taylor expansion of $\exp(z)$, we have
\[
I_0(z)
= 1 + E(z) z^2, \quad 
E(z) = \frac{1}{4} + \sum_{n =2}^\infty \frac{1}{4(n!)^2} \l(\frac{z}{2}\r)^{2(n -1)}.
\]
Since the estimate $\l | E(z) \r | < 1$ holds for $|z| \leq 1$, 
we obtain
\[
\log I_0 (z)
= \sum_{k =1}^\infty \frac{(-1)^{k-1}}{k}E(z)^{k} z^{2km}
= \frac{z^2}{4} + O\left( |z|^4 \right)
\]
for $|z| \leq 1$, 
which gives the first assertion.
The second assertion immediately follows from the asymptotic formula \eqref{BFB0}.
The third assertion can be easily obtained from the second assertion and Cauchy's integral formula.
This completes the proof.
\end{proof}

\begin{lemma}\label{lemCumu}
Let $(\s, m)\in\mca{A}$ and $\a\in\RR$. 
Suppose that $\kappa{p}^{-2\s}(\log{p})^{-m} \leq \delta$ is satisfied with a positive small absolute constant $\delta$. 
Then the function $f_{\s, m, p}(s; \a)$ given by \eqref{def_f_smp} can be analytically continued to the region $\Delta$.
Moreover we have
\[
f_{\s, m, p}(s; \a)
=g \l(\frac{s}{p^\s(\log{p})^m}\r)
+O\l(\frac{\kappa}{p^{2\s}(\log{p})^m}\r)\]
for $s = \kappa + it \in \Delta$.
Here, the region $\Delta$ is given by \eqref{eqdelta}.
\end{lemma}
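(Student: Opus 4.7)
The plan is to treat $F_{\s,m,p}(s;\a)$ as a perturbation of a Bessel function. For $p$ in the stated range (so $\kappa p^{-2\s}(\log p)^{-m}\leq\delta$), the sum $\te_{m,p}(\s,w)=\sum_{k\geq1}w^k/(kp^{k\s}(\log p^k)^m)$ splits as $w/p^\s(\log p)^m+R_p(w)$, where the tail $R_p(w):=\sum_{k\geq2}$ satisfies $|R_p(w)|\ll 1/(p^{2\s}(\log p)^m)$ uniformly for $|w|=1$. First I would note that the integral in \eqref{eqFp} defines an entire function of $s$ (differentiate under the integral), so the question of extending $f_{\s,m,p}=\log F_{\s,m,p}$ to $\Delta$ reduces to showing $F_{\s,m,p}(s;\a)\neq0$ on $\Delta$.

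Setting $z=s/(p^\s(\log p)^m)$, substituting $w=e^{i\theta}$ in \eqref{eqFp}, and shifting by $\a$ using $2\pi$-periodicity, one obtains
\[
F_{\s,m,p}(s;\a)=\frac{1}{2\pi}\int_{-\pi}^{\pi}\exp\!\left(z\cos\phi+s\,\Re\bigl(e^{-i\a}R_p(e^{i(\phi+\a)})\bigr)\right)d\phi.
\]
On $\Delta$ we have $|s|\leq\sqrt{2}\,\kappa$, so the second term in the exponent is bounded in absolute value by $C\kappa/(p^{2\s}(\log p)^m)\leq C\delta$ uniformly in $\phi$. Applying $e^{x}=1+O(|x|)$ with $|x|$ bounded yields the main term $\frac{1}{2\pi}\int_{-\pi}^\pi e^{z\cos\phi}\,d\phi=I_0(z)$ and an error controlled by
\[
\frac{\kappa}{p^{2\s}(\log p)^m}\cdot\frac{1}{2\pi}\int_{-\pi}^{\pi}|e^{z\cos\phi}|\,d\phi
=\frac{\kappa}{p^{2\s}(\log p)^m}\cdot I_0(\Re z),
\]
since $|e^{z\cos\phi}|=e^{\Re z\cos\phi}$ for real $\phi$. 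Invoking Lemma \ref{lemBessel} to replace $I_0(\Re z)$ by $|I_0(z)|$ up to an absolute constant gives
\[
F_{\s,m,p}(s;\a)=I_0(z)\left(1+O\!\left(\frac{\kappa}{p^{2\s}(\log p)^m}\right)\right).
\]

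Because $I_0$ is nonvanishing on $\{\Re z>0\}\supset\Delta$ and the multiplicative error is uniformly $O(\delta)$, we may choose $\delta$ so small that $F_{\s,m,p}(s;\a)\neq 0$ throughout $\Delta$; hence $f_{\s,m,p}$ continues analytically to $\Delta$. Taking the branch of $\log F_{\s,m,p}$ determined by analytic continuation from the positive real axis (where $F_{\s,m,p}(\kappa;\a)$ and $I_0(\kappa/(p^\s(\log p)^m))$ are both positive, so $\log F_{\s,m,p}=g+\log(1+O(\cdot))$ unambiguously with the real branches), we conclude
\[
f_{\s,m,p}(s;\a)=g(z)+O\!\left(\frac{\kappa}{p^{2\s}(\log p)^m}\right)
\]
as claimed. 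The step requiring the most care is the error control in the displayed identity above: one must keep the estimate for $\int|e^{z\cos\phi}|\,d\phi$ in terms of $|I_0(z)|$ (rather than $I_0(|z|)$) so that after dividing by $I_0(z)$ the error remains small uniformly on $\Delta$, and one must verify that the logarithm branch chosen on the real axis propagates globally to $\Delta$ without monodromy, which follows because $\Delta$ is simply connected and $F_{\s,m,p}/I_0$ stays within a small neighborhood of $1$.
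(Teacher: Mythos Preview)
Your proof is correct and follows essentially the same route as the paper: split $\te_{m,p}$ into the leading term $X(p)/(p^\s(\log p)^m)$ plus a tail of size $O(p^{-2\s}(\log p)^{-m})$, use the hypothesis $\kappa p^{-2\s}(\log p)^{-m}\leq\delta$ to expand the exponential of the tail contribution, recognize the main term as $I_0(z)$ with $z=s/(p^\s(\log p)^m)$, and apply Lemma~\ref{lemBessel} to convert $I_0(\Re z)$ into $|I_0(z)|$ so the error is multiplicatively small. The paper does exactly this (its equation \eqref{eqMine23T}), and your added remarks on the branch of the logarithm and the simple connectivity of $\Delta$ make the analytic continuation step slightly more explicit than the paper's version. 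One tiny quibble: $\Delta$ contains the origin, so strictly $\{\Re z>0\}\not\supset\Delta$; but $I_0(0)=1\neq0$, so this does not affect the argument.
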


\begin{proof}
We can write
\[
\te_{m, p}(\s, X(p))
=\frac{X(p)}{p^\s(\log{p})^m} + E_{\s, m} (p), \quad E_{\s, m} (p) \ll\frac{1}{p^{2\s}(\log{p})^m}.
\] 
Recalling that $|s|\leq2\kappa$ holds for every $s=\kappa+it\in\Delta$, we have 
\[\exp\l( s \Re e^{-i\a} E_{\s, m}(p)  \r)=1+O\l(\frac{\kappa}{p^{2\s}(\log{p})^m}\r)\]
for $s\in\Delta$ if $p$ satisfies $\kappa{p}^{-2\s}(\log{p})^{-m}\leq\delta$. 
Hence $F_{\s, m, p}(s; \a)$ is calculated as 
\begin{align*}
F_{\s, m, p}(s; \a)
&=\EXP{\exp\l(s\frac{\Re e^{-i\a} X(p)}{p^\s(\log{p})^m}\r)}\\
&\qquad+\EXP{\exp\l(s\frac{\Re e^{-i\a} X(p)}{p^\s(\log{p})^m}\r) O\l(\frac{\kappa}{p^{2\s}(\log{p})^m}\r)}\\
&=I_0\l(\frac{s}{p^\s(\log{p})^m}\r)+O\l({I}_0\l(\frac{\kappa}{p^\s(\log{p})^m}\r)\frac{\kappa}{p^{2\s}(\log{p})^m}\r).
\end{align*}
By Lemma \ref{lemBessel}, it implies
\begin{gather}\label{eqMine23T}
F_{\s, m, p}(s; \a)
=I_0\l(\frac{s}{p^\s(\log{p})^m}\r)\l(1+O\l(\frac{\kappa}{p^{2\s}(\log{p})^m}\r)\r).
\end{gather}
Therefore, if $\delta$ is sufficiently small, $F_{\s, m, p}(s; \a)\neq0$ for $s\in\Delta$. 
Hence we define $f_{\s, m, p}(s; \a) = \log F_{\s, m, p}(s; \a)$ as before.
We have by \eqref{eqMine23T} the formula
\[f_{\s, m, p}(s; \a)=g\l(\frac{s}{p^\s(\log{p})^m}\r)+O\l(\frac{\kappa}{p^{2\s}(\log{p})^m}\r). \]
This completes the proof.
\end{proof}

\subsection{Proof of Proposition \ref{propCumu1}}

\begin{proof}[Proof of Proposition \ref{propCumu1}]
Let $\a \in \RR$.
First, we show the asymptotic formula
\begin{align}
&f_{\s, m}(s; \alpha)
 \label{eqEndo1T}
=\s^{\frac{m}{\s}}g_0(\s)\frac{s^{\frac{1}{\s}}}{(\log\kappa)^{\frac{m}{\s}+1}}
\l(1+O_{\s, m}\l(\frac{1 + m\log\log\kappa}{\log\kappa} 
\r)\r)  \end{align}
for $s=\kappa+it$ satisfying $\kappa > c$ and $|t| \leq \kappa$ with $c > 0$ a large constant, 
where we choose the branch of $s^{\frac{1}{\s}}$ which takes positive values on the positive real axis.
Let $y_1$ and $y_2$ be the parameters determined by
\[\frac{\kappa}{{y}_1^{2\s}}=\delta
\quad\text{and}\quad
\frac{\kappa}{{y}_2^{\s}}=\l(\frac{1}{\log\kappa}\r)^{\frac{\s}{2\s-1}}, \]
where $\delta>0$ is a constant in Lemma \ref{lemCumu}.
Using formula \eqref{eqMine3T} along with Corollary \ref{corf}, Lemmas \ref{lemBF} and \ref{lemCumu}, we have 
\begin{gather}\label{eqMine24T}
f_{\s, m}(s; \alpha)
=\sum_{y_1<p\leq y_2} g\l(\frac{s}{p^\s(\log{p})^m}\r) 
+ E_1,
\end{gather}
where 
\begin{align*}
E_1
&\ll \sum_{p\leq{y}_1}\frac{\kappa}{p^\s(\log{p})^m}
+ \sum_{p > y_1 } \frac{\kappa}{p^{2\s}(\log{p})^m}
+ \sum_{p>y_2} \frac{\kappa^2}{p^{2\s}(\log{p})^{2m}} \\
&\ll_{\s, m} \frac{\kappa {y_1}^{1 - \s}}{ (\log {y_1})^{m +1} }
+\frac{\kappa^2 {y_2}^{1-2\s}}{ (\log {y_2})^{2m +1} }.
\end{align*}
The first term comes from Corollary \ref{corf}, the second term from Lemma \ref{lemCumu}, 
and the third term from Lemmas \ref{lemBF}, \ref{lemCumu}.
Since $y_1\asymp_{\s}\kappa^{\frac{1}{2\sigma}}$ and $y_2=\kappa^{\frac{1}{\s}}(\log\kappa)^{\frac{1}{2\s-1}}$, we obtain 
\begin{gather}\label{eqMine25T}
E_1
\ll_{\s, m} \frac{\kappa^{\frac{1}{2\s}+\frac{1}{2}}}{(\log\kappa)^{m+1}}
+ \frac{\kappa^{\frac{1}{\s}}}{(\log\kappa)^{2m+2}}
\ll_{m} \frac{\kappa^{\frac{1}{\s}}}{(\log\kappa)^{\frac{m}{\s}+2}}
\end{gather}
if $\kappa$ is large enough.

The main term comes from the terms for $y_1<p\leq y_2$. 
Recall that the asymptotic formula
\[\pi(y) := \sum_{p \leq y} 1=\int_2^y\frac{dt}{\log{t}}+O\l(ye^{-8\sqrt{\log{y}}}\r)\]
holds. 
Then, by partial summation, we have 
\[
\sum_{y_1<p\leq{y}_2}g \l(\frac{s}{p^\s(\log{p})^m}\r)
=\int_{y_1}^{y_2}g \l(\frac{s}{y^\s(\log{y})^m}\r)\frac{dy}{\log{y}}+E_2, \]
where 
\begin{align*}
E_2
\ll_m& \l | g\l(\frac{s}{{y_1}^\s(\log{y_1})^m}\r) \r | y_1e^{-8\sqrt{\log{y}_1}}
+ \l | g\l(\frac{s}{{y_2}^\s(\log{y_2})^m}\r) \r | y_2 e^{-8\sqrt{\log{y}_2}} \\
& + \kappa \int_{y_1}^{y_2}  \l | g' \l(\frac{s}{{y}^\s(\log{y})^m}\r) \r | \frac{e^{-8\sqrt{\log{y}}}}{y^{\s} ( \log y )^{m}} \, dy.
\end{align*}
Since $s \in \Delta$ and $\l | s {y_2}^{-\s}(\log{y_2})^{- m} \r | \leq 1$ hold,  
Lemma \ref{lemBF} yields
\[
\l | g\l(\frac{s}{{y_1}^\s(\log{y_1})^m}\r) \r | y_1e^{-8\sqrt{\log{y}_1}}
\ll \frac{\kappa e^{-8\sqrt{\log{y}_1}}}{{y_1}^{\s -1}(\log{y_1})^m}
\ll_{\s, m}\frac{\kappa^{\frac{1}{2\s}+\frac{1}{2}}}{(\log\kappa)^m}e^{-4\sqrt{\log\kappa}},
\]
and
\[
\l | g\l(\frac{s}{{y_2}^\s(\log{y_2})^m}\r) \r | y_2 e^{-8\sqrt{\log{y}_2}}
\ll \frac{\kappa^2 e^{-8\sqrt{\log{y}_2}}}{{y_2}^{2 \s -1}(\log{y_2})^{2m}}
\ll_{\s, m} \frac{\kappa^{\frac{1}{\s}}}{(\log\kappa)^{2m+1}}e^{-8\sqrt{\log\kappa}}.
\]
The third term is estimated as 
\begin{align*}
&\kappa \int_{y_1}^{y_2}  \l | g' \l(\frac{s}{{y}^\s(\log{y})^m}\r) \r | \frac{e^{-8\sqrt{\log{y}}}}{y^{\s} ( \log y )^{m}} \, dy \\
\ll \, & \kappa {e}^{-8\sqrt{\log{y}_1}} \int_{1}^{y_2} \frac{d y}{y^{\s} ( \log y )^{m}}
\ll_{\s, m} \kappa^{\frac{1}{\s}}e^{-4\sqrt{\log\kappa}}
\end{align*}
by Lemma \ref{lemBessel}.
As a result, the error term $E_2$ is estimated as 
\[E_2\ll_{\s, m}\frac{\kappa^{\frac{1}{\s}}}{(\log\kappa)^{\frac{m}{\s}+2}}.\]
Next, making the change of variables $u = \frac{\kappa}{y^{\s}(\log{y})^{m}}$, we obtain 
\begin{align*}
&\int_{y_1}^{y_2}g \l(\frac{s}{y^\s(\log{y})^m}\r)\frac{dy}{\log{y}}\\
&=\s^{\frac{m}{\s}}\kappa^{\frac{1}{\s}} \int_{u_2}^{u_1} \frac{g\l(\frac{su}{\kappa}\r)}{u^{\frac{1}{\s} +1 }(\log(\frac{\kappa}{u}))^{\frac{m}{\s}+1}} \l(1+O_{\s, m}\l(\frac{m \log\log\kappa}{\log\kappa}\r)\r) du, 
\end{align*}
where we put 
\[
u_1 = \frac{\kappa}{y_1^\s ( \log y_1 )^m}
\quad \text{and} \quad 
u_2 = \frac{\kappa}{y_2^\s ( \log y_2 )^m}.
\]
Since we have 
\[\frac{1}{\l(\log\l(\frac{\kappa}{u}\r)\r)^{\frac{m}{\s}+1}}
=\frac{1}{(\log\kappa)^{\frac{m}{\s}+1}}\l(1+O_{\s, m}\l(\frac{|\log{u}|}{\log\kappa}\r)\r)\]
for $u_1\leq{u}\leq{u}_2$ and the estimates
\[
\int_{0}^{\infty} \frac{\l |g\l(\frac{su}{\kappa}\r) \r |  }{u^{ \frac{1}{\s} +1}}du \ll_\s 1, \quad \int_{0}^{\infty} \frac{\l |g\l(\frac{su}{\kappa}\r) \log u \r |  }{u^{ \frac{1}{\s} +1}}du \ll_\s 1
\]
holds by Lemma \ref{lemBF}, the integral is calculated as 
\begin{align*}
&\int_{y_1}^{y_2} g\l( \frac{s}{p^\s(\log{p})^m} \r)\frac{dy}{\log{y}}\\
&=\s^{\frac{m}{\s}}\frac{\kappa^{\frac{1}{\s}}}{(\log\kappa)^{\frac{m}{\s}+1}}
\left\{ \int_{u_2}^{u_1}\frac{g\l(\frac{su}{\kappa}\r)}{u^{\frac{1}{\s}+1}}\, du
+O_{\s, m}\l(\frac{1 + m \log\log\kappa}{\log\kappa}\r) \right\}.
\end{align*}
Finally, we see that the estimates
\begin{align*}
\int_0^{u_2} \frac{g\l(\frac{su}{\kappa}\r)}{u^{\frac{1}{\s}+1}}\, du
&\ll_\sigma {u}_2^{2-\frac{1}{\s}}\ll_{\s, m}\frac{1}{\log\kappa} \\
\int_{u_1}^\infty \frac{g\l(\frac{su}{\kappa}\r)}{u^{\frac{1}{\s}+1}}\, du
&\ll_\sigma {u}_1^{1-\frac{1}{\s}}\ll_{\s, m}\kappa^{\frac{1}{2}-\frac{1}{2\s}} (\log \kappa)^m
\end{align*}
hold by Lemma \ref{lemBF}, and therefore, the asymptotic formula
\begin{align*}
\int_{u_2}^{u_1}\frac{g\l(\frac{su}{\kappa}\r)}{u^{\frac{1}{\s}+1}}\, du
=\int_0^{\infty}\frac{g\l(\frac{su}{\kappa}\r)}{u^{\frac{1}{\s}+1}}\, du 
+O_{\s, m}\l(\frac{1}{\log\kappa}
\r)
\end{align*}
follows. 
Using the equation
\[
\int_0^{\infty}\frac{g\l(\frac{su}{\kappa}\r)}{u^{\frac{1}{\s}+1}}\, du 
= \frac{s^{\frac{1}{\s}}}{\kappa^{\frac{1}{\s}}} \int_0^{\infty}\frac{g\l(u\r)}{u^{\frac{1}{\s}+1}}\, du
= \frac{s^{\frac{1}{\s}}}{\kappa^{\frac{1}{\s}}} g_{0}(\s), 
\]
we conclude
\begin{align}
&\sum_{y_1<p\leq{y}_2}g\l(\frac{s}{p^\s(\log{p})^m}\r)\\
&=\s^{\frac{m}{\s}}g_0(\s)\frac{s^{\frac{1}{\s}}}{(\log\kappa)^{\frac{m}{\s}+1}}
\l(1+ O_{\s, m}\l(\frac{1 + m\log \log \kappa}{\log\kappa}
\r) \r).\label{eqMine27T}
\end{align}
Combining \eqref{eqMine24T}, \eqref{eqMine25T}, and \eqref{eqMine27T}, we obtain the asymptotic formula \eqref{eqEndo1T}. 

Let $\kappa$ be large enough depending on $\s$ and $m$.
Then we have 
\[
f_{\s, m}(z; \alpha)
= \s^{\frac{m}{\s}}g_0(\s)\frac{z^{\frac{1}{\s}}}{(\log\kappa)^{\frac{m}{\s}+1}} + h_{\s, m}(z; \a), 
\]
where
\[
h_{\s, m}(z; \a) 
\ll_{\s, m} \frac{\kappa^{\frac{1}{\s}}}{(\log\kappa)^{\frac{m}{\s}+1}}
\frac{1 + m \log\log\kappa}{\log\kappa} 
\]
for $| z - \kappa | \leq \kappa /2$ by the asymptotic formula \eqref{eqEndo1T}.
By Cauchy's integral formula, we have
\begin{align*}
h^{(n)}_{\s, m}(\kappa; \a)
&= f^{(n)}_{\s, m}(\kappa; \a) - \s^{\frac{m}{\s}}G_n(\s) g_0 (\s) \frac{\kappa^{\frac{1}{\s} - n}}{(\log\kappa)^{\frac{m}{\s}+1}} \\
&= \frac{n!}{2\pi{i}}\int_{|z-\kappa|=\kappa/2}\frac{h_{\s, m}(z; \a)}{(z-\kappa)^{n+1}}\, dz \\
&\ll_{\s, m} \frac{ 2^n n! \kappa^{\frac{1}{\s} - n}}{(\log\kappa)^{\frac{m}{\s}+1}}
\frac{1 + m\log\log\kappa}{\log\kappa}, 
\end{align*}
where $G_n(\s) = \prod_{j = 0}^{n -1}(\frac{1}{\s} - j)$, $G_{0}(\s) = 1$.
Using the equation $g_n(\s)=G_n(\s) g_0(\s)$ obtained by integration by parts, 
we have the conclusion.
\end{proof}

\section{\textbf{A transformation of the density function}} \label{Sec_FPDF}

Let $(\s, m)\in\mca{A}$ and $\a\in\RR$. 
We define a non-negative continuous function $\DF_{\s, m}(x; \a)$ as 
\[
\DF_{\s, m}(x; \a)
=\int_{\RR} \DF_{\s, m}(e^{i\a}(x + iy))\frac{dy}{\sqrt{2\pi}}, 
\]
where $\DF_{\s, m}(z)$ is the probability density function determined by \eqref{eqMine14}. 
Then the function $\DF_{\s, m}(x; \a)$ satisfies
\begin{align}
\EXP{\Phi\l(\Re e^{-i\a} \te_{m}(\s, X)\r)}
&=\int_{\CC} \Phi\l(\Re e^{-i\a}z\r)\DF_{\s, m}(z)\, |dz|\\
\label{def_odM}
&=\int_{\RR} \Phi(u) \DF_{\s, m}(u; \a)\, |du|
\end{align}
for all Lebesgue measurable functions $\Phi$ on $\RR$, where $|du|=(2\pi)^{-1/2}du$. 
Here, the second equation is obtained by the change of variables $u = x \cos{\a} + y \sin{\a}$ and $v = -x\sin{\a} + y \cos{\a}$.
Hence $\DF_{\s, m}(x; \a)$ is again a probability density function. 
Additionally, its moment-generating function is given by 
\begin{gather}\label{eqMine29}
F_{\s, m}(s; \a)=\int_\RR{e}^{sx} \DF_{\s, m}(x; \a)\, |dx|
\end{gather}
which agrees with \eqref{eqFY}. 
In this section, we make a transformation to the density function $\DF_{\s, m}(x; \a)$ such that
\begin{gather}\label{eqMine29a}
\nDF_{\s, m}^\tau(x; \a)
=\frac{e^{\kappa\tau}}{F_{\s, m}(\kappa; \a)}e^{\kappa{x}} \DF_{\s, m}(x+\tau; \a)
\end{gather}
with $\tau>0$, where $\kappa=\kappa(\tau; \s, m, \a)$ is a positive real number determined as follows.

\begin{lemma}\label{lemSP}
Let $(\s, m)\in\mca{A}$ and $\a\in\RR$. 
For each $\tau>0$, there exists a unique real number $\kappa=\kappa(\tau; \s, m, \a)>0$ such that 
\begin{gather}\label{eqMine55}
f'_{\s, m}(\kappa; \a)=\tau.
\end{gather}
Furthermore, we have $\kappa\to\infty$ as $\tau\to\infty$. 
\end{lemma}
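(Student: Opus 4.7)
The plan is to combine strict convexity of $f_{\s,m}(\cdot;\a)$ with the intermediate value theorem, using Proposition \ref{propCumu1} to force $f'_{\s,m}(\kappa;\a)\to\infty$ as $\kappa\to\infty$. Put $Y:=\Re e^{-i\a}\te_{m}(\s,X)$. By Proposition \ref{prop:MGF}, $\EXP{e^{a|Y|}}<\infty$ for every $a>0$, so $F_{\s,m}(\kappa;\a)=\EXP{e^{\kappa Y}}$ is strictly positive and real-analytic on $\RR$, and differentiation under the expectation is legitimate.

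Direct computation gives
\begin{align*}
f'_{\s,m}(\kappa;\a)=\frac{\EXP{Ye^{\kappa Y}}}{\EXP{e^{\kappa Y}}},\qquad f''_{\s,m}(\kappa;\a)=\frac{\EXP{Y^{2}e^{\kappa Y}}}{\EXP{e^{\kappa Y}}}-\l(\frac{\EXP{Ye^{\kappa Y}}}{\EXP{e^{\kappa Y}}}\r)^{2},
\end{align*}
and the second identity expresses $f''_{\s,m}(\kappa;\a)$ as the variance of $Y$ under the tilted probability with density $e^{\kappa Y}/F_{\s,m}(\kappa;\a)$. Since this density is strictly positive, the condition $f''_{\s,m}(\kappa;\a)>0$ reduces to $Y$ being non-constant under $\PP$. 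Using the independence of $\{X(p)\}_{p\in\mca{P}}$ together with \eqref{BEUIR},
\begin{align*}
\mathrm{Var}(Y)=\sum_{p}\EXP{\bigl(\Re e^{-i\a}\te_{m,p}(\s,X(p))\bigr)^{2}}>0,
\end{align*}
because the leading term $\Re e^{-i\a}X(p)/(p^{\s}(\log p)^{m})$ of $\Re e^{-i\a}\te_{m,p}(\s,X(p))$ already contributes $\tfrac{1}{2}p^{-2\s}(\log p)^{-2m}$ to the second moment and dominates the higher-order corrections for large $p$. Hence $f'_{\s,m}(\cdot;\a)$ is strictly increasing on $\RR$, with $f'_{\s,m}(0;\a)=\EXP{Y}=0$.

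To exhibit the required growth at infinity I would invoke Proposition \ref{propCumu1} with $n=1$, which gives
\begin{align*}
f'_{\s,m}(\kappa;\a)\sim\s^{m/\s}g_{1}(\s)\,\frac{\kappa^{1/\s-1}}{(\log\kappa)^{m/\s+1}}\qquad(\kappa\to\infty).
\end{align*}
The inequality $\log I_{0}(u)>0$ for $u>0$ yields $g_{0}(\s)>0$, and integration by parts (as used at the end of the proof of Proposition \ref{propCumu1}) gives $g_{1}(\s)=(1/\s)g_{0}(\s)>0$; combined with $1/\s>1$, this forces $f'_{\s,m}(\kappa;\a)\to+\infty$. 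Existence and uniqueness of $\kappa=\kappa(\tau;\s,m,\a)>0$ satisfying \eqref{eqMine55} then follow by applying the intermediate value theorem to the continuous strictly increasing surjection $f'_{\s,m}(\cdot;\a)\colon(0,\infty)\to(0,\infty)$, and $\kappa(\tau)\to\infty$ as $\tau\to\infty$ is immediate because its inverse is likewise strictly increasing and unbounded above.

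The only mildly delicate point is confirming $\mathrm{Var}(Y)>0$; the rest is standard calculus, with all analytic inputs supplied by Propositions \ref{prop:MGF} and \ref{propCumu1}.
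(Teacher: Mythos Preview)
Your argument is essentially the paper's: both compute $f'_{\s,m}(0;\a)=\EXP{Y}=0$ and establish $f''_{\s,m}(\kappa;\a)>0$ via the tilted-variance identity, then conclude by monotonicity. You are in fact more careful than the paper on two points. First, you justify why the tilted variance is strictly positive (the paper simply writes ``$>0$'' without comment). Second, you explicitly supply the surjectivity $f'_{\s,m}(\kappa;\a)\to\infty$ via Proposition~\ref{propCumu1}, a step the paper's proof omits entirely (it asserts ``it is sufficient to show $f''>0$'' and stops, which gives uniqueness and the final sentence of the lemma but not existence for every $\tau>0$).

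One caveat on scope: Proposition~\ref{propCumu1} is stated only for $1/2<\s<1$, so your surjectivity argument does not cover all of $\mca{A}$ as the lemma claims. In fact for $\s\geq1$ with $m\geq1$ (or $\s>1$ with $m=0$) the support of $\te_m(\s,X)$ is compact by Proposition~\ref{prop:app}, so $Y$ is bounded, $f'_{\s,m}(\kappa;\a)$ tends to $\operatorname{ess\,sup}Y<\infty$, and the lemma is actually false for large $\tau$. This is a defect in the lemma's stated generality rather than in your proof, and only the range $1/2<\s<1$ is used downstream; but you should flag that your appeal to Proposition~\ref{propCumu1} is restricted to that range.
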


\begin{proof}
Since $f_{\s, m}(\kappa; \a)=\log{F}_{\s, m}(\kappa; \a)$, we have 
\[
f'_{\s, m}(\kappa; \a)=\frac{F'_{\s, m}(\kappa; \a)}{F_{\s, m}(\kappa; \a)}.
\] 
In particular, we obtain
\[f'_{\s, m}(0; \a)=\frac{F'_{\s, m}(0; \a)}{F_{\s, m}(0; \a)}=\EXP{Y}=0, \]
where we define $Y = \Re e^{-i\a}\te_{m, p}(\s, X(p))$. 
Therefore it is sufficient to show that $f''_{\s, m}(\kappa; \a)>0$ for $\kappa>0$ for the proof of the result. 
Note that we have 
\begin{align*}
f''_{\s, m}(\kappa; \a)
&=\frac{F''_{\s, m}(\kappa; \a)F_{\s, m}(\kappa; \a)-F'_{\s, m}(\kappa; \a)^2}{F_{\s, m}(\kappa; \a)^2}\\
&=\frac{1}{F_{\s, m}(\kappa; \a)}\EXP{\l(Y-f'_{\s, m}(\kappa; \a)\r)^2\exp(\kappa{Y})}>0.
\end{align*}
Hence the result follows. 
\end{proof}

Then, the goal of this section is to show the following asymptotic formula of the function $\nDF_{\s, m}^{\tau}(x; \a)$. 

\begin{proposition}\label{propE}
Let $1/2<\s<1$, $m\in\ZZ_{\geq0}$, and $\a\in\RR$. 
For $\tau>0$, we take $\kappa=\kappa(\tau; \s, m, \a)>0$ satisfying \eqref{eqMine55}. 
Then we have 
\[
\nDF_{\s, m}^\tau(x; \a)
=\frac{1}{\sqrt{{f}''_{\s, m}(\kappa; \a)}}
\l\{\exp\l(-\frac{x^2}{2{f}''_{\s, m}(\kappa; \a)}\r)
+O_{\s, m}\l(\kappa^{-\frac{1}{2\s}}(\log\kappa)^{\frac{1}{2}(\frac{m}{\s}+1)}\r)\r\}\]
for all $x\in\RR$ if $\tau>0$ is large enough. 
\end{proposition}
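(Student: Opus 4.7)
The approach is to recover $\nDF^{\tau}_{\s,m}(x;\a)$ by Fourier inversion of its characteristic function and then apply a saddle point analysis. By the definition \eqref{eqMine29a} together with \eqref{eqMine29}, a direct substitution $y=x+\tau$ shows that the characteristic function of $\nDF^{\tau}_{\s,m}(\,\cdot\,;\a)$ equals $e^{it\tau}F_{\s,m}(\kappa-it;\a)/F_{\s,m}(\kappa;\a)$. Applying Fourier inversion (followed by the change $t\mapsto -t$) then yields the representation
\[
\nDF^{\tau}_{\s,m}(x;\a)
=\int_{\RR}e^{-it(x+\tau)}\exp\bigl(f_{\s,m}(\kappa+it;\a)-f_{\s,m}(\kappa;\a)\bigr)\,|dt|,
\]
which is the starting point of the argument.

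I would split this integral at $|t|=T_0$, where $T_0$ is a parameter of order $\kappa^{1-1/(3\s)}(\log\kappa)^{(m/\s+1)/3}$ to be chosen precisely at the end. On the central range $|t|\leq T_0$, I would Taylor-expand
\[
f_{\s,m}(\kappa+it;\a)-f_{\s,m}(\kappa;\a)
= it\,f'_{\s,m}(\kappa;\a)-\tfrac{t^2}{2}f''_{\s,m}(\kappa;\a)+R(t),
\]
use the saddle point identity $f'_{\s,m}(\kappa;\a)=\tau$ of Lemma \ref{lemSP} to cancel the linear phase $e^{-it\tau}$, and bound $R(t)\ll|t|^3\max|f'''_{\s,m}|$ via Proposition \ref{propCumu1} combined with Cauchy's integral formula on a disc of radius comparable to $\kappa$. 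The chosen size of $T_0$ will make $R(t)=o(1)$ uniformly on $|t|\leq T_0$, so the integrand is the Gaussian $\exp\bigl(-itx-\tfrac{t^2}{2}f''_{\s,m}(\kappa;\a)\bigr)$ up to a controlled factor. Extending this Gaussian integral from $[-T_0,T_0]$ to $\RR$ costs only an exponentially small error, and its evaluation yields the main term $\frac{1}{\sqrt{f''_{\s,m}(\kappa;\a)}}\exp\bigl(-x^2/(2f''_{\s,m}(\kappa;\a))\bigr)$, uniform in $x\in\RR$.

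For the tail $|t|>T_0$, the plan is to control the ratio $|F_{\s,m}(\kappa+it;\a)|/F_{\s,m}(\kappa;\a)$ prime by prime via the Euler product $F_{\s,m}(s;\a)=\prod_p F_{\s,m,p}(s;\a)$ of \eqref{eqMine3T}. For primes $p$ with $\kappa/p^{2\s}(\log p)^m$ small, the approximation $F_{\s,m,p}(s;\a)=I_0(s/(p^\s(\log p)^m))(1+O(\cdot))$ of Lemma \ref{lemCumu} combined with Lemma \ref{lemBessel} bounds the corresponding factor by $(1+t^2/\kappa^2)^{-1/4}(1+O(\cdot))$; for primes in the intermediate range where Proposition \ref{thmF} applies, the saddle-point amplitude $\sqrt{(\log p)^m/|2\pi(\kappa+it)\lambda''_{p^{-\s}}(\theta_1;m,\a)|}$ exhibits the same type of shrinkage. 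Multiplying across the primes and integrating should give a tail contribution absorbable into $\frac{1}{\sqrt{f''_{\s,m}(\kappa;\a)}}O_{\s,m}(\kappa^{-1/(2\s)}(\log\kappa)^{(m/\s+1)/2})$.

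The main technical obstacle is exactly this tail estimate: the prime-by-prime factor bounds must be combined so as to yield an integrable decay in $t$ across $|t|>T_0$, while the choice of $T_0$ is simultaneously constrained by the requirement that the cubic remainder $R(t)$ in the Taylor expansion remains $o(1)$ on the central range. The sizes of $f''_{\s,m}(\kappa;\a)$ and $f'''_{\s,m}(\kappa;\a)$ coming from Proposition \ref{propCumu1} dictate that this balance is met at $T_0\asymp\kappa^{1-1/(3\s)}(\log\kappa)^{(m/\s+1)/3}$, which is precisely where the cubic and quadratic terms in the cumulant expansion swap dominance.
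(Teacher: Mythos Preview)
Your central-range argument is essentially the paper's: Taylor-expand $f_{\s,m}(\kappa+it;\a)$ to second order, use $f'_{\s,m}(\kappa;\a)=\tau$ to kill the linear phase, control the cubic remainder via Proposition~\ref{propCumu1}, and evaluate the resulting Gaussian. The paper packages this slightly differently through the auxiliary function $G(z)=\exp(-\tau z-\tfrac12 f''_{\s,m}(\kappa;\a)z^2)F_{\s,m}(\kappa+z;\a)/F_{\s,m}(\kappa;\a)$, but the content is the same, and your threshold $T_0$ agrees (up to harmless log factors) with the paper's $\kappa_1=\kappa^{1-1/(3\s)}$.

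The gap is in the tail. Every tool you cite---Lemma~\ref{lemCumu}, Lemma~\ref{lemBessel}, and Proposition~\ref{thmF}---is only proved for $s=\kappa+it$ in the sector $\Delta=\{|t|\le\kappa\}$, so your plan says nothing about the range $|t|\gtrsim\kappa$. Without a bound there you cannot even justify the Fourier inversion step, let alone absorb that portion into the stated error. The paper closes this by a separate argument (inequality~\eqref{lemBound1td2} in Lemma~\ref{lemBound1td}): for $|t|\ge c_2\kappa$ one discards the small primes altogether and, for $p>M|t|^{1/\s}$, Taylor-expands $F_{\s,m,p}(\kappa+it;\a)$ directly to obtain $|F_{\s,m}(\kappa+it;\a)|/F_{\s,m}(\kappa;\a)\le\exp(-|t|^{1/(2\s)})$. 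This is not accessible through the Bessel or saddle-point approximations you invoke, since those require $|t|\le\kappa$. A smaller point: Lemma~\ref{lemBessel} only gives $|I_0(z)|\asymp I_0(\Re z)$, which is a two-sided bound with no decay; the $(1+t^2/\kappa^2)^{-1/4}$ you want comes from the explicit asymptotic~\eqref{BFB0}, and the paper extracts it carefully in Lemma~\ref{FCvR}, not from Lemma~\ref{lemBessel}.
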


\subsection{Several inequalities}

\begin{lemma} \label{FCvR}
  Let $s = \kappa + it$.
  There exist positive constants $c_{1} = c_{1}(\s, m)$, $C_{1} = C_{1}(\s, m)$ such that 
  for any $\kappa \geq C_{1}$, $\kappa^{\frac{3}{4\s}} \leq p \leq c_{1}\kappa^{\frac{1}{\s}}(\log{\kappa})^{-\frac{m}{\s}}$, 
  and any $|t| \leq c_{1}\kappa$
  \begin{align*}
    \l| \frac{F_{\s, m, p}(s; \a)}{F_{\s, m, p}(\kappa; \a)} \r|
    \leq \exp\l( -\frac{t^2}{5\kappa^{2}} \r).
  \end{align*}
\end{lemma}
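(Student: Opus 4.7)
\emph{Plan.} I will give a probabilistic interpretation of the ratio and then use Lemma~\ref{lemCumu} together with Bessel-function asymptotics to control the relevant cumulants. Set $Y := \Re e^{-i\a}\te_{m,p}(\s,X(p))$, and let $Y_1,Y_2$ be two independent copies of $Y$. A direct computation gives
\[
  \left|\frac{F_{\s,m,p}(\kappa+it;\a)}{F_{\s,m,p}(\kappa;\a)}\right|^2
  = \frac{\EXP{\cos\bigl(t(Y_1-Y_2)\bigr)\,e^{\kappa(Y_1+Y_2)}}}{F_{\s,m,p}(\kappa;\a)^2}
\]
after expanding the modulus squared and using the symmetry $(Y_1,Y_2)\leftrightarrow(Y_2,Y_1)$ to kill the imaginary part. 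Applying the elementary inequality $\cos(x)\leq 1-x^2/2+x^4/24$ and abbreviating $\sigma^2 := f''_{\s,m,p}(\kappa;\a)$, this bounds the right-hand side by $1 - t^2\sigma^2 + \frac{t^4}{24}\bigl(2 f^{(4)}_{\s,m,p}(\kappa;\a) + 12\sigma^4\bigr)$, the parenthesised quantity being the fourth central moment of $Y_1-Y_2$ under the tilted measure with density proportional to $e^{\kappa(y_1+y_2)}$, computed via the standard moment--cumulant identity.

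To estimate $\sigma^2$ and $f^{(4)}_{\s,m,p}(\kappa;\a)$ I invoke Lemma~\ref{lemCumu}, which gives $f_{\s,m,p}(s;\a) = g(s/P) + O\!\bigl(\kappa/(p^{2\s}(\log p)^m)\bigr)$ uniformly for $s\in\Delta$, where $P := p^\s(\log p)^m$ and $g := \log I_0$; the hypothesis $\kappa p^{-2\s}(\log p)^{-m}\leq\delta$ is easily satisfied since $p^\s\geq\kappa^{3/4}$ by assumption. Cauchy's integral formula on the disk $|s-\kappa|\leq\kappa/2\subset\Delta$ then yields
\[
  f^{(n)}_{\s,m,p}(\kappa;\a) = P^{-n}\,g^{(n)}(z) + O\!\bigl(\kappa^{-n-1/2}(\log\kappa)^{-m}\bigr)
\]
for $n=2,4$, with $z:=\kappa/P$ and a negligible error. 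The asymptotic $I_0(w) = e^w(2\pi w)^{-1/2}(1+1/(8w)+O(w^{-2}))$ on $\Delta$, after two and four differentiations of $\log I_0$, gives $g''(z) = 1/(2z^2)+1/(4z^3)+O(z^{-4})$ and $g^{(4)}(z) = 3/z^4 + O(z^{-5})$. Consequently
\[
  \sigma^2 \geq \frac{1}{2\kappa^2}\bigl(1 - O(1/z)\bigr),
  \qquad 2 f^{(4)}_{\s,m,p}(\kappa;\a) + 12\sigma^4 \leq \frac{9}{\kappa^4}\bigl(1 + O(1/z)\bigr).
\]

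The upper bound $p\leq c_1\kappa^{1/\s}(\log\kappa)^{-m/\s}$ forces $z \geq \s^m c_1^{-\s}$, which becomes arbitrarily large as $c_1\to 0$. Substituting the preceding bounds, the assumption $|t|\leq c_1\kappa$ gives
\[
  \left|\frac{F_{\s,m,p}(\kappa+it;\a)}{F_{\s,m,p}(\kappa;\a)}\right|^2
  \leq 1 - \frac{t^2}{\kappa^2}\!\left(\frac{1}{2} - \frac{3c_1^2}{8}\right) + O\!\left(\frac{t^2}{z\kappa^2}\right)
  \leq 1 - \frac{2t^2}{5\kappa^2},
\]
the last step being valid as soon as $3c_1^2/8 + O(1/z) \leq 1/10$; this is guaranteed by choosing $c_1 < 2/\sqrt{15}$ and then shrinking $c_1$ (equivalently enlarging $z$) enough to absorb the $O(1/z)$ term. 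Invoking $1 - y \leq e^{-y}$ and taking a square root finishes the proof. The main delicacy lies in bookkeeping the various error terms so as to preserve the explicit constant $2/5$; fortunately all errors shrink as $c_1 \to 0$ and $\kappa \to \infty$, so a suitable joint choice of $c_1=c_1(\s,m)$ and $C_1=C_1(\s,m)$ always exists.
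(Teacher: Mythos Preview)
Your proof is correct and takes a genuinely different route from the paper. The paper argues directly from the asymptotic $F_{\s,m,p}(s;\a)=e^{s/P}\sqrt{P/(2\pi s)}\,(1+h_3(s;p))$ obtained from \eqref{eqMine23T} and \eqref{BFB0}, computes the ratio as $(1+it/\kappa)^{-1/2}$ times a correction, and takes logarithms; the leading $-\tfrac14 t^2/\kappa^2$ comes from $|\,(1+it/\kappa)^{-1/2}|$, while the correction $h_3$ is handled by a Cauchy-formula bound on its derivatives. Your argument instead interprets $|F(s)/F(\kappa)|^2$ as the expectation of $\cos(t(Y_1-Y_2))$ under the $\kappa$-tilted law, applies the polynomial bound on $\cos$, and computes the second and fourth moments of $Y_1-Y_2$ through the cumulants $f''_{\s,m,p}(\kappa;\a)$ and $f^{(4)}_{\s,m,p}(\kappa;\a)$, which are then evaluated via Lemma~\ref{lemCumu} and the Bessel asymptotics for $g=\log I_0$. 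The underlying input (Bessel asymptotics, the approximation $f_{\s,m,p}\approx g(\cdot/P)$) is the same, and the leading constant $1/(2\kappa^2)$ for $\sigma^2$ matches the $-\tfrac14 t^2/\kappa^2$ the paper extracts from the $s^{-1/2}$ factor. What your approach buys is a cleaner probabilistic structure that would transfer to any local factor with controlled cumulants; the paper's approach is more explicit and slightly shorter because it avoids the symmetrisation and moment--cumulant bookkeeping. One minor remark: in your line ``$z\geq \s^m c_1^{-\s}$'' the exact constant depends on $\log p\sim \tfrac1\s\log\kappa$, so the claimed lower bound holds only asymptotically; this does not affect the argument since you only need $z\to\infty$ as $c_1\to0$, which is immediate.
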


\begin{proof}
  Let $s = \kappa + it$.
  Suppose $\kappa \geq C_{1}$ and $|t| \leq c_{1}\kappa$ with $c_{1}$ small and $C_{1}$ large.
  Let $\kappa^{\frac{3}{4\s}} \leq p \leq c_{1}\kappa^{\frac{1}{\s}}(\log{\kappa})^{-\frac{m}{\s}}$.
  By formula \eqref{BFB0}, it then holds that
  \begin{align*}
    I_{0}(s) = \frac{e^{s}}{\sqrt{2\pi s}}\l( 1 + h_{1}(s) \r).
  \end{align*}
  Here, $h_{1}(s)$ satisfies $h_{1}(s) \ll |s|^{-1}$.
  Moreover, it follows from equation \eqref{eqMine23T} that
  \begin{align*}
    F_{\s, m, p}(s; \a)
    = I_{0}\l( \frac{s}{p^{\s}(\log{p})^{m}} \r)\l( 1 + h_{2}(s; p) \r),
  \end{align*} 
  where $h_{2}(s; p) \ll |s| / p^{2\s} (\log{p})^{m}$.
  Therefore, we have
  \begin{align*}
    F_{\s, m, p}(s; \a)
    = e^{s / p^{\s}(\log{p})^{m}}\sqrt{\frac{p^{\s}(\log{p})^{m}}{2\pi s}}
    \l( 1 + h_{3}(s; p) \r),
  \end{align*}
  and $h_{3}(s; p) \ll \frac{p^{\s}(\log{p})^{m}}{|s|} + \frac{|s|}{p^{2\s} (\log{p})^{m}}$.
  We also see that $h_{3}(s; p) \ll \frac{p^{\s}(\log{p})^{m}}{|s|}$ when $p \geq \kappa^{\frac{3}{4\s}}$, and $\kappa$ is large.
  We note that $h_{3}(s; p)$ is regular for $\Re s > 0$ since $h_{1}, h_{2}$ are regular, and $h_{3}(\kappa; p)$ is real when $\kappa > 0$.
  
  We can write as
  \begin{align*}
    \l| \frac{F_{\s, m, p}(s; \a)}{F_{\s, m, p}(\kappa; \a)} \r| =
    \l| \l( 1 + i\frac{t}{\kappa} \r)^{-\frac{1}{2}} \l( 1 
    + \frac{h_{3}(s; p) - h_{3}(\kappa; p)}{1 + h_{3}(\kappa; p)} \r) \r|.
  \end{align*}
  Hence, when $C_{1}$ is large, $c_{1}$ small, it holds that
  \begin{multline*}
    \l| \frac{F_{\s, m, p}(s; \a)}{F_{\s, m, p}(\kappa; \a)} \r|
    = \exp\bigg( -\frac{1}{4}\frac{t^2}{\kappa^2} 
    + \frac{\Re (h_{3}(s; p) - h_{3}(\kappa; p))}{1 + h_{3}(\kappa; p)}\\
    + O\l( \frac{|t|^3}{\kappa^{3}} + |h_{3}(s; p) - h_{3}(\kappa; p)|^2 \r) \bigg).
  \end{multline*}
  Since $h_{3}(s; p)$ is regular for $\Re s = \kappa > 0$, we have
  \begin{align*}
    h_{3}(s; p) - h_{3}(\kappa; p) = \sum_{n = 1}^{\infty}\frac{h_{3}^{(n)}(\kappa; p)}{n!}(it)^{n}.
  \end{align*}
  Using Cauchy's integral formula and the estimate $h_{3}(s; p) \ll \frac{p^{\s}(\log{p})^{m}}{|s|}$, 
  we see that $h_{3}^{(n)}(\kappa; p) \ll 2^{n}n! \kappa^{-n} \frac{p^{\s}(\log{p})^{m}}{|s|}$.
  Therefore, we have
  \begin{gather*}
    \frac{\Re (h_{3}(s; p) - h_{3}(\kappa; p))}{1 + h_{3}(\kappa; p)}
    \ll \frac{p^{\s} (\log{p})^{m}}{\kappa} \frac{t^2}{\kappa^2}, \\
    |h_{3}(s; p) - h_{3}(\kappa; p)|^2
    \ll \frac{p^{2\s}(\log{p})^{2m}}{\kappa^2} \frac{t^2}{\kappa^2}.
  \end{gather*}
  Hence, we obtain
  \begin{align*}
    \l| \frac{F_{\s, m, p}(s; \a)}{F_{\s, m, p}(\kappa; \a)} \r|
    \leq \exp\l( -\frac{1}{5}\frac{t^2}{\kappa^2} \r)
  \end{align*}
  when $c_{1}$ is sufficiently small.
  Thus, we complete the proof of this lemma.
\end{proof}

\begin{lemma}\label{lemBound1td}
Let $(\s, m) \in \mathcal{A}$, and let $\a \in \RR$. 
Suppose $s = \kappa+it$ with $\kappa$ sufficiently large and $t \in \RR$. 
There exists a positive constant $c_{2} = c_{2}(\s, m)$ such that 
for any $|t| \leq c_{2} \kappa$
\begin{align}
  \label{lemBound1td1}
  \frac{|F_{\s, m}(s; \a)|}{F_{\s, m}(\kappa; \a)} 
  \leq \exp\l(- c_{2} t^2\kappa^{\frac{1}{\s} - 2} (\log{\kappa})^{-\frac{m}{\s} - 1}\r),
\end{align}
and for any $|t| \geq c_{2} \kappa$
\begin{align}
  \label{lemBound1td2}
  \frac{|F_{\s, m}(s; \a)|}{F_{\s, m}(\kappa; \a)} 
  \leq \exp\l(-|t|^{1/(2\s)}\r).
\end{align}
\end{lemma}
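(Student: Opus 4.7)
I would use the Euler-product factorisation $F_{\s,m}(s;\a) = \prod_p F_{\s,m,p}(s;\a)$ combined with the trivial pointwise bound $|F_{\s,m,p}(s;\a)| \leq F_{\s,m,p}(\kappa;\a)$, which follows from \eqref{eqFp} and the triangle inequality since $|\exp(s\Re e^{-i\a}\te_{m,p}(\s,e^{i\theta}))| = \exp(\kappa\Re e^{-i\a}\te_{m,p}(\s,e^{i\theta}))$. This reduces the problem to establishing sufficient decay of $|F_{\s,m,p}(s;\a)/F_{\s,m,p}(\kappa;\a)|$ on a well-chosen set of primes in each range of $t$, while bounding the remaining factors trivially by $1$.

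\textbf{Proof of \eqref{lemBound1td1}.} I would take $c_2 \leq c_1$, where $c_1$ is the constant from Lemma \ref{FCvR}. For every prime $p$ in the range $\kappa^{3/(4\s)} \leq p \leq c_1\kappa^{1/\s}(\log\kappa)^{-m/\s}$, Lemma \ref{FCvR} supplies $|F_{\s,m,p}(s;\a)/F_{\s,m,p}(\kappa;\a)| \leq \exp(-t^2/(5\kappa^2))$. By the prime number theorem the number of such primes is $\gg_{\s,m} \kappa^{1/\s}(\log\kappa)^{-m/\s-1}$, and multiplying the above inequality over this set yields \eqref{lemBound1td1} after shrinking $c_2$ if necessary.

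\textbf{Proof of \eqref{lemBound1td2}.} In this range $|s| \asymp_{c_2} |t|$. My plan is to Taylor-expand $F_{\s,m,p}(s;\a)$ at $s=0$ over primes $p > M|t|^{1/\s}$ with $M = M(\s,m)$ sufficiently large. Since $|\Re e^{-i\a}\te_{m,p}(\s,X(p))| \ll p^{-\s}(\log p)^{-m}$, the expansion is legitimate there, and combining the vanishing of $\EXP{\Re e^{-i\a}\te_{m,p}(\s,X(p))}$ with the second-moment computation from the proof of Proposition \ref{propPDF} yields
\begin{align*}
\log F_{\s,m,p}(s;\a)
= \frac{s^2}{4p^{2\s}(\log p)^{2m}} + E_p(s),
\end{align*}
with $|E_p(s)| \ll |s|^2 p^{-4\s}(\log p)^{-2m} + |s|^3 p^{-3\s}(\log p)^{-3m}$. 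Taking real parts and using $\Re(s^2 - \kappa^2) = -t^2$, and observing that for $p > M|t|^{1/\s}$ the errors are at most $O(M^{-\s})$ times the quadratic main term, one obtains
\begin{align*}
\log\l|\frac{F_{\s,m,p}(s;\a)}{F_{\s,m,p}(\kappa;\a)}\r|
\leq -\frac{t^2}{8p^{2\s}(\log p)^{2m}}.
\end{align*}
Summing over these primes by partial summation and the prime number theorem gives a bound of order $-c|t|^{1/\s}(\log|t|)^{-2m-1}$ when $\s>1/2$, and of order $-ct^2(\log|t|)^{-2m}$ when $\s=1/2$ (so $m \geq 1$). Since $1/\s > 1/(2\s)$, both quantities exceed $|t|^{1/(2\s)}$ in absolute value once $\kappa$, hence $|t|$, is large enough, yielding \eqref{lemBound1td2}.

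\textbf{Main difficulty.} The delicate point lies in \eqref{lemBound1td2}: one must verify uniformly in $p > M|t|^{1/\s}$ that all higher-order remainders in the Taylor expansion are dominated by the quadratic term, which is precisely what dictates the truncation $P \asymp |t|^{1/\s}$ and the choice of large $M$. The rest of the argument is a mild bookkeeping of the tail sum $\sum_{p > P} p^{-2\s}(\log p)^{-2m}$, splitting the two sub-cases $\s > 1/2$ and $(\s,m) = (1/2, m\geq1)$ of $\mca{A}$.
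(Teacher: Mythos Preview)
Your proposal is correct and follows essentially the same route as the paper: both parts use the Euler product and the trivial bound $|F_{\s,m,p}(s;\a)|\leq F_{\s,m,p}(\kappa;\a)$, then extract decay from a single range of primes---for \eqref{lemBound1td1} via Lemma~\ref{FCvR} on primes of size $\asymp \kappa^{1/\s}(\log\kappa)^{-m/\s}$ counted by the prime number theorem, and for \eqref{lemBound1td2} via the Taylor expansion of $F_{\s,m,p}$ at $s=0$ over primes $p>M|t|^{1/\s}$. The only cosmetic differences are that the paper restricts to the dyadic window $L_1/2\leq p\leq L_1$ in the first part and writes the ratio directly rather than subtracting logarithms in the second, but the substance is identical.
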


\begin{proof}
We first prove \eqref{lemBound1td1}.
Suppose that $\kappa$ is large and $|t| \leq c_{1} \kappa$, 
where $c_{1}$ is the same constants as in Lemma \ref{FCvR}.
Put $L_{1} = c_{1}\kappa^{1/\s} (\log{\kappa})^{-m/\s}$.
Using Lemma \ref{FCvR} and the prime number theorem, we obtain
\begin{align*}
  \frac{|F_{\s, m}(s; \a)|}{F_{\s, m}(\kappa; \a)}
  &\leq \prod_{\frac{L_{1}}{2} \leq p \leq L_{1}}\exp\l( -\frac{1}{5}\frac{t^2}{\kappa^2} \r)
  \leq \exp\l(- c_{2} t^2\kappa^{\frac{1}{\s} - 2} (\log{\kappa})^{-\frac{m}{\s} - 1}\r),
\end{align*}
which is the assertion of \eqref{lemBound1td1}.

Next, we prove \eqref{lemBound1td2}.
We see from \eqref{eqMine3T} that
\begin{align}
\label{plemBound1td1}
\frac{|F_{\s, m}(s; \a)|}{F_{\s, m}(\kappa; \a)} 
\leq \prod_{p \geq M_{1} |t|^{1/\s}}
\frac{
\l|\EXP{\exp(s \Re e^{-i\a} \te_{m, p}(\s, X(p)) )}\r|}{
\EXP{\exp(\kappa \Re e^{-i\a} \te_{m, p}(\s, X(p)))}}
\end{align}
with $M_{1}$ a suitably large constant may depend on $\s$ and $m$.
We suppose that $s=\kappa+it$ satisfies $|t| \geq c_{2}\kappa$, then $|s| \leq (1 + 1/c_{2})|t|$. 
By the Taylor expansion of $\exp(z)$, we obtain 
\begin{align*}
\exp(s \Re e^{-i\a} \te_{m, p}(\s, X(p)) )
&= 1 + s\Re e^{-i\a} \te_{m, p}(\s, X(p))\\
&+ \frac{1}{2}\l(s\Re e^{-i\a} \te_{m, p}(\s, X(p)) \r)^{2}
+O\l(\frac{|t|^3}{p^{3\s}(\log{p})^{3m}}\r)
\end{align*}
for $p> M_{1} |t|^{1/\s}$ when $M_{1}$ is suitably large. 
We can easily see that
\begin{align*}
\EXP{\Re e^{-i\a} \te_{m, p}(\s, X(p))}
=0, 
\end{align*}
and
\begin{align*}
\EXP{\l(\Re e^{-i\a} \te_{m, p}(\s, X(p)) \r)^2}
&= \frac{1}{2}\frac{\Li_{2m+2}(p^{-2\s})}{(\log{p})^{2m}}.
\end{align*}
By these formulas, it follows that
\begin{align*}
&\EXP{\exp(s \Re e^{-i\a} \te_{m, p}(\s, X(p)) )}\\
&= 1 + \frac{s^{2}}{4}\frac{\Li_{2m+2}(p^{-2\s})}{(\log{p})^{2m}}
+O\l(\frac{|t|^3}{p^{3\s}(\log{p})^{3m}}\r).
\end{align*}
Therefore, we have
\begin{align*}
&\frac{
\l|\EXP{\exp(s \Re e^{-i\a} \te_{m, p}(\s, X(p)) )}\r|}{
\EXP{\exp(\kappa \Re e^{-i\a} \te_{m, p}(\s, X(p)) )}}\\
&= \bigg|1 + \frac{2i\kappa t - t^2}{4}\frac{\Li_{2m+2}(p^{-2\s})}{(\log{p})^{2m}}
+O\l(\frac{|t|^3}{p^{3\s}(\log{p})^{3m}}\r)\bigg|
\end{align*}
for $p \geq M_{1}|t|^{1/\s}$.
In particular, when $M_{1}$ is sufficiently large, it also holds that
\begin{align*}
\l| \frac{2i\kappa t - t^2}{4}\frac{\Li_{2m+2}(p^{-2\s})}{(\log{p})^{2m}}
+O\l(\frac{|t|^3}{p^{3\s}(\log{p})^{3m}}\r) \r|
\leq \frac{1}{2}.
\end{align*}
From these results and inequality \eqref{plemBound1td1}, we obtain
\begin{align*}
&\frac{|F_{\s, m}(s; \a)|}{F_{\s, m}(\kappa; \a)} 
\\
&\leq \exp\Bigg( \sum_{p \geq M_{1}|t|^{1/\s}}
\Re \log\Bigg( 1 + \frac{2i\kappa t - t^2}{4}\frac{\Li_{2m+2}(p^{-2\s})}{(\log{p})^{2m}}\\
&\qquad\qquad\qquad\qquad\qquad\qquad\qquad\qquad\qquad\qquad
+O\l(\frac{|t|^3}{p^{3\s}(\log{p})^{3m}}\r) \Bigg) \Bigg)\\
&= \exp\Bigg( \sum_{p \geq M_{1}|t|^{1/\s}}
\Re \Bigg( \frac{2i\kappa t - t^2}{4}\frac{\Li_{2m+2}(p^{-2\s})}{(\log{p})^{2m}} \\
&\qquad\qquad\qquad\qquad\qquad\qquad\qquad\qquad\qquad\qquad
+O\l(\frac{|t|^3}{p^{3\s}(\log{p})^{3m}}\r) \Bigg)  \Bigg)\\
&\leq \exp\l( \sum_{p \geq M_{1}|t|^{1/\s}}\l(\frac{- t^2}{4p^{2\s}(\log{p})^{2m}}
+O\l(\frac{|t|^3}{p^{3\s}(\log{p})^{3m}}\r) \r) \r)\\
&\leq \exp\l( -t^2\sum_{p \geq M_{1}|t|^{1/\s}}\frac{1}{8p^{2\s}(\log{p})^{2m}}\r)
\leq \exp\l( -|t|^{1/2\s} \r)
\end{align*}
when $M_{1}$ is sufficiently large.
Thus, we also obtain inequality \eqref{lemBound1td2}.
\end{proof}

\subsection{Fourier analysis for the function $\nDF_{\s, m}^{\tau}(x; \a)$}\label{secTrans}

\begin{lemma}\label{lemE}
Let $(\s, m)\in\mca{A}$ and $\a\in\RR$. 
For $\tau>0$, the function $\nDF_{\s, m}^{\tau}(x; \a)$ is a probability density function, whose Fourier transform is given by
\[
\widetilde{\nDF}_{\s, m}^\tau(t; \a)
:=\int_{-\infty}^{\infty}{e}^{itx}\nDF_{\s, m}^\tau(x; \a)\, |dx|
={e}^{-it\tau}\frac{F_{\s, m}(\kappa+it; \a)}{F_{\s, m}(\kappa; \a)}.\]
\end{lemma}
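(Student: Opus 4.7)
The proof is essentially a direct computation from the definition \eqref{eqMine29a} of $\nDF_{\s,m}^{\tau}(x;\a)$ together with the integral representation \eqref{eqMine29} of the moment-generating function, so the plan is short.

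First I would verify that $\nDF_{\s,m}^{\tau}(x;\a)$ is a probability density function. Nonnegativity is immediate since $\DF_{\s,m}(x+\tau;\a)\geq0$ and the scalar factor $e^{\kappa\tau}/F_{\s,m}(\kappa;\a)$ is positive. To check that it integrates to $1$, I would substitute the definition \eqref{eqMine29a}, perform the change of variables $u=x+\tau$, and recognize the remaining integral as $F_{\s,m}(\kappa;\a)$ via \eqref{eqMine29}:
\begin{align*}
\int_\RR \nDF_{\s,m}^\tau(x;\a)\,|dx|
&=\frac{e^{\kappa\tau}}{F_{\s,m}(\kappa;\a)}\int_\RR e^{\kappa x}\DF_{\s,m}(x+\tau;\a)\,|dx|\\
&=\frac{e^{\kappa\tau}}{F_{\s,m}(\kappa;\a)}\int_\RR e^{\kappa(u-\tau)}\DF_{\s,m}(u;\a)\,|du|=1.
\end{align*}

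Next I would compute the Fourier transform. By the definition of $\widetilde{\nDF}_{\s,m}^\tau(t;\a)$ and \eqref{eqMine29a},
\begin{align*}
\widetilde{\nDF}_{\s,m}^\tau(t;\a)
&=\frac{e^{\kappa\tau}}{F_{\s,m}(\kappa;\a)}\int_\RR e^{(\kappa+it)x}\DF_{\s,m}(x+\tau;\a)\,|dx|\\
&=\frac{e^{-it\tau}}{F_{\s,m}(\kappa;\a)}\int_\RR e^{(\kappa+it)u}\DF_{\s,m}(u;\a)\,|du|
=e^{-it\tau}\frac{F_{\s,m}(\kappa+it;\a)}{F_{\s,m}(\kappa;\a)}
\end{align*}
after the same substitution $u=x+\tau$. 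The final equality uses \eqref{eqMine29} extended to complex arguments $s=\kappa+it$.

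The only nontrivial point, and hence the one I would spell out carefully, is the absolute convergence that legitimizes every integral above. The integrand $e^{\kappa x}\DF_{\s,m}(x+\tau;\a)$ is dominated by $e^{\kappa|x|}\DF_{\s,m}(x+\tau;\a)$, whose integral against $|dx|$ is finite by Proposition \ref{prop:MGF} (after translation $u=x+\tau$): since $\EXP{\exp(a|\te_m(\s,X)|)}<\infty$ for every $a>0$, it follows via \eqref{def_odM} that $\int_\RR e^{a|u|}\DF_{\s,m}(u;\a)\,|du|<\infty$ for every $a>0$, and in particular for $a=\kappa$. This justifies both the change of variables and the evaluation $F_{\s,m}(\kappa+it;\a)=\int_\RR e^{(\kappa+it)u}\DF_{\s,m}(u;\a)\,|du|$. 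No estimate from Section \ref{Sec_PThmLD} is needed; this lemma is purely a bookkeeping statement about the exponentially tilted measure $\nDF_{\s,m}^\tau$.
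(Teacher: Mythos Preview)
Your proof is correct and follows essentially the same route as the paper: substitute the definition \eqref{eqMine29a}, shift the variable by $\tau$, and invoke \eqref{eqMine29}. Your explicit justification of absolute convergence via Proposition \ref{prop:MGF} is a welcome addition that the paper leaves implicit.
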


\begin{proof}
By the definition, we have 
\[\int_{\infty}^{\infty} \nDF_{\s, m}^\tau(x; \a)\, |dx|
=\frac{1}{F_{\s, m}(\kappa; \a)}\int_{-\infty}^\infty{e}^{\kappa(x+\tau)}\DF_{\s, m}(x+\tau; \a)\, |dx|=1\]
due to \eqref{eqMine29}. 
Similarly, the Fourier transform of $\nDF_{\s, m}^{\tau}(x; \a)$ is calculated as 
\begin{align*}
\widetilde{\nDF}_{\s, m}^{\tau}(t; \a)
&=\frac{1}{F_{\s, m}(\kappa; \a)}\int_{-\infty}^\infty{e}^{\kappa(x+\tau)+itx}\DF_{\s, m}(x+\tau; \a)\, |dx|\\
&=\frac{1}{F_{\s, m}(\kappa; \a)}\int_{-\infty}^\infty{e}^{\kappa{x}+it(x-\tau)}\DF_{\s, m}(x; \a)\, |dx|\\
&={e}^{-it\tau}\frac{F_{\s, m}(\kappa+it; \a)}{F_{\s, m}(\kappa; \a)}, 
\end{align*}
which completes the proof.
\end{proof}

By Lemma \ref{lemBound1td}, we find that $\widetilde{\nDF}_{\s, m}^{\tau}(t; \a)$ is absolutely integrable over $\RR$. 
Hence the function $\nDF_{\s, m}^\tau(x; \a)$ can be recovered by the inversion formula
\begin{gather}\label{eqMine30}
\nDF_{\s, m}^{\tau}(x; \a)
=\int_\RR\widetilde{\nDF}_{\s, m}^\tau(t; \a)e^{-itx}\, |dt|.
\end{gather}
Finally, we apply \eqref{eqMine30} to show Proposition \ref{propE}. 

\begin{proof}[Proof of Proposition \ref{propE}]
Put $\kappa_1=\kappa^{1-\frac{1}{3\sigma}}$ and $\kappa_2=c_2 \kappa$ with the positive constant $c_2=c_2(\sigma,m)$ of Lemma \ref{lemBound1td}. 
First, we divide the integral of \eqref{eqMine30} as 
\begin{align}\label{eqMine31}
\nDF_{\s, m}^{\tau}(x; \a)
&=\left( \int_{|t| \leq \kappa_1} + \int_{\kappa_1<|t|<\kappa_2} + \int_{|t|\geq \kappa_2} \right)
\widetilde{\nDF}_{\s, m}^{\tau}(t; \a)e^{-itx}\, |dt| \nonumber \\
&=\int_{-\kappa_1}^{\kappa_1} \widetilde{\nDF}_{\s, m}^{\tau}(t; \a)e^{-itx}\, |dt|+E_1+E_2, 
\end{align}
say. 
We evaluate the integral $E_1$ as 
\begin{align*}
E_1
&\ll \int_{\kappa_1}^{\kappa_2} \exp\l(- c_{2} t^2\kappa^{\frac{1}{\s} - 2} (\log{\kappa})^{-\frac{m}{\s} - 1}\r) \,dt \\
&\ll \exp\l(- \frac{c_{2}}{2} \kappa_1^2\kappa^{\frac{1}{\s} - 2} (\log{\kappa})^{-\frac{m}{\s} - 1}\r)
\int_{0}^{\infty} \exp\l(-\frac{c_{2}}{2} t^2\kappa^{\frac{1}{\s} - 2} (\log{\kappa})^{-\frac{m}{\s} - 1}\r) \,dt \\
&\ll \kappa^{1-\frac{1}{2\sigma}} (\log{\kappa})^{\frac{1}{2}(\frac{m}{\sigma}+1)} 
\exp\l(- \frac{c_{2}}{2} \kappa^{\frac{1}{3\s}} (\log{\kappa})^{-\frac{m}{\s} - 1}\r) 
\end{align*}
by using Lemmas \ref{lemBound1td} and \ref{lemE}. 
Similarly, we derive
\begin{align*}
E_2
\ll \int_{\kappa_2}^{\infty} \exp\l(-t^{1/(2\s)}\r) \,dt 
\ll \sqrt{\kappa} \exp\left(-\sqrt{\kappa_2}\right). 
\end{align*}
Recall that Proposition \ref{propCumu1} gives the estimate
\begin{gather}\label{eq:Mine30a}
\frac{1}{\sqrt{f''_{\sigma,m}(\kappa;\alpha)}}
\asymp_{\sigma,m} \kappa^{1-\frac{1}{2\sigma}} (\log{\kappa})^{\frac{1}{2}(\frac{m}{\sigma}+1)}. 
\end{gather}
Therefore, \eqref{eqMine31} yields the formula
\begin{gather}\label{eqMine31a}
\nDF_{\s, m}^{\tau}(x; \a)
=\int_{-\kappa_1}^{\kappa_1} \widetilde{\nDF}_{\s, m}^{\tau}(t; \a)e^{-itx}\, |dt|
+O_{\sigma,m}\left(\frac{\kappa^{-\frac{1}{2\s}}(\log\kappa)^{\frac{1}{2}(\frac{m}{\s}+1)}}
{\sqrt{f''_{\sigma,m}(\kappa;\alpha)}} \right). 
\end{gather}
To estimate the remaining integral, we define a holomorphic function $G(z)$ as  
\begin{align}\label{eqG}
G(z)
&=\exp\l(-\tau{z}-\frac{{f}''_{\s, m}(\kappa; \a)}{2}z^2\r)\frac{F_{\s, m}(z+\kappa; \a)}{F_{\s, m}(\kappa; \a)}. 
\end{align}
The function $G(z)$ is also expressed as 
\begin{align*}
G(z)
&=\exp\l(f_{\s, m}(z+\kappa; \a)-f_{\s, m}(\kappa; \a)-f'_{\s, m}(\kappa; \a)z-\frac{f''_{\s, m}(\kappa; \a)}{2}z^2\r) \\
&=\exp\l(\sum_{n=3}^\infty\frac{f^{(n)}_{\s, m}(\kappa; \a)}{n!}z^n\r)
\end{align*}
near the origin. 
Hence, we can write
\begin{align} \label{TEG}
  G(z)
  =1+\sum_{n=3}^\infty\frac{a_n}{n!}z^n,
\end{align}
where the coefficients $a_n$ are calculated as 
\[a_n
=\sum_{k=1}^{\lfloor{n}/3\rfloor}
\frac{1}{k!}\sum_{\substack{n_1+\cdots+{n}_k=n\\ \forall{j}, ~ n_j\geq3}}
\binom{n}{n_1, \ldots, n_k}f_{\s, m}^{(n_1)}(\kappa; \a)\cdots{f}_{\s, m}^{(n_k)}(\kappa; \a).\]
Then, by Lemma \ref{lemE}, we have 
\[\widetilde{\nDF}_{\s, m}^{\tau}(t; \a)
=\exp\l(-\frac{f''_{\s, m}(\kappa; \a)}{2}t^2\r)G(it). \]
Hence we obtain 
\begin{gather}\label{eqMine32}
\int_{-\kappa_1}^{\kappa_1}\widetilde{\nDF}_{\s, m}^{\tau}(t; \a)e^{-itx}\, |dt|
=\int_{-\kappa_1}^{\kappa_1}\exp\l(-\frac{f''_{\s, m}(\kappa; \a)}{2}t^2\r)e^{-itx}\, |dt|+E_3,
\end{gather}
where 
\begin{align*}
E_3
&=\int_{-\kappa_1}^{\kappa_1}\exp\l(-\frac{f''_{\s, m}(\kappa; \a)}{2}t^2\r)\l(G(it)-1\r)e^{-itx}\, |dt|\\
&\ll\int_0^{\kappa_1}\exp\l(-\frac{f''_{\s, m}(\kappa; \a)}{2}t^2\r)\l(\sum_{n=3}^\infty\frac{|a_n|}{n!}t^n\r)\, dt. 
\end{align*}
We further evaluate the error term $E_3$ as follows. 
Notice that we have 
\[
\sum_{n=3}^\infty\frac{|{a}_n|}{n!}t^n
\leq \exp\l(\sum_{n=3}^\infty\frac{\l|f_{\s, m}^{(n)}(\kappa; \a)\r|}{n!}t^n\r)-1. 
\]
for $0\leq{t}\leq \kappa_1$. 
Furthermore, it is deduced from Proposition \ref{propCumu1} and $g_{n}(\s) \ll_{\s} n!$ that 
\[\sum_{n=3}^\infty\frac{\l|f_{\s, m}^{(n)}(\kappa; \a)\r|}{n!}t^n
\ll_{\s, m}\frac{\kappa^{\frac{1}{\s}}}{(\log\kappa)^{\frac{m}{\s}+1}}
\sum_{n=3}^\infty\l(\frac{2t}{\kappa}\r)^n
\ll\frac{\kappa^{\frac{1}{\s}-3}}{(\log\kappa)^{\frac{m}{\s}+1}}t^3. \]
Here, we remark that $\kappa^{\frac{1}{\s}-3} (\log\kappa)^{-(\frac{m}{\s}+1)} t^3$ is bounded for $0\leq{t}\leq \kappa_1$. 
Hence there exists a constant $C_{\s, m}>0$ such that 
\begin{align}\label{eq_an}
\sum_{n=3}^\infty\frac{|{a}_n|}{n!}t^n
&\leq\sum_{n=1}^\infty\frac{1}{n!}
\l(C_{\s, m}\frac{\kappa^{\frac{1}{\s}-3}}{(\log\kappa)^{\frac{m}{\s}+1}}t^3\r)^n \nonumber\\
&\ll_{\s, m} \frac{\kappa^{\frac{1}{\s}-3}}{(\log\kappa)^{\frac{m}{\s}+1}}t^3, 
\end{align}
which deduces
\begin{align*}
E_3
&\ll_{\s, m} \frac{\kappa^{\frac{1}{\s}-3}}{(\log\kappa)^{\frac{m}{\s}+1}}
\int_0^{\infty} \exp\l(-\frac{f''_{\s, m}(\kappa; \a)}{2}t^2\r)t^{3}\, dt\\
&\ll\frac{1}{\sqrt{f''_{\s, m}(\kappa; \a)}}
\frac{\kappa^{\frac{1}{\s}-3}}{f''_{\s, m}(\kappa; \a)^{3/2}(\log\kappa)^{\frac{m}{\s}+1}}.
\end{align*}
We use \eqref{eq:Mine30a} to see that 
\[\frac{\kappa^{\frac{1}{\s}-3}}{f''_{\s, m}(\kappa; \a)^{3/2}(\log\kappa)^{\frac{m}{\s}+1}}
\ll_{\s, m}\kappa^{-\frac{1}{2\s}}(\log\kappa)^{\frac{1}{2}(\frac{m}{\s}+1)}. \]
Therefore we arrive at the estimate 
\begin{gather}\label{eqMine31b}
E_3
\ll_{\s, m}\frac{\kappa^{-\frac{1}{2\s}}(\log\kappa)^{\frac{1}{2}(\frac{m}{\s}+1)}}
{\sqrt{f''_{\sigma,m}(\kappa;\alpha)}}
\end{gather}
if $\kappa>0$ is large enough. 
Finally, we calculate the integral in the right-hand side of \eqref{eqMine32} as 
\begin{align}\label{eqMine33}
&\int_{-\kappa_1}^{\kappa_1}\exp\l(-\frac{f''_{\s, m}(\kappa; \a)}{2}t^2\r)e^{-itx}\, |dt|\\
&=\int_{-\infty}^\infty\exp\l(-\frac{f''_{\s, m}(\kappa; \a)}{2}t^2\r)e^{-itx}\, |dt|+E_4\nonumber\\
&=\frac{1}{\sqrt{{f}''_{\s, m}(\kappa; \a)}}\exp\l(-\frac{x^2}{2{f}''_{\s, m}(\kappa; \a)}\r)+E_4, 
\end{align}
where
\begin{align*}
E_4
&\ll\int_{\kappa_1}^\infty\exp\l(-\frac{f''_{\s, m}(\kappa; \a)}{2}t^2\r)\, dt \\
&\ll \frac{1}{\sqrt{{f}''_{\s, m}(\kappa; \a)}}
\exp\l(-\frac{f''_{\s, m}(\kappa; \a)}{4}\kappa_1^2\r) \\
&\ll_{\sigma,m} \frac{\kappa^{-\frac{1}{2\s}}(\log\kappa)^{\frac{1}{2}(\frac{m}{\s}+1)}}
{\sqrt{f''_{\sigma,m}(\kappa;\alpha)}}. 
\end{align*}
Combining \eqref{eqMine31a}, \eqref{eqMine32}, \eqref{eqMine31b} and \eqref{eqMine33}, 
we derive the desired asymptotic formula of $\nDF_{\s, m}^{\tau}(x; \a)$. 
\end{proof}

\section{\textbf{Large deviations: Proof of Theorem \ref{thmLD}}} \label{Sec_PLD}

\subsection{Preliminaries}

In this subsection, we prove the following proposition.

\begin{proposition}\label{propFT}
Let $1/2<\s<1$, $m\in\ZZ_{\geq0}$, and $\a\in\RR$. 
For $\tau>0$, we take $\kappa=\kappa(\tau; \s, m, \a)>0$ satisfying \eqref{eqMine55}. 
Then we have 
\begin{align}\label{eqMine56}
&\PP\l(\Re e^{-i\a} \te_{m}(\s, X) > \tau\r) \\
&=\frac{F_{\s, m}(\kappa; \a)e^{-\tau\kappa}}{\kappa\sqrt{2\pi{f}''_{\s, m}(\kappa; \a)}} 
\l\{1+O_{\s, m}\l(\kappa^{-\frac{1}{2\s}}(\log\kappa)^{\frac{1}{2}(\frac{m}{\sigma}+1)}\r)\r\} 
\end{align}
if $\tau>0$ is large enough. 
\end{proposition}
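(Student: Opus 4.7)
The plan is to exploit the exponential tilt relation \eqref{eqMine29a} to reduce the tail probability to an integral against the transformed density $\nDF^\tau_{\s,m}$, and then invoke the Gaussian approximation from Proposition \ref{propE}. First, by definition of $\DF_{\s,m}(x;\alpha)$ in \eqref{def_odM}, we write
\begin{align}
\PP\l(\Re e^{-i\a}\te_m(\s,X) > \tau\r)
=\int_{\tau}^{\infty} \DF_{\s,m}(u;\a)\,|du|
=\int_{0}^{\infty} \DF_{\s,m}(x+\tau;\a)\,|dx|.
\end{align}
Using \eqref{eqMine29a} to solve for $\DF_{\s,m}(x+\tau;\a)$ in terms of $\nDF^\tau_{\s,m}(x;\a)$, this becomes
\begin{align}
\PP\l(\Re e^{-i\a}\te_m(\s,X) > \tau\r)
= F_{\s,m}(\kappa;\a) e^{-\kappa\tau}
\int_{0}^{\infty} e^{-\kappa x}\, \nDF^\tau_{\s,m}(x;\a)\,|dx|.
\end{align}

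Next, I would substitute the asymptotic expansion of $\nDF^\tau_{\s,m}(x;\a)$ supplied by Proposition \ref{propE}. The main term contribution is
\begin{align}
\frac{1}{\sqrt{f''_{\s,m}(\kappa;\a)}} \int_{0}^{\infty} e^{-\kappa x}
\exp\l(-\frac{x^2}{2f''_{\s,m}(\kappa;\a)}\r) |dx|,
\end{align}
while the error term contributes at most
\begin{align}
\ll_{\s,m} \frac{\kappa^{-1/(2\s)}(\log\kappa)^{(m/\s+1)/2}}{\sqrt{f''_{\s,m}(\kappa;\a)}} \int_{0}^{\infty} e^{-\kappa x}\,|dx|
= \frac{\kappa^{-1/(2\s)}(\log\kappa)^{(m/\s+1)/2}}{\kappa \sqrt{2\pi f''_{\s,m}(\kappa;\a)}}.
\end{align}

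For the main term, the key observation is that by Proposition \ref{propCumu1}, $\kappa^2 f''_{\s,m}(\kappa;\a) \asymp_{\s,m} \kappa^{1/\s}(\log\kappa)^{-(m/\s+1)}$, which tends to infinity. Consequently, on the range $0 \leq x \leq \kappa^{-1}\log\kappa$ the quadratic factor satisfies
\begin{align}
\exp\l(-\frac{x^2}{2f''_{\s,m}(\kappa;\a)}\r)
= 1 + O\l(\frac{(\log\kappa)^{m/\s+3}}{\kappa^{1/\s}}\r),
\end{align}
while the tail $x > \kappa^{-1}\log\kappa$ contributes at most $O(\kappa^{-1}\log\kappa \cdot e^{-\log\kappa}) = O(\kappa^{-2}\log\kappa)$ in the $e^{-\kappa x}$ factor. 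Hence
\begin{align}
\int_{0}^{\infty} e^{-\kappa x}\exp\l(-\frac{x^2}{2f''_{\s,m}(\kappa;\a)}\r)|dx|
= \frac{1}{\kappa\sqrt{2\pi}}\l(1 + O_{\s,m}\l(\frac{(\log\kappa)^{m/\s+3}}{\kappa^{1/\s}}\r)\r).
\end{align}
Combining this with the error estimate above and noting that the latter dominates the former (since $\kappa^{-1/(2\s)}(\log\kappa)^{(m/\s+1)/2}$ is the larger relative error), I arrive at the asymptotic formula \eqref{eqMine56}.

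The main obstacle I anticipate is a bookkeeping one rather than a conceptual one: verifying that the Gaussian-integral error inside the window $[0,\kappa^{-1}\log\kappa]$ and the cutoff outside are both absorbed into the claimed relative error $\kappa^{-1/(2\s)}(\log\kappa)^{(m/\s+1)/2}$. This requires carefully using the estimate of $f''_{\s,m}(\kappa;\a)$ from Proposition \ref{propCumu1} and checking that the range $1/2 < \s < 1$ makes $1/(2\s) < 1/\s$, so that the error from Proposition \ref{propE} indeed dominates the contribution coming from expanding the Gaussian. No new analytic input is needed beyond the propositions already established.
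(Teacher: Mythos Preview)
Your argument is correct and, in fact, more direct than the route the paper takes. The paper proves Proposition \ref{propFT} independently of Proposition \ref{propE}: it invokes the Granville--Soundararajan smoothing inequality (Lemma \ref{lemGS}) to bound $\PP(\Re e^{-i\a}\te_m(\s,X)>\tau)$ above and below by Mellin-type integrals of $F_{\s,m}(s;\a)e^{-\tau s}$, then evaluates the main contribution via the contour computation of Lemma \ref{lemFT1} and controls the tails with Lemma \ref{lemBound1td}. You instead observe that once Proposition \ref{propE} is in hand, the tail probability is literally $F_{\s,m}(\kappa;\a)e^{-\kappa\tau}\int_0^\infty e^{-\kappa x}\nDF^\tau_{\s,m}(x;\a)\,|dx|$ by the Esscher tilt \eqref{eqMine29a}, and the uniform-in-$x$ error in Proposition \ref{propE} integrates against $e^{-\kappa x}$ without any smoothing apparatus. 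Your check that the Gaussian correction $O(\kappa^{-1/\s}(\log\kappa)^{m/\s+3})$ is dominated by $O(\kappa^{-1/(2\s)}(\log\kappa)^{(m/\s+1)/2})$ is correct since $1/(2\s)<1/\s$. The paper's approach has the mild advantage of being self-contained (it does not presuppose Proposition \ref{propE}), but since Proposition \ref{propE} is needed anyway later in the proof of Theorem \ref{thmLD}, your shortcut loses nothing and avoids Lemmas \ref{lemGS} and \ref{lemFT1} entirely.
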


We prepare some lemmas toward the proof of Proposition \ref{propFT}.

\begin{lemma}[Granville--Soundararajan \cite{GS2003}]\label{lemGS}
Let $\lambda>0$ be a real number.
For $y>0$ and $c>0$, we have 
\begin{align*}
0&\leq\frac{1}{2\pi{i}}\int_{c-i\infty}^{c+i\infty}y^s\l(\frac{e^{\lambda{s}}-1}{\lambda{s}}\r)\frac{ds}{s}-\chi(y)\\
&\leq\frac{1}{2\pi{i}}\int_{c-i\infty}^{c+i\infty}y^s\l(\frac{e^{\lambda{s}}-1}{\lambda{s}}\r)\l(\frac{1-e^{-\lambda{s}}}{s}\r)\, ds, 
\end{align*}
where $\chi(y)=1$ if $y>1$ and $\chi(y)=0$ otherwise. 
\end{lemma}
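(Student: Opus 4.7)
The plan is to reduce both contour integrals to explicit piecewise-linear expressions in $\log{y}$ and then verify the two inequalities by elementary case analysis. The key tool is the Perron-type identity
\[
\frac{1}{2\pi i}\int_{c-i\infty}^{c+i\infty}\frac{x^s}{s^2}\, ds
=\max(\log{x},0)=:(\log{x})_+\qquad(x>0,\ c>0),
\]
which I would establish by shifting the contour to $\Re{s}=-\infty$ when $x\geq 1$ (collecting the residue $\log{x}$ at the double pole $s=0$) and to $\Re{s}=+\infty$ when $0<x<1$; absolute convergence on $\Re{s}=c$ is immediate from the $|s|^{-2}$ decay of the integrand.

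The next step is to apply this identity after rewriting the two integrands in a form that makes the pole structure explicit. For the middle quantity in the lemma, I use
\[
y^s\l(\frac{e^{\lambda s}-1}{\lambda{s}}\r)\frac{1}{s}
=\frac{(ye^\lambda)^s-y^s}{\lambda{s}^2}
\]
to obtain
\[
\frac{1}{2\pi i}\int_{c-i\infty}^{c+i\infty}y^s\l(\frac{e^{\lambda s}-1}{\lambda{s}}\r)\frac{ds}{s}
=\frac{1}{\lambda}\l[(\log{y}+\lambda)_+-(\log{y})_+\r].
\]
For the right-hand integral, the algebraic identity $(e^{\lambda s}-1)(1-e^{-\lambda s})=e^{\lambda s}+e^{-\lambda s}-2$ turns the integrand into $[(ye^\lambda)^s+(ye^{-\lambda})^s-2y^s]/(\lambda{s}^2)$, so the same Perron identity yields
\[
\frac{1}{\lambda}\l[(\log{y}+\lambda)_++(\log{y}-\lambda)_+-2(\log{y})_+\r].
\]

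With $x:=\log{y}$, the middle quantity is the ``ramp'' function equal to $0$ for $x\leq-\lambda$, equal to $1+x/\lambda$ on $[-\lambda,0]$, and equal to $1$ for $x\geq 0$; the right-hand quantity is the tent $\max(0,1-|x|/\lambda)$ supported on $[-\lambda,\lambda]$. Splitting into the four ranges $x<-\lambda$, $-\lambda\leq{x}<0$, $0\leq{x}<\lambda$, $x\geq\lambda$, both inequalities $0\leq\text{ramp}-\chi(y)\leq\text{tent}$ reduce to elementary linear comparisons, verified directly. The only subtlety, and the main obstacle I expect, is the boundary value at $y=1$, where the convention $\chi(1)=0$ disagrees with the symmetric Perron value $1/2$; this is harmless because at that single point both inequalities continue to hold, namely $0\leq 1\leq 1$, from the ramp and tent values at $x=0$.
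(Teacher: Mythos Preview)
Your proof is correct. The paper does not supply its own proof of this lemma; it simply attributes the result to Granville--Soundararajan \cite{GS2003} and moves on, so there is nothing in the paper to compare your argument against.

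Your route via the Perron identity $\frac{1}{2\pi i}\int_{(c)}x^s s^{-2}\,ds=(\log x)_+$ is the clean way to do this: after the algebraic rewriting you give, each integral is absolutely convergent and the piecewise-linear expressions (the ramp and the tent) drop out immediately. One small remark: your caveat about ``the symmetric Perron value $1/2$'' at $y=1$ is unnecessary here, because the $s^{-2}$ integral is continuous in $x$ and takes the value $0$ at $x=1$ exactly, so there is no discontinuity to worry about; the verification $0\le 1\le 1$ at $x=\log y=0$ that you give is already the full story.
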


\begin{lemma}\label{lemFT1}
With the assumptions of Proposition \ref{propFT}, we have 
\begin{align*}
&\frac{1}{2\pi{i}}\int_{\kappa-i\kappa_1}^{\kappa+i\kappa_1}
F_{\s, m}(s; \a)e^{-\tau{s}}
\l(\frac{e^{\lambda{s}}-1}{\lambda{s^2}}\r)\, ds\\
&=\frac{F_{\s, m}(\kappa; \a)e^{-\tau\kappa}}{\kappa\sqrt{2\pi{f}''_{\s, m}(\kappa; \a)}}
\l\{1+O_{\s, m}\l(\kappa^{-\frac{1}{2\s}}(\log\kappa)^{\frac{1}{2}(\frac{m}{\sigma}+1)}\r)\r\}, 
\end{align*}
where $\lambda=\kappa^{-3}$ and $\kappa_1=\kappa^{1-\frac{1}{3\sigma}}$. 
\end{lemma}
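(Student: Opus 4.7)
The plan is to parameterize the contour by $s=\kappa+it$ with $|t|\le\kappa_1$, factor out $F_{\s,m}(\kappa;\a)e^{-\tau\kappa}$, and reduce the integral to one involving the Fourier transform $\widetilde{\nDF}_{\s,m}^{\tau}(t;\a)$ from Lemma \ref{lemE}. Since $\widetilde{\nDF}_{\s,m}^{\tau}(t;\a)=e^{-it\tau}F_{\s,m}(\kappa+it;\a)/F_{\s,m}(\kappa;\a)$, the substitution $s=\kappa+it$, $ds=i\,dt$ gives
\[
\frac{1}{2\pi i}\int_{\kappa-i\kappa_1}^{\kappa+i\kappa_1}F_{\s,m}(s;\a)e^{-\tau s}\frac{e^{\lambda s}-1}{\lambda s^{2}}\,ds
=\frac{F_{\s,m}(\kappa;\a)e^{-\tau\kappa}}{2\pi}\int_{-\kappa_1}^{\kappa_1}\widetilde{\nDF}_{\s,m}^{\tau}(t;\a)\,K(t)\,dt,
\]
where $K(t)=(e^{\lambda(\kappa+it)}-1)/(\lambda(\kappa+it)^{2})$. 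On this truncated contour the two small parameters are $|\lambda s|\le\lambda(\kappa+\kappa_1)\ll\kappa^{-2}$ and $|t|/\kappa\le\kappa^{-1/(3\s)}$, so a single Taylor expansion yields
\[
K(t)=\frac{1}{\kappa}\left(1-\frac{it}{\kappa}+O\!\left(\frac{t^{2}}{\kappa^{2}}+\kappa^{-2}\right)\right).
\]

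The resulting integral splits into a main piece $\frac{1}{\kappa}\int_{-\kappa_1}^{\kappa_1}\widetilde{\nDF}_{\s,m}^{\tau}(t;\a)\,dt$, a first-moment piece $-\frac{i}{\kappa^{2}}\int_{-\kappa_1}^{\kappa_1}t\,\widetilde{\nDF}_{\s,m}^{\tau}(t;\a)\,dt$, and a remainder controlled by the $O$-term. The main piece is handled by Fourier inversion: by Proposition \ref{propE} at $x=0$, $\nDF_{\s,m}^{\tau}(0;\a)=(f''_{\s,m}(\kappa;\a))^{-1/2}\{1+O_{\s,m}(\kappa^{-1/(2\s)}(\log\kappa)^{(m/\s+1)/2})\}$, and the tail bounds (derived from Lemma \ref{lemBound1td} exactly as in the proof of Proposition \ref{propE}) let us extend integration to all of $\RR$ within the same error, giving
\[
\int_{-\kappa_1}^{\kappa_1}\widetilde{\nDF}_{\s,m}^{\tau}(t;\a)\,dt=\sqrt{\frac{2\pi}{f''_{\s,m}(\kappa;\a)}}\left\{1+O_{\s,m}\!\left(\kappa^{-1/(2\s)}(\log\kappa)^{\frac{1}{2}(m/\s+1)}\right)\right\}.
\]
Inserting this into the formula above produces the asserted main term.

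For the first-moment piece, write $\widetilde{\nDF}_{\s,m}^{\tau}(t;\a)=e^{-f''_{\s,m}(\kappa;\a)t^{2}/2}G(it)$ with $G$ as in \eqref{eqG}; the constant part of $G$ contributes $0$ by symmetry, and the leading correction comes from the $n=3$ term of the expansion \eqref{TEG}, whose coefficient is bounded using $f^{(3)}_{\s,m}(\kappa;\a)\ll\kappa^{1/\s-3}(\log\kappa)^{-m/\s-1}$ from Proposition \ref{propCumu1}. A routine Gaussian moment computation (together with the uniform bound $|G(it)|\ll1$ for $|t|\le\kappa_1$, which follows from \eqref{eq_an}) shows that this contribution, divided by the main term $1/(\kappa\sqrt{2\pi f''_{\s,m}(\kappa;\a)})$, is of relative size $\kappa^{-1/\s}(\log\kappa)^{m/\s+1}$, which is absorbed into the claimed error. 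The $O$-remainder from the kernel expansion is treated identically, producing contributions of the same or smaller order.

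The main obstacle is keeping every auxiliary error term below the relative bound $\kappa^{-1/(2\s)}(\log\kappa)^{(m/\s+1)/2}$; this is why the contour is truncated precisely at $|t|\le\kappa_1=\kappa^{1-1/(3\s)}$, the exact scale on which $\lambda s$ and $t/\kappa$ are simultaneously small enough for a single-step Taylor expansion of the kernel, while still being large enough that Proposition \ref{propE}'s tail estimates can close the argument.
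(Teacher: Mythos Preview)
Your proof is correct and follows essentially the same route as the paper: both parameterize by $s=\kappa+it$, expand the kernel $(e^{\lambda s}-1)/(\lambda s^2)=\kappa^{-1}(1-it/\kappa+O(\lambda\kappa+t^2/\kappa^2))$, and control the non-Gaussian part through the decomposition $\widetilde{\nDF}_{\s,m}^{\tau}(t;\a)=e^{-f''_{\s,m}(\kappa;\a)t^2/2}G(it)$ together with the bound \eqref{eq_an}. The only organizational difference is that for the main piece you invoke Proposition~\ref{propE} at $x=0$ (plus the tail estimates from Lemma~\ref{lemBound1td}) to obtain $\int_{-\kappa_1}^{\kappa_1}\widetilde{\nDF}_{\s,m}^{\tau}(t;\a)\,dt$ directly, whereas the paper recomputes this Gaussian integral from scratch inside the proof; your shortcut is legitimate since Proposition~\ref{propE} has already been established.
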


\begin{proof}
Let $G(z)$ be the function defined as \eqref{eqG}. 
Then we see that 
\[F_{\s, m}(\kappa+it; \a)e^{-\tau(\kappa+it)}
=F_{\s, m}(\kappa; \a)e^{-\tau\kappa}\exp\l(-\frac{f''_{\s, m}(\kappa; \a)}{2}t^2\r)G(it) \]
by the definition.
Furthermore we obtain
\[\frac{e^{\lambda(\kappa+it)}-1}{\lambda(\kappa+it)^2}
=\frac{1}{\kappa}\l(1-i\frac{t}{\kappa}+O\l(\lambda\kappa+\frac{t^2}{\kappa^2}\r)\r) \]
for $|t|\leq\kappa_1$. 
Hence the integral is calculated as
\begin{align*}
&\frac{1}{2\pi i}\int_{\kappa-i\kappa_1}^{\kappa+i\kappa_1}
F_{\s, m}(s; \a)e^{-\tau{s}}
\l(\frac{e^{\lambda{s}}-1}{\lambda{s^2}}\r)\, ds\\
&=\frac{F_{\s, m}(\kappa; \a)e^{-\tau\kappa}}{\kappa}
\frac{1}{2\pi }\int_{-\kappa_1}^{\kappa_1}
\exp\l(-\frac{f''_{\s, m}(\kappa; \a)}{2}t^2\r)\\
&\qquad\qquad\qquad\qquad\qquad\qquad\qquad
 \times G(it)\l(1-i\frac{t}{\kappa}+O\l(\lambda\kappa+\frac{t^2}{\kappa^2}\r)\r)\, dt.
\end{align*}
Using \eqref{TEG}, we find that this is equal to
\begin{align*}
&\frac{F_{\s, m}(\kappa; \a)e^{-\tau\kappa}}{\kappa}
\Bigg\{\frac{1}{2\pi}\int_{-\kappa_1}^{\kappa_1}\exp\l(-\frac{f''_{\s, m}(\kappa; \a)}{2}t^2\r)\, dt\\
&\qquad+\frac{1}{2\pi}\int_{-\kappa_1}^{\kappa_1}\exp\l(-\frac{f''_{\s, m}(\kappa; \a)}{2}t^2\r)\l(-i\frac{t}{\kappa}+O\l(\lambda\kappa+\frac{t^2}{\kappa^2}\r)\r)\, dt\\
&\qquad+\frac{1}{2\pi}\int_{-\kappa_1}^{\kappa_1}\exp\l(-\frac{f''_{\s, m}(\kappa; \a)}{2}t^2\r)
\l(\sum_{n=3}^\infty\frac{a_n}{n!}(it)^n\r)\\
&\qquad\qquad\qquad\qquad\qquad
\times\l(1-i\frac{t}{\kappa}+O\l(\lambda\kappa+\frac{t^2}{\kappa^2}\r)\r)\, dt\Bigg\}\\
&=\frac{F_{\s, m}(\kappa; \a)e^{-\tau\kappa}}{\kappa}(I_1+I_2+I_3), 
\end{align*}
say. 
We have 
\[I_1=\frac{1}{\sqrt{2\pi{f}''_{\s, m}(\kappa; \a)}}
\l\{1+O\l(\kappa_{1}^{-1}\exp\l(-\frac{{f}''_{\s, m}(\kappa; \a)}{2}\kappa_1^2\r)\r)\r\},\]
and furthermore,
\begin{align*}
I_2
&=\frac{1}{2\pi}\int_{-\kappa_1}^{\kappa_1}\exp\l(-\frac{f''_{\s, m}(\kappa; \a)}{2}t^2\r)
O\l(\lambda\kappa+\frac{t^2}{\kappa^2}\r) \, dt \\
&\ll\frac{1}{\sqrt{{f}''_{\s, m}(\kappa; \a)}}\l(\lambda\kappa+\frac{1}{\kappa^2 {f}''_{\s, m}(\kappa; \a)}\r). 
\end{align*}
Since $\lambda=\kappa^{-3}$, the contributions of $I_1$ and $I_2$ are evaluated as 
\[\frac{F_{\s, m}(\kappa; \a)e^{-\tau\kappa}}{\kappa}(I_1+I_2)
=\frac{F_{\s, m}(\kappa; \a)e^{-\kappa\tau}}{\kappa\sqrt{2\pi{f}''_{\s, m}(\kappa; \a)}}
\l\{1+O_{\s, m}\l(\kappa^{-\frac{1}{\s}}(\log\kappa)^{\frac{m}{\s}+1}\r)\r\}\]
by using \eqref{eq:Mine30a}. 
As for $I_3$, we use upper bounds \eqref{eq:Mine30a} and \eqref{eq_an} to derive  
\begin{align*}
I_3
&\ll_{\s, m} \frac{\kappa^{\frac{1}{\s}-3}}{(\log\kappa)^{\frac{m}{\s}+1}}
\int_0^{\infty} \exp\l(-\frac{f''_{\s, m}(\kappa; \a)}{2}t^2\r)t^{3}\, dt\\
&\ll\frac{1}{\sqrt{f''_{\s, m}(\kappa; \a)}}
\frac{\kappa^{\frac{1}{\s}-3}}{f''_{\s, m}(\kappa; \a)^{3/2}(\log\kappa)^{\frac{m}{\s}+1}} \\
&\ll_{\s, m} \frac{\kappa^{-\frac{1}{2\s}}(\log\kappa)^{\frac{1}{2}(\frac{m}{\s}+1)}}{\sqrt{f''_{\s, m}(\kappa; \a)}}. 
\end{align*}
Hence the desired result follows. 
\end{proof}

\begin{proof}[Proof of Proposition \ref{propFT}]
Recall that Lemma \ref{lemGS} with $y = \exp(\Re e^{-i\a} \te_{m}(\s, X) - \tau)$ deduces the inequality
\begin{align*}
0&\leq\frac{1}{2\pi{i}}\int_{c-i\infty}^{c+i\infty}F_{\s, m}(s; \a)e^{-\tau{s}}
\l(\frac{e^{\lambda{s}}-1}{\lambda{s^2}}\r)\, ds
-\PP\l(\Re e^{-i\a} \te_{m}(\s, X) > \tau \r)\\
&\leq\frac{1}{2\pi{i}}\int_{c-i\infty}^{c+i\infty}F_{\s, m}(s; \a)e^{-\tau{s}}
\l(\frac{e^{\lambda{s}}-1}{\lambda{s}}\r)\l(\frac{1-e^{-\lambda{s}}}{s}\r)\, ds. 
\end{align*}
We put $\kappa_1=\kappa^{1-\frac{1}{3\sigma}}$ and $\kappa_2=c_2 \kappa$ with the positive constant $c_2=c_2(\sigma,m)$ of Lemma \ref{lemBound1td}. 
Then, by Lemma \ref{lemFT1}, the desired result follows if the integrals
\begin{gather}\label{eqMine57}
E_1
=\frac{1}{2\pi{i}}\int_{\kappa\pm{i}\kappa_1}^{\kappa\pm{i}\infty}
F_{\s, m}(s; \a)e^{-\tau{s}}
\l(\frac{e^{\lambda{s}}-1}{\lambda{s^2}}\r)\, ds, \\
E_2
=\frac{1}{2\pi{i}}\int_{\kappa-i\infty}^{\kappa+i\infty}
F_{\s, m}(s; \a)e^{-\tau{s}}
\l(\frac{e^{\lambda{s}}-1}{\lambda{s}}\r)\l(\frac{1-e^{-\lambda{s}}}{s}\r)\, ds
\end{gather}
are evaluated as
\begin{gather}\label{eq:E_j}
E_j
\ll_{\s, m} \frac{F_{\s, m}(\kappa; \a)e^{-\kappa\tau}}{\kappa\sqrt{{f}''_{\s, m}(\kappa; \a)}}
\kappa^{-\frac{1}{2\sigma}}(\log\kappa)^{\frac{1}{2}(\frac{m}{\s}+1)}. 
\end{gather}
First, we divide the integral of $E_1$ as 
\begin{align*}
E_1
&=\frac{1}{2\pi{i}}
\left(\int_{\kappa\pm{i}\kappa_1}^{\kappa\pm{i}\kappa_2}
+\int_{\kappa\pm{i}\kappa_2}^{\kappa\pm{i}\infty}\right)
F_{\s, m}(s; \a)e^{-\tau{s}}
\l(\frac{e^{\lambda{s}}-1}{\lambda{s^2}}\r)\, ds \\
&=E_{1,1}+E_{1,2}, 
\end{align*}
say. 
Since we take $\lambda=\kappa^{-3}$, we obtain $|e^{\lambda{s}}-1| \leq 4$. 
It yields the upper bound
\begin{gather*}
\frac{e^{\lambda{s}}-1}{\lambda{s^2}}
\ll\frac{\kappa^3}{t^2}. 
\end{gather*}
Then we deduce from Lemma \ref{lemBound1td} the estimates
\begin{align*}
E_{1,1}
&\ll{F}_{\s, m}(\kappa; \a)e^{-\tau\kappa}
\int_{\kappa_1}^{\infty}
\exp\l(- c_{2} t^2\kappa^{\frac{1}{\s} - 2} (\log{\kappa})^{-\frac{m}{\s} - 1}\r)
\frac{\kappa^3}{t^2}\, dt \\
&\ll{F}_{\s, m}(\kappa; \a)e^{-\tau\kappa}
\exp\l(- c_{2} \kappa^{\frac{1}{3\s}} (\log{\kappa})^{-\frac{m}{\s} - 1}\r)
\kappa^{2+\frac{1}{3\sigma}} \\
\end{align*}
and 
\begin{align*}
E_{1,2}
&\ll{F}_{\s, m}(\kappa; \a)e^{-\tau\kappa}
\int_{\kappa_2}^{\infty}\exp(-t^{1/(2\sigma)})\frac{\kappa^3}{t^2}\, dt \\
&\ll{F}_{\s, m}(\kappa; \a)e^{-\tau\kappa}\exp\l(-c_2\kappa^{1/(2\sigma)}\r) \kappa^2.  
\end{align*}
Using \eqref{eq:Mine30a}, we conclude that \eqref{eq:E_j} is satisfied for $j=1$. 
Next, we divide the integral of $E_2$ as 
\begin{align*}
E_2 
&=\frac{1}{2\pi{i}}\l(\int_{\kappa-{i}\kappa_2}^{\kappa+{i}\kappa_2}
+\int_{\kappa\pm{i}\kappa_2}^{\kappa\pm{i}\infty}\r) F_{\s, m}(s; \a)e^{-\tau{s}}
\l(\frac{e^{\lambda{s}}-1}{\lambda{s}}\r)\l(\frac{1-e^{-\lambda{s}}}{s}\r)\, ds\\
&=E_{2,1}+E_{2,2}, 
\end{align*}
say. 
Then, we use the inequalities $|e^{\lambda{s}}-1| \leq 4$ and $|1-e^{-\lambda s}| \leq 2$ to derive
\[
\l(\frac{e^{\lambda{s}}-1}{\lambda{s}}\r)\l(\frac{1-e^{-\lambda{s}}}{s}\r)
\ll
\begin{cases}
\kappa^{-3} & \text{if $|t|\leq\kappa_2$}, \\
\kappa^3t^{-2} & \text{if $|t|>\kappa_2$}.
\end{cases}
\]
Therefore, we evaluate $E_{2,1}$ as
\begin{gather*}
E_{2,1}
\ll{F}_{\s, m}(\kappa; \a)e^{-\tau\kappa}
\int_{-\kappa_2}^{\kappa_2}\kappa^{-3}\, dt
\ll{F}_{\s, m}(\kappa; \a)e^{-\tau\kappa}\kappa^{-2}. 
\end{gather*}
The integral $E_{2,2}$ can be estimated similarly to $E_{1,2}$. 
We have 
\begin{gather*}
E_{2,2}
\ll{F}_{\s, m}(\kappa; \a)e^{-\tau\kappa}\exp\l(-c_2\kappa^{1/(2\sigma)}\r) \kappa^2.  
\end{gather*}
As a result, we deduce
\begin{align*}
E_2
&\ll{F}_{\s, m}(\kappa; \a)e^{-\tau\kappa}\kappa^{-2} \\
&\ll_{\s, m} \frac{F_{\s, m}(\kappa; \a)e^{-\kappa\tau}}{\kappa\sqrt{{f}''_{\s, m}(\kappa; \a)}}
\kappa^{-\frac{1}{2\sigma}}(\log\kappa)^{\frac{1}{2}(\frac{m}{\s}+1)}
\end{align*}
by estimate \eqref{eq:Mine30a}. 
Thus \eqref{eq:E_j} is satisfied for $j=2$, and the result follows. 
\end{proof}

\begin{corollary}\label{corLD}
Let $1/2<\s<1$, $m\in\ZZ_{\geq0}$, and $\a\in\RR$. 
For large $\tau>0$, we have 
\begin{align*}
&\PP\l(\Re {e^{-i\a} \te_m(\s, X)} > \tau\r)\\
&=\exp\l(-A_m(\s)\tau^{\frac{1}{1-\s}}(\log\tau)^{\frac{m+\s}{1-\s}}\l(1+O_{\s, m}\l(\frac{\log\log\tau}{\log\tau}\r)\r)\r), 
\end{align*}
where $A_m(\s)$ can be explicitly described as 
\[
A_m(\s)
=\l(\frac{\s}{(1-\s)^{\frac{m-1}{\s}+2}g_1(\s)}\r)^{\frac{\s}{1-\s}} 
\]
with the constant $g_1(\s)$ of \eqref{eq_gn}. 
\end{corollary}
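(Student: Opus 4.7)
The plan is to start from Proposition \ref{propFT}, which, after taking logarithms, expresses
\[
\log\PP\bigl(\Re e^{-i\a}\te_m(\s,X)>\tau\bigr) = -\tau\kappa + f_{\s,m}(\kappa;\a) - \log\bigl(\kappa\sqrt{2\pi f''_{\s,m}(\kappa;\a)}\bigr) + O_{\s,m}(1),
\]
where $\kappa=\kappa(\tau;\s,m,\a)>0$ is determined implicitly by $f'_{\s,m}(\kappa;\a)=\tau$. The prefactor $\log(\kappa\sqrt{f''_{\s,m}(\kappa;\a)})$ is, by Proposition \ref{propCumu1}, of size $O_{\s,m}(\log\tau)$, and this (together with the $O(1)$ error above) is easily absorbed into the target error term $O_{\s,m}(\tau^{1/(1-\s)}(\log\tau)^{(m+\s)/(1-\s)-1}\log\log\tau)$ in the exponent. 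Thus the task reduces to evaluating $f_{\s,m}(\kappa;\a)-\tau\kappa$ with relative error $O_{\s,m}(\log\log\tau/\log\tau)$.

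The first step is to invert the saddle-point equation. Proposition \ref{propCumu1} with $n=1$ gives
\[
\tau = \s^{m/\s} g_1(\s) \frac{\kappa^{(1-\s)/\s}}{(\log\kappa)^{m/\s+1}}\l(1 + O_{\s,m}\l(\tfrac{1+m\log\log\kappa}{\log\kappa}\r)\r).
\]
A first pass of logarithms yields $\log\kappa = \tfrac{\s}{1-\s}\log\tau + O_{\s,m}(\log\log\tau)$, and resubstituting this back produces
\[
\kappa = \l(\frac{\tau\bigl(\tfrac{\s}{1-\s}\log\tau\bigr)^{m/\s+1}}{\s^{m/\s}g_1(\s)}\r)^{\s/(1-\s)}\l(1 + O_{\s,m}\l(\tfrac{\log\log\tau}{\log\tau}\r)\r).
\]

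The second step is the evaluation of $f_{\s,m}(\kappa;\a)-\tau\kappa$. Using the identity $g_0(\s) = \s g_1(\s)$, which follows from the relation $g_n(\s)=G_n(\s)g_0(\s)$ in Proposition \ref{propCumu1} with $G_1(\s) = 1/\s$ (and is also seen by integration by parts in \eqref{eq_gn}), Proposition \ref{propCumu1} with $n=0$, combined with $\tau\kappa=\kappa f'_{\s,m}(\kappa;\a)$, gives
\[
f_{\s,m}(\kappa;\a)-\tau\kappa = \s^{m/\s}\bigl(g_0(\s)-g_1(\s)\bigr)\frac{\kappa^{1/\s}}{(\log\kappa)^{m/\s+1}}\bigl(1+O_{\s,m}(\tfrac{\log\log\tau}{\log\tau})\bigr) = -(1-\s)\tau\kappa\bigl(1+O_{\s,m}(\tfrac{\log\log\tau}{\log\tau})\bigr).
\]
Substituting the explicit asymptotic for $\kappa$ established above yields $(1-\s)\tau\kappa = A_m(\s)\tau^{1/(1-\s)}(\log\tau)^{(m+\s)/(1-\s)}$ up to the same relative error, and a short manipulation of the exponents of $\s$ and $1-\s$ identifies the leading constant as $\bigl(\s/((1-\s)^{(m-1)/\s+2}g_1(\s))\bigr)^{\s/(1-\s)}$, matching the stated form of $A_m(\s)$. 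Exponentiating completes the proof.

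The main technical obstacle I expect is the bootstrap inversion $\kappa=\kappa(\tau)$ to the prescribed accuracy $O_{\s,m}(\log\log\tau/\log\tau)$: the implicit relation produces nested logarithms, and the resulting error must be tracked uniformly in $m$ and propagated cleanly through both $\tau\kappa=\kappa f'_{\s,m}(\kappa;\a)$ and the cancellation $g_0(\s)-g_1(\s)=-(1-\s)g_1(\s)$ responsible for the factor $(1-\s)$ in the main term.
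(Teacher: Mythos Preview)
Your proposal is correct and follows essentially the same approach as the paper: invert the saddle-point relation $f'_{\s,m}(\kappa;\a)=\tau$ via Proposition \ref{propCumu1} to obtain the asymptotic for $\kappa$, then insert this into Proposition \ref{propFT} and use Proposition \ref{propCumu1} again (together with $g_0(\s)=\s g_1(\s)$) to compute the exponent. The paper's proof is extremely terse, merely stating the asymptotic for $\kappa$ and saying ``inserting it to \eqref{eqMine56} and using Proposition \ref{propCumu1}, we obtain the corollary''; your write-up spells out the same steps in more detail, including the cancellation $g_0(\s)-g_1(\s)=-(1-\s)g_1(\s)$ and the verification of the constant $A_m(\s)$.
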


\begin{proof}
By Proposition \ref{propCumu1}, one can estimate $\kappa=\kappa(\tau; \s, m, \a)$ of Lemma \ref{lemSP} as 
\begin{align} \label{def_kappa}
  \kappa
  = C_m(\s)\tau^{\frac{\s}{1-\s}}(\log\tau)^{\frac{m+\s}{1-\s}}
  \l(1 + O_{\s, m}\l(\frac{\log\log\tau}{\log\tau}\r)\r)
\end{align}
if $\tau>0$ is large enough, where 
\begin{align*}
  C_m(\s)
  = \l(\frac{\s}{(1-\s)^{\frac{m}{\s}+1}g_1(\s)}\r)^{\frac{\s}{1-\s}}.
\end{align*}
Inserting it to \eqref{eqMine56} and using Proposition \ref{propCumu1}, we obtain the corollary. 
\end{proof}

\subsection{Proof of Theorem \ref{thmLD}}\label{secLD}

Let $\mca{X}\subset[T, 2T]$ be a Lebesgue measurable set. 
Then we define 
\begin{align*}
\PP_T^{\mca{X}}(f(t)\in{A})
=\frac{1}{T}\meas\set{t\in\mca{X}}{f(t)\in{A}}
\end{align*}
for any $A$ Lebesgue measurable set on $\RR$, where $f:\RR \to \RR$ is a Lebesgue measurable function. 
Denote by $\mu$ and $\nu$ the measures on $\RR$ such that
\begin{gather*}
\mu(A) = \PP_T^{A_T}\l(\Re {e^{-i\a} P_{m, Y}(\s+it)} \in A \r), \\
\nu(A) = \PP\l( \Re {e^{-i\a} \te_{m}(\s, X)} \in A \r), 
\end{gather*}
respectively, where $A_T=A_T(V, Y; \s, m)$ is given by \eqref{def_A_Y}, and $Y, V$ are functions of $T$ determined later. 
Towards the proof of Theorem \ref{thmLD}, we further define the measures $P$ and $Q$ as
\begin{gather*}
P(A)
=\int_{A} e^{\kappa u}\, d\mu(u), \quad
Q(A)=\int_{A} e^{\kappa u}\, d\nu(u)
\end{gather*}
for any Lebesgue measurable set $A$ on $\RR$, where $\kappa$ is a real number chosen later.
Then, for any $\tau > 0$, we obtain
\begin{align}
\label{eqMine46}
\mu((\tau, \infty))
= \int_{(\tau, \infty)} e^{-\kappa u}\, dP(u)
= \int_{\kappa \tau}^{\infty}e^{-x}P((\tau, x / \kappa))dx, 
\end{align}
and
\begin{align}
\label{eqMine47}
\nu((\tau, \infty))
= \int_{(\tau, \infty)}e^{-\kappa u}\, dQ(u)
= \int_{\kappa \tau}^{\infty}e^{-x}Q((\tau, x / \kappa))dx.
\end{align}
We begin with estimating the difference between $P$ and $Q$.

\begin{lemma}	\label{SPMPQ}
Let $1/2<\s<1$, $m\in\ZZ_{\geq0}$, and $\a\in\RR$.  
Suppose that $Y$ satisfies \eqref{eqY} and the inequality
\[
V^{\frac{\s}{1-\s}}(\log{V})^{\frac{m+\s}{1-\s}}
\leq \e{Y}^{\sigma-\frac{1}{2}}(\log{Y})^m
\]
with a small constant $\e>0$. 
There exists a positive constant $b_{3} = b_{3}(\s, m)$ such that 
for any $|\kappa| \leq b_{3} V^{\frac{\s}{1-\s}}(\log{V})^{\frac{m + \s}{1 - \s}}$ we have 
\begin{align*}
P((c, d))
= Q((c, d)) + E 
\end{align*}
for any $c, d \in \RR$ with $c < d$, where
\begin{align}
\label{ERSPMPQ}
E 
&\ll_{\s, m} 
\frac{\kappa^{1 - \frac{1}{2\s}} (\log{\kappa})^{\frac{m + \s}{2\s}}}{V^{\frac{\s}{1-\s}}(\log{V})^{\frac{m+\s}{1-\s}}} F_{\s, m}(\kappa; \a)\\
& \qquad 
+ (1 + (d - c)V^{\frac{\s}{1-\s}}(\log{V})^{\frac{m+\s}{1-\s}}) 
\Bigg\{\frac{V^{\frac{\s}{1-\s}}(\log{V})^{\frac{m+\s}{1-\s}}}{Y^{\s-\frac{1}{2}}(\log{Y})^{m}} F_{\s, m}(\kappa; \a)\\
& \qquad+\frac{1}{T}\l( V^{\frac{\s}{1-\s}}(\log{V})^{\frac{m + \s}{1-\s}}Y \r)^{V^{\frac{1}{1-\s}}(\log{V})^{\frac{m+\s}{1-\s}}}
+ \exp\l( -b_{2} V^{\frac{1}{1-\s}}(\log{V})^{\frac{m+\s}{1-\s}} \r)\Bigg\},
\end{align}
and $b_{2}$ is the same constant as in Proposition \ref{Key_Prop}.
\end{lemma}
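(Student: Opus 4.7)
\medskip

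\noindent\textbf{Proof plan for Lemma \ref{SPMPQ}.} The plan is to reduce the comparison between $P$ and $Q$ on $(c,d)$ to the comparison of the moment-generating function of $\Re e^{-i\a}P_{m,Y}(\s+it)$ (averaged over $A_T$) with the corresponding expectation for $\Re e^{-i\a}P_{m,Y}(\s,X)$, and then to pass to $\te_m(\s,X)$. Set
\[
X(t)=\Re e^{-i\a}P_{m,Y}(\s+it),\qquad L=V^{\frac{\s}{1-\s}}(\log{V})^{\frac{m+\s}{1-\s}},
\]
so that both $P((c,d))=T^{-1}\int_{A_T}e^{\kappa X(t)}\bm{1}_{(c,d)}(X(t))\,dt$ and $Q((c,d))=\EXP{e^{\kappa\Re e^{-i\a}\te_m(\s,X)}\bm{1}_{(c,d)}(\Re e^{-i\a}\te_m(\s,X))}$ are well defined whenever the hypothesis $|\kappa|\leq b_3 L$ holds with $b_3$ suitably small.

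\medskip

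\noindent\textbf{Step 1 (Beurling--Selberg).} Apply Lemma \ref{BSF2d} to $\bm{1}_{(c,d)}$ inside both $P((c,d))$ and $Q((c,d))$. The main parts then become
\[
\Im\int_0^L G(u/L)\,f_{c,d}(u)\,\Phi(u)\,\frac{du}{u},
\]
where $\Phi(u)$ is either $T^{-1}\int_{A_T}e^{(\kappa+2\pi iu)X(t)}\,dt$ (for $P$) or $\EXP{\exp((\kappa+2\pi iu)\Re e^{-i\a}\te_m(\s,X))}=F_{\s,m}(\kappa+2\pi iu;\a)$ (for $Q$). In addition there appear $K$-tail integrals of the form $T^{-1}\int_{A_T}e^{\kappa X(t)}K(L(X(t)-\xi))\,dt$ and $\EXP{e^{\kappa\Re e^{-i\a}\te_m(\s,X)}K(L(\Re e^{-i\a}\te_m(\s,X)-\xi))}$ for $\xi\in\{c,d\}$.

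\medskip

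\noindent\textbf{Step 2 (Identifying the main terms).} For $u\in[0,L]$, write the argument of $\Phi$ coming from $P$ as $z_1P_{m,Y}(\s+it)+z_2\ol{P_{m,Y}(\s+it)}$ with $z_1=\tfrac{\kappa+2\pi iu}{2}e^{-i\a}$ and $z_2=\overline{z_1}$. Since $|z_j|\ll L$, Proposition \ref{Key_Prop} applies with $b_3$ chosen so that $|z_j|\leq b_1 L$, giving
\[
T^{-1}\!\int_{A_T}\!\!e^{(\kappa+2\pi iu)X(t)}dt
=\EXP{\exp((\kappa+2\pi iu)\Re e^{-i\a}P_{m,Y}(\s,X))}+E_{\mathrm{KP}}(u),
\]
where $E_{\mathrm{KP}}(u)$ is bounded by the error term in Proposition \ref{Key_Prop}. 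Next, Lemma \ref{CREP2} (valid under the assumption $L\leq\e Y^{\s-1/2}(\log{Y})^m$) upgrades $P_{m,Y}$ to the full $\te_m$, producing
\[
\EXP{\exp((\kappa+2\pi iu)\Re e^{-i\a}P_{m,Y}(\s,X))}=F_{\s,m}(\kappa+2\pi iu;\a)+O_{\s,m}\!\l(F_{\s,m}(\kappa;\a)\,\tfrac{L}{Y^{\s-\frac12}(\log{Y})^m}\r).
\]
Thus the main terms in $P$ and $Q$ coincide exactly, and the discrepancy collapses to a bound on the error contributions, weighted by $|G(u/L)f_{c,d}(u)/u|\ll\min\{1,(d-c)L\}$ integrated against $du$.

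\medskip

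\noindent\textbf{Step 3 ($K$-tail estimates).} For the tails, use the identity \eqref{eq:1805} to rewrite $K(L(\cdot-\xi))$ as a Fourier integral in $u\in[0,L]$. Both tail contributions then reduce to estimating $\int_0^L(L-u)e^{-2\pi i\xi u}F_{\s,m}(\kappa+2\pi iu;\a)\,du$, up to the same errors $E_{\mathrm{KP}}$ and the $Y$-truncation error above. The crucial input is Lemma \ref{lemBound1td}: for $|u|\leq c_2\kappa/(2\pi)$ we have
\[
|F_{\s,m}(\kappa+2\pi iu;\a)|\leq F_{\s,m}(\kappa;\a)\exp\!\l(-c_2'\,u^2\kappa^{\tfrac{1}{\s}-2}(\log{\kappa})^{-\tfrac{m}{\s}-1}\r),
\]
and the faster decay $\exp(-|u|^{1/(2\s)})$ for larger $u$. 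Integrating, one obtains (after multiplying by the $L^{-2}$ in \eqref{eq:1805} and the factor $L$ from $\int_0^L(L-u)\,du$) a bound
\[
\ll_{\s,m} F_{\s,m}(\kappa;\a)\,L^{-1}\kappa^{1-\frac{1}{2\s}}(\log{\kappa})^{\frac{m+\s}{2\s}},
\]
which is exactly the first term in \eqref{ERSPMPQ}.

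\medskip

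\noindent\textbf{Step 4 (Aggregation).} Collecting the main-term cancellation from Step 2 with the $K$-tail bound of Step 3, and multiplying the Proposition \ref{Key_Prop} and Lemma \ref{CREP2} errors by $\int_0^L|G(u/L)f_{c,d}(u)|\,du/u\ll 1+(d-c)L$, yields precisely the stated estimate for $E$. The main obstacle is the careful bookkeeping in Step 3: one must verify that the Gaussian-type decay furnished by Lemma \ref{lemBound1td} is strong enough to make the $K$-tail contribution saturate at the $L^{-1}\sqrt{f_{\s,m}''(\kappa;\a)^{-1}}$ scale, and that the admissible range $|\kappa|\leq b_3 L$ keeps $\kappa+2\pi iu$ inside the region $|t|\leq c_2\kappa$ of Lemma \ref{lemBound1td} for $|u|$ up to $L$; this is where the constant $b_3$ must be chosen (depending on $\s,m$) small enough to ensure both the applicability of Proposition \ref{Key_Prop} and the Gaussian regime of Lemma \ref{lemBound1td}.
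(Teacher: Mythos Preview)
Your proposal is correct and follows essentially the same route as the paper: apply the Beurling--Selberg approximation (Lemma \ref{BSF2d}) to both $P((c,d))$ and $Q((c,d))$, convert the resulting integrals to moment-generating functions via \eqref{eq:1805}, replace the $A_T$-average by the random model via Proposition \ref{Key_Prop} and Lemma \ref{CREP2}, and bound the $K$-tails using Lemma \ref{lemBound1td}. One small slip in your Step 4: the choice of $b_3$ is needed only to keep $|z_j|\leq b_1 L$ in Proposition \ref{Key_Prop}; it does \emph{not} (and cannot) force $2\pi u\leq c_2\kappa$ for all $u\leq L$, since $\kappa$ may be much smaller than $L$---but you already noted in Step 3 that the second regime of Lemma \ref{lemBound1td} handles the range $|u|\gg\kappa$, so the argument goes through.
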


\begin{proof}
Put $L = b_{3} V^{\frac{\s}{1-\s}}(\log{V})^{\frac{m+\s}{1-\s}}$.
By Lemma \ref{BSF2d} and the definition of $P$, we can write
\begin{align}
\label{pSPMPQ2}
P((c, d))
= \int_{-\infty}^{\infty}e^{\kappa x}\Im\int_{0}^{L}G\l( \frac{u}{L} \r)e^{2\pi i u x}f_{c, d}(u)\frac{du}{u} d\mu(x) 
+ E_{1}, 
\end{align}
where 
\begin{align*}
E_{1} \ll
\int_{\RR}\l( K(L(u - c)) + K(L(u - d)) \r)e^{\kappa u}d\mu(u),
\end{align*}
and $G$, $K$, and $f_{c, d}$ are defined by \eqref{def_GKf}.
First, we estimate $E_{1}$. For $z \in \CC$, we define
\begin{align*}
M(z)
= \int_{\RR}e^{z x}d\mu(x).
\end{align*}
Then it holds that
\begin{align*}
  M(z)
  &= \frac{1}{T}\int_{A_{T}}\exp\l(z \Re {e^{-i\a} P_{m, Y}(\s+it)} \r)dt\\
  &= \frac{1}{T}\int_{A_{T}}\exp\l( \frac{z}{2}e^{-i\a}P_{m, Y}(\s+it)
  + \frac{z}{2}e^{i\a}\ol{P_{m, Y}(\s+it)} \r)dt.
\end{align*}
For any $\ell, u \in \RR$, we can write
\begin{align*}
  K(L(u - \ell)) 
  = \l( \frac{\sin(\pi L (u - \ell))}{\pi L (u - \ell)} \r)^2
  &= \frac{2}{L^2}\int_{0}^{L}(L - \xi)\cos(2\pi(u - \ell)\xi)d\xi\\
  &= \frac{2}{L^2}\Re \int_{0}^{L}(L - \xi)e^{2\pi i (u - \ell)\xi}d\xi, 
\end{align*}
and therefore we have
\begin{align}
  \int_{\RR}K(L(u - \ell))e^{\kappa u}d\mu(u)
  &= \frac{2}{L^2} \Re \int_{0}^{L}(L - \xi)\int_{\RR}e^{\kappa u + 2\pi i (u - \ell)\xi}d\mu(u)d\xi\\
  \label{pSPMPQ1}
  &= \frac{2}{L^2} \Re \int_{0}^{L}(L - \xi)e^{-2\pi i \ell \xi}M(\kappa + 2\pi i \xi)d\xi.
\end{align}
Here, we decide $b_{3}$ as $b_{1} / 4$, where $b_{1}$ is the same constant as in Proposition \ref{Key_Prop}.
Then, we can apply Proposition \ref{Key_Prop}, and obtain
\begin{align}
\label{AKProp}
M(\kappa + 2\pi i \xi)
= \EXP{ \exp\l( (\kappa + 2\pi i\xi)\Re {e^{-i\a} P_{m, Y}(\s, X)} \r) }
+ O(E_{2})
\end{align}
for $|\xi| \leq L$, where 
\begin{align*}
E_{2}
= \frac{1}{T}
\l( V^{\frac{\s}{1-\s}}(\log{V})^{\frac{m + \s}{1-\s}}Y \r)^{V^{\frac{1}{1-\s}}(\log{V})^{\frac{m+\s}{1-\s}}}
+ \exp\l( -b_{2} V^{\frac{1}{1-\s}}(\log{V})^{\frac{m+\s}{1-\s}} \r),
\end{align*}
where $b_{2}$ is the same constant as in Proposition \ref{Key_Prop}.
Applying further Lemma \ref{CREP2}, we derive
\begin{align}
\label{AKProp'}
M(\kappa + 2\pi i \xi)
= F_{\s, m}(\kappa + 2\pi i \xi; \a)+ O(E_{2}+E_3), 
\end{align}
where 
\begin{align*}
  E_3
  =F_{\s, m}(\kappa; \a)
  \frac{|\kappa+2\pi i\xi|}{Y^{\s-\frac{1}{2}}(\log{Y})^{m}}
  \ll_{\s, m} F_{\s, m}(\kappa; \a)
  \frac{V^{\frac{\s}{1-\s}}(\log{V})^{\frac{m+\s}{1-\s}}}{Y^{\s-\frac{1}{2}}(\log{Y})^{m}}.
\end{align*}
for $0 \leq \xi \leq L$.
By this formula and Lemma \ref{lemBound1td}, we find that
\begin{align*}
  &\frac{2}{L^2}\int_{0}^{L}(L - \xi)e^{-2\pi i \ell \xi}M(\kappa + 2\pi i \xi)d\xi\\
  &\ll \frac{F_{\s, m}(\kappa; \a)}{L^2}\bigg\{\int_{0}^{c_{2}\kappa}
  (L - \xi) \exp\l(- c_{2} \xi^2 \kappa^{\frac{1}{\s} - 2} (\log{\kappa})^{-\frac{m + \s}{\s}}\r)d\xi\\
  &\qquadf \qquadf +\int_{c_{2}\kappa}^{L}(L - \xi)\exp(-\xi^{1/(2\s)})d\xi\bigg\} + E_{2}+E_3. 
\end{align*}
By the change of variables $\xi = u \kappa^{1 - \frac{1}{2\s}} (\log{\kappa})^{\frac{m+\s}{2\s}}$, 
we see that the first integral is $\ll L\kappa^{1 - \frac{1}{2\s}} (\log{\kappa})^{\frac{m + \s}{2\s}}$.
The second integral is $\ll L$.
Hence, we have
\begin{align*}
  \frac{2}{L^2}\int_{0}^{L}(L - \xi)e^{-2\pi i \ell \xi}M(\kappa + 2\pi i \xi)d\xi
  \ll \frac{\kappa^{1 - \frac{1}{2\s}} (\log{\kappa})^{\frac{m + \s}{2\s}}}{L}F_{\s, m}(\kappa; \a) 
  + E_{2} + E_3
\end{align*}
uniformly for $\ell \in \RR$.
From this estimate and equation \eqref{pSPMPQ1}, we obtain
\begin{align*}
\int_{\RR}K(L(u - c))e^{\kappa u}d\mu(u)
\ll \frac{\kappa^{1 - \frac{1}{2\s}} (\log{\kappa})^{\frac{m + \s}{2\s}}}{L}F_{\s, m}(\kappa; \a) 
+ E_{2}+E_3, 
\end{align*}
and
\begin{align*}
\int_{\RR}K(L(u - d))e^{\kappa u}d\mu(u)
\ll \frac{\kappa^{1 - \frac{1}{2\s}} (\log{\kappa})^{\frac{m + \s}{2\s}}}{L}F_{\s, m}(\kappa; \a) 
+ E_{2}+E_3.
\end{align*}
Thus, we can estimate the error term $E_{1}$ of equation \eqref{pSPMPQ2} by
\begin{align}
\label{pSPMPQ5}
E_{1}
\ll \frac{\kappa^{1 - \frac{1}{2\s}} (\log{\kappa})^{\frac{m + \s}{2\s}}}{L}F_{\s, m}(\kappa; \a) 
+ E_{2}+E_3.
\end{align}

Next, we calculate the main term of \eqref{pSPMPQ2}.
Using Fubini's theorem, we find that the main term is equal to
\begin{align*}
&\Im \int_{0}^{L}G\l( \frac{u}{L} \r)\frac{f_{c, d}(u)}{u} 
M(\kappa + 2\pi i u)du.
\end{align*}
Additionally, by equation \eqref{AKProp'} and the estimates $G(x) \ll 1$ for $0 \leq x \leq 1$ 
and $|f_{c, d}(u) / u| \leq \pi |d - c|$, 
this is equal to
\begin{align}
\label{pSPMPQ3}
\Im \int_{0}^{L}G\l( \frac{u}{L} \r) \frac{f_{c, d}(u)}{u}F_{\s, m}(\kappa + 2\pi i u; \a)du
+ O(L (d - c)(E_{2}+E_3)).
\end{align}
Since we can write
\begin{align*}
F_{\s, m}(\kappa + 2\pi i u; \a)
= \int_{\RR}e^{(\kappa + 2\pi i u)\xi}d\nu(\xi), 
\end{align*}
we find, using Fubini's theorem, that \eqref{pSPMPQ3} equals to
\begin{align*}
\Im \int_{\RR}\int_{0}^{L}
G\l( \frac{u}{L} \r)e^{2\pi i u\xi}f_{c, d}(u)\frac{du}{u}
e^{\kappa \xi}d\nu(\xi)
+ O(L (d - c)(E_{2}+E_3)).
\end{align*}
Applying Lemma \ref{BSF2d} again, this is equal to
\begin{align*}
Q((c, d))
+ O\l( L (d - c)(E_{2}+E_3) + E_{4} \r), 
\end{align*}
where 
\begin{align*}
E_{4}
\ll \int_{\RR}\l(K(L(u - c)) + K(L(u - d))\r)e^{\kappa u}d\nu(u).
\end{align*}
Similarly to the proof of \eqref{pSPMPQ5}, we can obtain
\begin{align*}
E_{4}
\ll \frac{\kappa^{1 - \frac{1}{2\s}} (\log{\kappa})^{\frac{m + \s}{2\s}}}{L} F_{\s, m}(\kappa; \a).
\end{align*}
Thus, we obtain this lemma.
\end{proof}

\begin{proposition}\label{propLDpre}
Let $1/2<\s<1$, $m\in\ZZ_{\geq0}$, and $\a\in\RR$. 
Let $T>0$ be large enough. 
Define the functions $Y$ and $V$ as
\begin{align*}
  Y &=(\log{T})^{B}, \\
  V &=c_{1}(\log{T})^{1-\s}(\log\log{T})^{-m-1}
\end{align*}
with $B=\frac{6}{\s-1/2}$ and $c_{1} = c_{1}(\s, m)$ a small positive constant.
Then there exists a constant $b_m(\sigma)>0$ such that
\begin{align*}
&\PP_T^{A_T}\l(\Re e^{-i\a}P_{m, Y}(\s+it) > \tau \r)\\
&=\PP\l(\Re e^{-i\a}\te_{m}(\s, X) > \tau \r)
\l(1+O_{\s, m}\l(\frac{\tau^{\frac{\s}{1-\s}}(\log\tau)^{\frac{m+\s}{1-\s}}}{V^{\frac{\s}{1-\s}}(\log{V})^{\frac{m+\s}{1-\s}}}\r)\r)
\end{align*}
for any large $\tau$ with $\tau \leq {b}_m(\s)V$. 
\end{proposition}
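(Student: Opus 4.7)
The plan is to derive Proposition \ref{propLDpre} by combining the integral representations \eqref{eqMine46} and \eqref{eqMine47} with the comparison result of Lemma \ref{SPMPQ} and the tail asymptotic of Proposition \ref{propFT}. First I would fix $\kappa = \kappa(\tau; \s, m, \a)$ as the saddle point from Lemma \ref{lemSP}; by Proposition \ref{propCumu1} this satisfies $\kappa \asymp_{\s, m} \tau^{\s/(1-\s)}(\log\tau)^{(m+\s)/(1-\s)}$, so the claimed relative error is exactly $\kappa/(V^{\s/(1-\s)}(\log V)^{(m+\s)/(1-\s)})$ up to constants. A direct computation with $V = c_1(\log T)^{1-\s}(\log\log T)^{-m-1}$ gives $V^{1/(1-\s)}(\log V)^{(m+\s)/(1-\s)} \asymp c_1^{1/(1-\s)}(\log T)(\log\log T)^{-1}$ and $V^{\s/(1-\s)}(\log V)^{(m+\s)/(1-\s)} \asymp (\log T)^\s(\log\log T)^m$; since $Y^{\s-1/2}(\log Y)^m \asymp (\log T)^6(\log\log T)^m$ with $B = 6/(\s-1/2)$, both \eqref{eqY} and the inequality $V^{\s/(1-\s)}(\log V)^{(m+\s)/(1-\s)} \leq \e Y^{\s-1/2}(\log Y)^m$ required by Lemma \ref{SPMPQ} hold provided $c_1$ is small enough in terms of $B$ and $\s$. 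The constraint $\kappa \leq b_3 V^{\s/(1-\s)}(\log V)^{(m+\s)/(1-\s)}$ of Lemma \ref{SPMPQ} is similarly secured for $\tau \leq b_m(\s) V$ once $b_m(\s)$ is chosen small.

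With these preparations, \eqref{eqMine46} and \eqref{eqMine47} yield
\[
\mu((\tau,\infty)) - \nu((\tau,\infty))
= \int_{\kappa\tau}^{\infty} e^{-x}\bigl(P((\tau, x/\kappa)) - Q((\tau, x/\kappa))\bigr)\, dx,
\]
and I would insert the bound from Lemma \ref{SPMPQ} with $c = \tau$ and $d = x/\kappa$. Using the elementary identities $\int_{\kappa\tau}^{\infty} e^{-x}\,dx = e^{-\kappa\tau}$ and $\int_{\kappa\tau}^{\infty} e^{-x}(x/\kappa - \tau)\,dx = e^{-\kappa\tau}/\kappa$, every summand in the error of Lemma \ref{SPMPQ} produces a multiple of $e^{-\kappa\tau}$. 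Dividing by $\nu((\tau,\infty)) \asymp F_{\s, m}(\kappa; \a) e^{-\kappa\tau}/(\kappa\sqrt{f''_{\s, m}(\kappa; \a)})$ from Proposition \ref{propFT} and invoking $\kappa\sqrt{f''_{\s, m}(\kappa; \a)} \asymp_{\s, m} \kappa^{1/(2\s)}(\log\kappa)^{-(m+\s)/(2\s)}$ coming from \eqref{eq:Mine30a}, the first term of Lemma \ref{SPMPQ} contributes exactly the relative error $\kappa/(V^{\s/(1-\s)}(\log V)^{(m+\s)/(1-\s)})$, which becomes the main contribution.

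The main obstacle is verifying that the auxiliary error terms of Lemma \ref{SPMPQ}---those without the factor $F_{\s, m}(\kappa; \a)$---give contributions of strictly lower order. Their ratios to $\nu((\tau,\infty))$ acquire an additional factor $\kappa\sqrt{f''_{\s, m}(\kappa; \a)}/F_{\s, m}(\kappa; \a)$. The asymptotic $f_{\s, m}(\kappa; \a) \asymp \kappa^{1/\s}(\log\kappa)^{-(m+\s)/\s}$ from Proposition \ref{propCumu1} implies $F_{\s, m}(\kappa; \a)^{-1} \leq \exp(-c(\log T)/\log\log T)$ for some $c>0$. The $T$-dependent term in Lemma \ref{SPMPQ} simplifies to $T^{c_1^{1/(1-\s)}(1-\s)^{(m+\s)/(1-\s)}(\s+B) - 1}$, which becomes $T^{-\eta}$ after choosing $c_1$ small enough in terms of $B$, $\s$ and $m$; this decay overwhelms $F_{\s, m}(\kappa; \a)^{-1}$. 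The exponential contribution $\exp(-b_2 V^{1/(1-\s)}(\log V)^{(m+\s)/(1-\s)}) \asymp \exp(-c(\log T)/\log\log T)$ is handled similarly, while the term with factor $Y^{-(\s-1/2)}(\log Y)^{-m} = (\log T)^{-6}(\log\log T)^{-m}$ is dwarfed because the main error only decays like $(\log T)^{-\s}$ up to slowly varying factors.
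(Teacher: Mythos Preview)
Your approach coincides with the paper's: combine the representations \eqref{eqMine46}--\eqref{eqMine47} with Lemma \ref{SPMPQ} and then compare to $\nu((\tau,\infty))$ via Proposition \ref{propFT}, treating the $F_{\s,m}(\kappa;\a)$-terms and the remaining terms separately.

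There is one slip. The inequality ``$F_{\s, m}(\kappa; \a)^{-1} \leq \exp(-c(\log T)/\log\log T)$'' is false in general: $\kappa$ is determined by $\tau$ through \eqref{eqMine55}, so when $\tau$ is a fixed large number the quantity $F_{\s,m}(\kappa;\a)^{-1}=\exp(-f_{\s,m}(\kappa;\a))$ is a fixed constant in $(0,1)$, not decaying with $T$. What you actually need is much weaker: since $f_{\s,m}(\kappa;\a)>0$ for large $\kappa$ one has $F_{\s,m}(\kappa;\a)^{-1}\leq 1$, and the prefactor $\kappa\sqrt{f''_{\s,m}(\kappa;\a)}\cdot(1+L/\kappa)$ is at most a fixed power of $\log T$ (here $L=V^{\s/(1-\s)}(\log V)^{(m+\s)/(1-\s)}$). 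These polynomial-in-$\log T$ factors are harmless against $T^{-\eta}$ and against $\exp(-b_2 V^{1/(1-\s)}(\log V)^{(m+\s)/(1-\s)})\asymp\exp(-c\log T/\log\log T)$, so the auxiliary terms are indeed absorbed into the main relative error. The paper phrases this step equivalently by invoking Corollary \ref{corLD} to bound $\PP(\Re e^{-i\a}\te_m(\s,X)>\tau)^{-1}\ll\exp(C\log T/\log\log T)$ in the range $\tau\leq b_m(\s)V$ with $b_m(\s)$ small; either route works, but your stated inequality on $F_{\s,m}(\kappa;\a)^{-1}$ should be replaced by one of these.
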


\begin{proof}
Under our definitions of $Y$ and $V$, the parameter $Y$ satisfies the condition of Lemma \ref{SPMPQ} 
when $T$ is sufficiently large, and $c_{1}$ is sufficiently small.
Let $\tau$ be large number with $\tau \leq b_{m}(\s) V$, where $b_{m}(\s)$ is a suitably small constant chosen later.
Let $\kappa$ satisfy \eqref{eqMine55}. 
Recall that this $\kappa$ satisfies \eqref{def_kappa}, particularly
$\kappa \asymp_{\s, m} \tau^{\frac{\s}{1-\s}}(\log\tau)^{\frac{m+\s}{1-\s}}$.
We can then use Lemma \ref{SPMPQ} for this $\tau$ when $b_{m}(\s)$ is sufficiently small.
By equation \eqref{eqMine46}, we have 
\begin{align*}
\PP_T^{A_T}\l( \Re e^{-i\a}P_{m, Y}(\s+it) > \tau \r)
= \int_{\kappa \tau}^{\infty}e^{-x} P((\tau, x / \kappa))dx.
\end{align*}
Hence, by Lemma \ref{SPMPQ}, we can easily show that this is equal to
\begin{align*}
\int_{\kappa \tau}^{\infty}e^{-x} Q((\tau, x / \kappa))dx
+ O_{\s, m}\l( e^{-\kappa \tau} (E_{1} + E_{2} + E_{3}) \r), 
\end{align*}
where
\begin{gather*}
  E_{1} 
  = \l( \frac{\kappa^{1 - \frac{1}{2\s}} (\log{\kappa})^{\frac{m + \s}{2\s}}}{V^{\frac{\s}{1 - \s}} (\log{V})^{\frac{m + \s}{1 - \s}}} 
  + \frac{\l(V^{\frac{\s}{1 - \s}} (\log{V})^{\frac{m + \s}{1 - \s}}\r)^2}{\kappa Y^{\s - \frac{1}{2}} (\log{Y})^{m}} \r)F_{\s, m}(\kappa; \a),\\
  E_{2}
  = \frac{1}{T}\frac{V^{\frac{\s}{1 - \s}} (\log{V})^{\frac{m+\s}{1-\s}}}{\kappa}
  \l( V^{\frac{\s}{1-\s}}(\log{V})^{\frac{m + \s}{1-\s}}Y \r)^{V^{\frac{1}{1-\s}}(\log{V})^{\frac{m+\s}{1-\s}}}, \\
  E_{3} 
  = \frac{V^{\frac{\s}{1 - \s}} (\log{V})^{\frac{m+\s}{1-\s}}}{\kappa}
  \exp\l( -b_{2} V^{\frac{1}{1-\s}}(\log{V})^{\frac{m+\s}{1-\s}} \r).
\end{gather*}
Also, by equation \eqref{eqMine47}, the main term is equal to 
$\PP\l( \Re e^{-i\a}\te_{m}(\s, X) > \tau \r)$.

Using Proposition \ref{propFT} and noting the definitions of $V$ and $Y$ and the range of $\tau$, we have
\begin{align*}
  e^{-\kappa \tau} E_{1}
  \ll \PP\l(\Re e^{-i\a} \te_{m}(\s, X) > \tau\r) 
  \frac{\kappa^{2 - \frac{1}{2\s}} (\log{\kappa})^{\frac{m + \s}{2\s}} \sqrt{{f}''_{\s, m}(\kappa; \a)}}{
    V^{\frac{\s}{1 - \s}} (\log{V})^{\frac{m + \s}{1 - \s}}}.
\end{align*}
It follows from this estimate, \eqref{eq:Mine30a}, 
and the estimate $\kappa \asymp_{\s, m} \tau^{\frac{\s}{1-\s}}(\log{\tau})^{\frac{m + \s}{1 - \s}}$ that
\begin{align*}
  e^{-\kappa \tau} E_{1}
  \ll_{\s, m} \PP\l(\Re e^{-i\a} \te_{m}(\s, X) > \tau\r) 
  \frac{\tau^{\frac{\s}{1 - \s}} (\log{\tau})^{\frac{m + \s}{1 - \s}}}{V^{\frac{\s}{1 - \s}} (\log{V})^{\frac{m + \s}{1 - \s}}}.
\end{align*} 
By the definitions of $Y$ and $V$, it holds that $E_{2} \ll T^{-3/4}$ when $c_{1}$ is small.
Hence, using Corollary \ref{corLD}, we have
\begin{align*}
  e^{-\kappa \tau}E_{2}
  \ll \PP\l(\Re e^{-i\a} \te_{m}(\s, X) > \tau\r) T^{- 1/2}.
\end{align*}
Also, when $\tau \leq b_{m}(\s) V$ with $b_{m}(\s)$ small, we find, using Corollary \ref{corLD} again, that
\begin{align*}
  e^{-\kappa \tau}E_{3}
  &\leq \PP\l(\Re e^{-i\a} \te_{m}(\s, X) > \tau\r) \exp\l( - \frac{b_{2}}{2} V^{\frac{1}{1-\s}}(\log{V})^{\frac{m+\s}{1-\s}} \r)\\
  &\ll_{\s, m} \frac{1}{V^{\frac{1}{1-\s}}(\log{V})^{\frac{m+\s}{1-\s}}}
  \PP\l(\Re e^{-i\a} \te_{m}(\s, X) > \tau\r).
\end{align*}
Thus, we complete the proof of this proposition.
\end{proof}

\begin{proof}[Proof of Theorem \ref{thmLD}]
Let $\tau$ be large with $\tau \leq b_{m}'(\s) V$ with $b_{m}'(\s)$ a small positive constant to be chosen later.
We choose the functions $Y, V$ as in Proposition \ref{propLDpre} and put 
\begin{gather*}
\e
=WY^{\frac{1}{2}-\s}
=W(\log{T})^{-6}, 
\end{gather*}
where $W$ is a function of $T$ determined later. 
Here, we assume that $\epsilon \ll 1$ is satisfied. 
Denote by $B_T=B_{T}(Y, W; \s, m)$ the subset of $[0,T]$ given by \eqref{def_B_Y}. 
Then the condition $t \in B_T$ deduces that 
\begin{gather*}
\Re e^{-i\a}\te_m(\s+it) > \tau
\quad\Longrightarrow\quad
\Re e^{-i\a}P_{m, Y}(\s+it) > \tau-\e
\end{gather*}
by the definition. 
Thus we have 
\begin{align}\label{eq:Mine03}
&\PP_T\l(\Re e^{-i\a}\te_m(\s+it) > \tau\r)\\
&=\PP_T^{B_{T}}\l(\Re e^{-i\a}\te_m(\s+it) > \tau\r)+O\l(\frac{1}{T}\meas([T, 2T]\setminus{B}_{T})\r)\\
&\leq\PP_T^{B_{T}}\l(\Re e^{-i\a}P_{m, Y}(\s+it) > \tau-\e\r)+O\l(\frac{1}{T}\meas([T, 2T]\setminus{B}_{T})\r). 
\end{align}
In a similar way, we also obtain the inequality
\begin{align}\label{eq:Mine04}
&\PP_T\l(\Re e^{-i\a}\te_m(\s+it) > \tau\r) \\
&\geq\PP_T^{B_{T}}\l(\Re e^{-i\a}P_{m, Y}(\s+it) > \tau+\e\r)+O\l(\frac{1}{T}\meas([T, 2T]\setminus{B}_{T})\r). 
\end{align}
Note that $\PP_T^{B_{T}}(\cdots)$ is modified as 
\begin{align}\label{eq:Mine05}
&\PP_T^{B_{T}}\l(\Re e^{-i\a}P_{m, Y}(\s+it) > \tau\pm\e\r)\\
&=\PP_T^{A_{T}}\l(\Re e^{-i\a}P_{m, Y}(\s+it) > \tau\pm\e\r)\\
&\qquad+O\l(\frac{1}{T}\meas([0, T]\setminus{A}_{T})+\frac{1}{T}\meas([T, 2T]\setminus{B}_{T})\r)
\end{align}
by using the set $A_T=A_T(V, Y; \s, m)$ given by \eqref{def_A_Y}. 
Since $\epsilon \ll1$, there exists a constant ${b}'_m(\s)>0$ such that $\tau\leq{b}'_m(\s)V$ implies $\tau\pm \epsilon \leq{b}_m(\s)V$, 
where ${b}_m(\s)$ is the positive constant of Proposition \ref{propLDpre}. 
Then we deduce from Proposition \ref{propLDpre} the asymptotic formulas
\begin{align}\label{eq:Mine01}
&\PP_T^{A_T}\l(\Re e^{-i\a}P_{m, Y}(\s+it) > \tau\pm\e\r)\\
&=\PP\l(\Re e^{-i\a}\te_{m}(\s, X) > \tau\pm\e\r)
\l(1+O_{\s, m}\l(\frac{\tau^{\frac{\s}{1-\s}}(\log\tau)^{\frac{m+\s}{1-\s}}}{(\log{T})^\s(\log\log{T})^m}\r)\r)
\end{align}
in the range $\tau\leq{b}'_m(\s)V$. 
Next, we evaluate the differences
\begin{gather}\label{eq:diff}
\PP\l(\Re e^{-i\a}\te_{m}(\s, X) > \tau\pm\e\r)
-\PP\l(\Re e^{-i\a}\te_{m}(\s, X) > \tau\r). 
\end{gather}
Note that we obtain
\begin{align*}
\PP\l(\Re e^{-i\a}\te_{m}(\s, X) > \tau+\gamma\r)
&=\int_{\gamma}^{\infty} \DF_{\s, m}(x+\tau; \a)\,|dx| \\
&=F_{\s, m}(\kappa; \a)e^{-\kappa\tau}\int_{\gamma}^\infty{e}^{-\kappa{x}}\nDF_{\s, m}^{\tau}(x; \a)\, |dx|
\end{align*}
for any $\gamma\in\RR$ by using the functions $\DF_{\s, m}(x; \a)$, $\nDF_{\s, m}^{\tau}(x; \a)$ 
defined by \eqref{def_odM}, \eqref{eqMine29a} respectively. 
It yields
\begin{align*}
&\PP\l(\Re e^{-i\a}\te_{m}(\s, X) > \tau-\e\r) - \PP\l(\Re e^{-i\a}\te_{m}(\s, X) > \tau\r)\\
&={F}_{\s, m}(\kappa; \a)e^{-\kappa\tau}\int_{-\e}^0{e}^{-\kappa{x}}\nDF_{\s, m}^{\tau}(x; \a)\, |dx|\\
&\ll \frac{{F}_{\s, m}(\kappa; \a)e^{-\kappa\tau}}{\sqrt{{f}''_{\s, m}(\kappa; \a)}} \frac{{e}^{\kappa\e}-1}{\kappa}
\end{align*}
by using Proposition \ref{propE}, and similarly, 
\begin{align*}
&\PP\l(\Re e^{-i\a}\te_{m}(\s, X) > \tau\r) - \PP\l(\Re e^{-i\a}\te_{m}(\s, X) > \tau+\e\r)\\
&={F}_{\s, m}(\kappa; \a)e^{-\kappa\tau}\int_{0}^{\epsilon} {e}^{-\kappa{x}}\nDF_{\s, m}^{\tau}(x; \a)\, |dx|\\
&\ll \frac{{F}_{\s, m}(\kappa; \a)e^{-\kappa\tau}}{\sqrt{{f}''_{\s, m}(\kappa; \a)}} \frac{1-e^{-\kappa \e}}{\kappa} 
\end{align*}
follows. 
Combining them, we evaluate \eqref{eq:diff} as 
\begin{align*}
&\PP\l(\Re e^{-i\a}\te_{m}(\s, X) > \tau\pm\e\r)
-\PP\l(\Re e^{-i\a}\te_{m}(\s, X) > \tau\r) \\ 
&\ll \PP\l(\Re e^{-i\a}\te_{m}(\s, X) > \tau\r) \delta
\end{align*}
by Proposition \ref{propFT}, where we put $\delta=\max\{{e}^{\kappa\e}-1, 1-e^{-\kappa\e}\}$.  
Hence formula \eqref{eq:Mine01} derives
\begin{align}\label{eq:Mine06}
&\PP_T^{A_T}\l(\Re e^{-i\a}P_{m, Y}(\s+it) > \tau\pm\e\r)\\
&=\PP\l(\Re e^{-i\a}\te_{m}(\s, X) > \tau\r)
\l(1+O_{\s, m}\l(\frac{\tau^{\frac{\s}{1-\s}}(\log\tau)^{\frac{m+\s}{1-\s}}}{(\log{T})^\s(\log\log{T})^m}
+\delta\r)\r). 
\end{align}
By \eqref{eq:Mine03}, \eqref{eq:Mine05}, and \eqref{eq:Mine06}, we obtain the inequality
\begin{align*}
&\PP_T\l(\Re e^{-i\a}\te_{m}(\s+it) > \tau\r)\\
&\leq\PP\l(\Re e^{-i\a}\te_{m}(\s, X) > \tau\r)
\l(1+O_{\s, m}\l(\frac{\tau^{\frac{\s}{1-\s}}(\log\tau)^{\frac{m+\s}{1-\s}}}{(\log{T})^\s(\log\log{T})^m}
+\delta \r)\r)\\
&\qquad+O\l(\frac{1}{T}\meas([T, 2T]\setminus{A}_{T})+\frac{1}{T}\meas([T, 2T]\setminus{B}_{T})\r).  
\end{align*}
If we use \eqref{eq:Mine04} in place of \eqref{eq:Mine03}, we also obtain
\begin{align*}
&\PP_T\l(\Re e^{-i\a}\te_{m}(\s+it) > \tau\r)\\
&\geq\PP\l(\Re e^{-i\a}\te_{m}(\s, X) > \tau\r)
\l(1+O_{\s, m}\l(\frac{\tau^{\frac{\s}{1-\s}}(\log\tau)^{\frac{m+\s}{1-\s}}}{(\log{T})^\s(\log\log{T})^m}
+\delta \r)\r)\\
&\qquad+O\l(\frac{1}{T}\meas([T, 2T]\setminus{A}_{T})+\frac{1}{T}\meas([T, 2T]\setminus{B}_{T})\r).
\end{align*}
Therefore, the asymptotic formula
\begin{align}\label{eq:Mine07}
&\PP_T\l(\Re e^{-i\a}\te_{m}(\s+it) > \tau\r)\\
&=\PP\l(\Re e^{-i\a}\te_{m}(\s, X) > \tau\r)
\l(1+O_{\s, m}\l(\frac{\tau^{\frac{\s}{1-\s}}(\log\tau)^{\frac{m+\s}{1-\s}}}{(\log{T})^\s(\log\log{T})^m}
+\delta\r)\r)\\
&\qquad+O\l(\frac{1}{T}\meas([T, 2T]\setminus{A}_{T})+\frac{1}{T}\meas([T, 2T]\setminus{B}_{T})\r)  
\end{align}
follows. 
Then, we determine the function $W$ as 
\[W
= a(\log{T})(\log\log{T})^{-m-1},  
\]
where the constant $a=a(\sigma,m)>0$ is taken so that 
\begin{gather*}
\l((\log{T})(\log{Y})^{-2(m+1)}\r)^{\frac{m}{2m+1}} \leq W \leq (\log{T})(\log{Y})^{-(m+1)}
\end{gather*}
is satisfied. 
By \eqref{def_kappa}, we have $\kappa \ll (\log{T})^{\sigma}(\log\log{T})^{m}$ in the range $\tau\leq{b}'_m(\s)V$. 
Hence $\kappa \epsilon \ll (\log{T})^{-4}$ is valid due to $\epsilon= WY^{\frac{1}{2}-\s}$. 
It gives that the condition $\epsilon\ll1$ is satisfied. 
Furthermore, we evaluate $\delta=\max\{{e}^{\kappa\e}-1, 1-e^{-\kappa\e}\}$ as
\begin{gather}\label{eq:Mine08}
\delta
\ll (\log{T})^{-4}
\ll \frac{\tau^{\frac{\s}{1-\s}}(\log\tau)^{\frac{m+\s}{1-\s}}}{(\log{T})^\s(\log\log{T})^m}.
\end{gather}
Finally, we apply Lemmas \ref{ESE} and \ref{ESEPE} to deduce
\begin{align*}
&\frac{1}{T}\meas([T, 2T]\setminus{A}_{T})
= \PP_T\l( |P_{m, Y}(\s+it)| > V \r)\\
&\ll\exp\l( -c V^{\frac{1}{1-\s}}(\log{V})^{\frac{m+\s}{1-\s}} \r)
\ll\exp\l(-c' \frac{\log{T}}{\log\log{T}}\r),
\end{align*}
and
\begin{align*}
\frac{1}{T}\meas([T, 2T]\setminus{B}_{T})
&\ll\exp\l( -c (W(\log{T})^{m})^{\frac{1}{m+1}} \r)
\ll\exp\l(-c' \frac{\log{T}}{\log\log{T}}\r) 
\end{align*}
with positive constants $c=c(\s, m)$ and $c'=c'(\s, m)$. 
Note that Corollary \ref{corLD} yields the upper bound
\[\PP\l(\Re e^{-i\a}\te_{m}(\s, X) > \tau\r)^{-1}
\ll\exp\l(\frac{c'}{2}\frac{\log{T}}{\log\log{T}}\r)\]
in the range $\tau\leq{b}'_m(\s)V$ if $b'_m(\s)>0$ is sufficiently small. 
Hence we arrive at
\begin{align}\label{eq:Mine09}
&\frac{1}{T}\meas([T, 2T]\setminus{A}_{T})+\frac{1}{T}\meas([T, 2T]\setminus{B}_{T}) \\
&\ll\PP\l(\Re e^{-i\a}\te_{m}(\s, X) > \tau\r)\exp\l(-\frac{c'}{2}\frac{\log{T}}{\log\log{T}}\r) \\
&\ll\PP\l(\Re e^{-i\a}\te_{m}(\s, X) > \tau\r)\frac{\tau^{\frac{\s}{1-\s}}(\log\tau)^{\frac{m+\s}{1-\s}}}{(\log{T})^\s(\log\log{T})^m}. 
\end{align}
The result follows from \eqref{eq:Mine07}, \eqref{eq:Mine08}, and \eqref{eq:Mine09}. 
\end{proof}

\section{\textbf{Appendix}}

\subsection{Proof of Proposition \ref{GCGCS}}

We give a lemma to prove of Proposition \ref{GCGCS}.
The following lemma is just a rewrite of the statement of Proposition 1 in \cite{II2020}.

\begin{lemma} \label{EQLtecl}
  The following statements are equivalent.
  \begin{itemize}
    \item[(A)'] The Lindel\"of Hypothesis is true.
    \item[(B)'] For any positive integer $m$, the estimate $\Re \te_{m}(\tfrac{1}{2} + it) = o(\log{t})$ holds as $t \rightarrow + \infty$.
    \item[(C)'] There exists a positive integer $m$ such that
    the estimate $\Re \te_{m}(\tfrac{1}{2} + it) = o(\log{t})$ holds as $t \rightarrow + \infty$. 
  \end{itemize}
\end{lemma}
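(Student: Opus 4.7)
The plan is to establish the cycle $(A) \Rightarrow (B) \Rightarrow (C) \Rightarrow (A)$. The implication $(B) \Rightarrow (C)$ is trivial, and the two nontrivial directions will both be handled by propagating bounds from $\te_{m}$ to $\te_{m+1}$ through the defining recursion $\te_{m+1}(\s+it) = \int_{\s}^{\infty}\te_{m}(\a+it)\,d\a$. In the forward direction the propagation is quantitative, while in the direction $(C) \Rightarrow (A)$ I will invoke dominated convergence to cross the boundary $\s = 1/2$ and then feed the result into Lemma \ref{EQLtecl}.

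For $(A) \Rightarrow (B)$, assume LH. Littlewood's conditional estimate gives $\log\zeta(\s+it) \ll_{\s} (\log t)^{2-2\s}(\log\log t)^{-1}$ for each fixed $\s > 1/2$, which serves as the base case $m = 0$. Integrating against $d\a$ from $\s$ to $2$ loses one power of $\log\log t$, while the tail $\int_{2}^{\infty}\te_{m-1}(\a+it)\,d\a = O(1)$ is immediate from absolute convergence of the Dirichlet series for $\te_{m-1}$. Inductively this yields $\te_{m}(\s+it) \ll_{m,\s} (\log t)^{2-2\s}(\log\log t)^{-(m+1)} = o(\log t)$ for every $m \geq 1$ and every fixed $\s > 1/2$, which is $(B)$.

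For $(C) \Rightarrow (A)$, fix an index $m$ witnessing $(C)$. The key claim is that $\Re\te_{m+1}(\tfrac{1}{2}+it) = o(\log t)$; since this is exactly property $(C)'$ with index $m+1$ in Lemma \ref{EQLtecl}, it yields $(A)' = (A)$. Fix $A \geq 2$ and split
\[
\te_{m+1}(\tfrac{1}{2}+it) = \int_{1/2}^{A}\te_{m}(\a+it)\,d\a + \int_{A}^{\infty}\te_{m}(\a+it)\,d\a,
\]
the tail again being $O(1)$. For the bulk I apply dominated convergence to the family $f_{t}(\a) := \Re\te_{m}(\a+it)/\log t$ on $[\tfrac{1}{2}, A]$: pointwise on $(\tfrac{1}{2}, A]$ the hypothesis $(C)$ gives $f_{t}(\a) \to 0$ as $t \to \infty$, while a uniform majorant $|f_{t}(\a)| \ll_{m} 1$ valid on the closed interval $[\tfrac{1}{2}, A]$ is supplied by iterating the unconditional bound $|\log\zeta(\a+it)| \ll \log t$ (subconvexity for $\log|\zeta|$ together with Backlund's $S(t) \ll \log t$, both valid down to $\a = 1/2$). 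Hence $\int_{1/2}^{A}\Re\te_{m}(\a+it)\,d\a = o(\log t)$, proving the claim.

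The main obstacle is precisely the last step: securing the uniform majorant \emph{including} the point $\a = 1/2$ so that the dominated-convergence interval may start exactly at the critical line. Without the unconditional bound extending to $\a = 1/2$, a contribution from $\a$ near $\tfrac{1}{2}$ could swamp the $o(\log t)$ gain provided by $(C)$ on the open interval. The jumps of $\te_{0}$ at ordinates of zeros of $\zeta$ are harmless here, since they form a set of Lebesgue measure zero.
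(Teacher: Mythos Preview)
Your proposal proves the wrong statement. The target is Lemma~\ref{EQLtecl}, which concerns the primed equivalences $(\mathrm{A})'$, $(\mathrm{B})'$, $(\mathrm{C})'$ \emph{at the critical line} $\sigma = 1/2$; what you have actually written is an argument for Proposition~\ref{GCGCS}, the unprimed equivalences $(\mathrm{A})$, $(\mathrm{B})$, $(\mathrm{C})$ for fixed $\sigma > 1/2$. Your $(A)\Rightarrow(B)$ paragraph concludes with $\te_m(\sigma+it) = o(\log t)$ ``for every fixed $\sigma > 1/2$, which is $(B)$'' --- that is $(\mathrm{B})$, not $(\mathrm{B})'$. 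More fatally, in your $(C)\Rightarrow(A)$ step you derive $(\mathrm{C})'$ at index $m+1$ and then explicitly invoke Lemma~\ref{EQLtecl} to pass from $(\mathrm{C})'$ to $(\mathrm{A})'$. Since Lemma~\ref{EQLtecl} is precisely the statement you were asked to prove, this is circular.

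The paper itself does not prove Lemma~\ref{EQLtecl}: it records the lemma as ``just a rewrite of the statement of Proposition~1 in \cite{II2020}'' and then uses it as a black box in the proof of Proposition~\ref{GCGCS}. The substantive implication $(\mathrm{C})' \Rightarrow (\mathrm{A})'$ --- deducing the Lindel\"of Hypothesis from $\Re\te_m(\tfrac{1}{2}+it) = o(\log t)$ for a single $m$ --- is the content of that cited result, and your proposal does not touch it. If instead you meant to prove Proposition~\ref{GCGCS}, your outline is close in spirit to the paper's, though the paper handles $(\mathrm{C})\Rightarrow(\mathrm{A})$ not by dominated convergence on $[1/2,A]$ but by splitting at $\sigma(\epsilon)=\tfrac12+\epsilon^2$ and bounding the short integral $\int_{1/2}^{\sigma(\epsilon)}\Re\te_{m-1}(\alpha+it)\,d\alpha$ directly via the unconditional $O(\log t)$ bound (using the explicit formula near zeros when $m=1$).
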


\begin{proof}[Proof of Proposition \ref{GCGCS}]
  The implication (B) $\Rightarrow$ (C) is clear.
  We first consider (A) $\Rightarrow$ (B).
  Let $m \in \ZZ_{\geq 1}$ be fixed. 
  We assume the Lindel\"of Hypothesis.
  By the same method as the proof of Theorem 13.6 (B) in \cite{TT}, 
  we can show that the Lindel\"of Hypothesis implies $\Re \te_{m}(\s + it) = o(\log{t})$ for any $\s \geq \frac{1}{2}$.

  Next, we show that the Lindel\"of Hypothesis is true when
  $\Re \te_{m_{0}}(\s + it) = o(\log{t})$ for any fixed $\s > \frac{1}{2}$ with some $m_{0} \in \ZZ_{\geq 1}$.
  By Lemma \ref{EQLtecl}, it suffices to derive, from the assumption,
   $|\Re \te_{m_{0}}(\tfrac{1}{2} + it)| \leq \e \log{t}$
  for any small $\e > 0$, $t \geq t_{0}(\e)$.
  Put $\s(\e) = \frac{1}{2} + \e^2$.
  We can write
  \begin{align} \label{pGCGCS1}
    \Re \te_{m_{0}}(\tfrac{1}{2} + it)
    = \int_{\s(\e)}^{\frac{1}{2}}\Re \te_{m_{0} - 1}(\s + it)d\s + \Re \te_{m_{0}}(\s(\e) + it).
  \end{align}
  From our assumption, the second term on the right hand side is $o(\log{t})$.
  When $m_{0} \geq 2$, we can show that $\Re \te_{m_{0} - 1}(\s + it) \ll \log{t}$ similarly to the proof of Theorem 9.9 (A) in \cite{TT}.
  Hence, when $m_{0} \geq 2$, the absolute value of the integral on the right hand side of \eqref{pGCGCS1} is $\leq \e\log{t}$, 
  where $C$ is a positive constant.
  When $m_{0} = 1$, using the well known formula (see Theorem 9.6 (B) in \cite{TT})
  \begin{align*}
    \log{\zeta(\s + it)} = \sum_{|t - \gamma| \leq 1}\log(s - \rho) + O(\log{t}) \quad \text{for $-1 \leq \s \leq 2$}
  \end{align*}
  and the fact that the number of zeros with $|t - \gamma| \leq 1$ is $= O(\log{t})$ (see Theorem 9.2 in \cite{TT}), we see that
  \begin{align*}
    \l|\int_{\s(\e)}^{\frac{1}{2}}\Re \log{\zeta(\s + it)}d\s\r|
    \leq \e \log{t}.
  \end{align*}
  Hence, we obtain $|\Re \te_{m_{0}}(\tfrac{1}{2} + it)| \leq \e \log{t}$ for any $t \geq t_{0}(\e)$.
\end{proof}

\subsection{Positivity of the probability density function}

\begin{proposition}\label{prop:app}
Let $\DF_{\s, m}(z)$ be the probability density function of Proposition \ref{propPDF}. 
\begin{itemize}
\item[$(\mathrm{i})$] 
Let $m\geq1$. 
If $1/2\leq\s<1$, then $\DF_{\s, m}(z)>0$ for all $z\in\CC$. 
If $\s\geq1$, then $\DF_{\s, m}$ is compactly supported. 
\item[$(\mathrm{ii})$] 
Let $m=0$. 
If $1/2<\s\leq1$, then $\DF_{\s, m}(z)>0$ for all $z\in\CC$. 
If $\s>1$, then $\DF_{\s, m}$ is compactly supported.
\end{itemize}
\end{proposition}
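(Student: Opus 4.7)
The plan is to handle the compact-support claims by direct absolute convergence of the defining random series, and to establish the pointwise positivity via a Jessen--Wintner splitting of $\te_m(\s, X)$ into a truncated part and a smoothing tail, combined with a geometric density statement about Minkowski sums of the closed curves $S_p := \te_{m, p}(\s, \TT)$.

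For the compact-support statements, the elementary bound $|\te_{m, p}(\s, w)| \leq \sum_{k\geq 1} k^{-1} p^{-k\s}(\log p^k)^{-m} \ll_{\s, m} p^{-\s}(\log p)^{-m}$ valid for $|w|=1$ gives $|\te_{m, p}(\s, X(p))| \ll_{\s, m} p^{-\s}(\log p)^{-m}$ almost surely. When $\s \geq 1$ and $m \geq 1$, or when $\s > 1$ and $m = 0$, the series $\sum_p p^{-\s}(\log p)^{-m}$ converges, so $\te_m(\s, X)$ is almost surely bounded by a deterministic constant $R = R(\s, m)$; consequently $\supp(\DF_{\s, m}) \subseteq \{|z| \leq R\}$.

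For the positivity statements, fix $(\s, m)$ with $1/2 \leq \s < 1, m \geq 1$ or $1/2 < \s \leq 1, m = 0$, and fix $z_0 \in \CC$. For each cutoff $Y \geq 3$ write
\[
\te_m(\s, X) = T_Y + U_Y, \quad T_Y = \sum_{p \leq Y}\te_{m, p}(\s, X(p)), \quad U_Y = \sum_{p > Y}\te_{m, p}(\s, X(p)),
\]
where $T_Y$ and $U_Y$ are independent. Repeating the Fourier computation of Proposition \ref{propPDF} for $U_Y$ alone still yields a super-exponential bound $|\EXP{\exp(i\inp{U_Y, w})}| \leq \exp(-c_Y|w|^{1/(2\s)})$ for $|w|$ sufficiently large; hence $U_Y$ admits a continuous density $\rho_Y \geq 0$ on $\CC$, and one obtains the convolution representation
\[
\DF_{\s, m}(z) = \EXP{\rho_Y(z - T_Y)}.
\]
Since $\rho_Y$ is continuous and integrates to $1$, there exists $w_Y \in \CC$ with $\rho_Y(w_Y) > 0$, hence by continuity a ball $B(w_Y, \delta_Y)$ on which $\rho_Y \geq c_Y' > 0$. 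Pointwise positivity $\DF_{\s, m}(z_0) > 0$ thus reduces to exhibiting, for some $Y$, a positive-probability event on which $T_Y \in B(z_0 - w_Y, \delta_Y)$.

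The main obstacle is the geometric claim that the Minkowski sums $\Sigma_Y := \sum_{p \leq Y} S_p = \supp(T_Y)$ become arbitrarily dense in $\CC$ as $Y \to \infty$; this is precisely where the restriction on $(\s, m)$ enters, via the divergence $\sum_p p^{-\s}(\log p)^{-m} = \infty$ in the prescribed range. Each $S_p$ is a closed Jordan curve encircling $0$ with $\diam(S_p) \asymp p^{-\s}(\log p)^{-m}$, so for each $p$ one can select points $\alpha_p^{(1)}, \alpha_p^{(2)} \in S_p$ of respective magnitudes $\asymp p^{-\s}(\log p)^{-m}$ along the real and imaginary directions, together with their antipodal counterparts $-\alpha_p^{(j)} \in S_p$. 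The subset of $\Sigma_Y$ obtained by summing $\pm\alpha_p^{(j_p)}$ over $p \leq Y$ then contains arbitrary $\pm$-combinations in each of the two coordinate directions; applying the classical Jessen--Wintner theorem \cite{JW1935} on real $\pm$-sums with divergent total weight to each coordinate yields density in $\CC$. Finally, since the distribution of $T_Y$ is the push-forward under a continuous map of the uniform measure on a finite-dimensional torus, it assigns positive mass to every non-empty open subset of $\CC$ meeting $\Sigma_Y$; hence $\PP(T_Y \in B(z_0 - w_Y, \delta_Y)) > 0$ for $Y$ sufficiently large, which completes the proof.
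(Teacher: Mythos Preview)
Your compact-support argument is fine and matches the paper.

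The positivity argument has a real gap. You reduce $\DF_{\s,m}(z_0)>0$ to finding \emph{some} $Y$ for which $\Sigma_Y$ meets $B(z_0-w_Y,\delta_Y)$, and then invoke the fact that $\Sigma_Y$ becomes arbitrarily dense as $Y\to\infty$. But the target ball itself depends on $Y$ through the tail density $\rho_Y$: as $Y$ grows, $U_Y\to0$ in law, $\rho_Y$ becomes increasingly peaked, and $\delta_Y$ may well shrink to $0$. Nothing in your argument compares the rate at which $\Sigma_Y$ densifies against the rate at which $\delta_Y$ shrinks, so the final step ``hence $\PP(T_Y\in B(z_0-w_Y,\delta_Y))>0$ for $Y$ sufficiently large'' does not follow. (A smaller slip: the curves $S_p$ are not symmetric about the origin---the even-$k$ terms in $\te_{m,p}(\s,e^{i\theta})$ do not change sign under $\theta\mapsto\theta+\pi$---so ``$-\alpha_p^{(j)}\in S_p$'' is false as stated; the underlying density claim for $\bigcup_Y\Sigma_Y$ still survives, but not via a clean $\pm$-sum appeal.)

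The paper sidesteps the coupling entirely by splitting the primes into two \emph{infinite} sets (the congruence classes mod $4$, together with $p=2$) rather than head and tail. Each partial sum then has, by the same Fourier bound as in Proposition~\ref{propPDF}, a continuous density $\DF_{\s,m}^\flat$, $\DF_{\s,m}^\#$; and by the same divergence argument (applied once, with no limit in a cutoff parameter), each density has support equal to $\CC$, hence is positive on an open dense set. The convolution $\DF_{\s,m}(z_0)=\int \DF_{\s,m}^\flat(z_0-w)\,\DF_{\s,m}^\#(w)\,|dw|$ is then strictly positive because the integrand is positive on the intersection of two open dense sets. Your head/tail split can be repaired in the same spirit: for a \emph{fixed} $Y$, apply the divergence argument to the tail $U_Y$ to obtain $\supp(\rho_Y)=\CC$, so that $\{\rho_Y>0\}$ is already open and dense; then no passage $Y\to\infty$ is needed.
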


Using Theorem \ref{thmDB1} and Proposition \ref{prop:app},
we strengthen the previous result due to the first and second authors \cite{EI2020} for the denseness of $\te_{m}(\s + it)$.
Actually, we obtain the following corollary.

\begin{corollary}
  Let $m$ be a positive integer, and $\frac{1}{2} < \s < 1$.
  For any rectangle $\mca{R} \subset \CC$, we have
  \begin{align*}
    \lim_{T \rightarrow + \infty}\PP_{T}(\te_{m}(\s + it) \in \mca{R})
    = \int_{\mca{R}} \DF_{\s, m}(z)|dz| > 0.
  \end{align*}
  In particular, the set $\set{\te_{m}(\s + it)}{t \in [0, \infty)}$ is dense in the complex plane.
\end{corollary}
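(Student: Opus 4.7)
The plan is to obtain the corollary as an immediate consequence of Theorem \ref{thmDB1} and Proposition \ref{prop:app}; all the substantive analytic work has already been carried out, so what remains is essentially to assemble the pieces.

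First, I would fix a rectangle $\mca{R} \subset \CC$ with edges parallel to the axes and invoke Theorem \ref{thmDB1} to write
\[
\PP_{T}(\te_{m}(\s+it) \in \mca{R})
= \PP(\te_{m}(\s,X) \in \mca{R})
+ O_{\s,m}\l((\log T)^{-\s}(\log\log T)^{-m}\r).
\]
The error term vanishes as $T \to \infty$, and by Proposition \ref{propPDF} the main term equals $\int_{\mca{R}} \DF_{\s,m}(z)\,|dz|$. This gives the limit formula in the corollary.

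Next, I would establish strict positivity of that integral. By Proposition \ref{prop:app}\,(i) applied with $m \geq 1$ and $1/2 \leq \s < 1$, the density $\DF_{\s,m}$ is everywhere strictly positive on $\CC$; combined with its continuity from Proposition \ref{propPDF}, it is bounded below by a positive constant on any compact set, so $\int_{\mca{R}} \DF_{\s,m}(z)\,|dz| > 0$ for every rectangle $\mca{R}$ of positive area. This yields the strict inequality in the statement.

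The denseness conclusion then follows by a routine argument: given any $z_{0} \in \CC$ and $\e > 0$, take $\mca{R}_{\e}$ to be the axis-parallel square of side $2\e$ centered at $z_{0}$. The preceding step gives $\lim_{T \to \infty} \PP_{T}(\te_{m}(\s+it) \in \mca{R}_{\e}) > 0$, so for all sufficiently large $T$ the set $\set{t \in [T,2T]}{\te_{m}(\s+it) \in \mca{R}_{\e}}$ has positive Lebesgue measure, and in particular is nonempty; any such $t$ then satisfies $|\te_{m}(\s+it) - z_{0}| < \e\sqrt{2}$. The only minor point to watch is that the positivity assertion is only nontrivial for rectangles with non-empty interior, which is automatic in the denseness application. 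I do not anticipate a real obstacle beyond bookkeeping here, since Theorem \ref{thmDB1} and Proposition \ref{prop:app} carry out all the heavy lifting upstream.
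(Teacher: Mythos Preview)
Your proposal is correct and matches the paper's approach exactly: the paper does not spell out a proof of the corollary but simply states that it follows from Theorem~\ref{thmDB1} and Proposition~\ref{prop:app}, which is precisely the assembly you carry out. Your handling of the minor bookkeeping (axis-parallel rectangles, continuity of $\DF_{\s,m}$ for the lower bound on compacta, and the denseness argument via positive-measure sets in $[T,2T]$) fills in the details the paper leaves implicit.
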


We introduce some notation to prove Proposition \ref{prop:app}.
The distribution of the random variable $\te_m(\s, X)$ is the probabilistic measure on $\mathbb{C}$ defined as 
\begin{gather}\label{eqMine10}
\mu_{\s, m}(A)=\PP(\te_m(\s, X)\in{A}). 
\end{gather}
Let $p$ be a prime number. 
Then we also define
\[\mu_{\s, m, p}(A)=\PP(\te_{m, p}(\s, X(p))\in{A}), \] 
where $\te_{m, p}(\s, X(p))$ is defined from \eqref{eqlocal}.

\begin{lemma}\label{lemConv}
Let $(\s, m)\in\mca{A}$. 
Then the convolution measure 
\begin{gather}\label{eqMine16}
\nu_{\sigma, m, N}=\mu_{\sigma, m, p_1}\ast\cdots\ast\mu_{\sigma, m, p_N}
\end{gather}
converges weakly to $\mu_{\sigma, m}$ as $N\to\infty$, 
where $p_n$ indicates the $n$-th prime number. 
Furthermore, the convergence is absolute in the sense that it converges to $\mu_{\sigma, m}$ in any order of terms of the convolution. 
\end{lemma}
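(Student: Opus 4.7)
The approach is to interpret the convolution as the distribution of a partial sum and then exploit Lemma \ref{lemASConv}. Since $\{X(p)\}_{p\in\mca{P}}$ is an independent family, the random variables $\{\te_{m,p_n}(\s, X(p_n))\}_{n\geq1}$ are independent as $\CC$-valued random variables. Consequently, the distribution of the partial sum
\begin{gather*}
S_N
=\sum_{n=1}^{N}\te_{m,p_n}(\s, X(p_n))
\end{gather*}
coincides with the convolution $\nu_{\s, m, N}$ defined in \eqref{eqMine16}. This is the starting point on which everything else rests.

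The next step would be to combine this with Lemma \ref{lemASConv}: the series $\sum_p \te_{m,p}(\s, X(p))$ converges almost surely to $\te_m(\s, X)$, so $S_N\to\te_m(\s, X)$ almost surely as $N\to\infty$. Since almost sure convergence implies convergence in law, the distribution $\nu_{\s, m, N}$ of $S_N$ converges weakly to the distribution $\mu_{\s, m}$ of $\te_m(\s, X)$ defined in \eqref{eqMine10}. This gives the first assertion.

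For the absolute convergence statement, I would argue that any reordering of the primes yields a partial sum converging almost surely to the same limit. The key inputs are that $\EXP{\te_{m,p}(\s, X(p))}=0$ for every $p$ and $\sum_p\EXP{\l|\te_{m,p}(\s, X(p))\r|^2}<\infty$, the latter being exactly \eqref{plemASConv1}. These are precisely the hypotheses of Kolmogorov's one-series theorem (equivalently, the criterion recalled from \cite[Theorem 1.4.2]{S2011} used in the proof of Lemma \ref{lemASConv}), and they are invariant under any permutation $\pi$ of $\mca{P}$. Hence the rearranged series $\sum_n\te_{m,p_{\pi(n)}}(\s, X(p_{\pi(n)}))$ also converges almost surely; moreover, its $L^2$-limit must agree with the $L^2$-limit of the original series since both have the same partial-sum variances and means. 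Passing to distributions as before, we conclude that the rearranged convolution also converges weakly to $\mu_{\s, m}$, which is the desired unconditional convergence.

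I do not foresee a significant obstacle here: the lemma is essentially a translation of Lemma \ref{lemASConv} into the language of distributions, together with the classical fact that square-summable independent mean-zero series converge unconditionally. The only mild care needed is to articulate that unconditionality at the level of almost sure convergence of the sums transfers directly to weak convergence of the associated convolution measures, which is immediate from the continuous mapping theorem applied to $S_N$ versus its rearrangements.
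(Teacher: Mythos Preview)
Your proposal is correct and follows essentially the same route as the paper: identify $\nu_{\s,m,N}$ as the law of the partial sum $S_N$ via independence, then invoke Lemma \ref{lemASConv} to pass from almost sure convergence to weak convergence. The only difference is in the absolute convergence clause: the paper dispatches it by citing \eqref{plemASConv1} together with Jessen--Wintner \cite[Theorem 6]{JW1935}, whereas you argue directly that the mean-zero, square-summable hypotheses of Kolmogorov's theorem are permutation-invariant and that the $L^2$-limit is independent of the ordering. Your argument is fine, but the sentence ``its $L^2$-limit must agree with the $L^2$-limit of the original series since both have the same partial-sum variances and means'' is a bit loose; the clean reason is that independent mean-zero summands are orthogonal in $L^2$, and an orthogonal series with $\sum_n\|x_n\|^2<\infty$ converges unconditionally in $L^2$ to a unique limit.
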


\begin{proof}
Recall that $\te_{m, p}(\s, X(p))$ and $\te_{m, q}(\s, X(q))$ are independent if $p$ and $q$ are distinct prime numbers. 
Hence the distribution of $\te_{m, p}(\s, X(p))+\te_{m, q}(\s, X(q))$ equals to $\mu_{\sigma, m, p}\ast\mu_{\sigma, m, q}$. 
More generally, we see that 
\[\nu_{\sigma, m, N}(A)
=\PP\l(\sum_{n\leq{N}}\te_{m, p_n}(\s, X(p_n))\in{A}\r)\]
for any Borel set $A$ on $\CC$. 
By Lemma \ref{lemASConv}, $\sum_{n\leq{N}}\te_{m, p_n}(\s, X(p_n))\to\te_m(\s, X)$ 
in law as $N\to\infty$, i.e.\ $\nu_{\sigma, m, N}\to\mu_{\sigma, m}$ weakly. 
The absoluteness of the convergence follows from \eqref{plemASConv1} and Jessen--Wintner \cite[Theorem 6]{JW1935}. 
\end{proof}

In general, the support of a probability measure $P$ on $\CC$ is defined as
\[\supp(P)
=\Set{z\in\CC}{\text{$P(A)>0$ for any Borel set $A$ with $z\in{A}^i$}}, \]
where $A^i$ is the interior of $A$. 
We know that $\supp(P)$ is a non-empty closed subset of $\CC$. 
Applying Lemma \ref{lemConv}, we study the support of $\mu_{\s, m}$.

\begin{lemma}\label{lemSupp}
Let $\mu_{\s, m}$ be the probability measure defined as \eqref{eqMine10}.
\begin{itemize}
\item[$(\mathrm{i})$] Let $m\geq1$. 
If $1/2\leq\s<1$, then $\supp(\mu_{\s, m})=\CC$. 
If $\s\geq1$, then $\supp(\mu_{\s, m})$ is a compact subset of $\CC$. 
\item[$(\mathrm{ii})$] Let $m=0$. 
If $1/2<\s\leq1$, then $\supp(\mu_{\s, m})=\CC$. 
If $\s>1$, then $\supp(\mu_{\s, m})$ is a compact subset of $\CC$. 
\end{itemize}
\end{lemma}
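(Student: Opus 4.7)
The plan is to derive both statements from Lemma \ref{lemConv}, which identifies $\mu_{\s,m}$ as the weak, and indeed absolute, limit of the finite convolutions $\nu_{\s,m,N}$. By the classical theorem of Jessen and Wintner on supports of infinite convolutions \cite{JW1935}, this yields the characterization
\[
\supp(\mu_{\s,m})
=\overline{\l\{\sum_{p} z_p : z_p \in \supp(\mu_{\s,m,p}),~ \sum_p z_p\text{ converges}\r\}}.
\]
Each local support $C_p := \supp(\mu_{\s,m,p})$ is the image of the unit circle under $w \mapsto \te_{m,p}(\s,w)$, a closed curve in $\CC$. Writing $\te_{m,p}(\s, e^{i\theta}) = r_p e^{i\theta} + R_p(\theta)$ with $r_p = p^{-\s}(\log p)^{-m}$ and remainder $R_p(\theta) \ll p^{-2\s}(\log p)^{-m}$, the curve $C_p$ is a small perturbation of the circle of radius $r_p$ centered at the origin.

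For the compactness assertions, the trivial bound $|\te_{m,p}(\s, w)| \ll_m r_p$ for $|w|=1$ ensures that every convergent series $\sum_p z_p$ with $z_p \in C_p$ lies in a fixed ball as soon as $\sum_p r_p < \infty$. The prime number theorem yields this convergence whenever $\s \geq 1$ with $m \geq 1$, or $\s > 1$ with $m = 0$. In both regimes, $\supp(\mu_{\s,m})$ is then closed and bounded, hence compact.

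For the assertions that $\supp(\mu_{\s,m}) = \CC$, I would show by an inductive construction that every $z_0 \in \CC$ is approximated arbitrarily well by convergent sums $\sum_p z_p$ with $z_p \in C_p$. Given a residual $w_n = z_0 - \sum_{k \leq n} z_{p_k}$ with $|w_n|$ still above a threshold, select $z_{p_{n+1}} \in C_{p_{n+1}}$ near the circle point $r_{p_{n+1}} w_n/|w_n|$; then $|w_{n+1}| \leq |w_n| - r_{p_{n+1}} + O(r_{p_{n+1}}^2)$. The prime number theorem gives $\sum_p r_p = \infty$ and $\sum_p r_p^2 < \infty$ in both of the relevant regimes ($1/2 \leq \s < 1$ with $m \geq 1$, and $1/2 < \s \leq 1$ with $m = 0$), so iterating this step drives $|w_n|$ below any prescribed $\e > 0$. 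Past the threshold, $z_p$ is chosen of nearly minimal modulus on $C_p$ so that the tail $\sum_{p > P} z_p$ remains negligible.

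The main obstacle will be the rigorous bookkeeping in the reachability step: we must verify uniformly in $p$ that $C_p$ contains a point of any prescribed argument within error $O(r_p^2)$, and that the corresponding projection genuinely reduces $|w_n|$ by essentially the full amount $r_{p_{n+1}}$ rather than a smaller fraction. This amounts to a short geometric analysis of the perturbed circle $C_p$ from the Taylor expansion above. Alternatively, one could reduce to the classical Jessen--Wintner result for sums of independent variables on exact circles by a perturbative coupling argument.
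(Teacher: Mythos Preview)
Your overall strategy matches the paper's: invoke the Jessen--Wintner description of the support of an infinite convolution (via Lemma \ref{lemConv}), then argue boundedness when $\sum_p r_p<\infty$ and density when $\sum_p r_p=\infty$. The paper simply outsources the density step, citing \cite[Theorem 14]{JW1935} for $m=0$ and \cite[Theorem 5.4]{St2007} for $m\geq1$ (applied to the leading circles $r_p e^{i\theta}$, with the higher-order terms absorbed as an $O\bigl(\sum_{n>N_0}p_n^{-2\s}(\log p_n)^{-m}\bigr)$ perturbation after throwing away the first $N_0$ primes). Your proposal to carry out the greedy construction by hand is a legitimate alternative and would make the argument self-contained.

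There is, however, a genuine gap in your tail handling. You write that past the threshold ``$z_p$ is chosen of nearly minimal modulus on $C_p$ so that the tail $\sum_{p>P} z_p$ remains negligible.'' But the curve $C_p$ is an $O(p^{-2\s}(\log p)^{-m})$-perturbation of the circle of radius $r_p$, so $\min_{z\in C_p}|z|\sim r_p$ for large $p$, and in the very regime where you want $\supp(\mu_{\s,m})=\CC$ you have $\sum_p r_p=\infty$. Hence no choice of ``small'' points makes the tail absolutely small; you would need to arrange cancellation (e.g.\ pair up primes so that successive terms nearly cancel), which is extra work. The cleanest fix is to use the finite-sum form of the Jessen--Wintner description,
\[
\supp(\mu_{\s,m})=\lim_{N\to\infty}\bigl(C_{p_1}+\cdots+C_{p_N}\bigr),
\]
as the paper does: once your greedy step has driven $|w_N|<\e$ you simply stop, with no tail to control. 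With that adjustment (and the observation that for the finitely many small primes one fixes an arbitrary choice and absorbs it into the target, so that the $O(r_p^2)$ estimate is only needed for $p$ large), your argument goes through.
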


\begin{proof}
Let $\{A_n\}_{n = 1}^{\infty}$ be a sequence of subsets of $\CC$. 
We denote by $\lim_{n \to \infty}{A_n}$ the set of all points in $\CC$ 
that may be represented in at least one way as the limit of a sequence of points $a_n\in{A}_n$. 
Jessen--Wintner \cite[Theorem 3]{JW1935} proved that 
\begin{align} \label{plemSupp1}
  \supp(P)=\lim_{N\to\infty}\l(\supp(P_1)+\cdots+\supp(P_N)\r)
\end{align}
if the convolution measure $P_1\ast\cdots\ast{P}_N$ converges weakly to $P$ as $N\to\infty$. 
Here, $A + B$ means the set $\set{a + b \in \CC}{a \in A, b \in B}$.
Applying further \cite[Theorem 14]{JW1935} with $P_n=\mu_{\s, m, p_n}$ for $m = 0$, we obtain assertion $(\mathrm{ii})$. 
Then, we consider the case $m\geq1$. 
By Lemma \ref{lemConv} and \eqref{plemSupp1}, we have 
\[\supp(\mu_{\s, m})=\lim_{N\to\infty}\l(\supp(\mu_{\s, m, p_1})+\cdots+\supp(\mu_{\s, m, p_N})\r). \]
Note that the support of every $\mu_{\s, m, p}$ is determined as 
\[\supp(\mu_{\s, m, p})=\Set{ \te_{m, p}(\s, e^{i\theta}) }{ \theta\in[0, 2\pi) }\]
by the definition. 
First, we let $1/2\leq\s<1$. 
In this case we apply \cite[Theorem 5.4]{St2007} to deduce that for any $z\in\CC$, $N_0\geq1$, and $\e>0$, we have 
\[\l|\l(z-\sum_{n<N_0}\frac{\Li_{m+1}(p_n^{-\s})}{(\log{p}_n)^m}\r)-\sum_{N_0<{n}\leq{N}}\frac{p_n^{-\s}e^{i\theta_n}}{(\log{p}_n)^m}\r|<\e\]
with some $N=N(z, N_0, \e)>N_0$ and $\{\theta_n\}_{N_0<{n}\leq{N}}\in[0, 2\pi)^{N-N_0}$. 
We also derive  
\begin{align*}
  &\l|\sum_{N_0<{n}\leq{N}}\frac{\Li_{m+1}(p_n^{-\s}{e}^{i\theta_n})}{(\log{p}_n)^m}
  -\sum_{N_0<{n}\leq{N}}\frac{p_n^{-\s}e^{i\theta_n}}{(\log{p}_n)^m}\r|\\
  &= \l| \sum_{N_{0} < n \leq N} \frac{1}{(\log{p_{n}})^{m}}
  \sum_{k = 2}^{\infty}\frac{e^{ik\theta_{n}}}{k^{m+1}p_{n}^{k\s}} \r|
  \ll \sum_{n > N_{0}}\frac{1}{p^{2\s}(\log{p_{n}})^{m}}
  < \e
\end{align*}
if $N_0$ is sufficiently large. 
These imply $\supp(\mu_{\s, m})=\CC$ for $1/2\leq\s<1$. 
Next, we let $\s\geq1$. 
Then we have 
\[\sum_{n=1}^\infty\te_{m, p_n}(\s, e^{i\theta_n})
\ll\sum_{n=1}^\infty\frac{1}{p_n\log{p}_n}<\infty\]
for any $\{\theta_n\}_{n = 1}^{\infty} \in \prod_{n = 1}^{\infty}[0, 2\pi)$ in this case. 
Hence $\supp(\mu_{\s, m})$ is included in a bounded disk, which completes the proof. 
\end{proof}

\begin{proof}[Proof of Proposition \ref{prop:app}]
Remark that the support of the function $\DF_{\s, m}$ is equal to $\supp(\mu_{\s, m})$ studied in Lemma \ref{lemSupp}. 
We first show assertion $(\mathrm{i})$.
Let $m\geq1$. 
Then the fact that $\DF_{\s, m}$ is compactly supported for $\s\geq1$ is a direct consequence of Lemma \ref{lemSupp}. 
Let $1/2\leq\s<1$. 
To prove the positivity of $\DF_{\s, m}$, we define two probabilistic measures 
\[\nu^\flat_{\s, m, N}=\mu_{\s, m, 2}\ast\mu_{\s, m, p_1^\flat}\ast\cdots\ast\mu_{\s, m, p_N^\flat}
\quad\text{and}\quad 
\nu^\#_{\s, m, N}=\mu_{\s, m, p_1^\#}\ast\cdots\ast\mu_{\s, m, p_N^\#}\]
as analogues of \eqref{eqMine16}, where $p_n^\flat$ is the $n$-th prime number congruent to $1 \pmod{4}$, 
and $p_n^\#$ is the $n$-th prime number congruent to $-1 \pmod{4}$. 
Then it can be proved similarly to Lemma \ref{lemConv} 
that $\nu^\flat_{\s, m, N}$ and $\nu^\#_{\s, m, N}$ converge weakly to some probability measures $\mu_{\s, m}^\flat$ and $\mu_{\s, m}^\#$ 
as $N\to\infty$, respectively. 
One can check that the limit measures $\mu_{\s, m}^\flat$ and $\mu_{\s, m}^\#$ satisfy many of the same properties 
as $\mu_{\s, m}$ described above. 
In particular, we have 
\begin{align} \label{ppropPDF1}
  \supp(\mu_{\s, m}^\flat)=\supp(\mu_{\s, m}^\#)=\CC 
\end{align}
for $1/2\leq\s<1$ along the same line of Lemma \ref{lemSupp}, where we use the prime number theorem in arithmetic progression modulo $4$.
Furthermore, there exist non-negative continuous functions $\DF_{\s, m}^\flat$ and $\DF_{\s, m}^\#$ such that 
\[\mu_{\s, m}^\flat(A)=\int_{A}\DF_{\s, m}^\flat(z)\, |dz| 
\quad\text{and}\quad 
\mu_{\s, m}^\#(A)=\int_{A} \DF_{\s, m}^\#(z)\, |dz| \]
for all Borel set $A$. We see from \eqref{ppropPDF1} 
that the supports of $\DF_{\s, m}^\flat$ and $\DF_{\s, m}^\#$ are equal to $\CC$ if $1/2\leq\s<1$. 
Recall that $\nu_{\s, m, N}^\flat\ast\nu_{\s, m, N}^\#$ converges weakly to $\mu_{\s, m}$ as $N\to\infty$ by Lemma \ref{lemConv}. 
Hence we deduce the equality
\[\DF_{\s, m}(z)=\int_{\CC} \DF^\flat_{\s, m}(z-w)\DF^\#_{\s, m}(w)|dw|\]
for any $z\in\CC$. 
Since the functions $\DF_{\s, m}^\flat$ and $\DF_{\s, m}^\#$ are continuous and are not identically zero on every disk on $\CC$, 
we see that $\DF_{\s, m}(z)>0$ for any $z \in \CC$. 
Hence the proof of assertion $(\mathrm{i})$ is completed. 
We note that assertion $(\mathrm{ii})$ is just a consequence of \cite[Theorem 14]{JW1935}. 
\end{proof}


\begin{ackname}
The authors would like to thank Kohji Matsumoto for his helpful comments.  
The work of the second author was supported by Grant-in-Aid for JSPS Fellows (Grant Number 21J00425).
The work of the third author was supported by Grant-in-Aid for JSPS Fellows (Grant Number 21J00529). 
\end{ackname}


\begin{thebibliography}{99}


\bibitem{BJ1930} H. Bohr and B. Jessen, \"{U}ber die Werteverteilung der Riemannschen Zetafunktion, Erste Mitteilung, 
\textit{Acta Math.} \textbf{54} (1930), 1--35; Zweite Mitteilung, ibid. \textbf{58} (1932), 1--55.


\bibitem{GS2003} A. Granville and K. Soundararajan, The distribution of values of {$L(1, \chi_d)$}, 
\textit{Geom. Funct. Anal.} \textbf{13}, no. 5, 992--1028. 


\bibitem{EI2020} K. Endo and S. Inoue, On the value-distribution of iterated integrals of the logarithm of 
the Riemann zeta-function I: Denseness, 
\textit{Forum Math.} \textbf{33} (2021), no.~1, 167--176. 


\bibitem{HM1999} T. Hattori and K. Matsumoto, A limit theorem for Bohr-Jessen's probability measures of the Riemann zeta-function, 
\textit{J. Reine Angew. Math.} \textbf{507} (1999), 219--232.


\bibitem{II2019} S. Inoue, On the logarithm of the Riemann zeta-function and its iterated integrals, 
preprint, \texttt{arXiv:1909.03643}.

\bibitem{II2020} S. Inoue, Extreme values for iterated integrals of the logarithm of the Riemann zeta-function, 
preprint, \texttt{arXiv:2009.04099}.


\bibitem{IL2021} S. Inoue and J. Li, Joint value distribution of $L$-functions on the critical line, 
preprint, \texttt{arXiv:2102.12724}.

\bibitem{JW1935}B. Jessen and A. Wintner, Distribution functions and The Riemann zeta function, 
\textit{Trans. Amer. Math. Soc.} \textbf{38}, no. 1, 48--88.


\bibitem{L2011} Y. Lamzouri, On the distribution of extreme values of zeta and $L$-functions in the strip $\frac{1}{2} < \s < 1$, 
\textit{Int. Math. Res. Not. IMRN} 2011, no.23, 5449--5503.
%
%
\bibitem{LLR2019} Y. Lamzouri, S. Lester, and M. Radziwi\l\l, Discrepancy bounds for the distribution 
of the Riemann zeta-function and applications, \textit{J. Anal. Math.} \textbf{139} (2019), no. 2, 453--494. 

\bibitem{LSF} N. N. Lebedev, \textit{Special functions and their applications}, 
Revised English edition, Translated and edited by Richard A. Silverman, Prentice-Hall, Inc., Englewood Cliffs, 
N. J. 1965.


\bibitem{LRW2008} J. Liu, E. Royer, and J. Wu, 
\emph{On a conjecture of Montgomery-Vaughan on extreme values of automorphic $L$-functions at 1}, 
Anatomy of integers, CRM Proc. Lecture Notes, vol. 46, Amer. Math. Soc., Providence, RI, 2008, pp. 217--245. 


\bibitem{M2021} M. Mine,
\emph{Large deviations for values of $L$-functions attached to cusp forms in the level aspect},
in preparation. 


\bibitem{M1977} H. L. Montgomery, Extreme values of the Riemann zeta function, \textit{Comment. Math. Helvetici} 
\textbf{52} (1977), 511--518.


\bibitem{St2007} J.~Steuding, 
\textit{Value-distribution of $L$-functions}, 
{Lecture Notes in Mathematics}, \textbf{1877}, Springer, 2007.


\bibitem{S2011} D. W. Stroock, \emph{Probability Theory: An Analytic View}, Second Edition,  Cambridge Univ. Press, 2011.


\bibitem{TT}  E. C. Titchmarsh, \textit{The Theory of the Riemann Zeta-Function}, 
Second Edition, Edited and with a preface by D. R. Heath--Brown, 
The Clarendon Press, Oxford University Press, New York, 1986.  


\bibitem{KTDT} K. M. Tsang, \textit{The distribution of the values of the Riemann zeta function}, PhD thesis, 
Princeton University, Princeton, NJ, 1984.



\bibitem{W2007} J. Wu, 
\emph{Note on a paper by A. Granville and K. Soundararajan: ``The distribution of values of $L(1,\chi_d)$''}, 
J. Number Theory \textbf{123} (2007), no. 2, 329--351. 



\end{thebibliography}
\end{document}